\numberwithin{equation}{section}
\numberwithin{figure}{section}
\newtheorem{thm}{Theorem}[section]
\newtheorem{lem}[thm]{Lemma}
\newtheorem*{claim*}{Claim}
\newtheorem{clm}[thm]{Claim}
\newtheorem{prop}[thm]{Proposition}
\newtheorem{cor}[thm]{Corollary}
\newtheorem*{lema}{Lemma 2.1}
\newtheorem*{lemb}{Lemma 2.3}
\theoremstyle{definition}
\newtheorem{defn}[thm]{Definition}
\newtheorem{condition}[thm]{Condition}
\theoremstyle{remark}
\newtheorem{notation}[thm]{Notation}
\newtheorem{rem}[thm]{Remark}
\newcommand{\RR}{\mathbb{R}}
\newcommand{\Rn}{\mathbb{R}^{n}}
\newcommand{\Ol}{\Omega_{1}}
\newcommand{\Or}{\Omega_{2}}
\renewcommand{\div}{\text{div}}
\newcommand{\supp}{\text{supp}}
\newcommand{\osc}{\text{osc}}
\begin{document}

\title{Regularity for a Local-Nonlocal Transmission Problem}
\author{Dennis Kriventsov\thanks{Mathematics Department, University of Texas at Austin, Austin, Texas}}

\date{January 31, 2015}

\maketitle
\begin{abstract}
We formulate and study an elliptic transmission-like problem combining
local and nonlocal elements. Let $\Rn$ be separated into two components
by a smooth hypersurface $\Gamma$. On one side of $\Gamma$, a function
satisfies a local second-order elliptic equation. On the other, it
satisfies a nonlocal one of lower order. In addition, a nonlocal transmission
condition is imposed across the interface $\Gamma$, which has a natural
variational interpretation. We deduce the existence of solutions to
the corresponding Dirichlet problem, and show that under mild assumptions
they are H\"{o}lder continuous, following the method of De Giorgi. The
principal difficulty stems from the lack of scale invariance near
$\Gamma$, which we circumvent by deducing a special energy estimate
which is uniform in the scaling parameter. We then turn to the question
of optimal regularity and qualitative properties of solutions, and
show that (in the case of constant coefficients and flat $\Gamma)$
they satisfy a kind of transmission condition, where the ratio of
``fractional conormal derivatives'' on the two sides of the interface
is fixed. A perturbative argument is then given to show how to obtain
regularity for solutions to an equation with variable coefficients
and smooth interface. Throughout, we pay special attention to a nonlinear
version of the problem with a drift given by a singular integral operator
of the solution, which has an interpretation in the context of quasigeostrophic
dynamics.
\end{abstract}

\tableofcontents

\section{Introduction}

The quasigeostrophic system modeling large-scale atmospheric
motion induced by the rotation of the Earth is given on
a half-space in $\RR^{3}$ by
\begin{equation}
\begin{cases}
D_{t}L\Psi=0 & \RR^{2}\times[0,\infty)\times[0,T)\\
D_{t}\partial_{z}\Psi=T\Psi & \RR^{2}\times\{0\}\times[0,T).
\end{cases}\label{eq:QG1}
\end{equation}
Here $\Psi$ is the stream function, from which we can obtain the
fluid velocity $\boldsymbol{b}=(-\partial_{y}\Psi,\partial_{x}\Psi)$.
Then $D_{t}=\partial_{t}+\boldsymbol{b}_{x}\partial_{x}+\boldsymbol{b}_{y}\partial_{y}$
stands for the material derivative (measuring time variation in a
frame moving along the flow). The remaining quantities are $L$, a
uniformly elliptic operator with smooth coefficients depending only
on $z$, and the differential operator $T$, which we discuss below. This system is obtained by starting with the compressible Navier-Stokes equations in the Boussinesq approximation for a rotating fluid subject to constant gravity (meaning the density variation contributes only to the gravity terms). Then under the assumption that the Rossby number, which measures the ratio of the fluid's time scale to the Earth's rotation, is comparable to the ratio of the fluid's and planet's length scales, and both are small, the zero-order limiting behavior is known as the geostrophic approximation, and relates the fluid velocity to the pressure. The first-order correction gives the first equation in \eqref{eq:QG1}, while the second equation comes from considering the Ekman layer to properly account for viscosity. See \cite{P,DG,BBSQG}
for derivations and detailed discussion.

We will assume that $L=\triangle$
and that the initial conditions for the first equation are $\triangle\Psi(x,y,z,0)=0$;
if $\Psi$ is sufficiently regular this reduces the equation to $\triangle\Psi=0$. This reduction is more commonly studied in the mathematical literature than the full system above, especially in the case of $T\Psi=-\triangle_{x,y}\Psi=\partial_{zz}\Psi$. This is the form $T$ takes when the lower boundary is assumed to be fixed, so impervious to pressure variations. Then the system simplifies to a single equation in two dimensions (with $s=\frac{1}{2}$ the physical case):
\begin{equation*}
\begin{cases}
\partial_{t}u+\langle\boldsymbol{b},\nabla u\rangle+ (-\triangle)^s u=0 & x\in \mathbb{R}^2\\
\boldsymbol{b}=(-R_{2}u,R_{1}u).
\end{cases}
\end{equation*}
Here $\overrightarrow{R}=(R_{1},R_{2})$ are the Riesz transforms
and we have used the extension interpretation of $(-\triangle)^{1/2}$. The reason to include the parameter $s$ is largely mathematical, as the problem is easier to solve for larger $s$. Indeed, the case of $s>\frac{1}{2}$ is subcritical, and global well-posedness was known for some time; see \cite{GHPS,R,CW,CCW,CC} and the references therein. The critical and most interesting case of $s=\frac{1}{2}$ proved more challenging, and the first proofs of global well-posedness were given independently in \cite{CV,KNV}, and since then others were found in \cite{KN,CVi}. There has also been extensive work done on slightly supercritical cases \cite{S,DKS}, conditional results for the critical case \cite{CW,DP}, and equations with similar properties \cite{FV,CCW2}, just to give some examples.

Now assume that $\RR^{2}$ is separated by $\Gamma$ into components
$\Or,\Ol$, with $\Or$ representing land and $\Ol$ ocean. The operator
$T$ comes from frictional forces near the surface $\{z=0\}$, and
so takes different forms in these two environments. Over a lower surface
which doesn't respond to pressure variation (like the land in $\Or$),
the generally accepted form for $T$ can be derived from considering
the Ekman layer near $z=0,$ and is given by $T\Psi=-\triangle_{x,y}\Psi=\partial_{zz}\Psi$ as mentioned above.
On the other hand, over a flexible boundary (like the water in $\Ol$),
the analysis appears more difficult due to the response of the frictional
layer to pressure variation, and is generally proportional to $\langle\hat{z},\text{curl}\tau(x,y)\rangle$
where $\tau$ represents the stress exerted on the fluid. Note that
this effect comes not from the actual variations of water level (these
are insignificant relative to the length scale in the model) but from
the associated pressure balance and its effect on the friction forces.
This stress is determined by small scale dynamics near the water,
and so it can not be determined from the quantities in the model.
However, one proposed approximation is that the wind forces responsible
for the stress replicate the large-scale dynamics of the system (see
\cite[Chapter 4]{P} for discussion), and so from the continuity of
stress and velocity across the boundary we get (up to constants of
proportionality) that $\tau_{x}=\partial_{z}\boldsymbol{b}_{x}$,
$\tau_{y}=\partial_{z}\boldsymbol{b}_{y}$, and $T\Psi=\triangle_{x,y}\partial_{z}\Psi$.
Setting $u=\partial_{z}\Psi$ , we obtain the system
\begin{equation}
\begin{cases}
\partial_{t}u+\langle\boldsymbol{b},\nabla u\rangle-\triangle u=0 & x\in\Ol\\
\partial_{t}u+\langle\boldsymbol{b},\nabla u\rangle+(-\triangle)^{1/2}u=0 & x\in\Or\\
\boldsymbol{b}=(-R_{2}u,R_{1}u).
\end{cases}\label{eq:SQG}
\end{equation}

This does not fully specify the behavior of $u$; that would require
conditions on how $u$ behaves near $\Gamma$ that are not contained
in the above analysis. However, the \emph{weak form} of the quasigeostrophic
system automatically imposes this extra condition, and reduces to
\begin{align*}
\int_{\RR^2}\partial_{t}u\phi&+\int_{\Ol}\langle\nabla u,\nabla\phi\rangle+\int_{\Or}\int_{\Or}\frac{[u(x)-u(y)][\phi(x)-\phi(y)]}{|x-y|^{3}}dydx\\
&+2\int_{\Or}\int_{\Ol}\frac{[u(x)-u(y)]\phi(x)}{|x-y|^{3}}dydx-\int u\langle\boldsymbol{b},\nabla\phi\rangle=0
\end{align*}
We actually consider the more general model
\begin{align}
\int_{\RR^2}\partial_{t}u\phi&+\int_{\Ol}\langle\nabla u,\nabla\phi\rangle+\int_{\Or}\int_{\Or}\frac{[u(x)-u(y)][\phi(x)-\phi(y)]}{|x-y|^{3}}dydx\label{eq:2DModel}\\
&+\int_{\Or}\int_{\Ol}\frac{[u(x)-u(y)][\nu_{1}\phi(x)-\nu_{2}\phi(y)]}{|x-y|^{3}}dydx-\int u\langle\boldsymbol{b},\nabla\phi\rangle=\int f \phi  \nonumber
\end{align}
for parameters $\nu_{1}\in(0,\infty)$ and $\nu_{2}\in[0,\infty)$. The term with $\nu_{1}$ corresponds to the nonlocal diffusive
effects of $u$ over water on $u$ over land, and vise versa for the
term with $\nu_{2}$. The function $f$ represents a forcing term. The effect of $\nu_1$, as we will see, is very substantial and qualitatively changes the shapes of solutions. As will be discussed in Section 7, the term with $\nu_2$ appears to be lower-order, but to exploit the variational structure of the equation most of our results will only apply to the case of $\nu_1=\nu_2$. This will be assumed from now on.

The goal of the work to follow is to develop a satisfactory mathematical theory for a stationary version of this equation, and study its qualitative properties near the interface. The reason only the stationary case will be considered is that the time-dependent problem appears substantially more difficult, as will be explained. The outcome is the following theorem, which is proved in Section 9.7; admissible solutions are ones obtained as vanishing-viscosity limits, and will be defined in Definition \ref{def:admis}.

\begin{thm}\label{thm:IT1}
There exists a unique admissible stationary solution $u$ of \eqref{eq:2DModel}, for
$\Gamma\in C^{1,1}$ globally (uniformly) and satisfying Condition
\ref{StrongLip}. Assume $f\in C_{c}^{\infty}$ and
$\nu_{1}>0$. Then $u$ is in $C^{0,\gamma}(\Rn)$ for every $\gamma<\alpha_{0}$,
where $\alpha_{0}$ depends only on $\nu_1$ and $\|f\|_{L^{\infty}\cap L^{1}}.$
Moreover, $u\in C^{1,\gamma}(\Ol)$.\end{thm}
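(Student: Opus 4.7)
The plan is to combine a vanishing-viscosity construction with the De Giorgi-type Hölder regularity scheme and the perturbative Schauder theory developed in the earlier sections of the paper, handling the nonlinear drift $\boldsymbol{b}(u)=(-R_{2}u,R_{1}u)$ by a fixed-point iteration. First, I would regularize by adding $\varepsilon\Delta$ to the nonlocal operator on $\Or$ (and, if needed, mollifying the coupling kernel) so that the associated bilinear form is uniformly coercive on $H^{1}(\RR^{2})$. For fixed $\varepsilon>0$, freezing the velocity field in the advective term and solving the resulting linear problem by Lax--Milgram, then closing a Schauder fixed-point iteration via $L^{2}$-boundedness of the Riesz transforms, yields an approximate solution $u_{\varepsilon}$.

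Next I would obtain estimates independent of $\varepsilon$. Testing the equation against $u_{\varepsilon}$ itself cancels the transport term (because $\div\boldsymbol{b}(u_{\varepsilon})=0$), producing a uniform bound on the Dirichlet energy over $\Ol$, the Gagliardo seminorm over $\Or$, and the coupling double integral in terms of $\|f\|_{L^{2}}$. A Stampacchia truncation argument yields a uniform $L^{\infty}$ bound depending only on $\|f\|_{L^{\infty}\cap L^{1}}$. The core of the proof is the De Giorgi iteration developed earlier in the paper, which under Condition \ref{StrongLip} and using the scale-uniform energy estimate (the technical innovation emphasized in the abstract) delivers a uniform $C^{0,\alpha_{0}}$ estimate for $u_{\varepsilon}$, with $\alpha_{0}$ depending only on $\nu_{1}$ and $\|f\|_{L^{\infty}\cap L^{1}}$. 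An Arzelà--Ascoli argument then extracts a subsequential limit $u$, which is admissible by construction.

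For the $C^{1,\gamma}(\Ol)$ interior regularity, once $u$ is globally $C^{0,\alpha_{0}}\cap L^{\infty}$, the drift $\boldsymbol{b}(u)$ inherits Hölder continuity on $\Ol$ (Riesz transforms preserve Hölder classes for bounded $L^{\infty}\cap C^{0,\alpha_{0}}$ data on $\RR^{2}$), and the nonlocal coupling integral defines a Hölder source in $\Ol$. The equation on $\Ol$ then reads as a uniformly elliptic second-order equation with Hölder drift and right-hand side, and the perturbative Schauder scheme of the paper gives $u\in C^{1,\gamma}(\Ol)$. For uniqueness, one subtracts two admissible solutions and tests with their difference: the divergence-free structure of the advection and the coercivity of the bilinear form, combined with $\nabla u\in L^{\infty}(\Ol)$ from the $C^{1,\gamma}$ bound, force the two to coincide.

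The main obstacle is the Hölder step. The local second-order term on $\Ol$ and the nonlocal lower-order term on $\Or$ scale incompatibly near $\Gamma$, so the standard De Giorgi oscillation-decay argument does not close on itself under dyadic rescaling across the interface. The scale-uniform energy estimate is the device that overcomes this, and every ingredient of the iteration (Caccioppoli-type inequality, level-set decrement, and the $L^{2}$-to-$L^{\infty}$ step) must be arranged so that its hypotheses persist at every dyadic scale near $\Gamma$.
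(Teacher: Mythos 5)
There is a genuine gap at the Hölder step, and it is the central one. You assert that the De Giorgi iteration itself ``delivers a uniform $C^{0,\alpha_{0}}$ estimate.'' It does not: the De Giorgi scheme of Section 4 (Theorem \ref{thm:Holder-1}, with the BMO re-estimation of $\boldsymbol{b}=G\vec{R}u$ at each scale) produces only some small, non-explicit exponent $\alpha>0$. The exponent $\alpha_{0}$ in the theorem is a different object: it is the optimal exponent of the constant-coefficient, flat-interface problem, determined by the homogeneous solutions and barrier construction of Sections 7--8 (the root of $q(s,\alpha_{0})=(1-\tfrac{\nu}{2})\tfrac{1}{2s}$, corrected at $s=\tfrac12$ by $\langle\boldsymbol{b},e_{n}\rangle$, which is why $\alpha_{0}$ ends up depending on $\|f\|_{L^{\infty}\cap L^{1}}$). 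Passing from the small De Giorgi exponent to every $\gamma<\alpha_{0}$ is exactly what the perturbative theory of Section 9 is for: one flattens $\Gamma$ locally (Lemma \ref{lem:Flattening}), applies the Campanato bootstrap (Theorem \ref{thm:Campanatoboot}) with the frozen-coefficient estimates of Theorem \ref{thm:ConstCoeffEst}, and — because at $s=\tfrac12$ the attainable exponent is capped by $s+\beta$ where $\beta$ is the Hölder regularity of the drift — one must then re-estimate the Hölder norm of $\boldsymbol{b}=Tu$ from the improved regularity of $u$ and iterate, gaining $\tfrac12$ per cycle until the cap $\alpha_{0}$ is reached. Your outline conflates the two exponents and omits this bootstrap entirely, so as written the claimed conclusion ``$C^{0,\gamma}$ for every $\gamma<\alpha_{0}$'' is not reached.

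Two smaller points. For $u\in C^{1,\gamma}(\Ol)$ you treat the nonlocal coupling term as a Hölder source for a local elliptic equation; but for $x\in\Ol$ near $\Gamma$ that term behaves like $d(x,\Gamma)^{\gamma-2s}$ and is not Hölder up to the interface, so local Schauder theory only gives interior estimates on compact subsets of $\Ol$; regularity up to $\Gamma$ requires the vanishing of the conormal derivative and the Neumann-type argument of Section 9.6 (or the transmission analysis of Sections 7--8). For uniqueness, testing the difference of two solutions requires justifying that the difference is an admissible test function in the presence of the drift; your appeal to $\nabla u\in L^{\infty}(\Ol)$ does not control the gradient on $\Or$, where one instead needs the weighted estimate $|x_{n}|^{\alpha-1}|\nabla u|\le C$ giving $u\in W^{1,p}_{\mathrm{loc}}$, as in Proposition \ref{prop:Unique}. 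The existence, energy, $L^{\infty}$, and vanishing-viscosity compactness parts of your outline do match the paper's construction (Lemmas \ref{lem:globalexist}--\ref{lem:nonlinbd}).
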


Before explaining how this theorem is obtained, however, we would like to draw an analogy to transmission problems, which will guide our exposition. Classical elliptic transmission problems have been studied exhaustively
from a variety of perspectives, both for their intrinsic mathematical
interest (as model partial differential equations with discontinuous
coefficients) and their many applications and interpretations. As
a simple example, consider $\Gamma\subset\Rn$ a hypersurface separating
$\Rn$ into two components $\Ol$ and $\Or.$ For $A_{1},A_{2}$ two
$n\times n$ symmetric strictly positive definite matrices, we are
tasked with finding minimizers to the energy
\begin{equation}
E[u]=\int_{\Ol}\langle A_{1}\nabla u,\nabla u\rangle+\int_{\Or}\langle A_{2}\nabla u,\nabla u\rangle\label{eq:classvar}
\end{equation}
among, say, $\{u\in H^{1}(\Rn)|u=u_{0}\text{ on }\partial\Omega\}$
for some smooth bounded domain $\Omega$ and $u_{0}\in C^{\infty}(\partial\Omega)$,
where $\langle\cdot,\cdot\rangle$ represents the Euclidean inner
product. This problem can be interpreted, for instance, in terms of
finding an electric potential over a region consisting of two homogeneous
materials with differing dielectric constants separated by the interface
$\Gamma.$

There are many possible approaches to studying the solutions to this
problem. On one hand, the situation falls within the scope of the
theory of divergence-form elliptic equations with bounded measurable
coefficients, meaning energy estimates and the De Giorgi-Nash-Moser
Harnack inequality are available immediately. On the other hand, the
Euler-Lagrange equation for this problem is a distributional form
of the following PDE:
\begin{equation}
\begin{cases}
\text{Tr}A_{1}D^{2}u=0 & x\in\Ol\\
\text{Tr}A_{2}D^{2}u=0 & x\in\Or\\
\langle A_{1}\nabla_{\Ol}u,n\rangle=\langle A_{2}\nabla_{\Or}u,n\rangle & x\in\Gamma
\end{cases}\label{eq:classtrans}
\end{equation}
where $\nabla_{\Omega_{i}}u$ means gradient evaluated from $\Omega_{i}$
and $n$ is the outward unit normal to $\Ol$. This was known early
in the development of weak solution methods for elliptic equations;
see \cite{St}. The third line is known as the \emph{transmission
condition}, and demands that the conormal derivatives of $u$ on both
sides of $\Gamma$ have a fixed ratio. Conversely, a sufficiently
smooth (except in the conormal direction on $\Gamma$) solution to
\eqref{eq:classtrans} satisfying the transmission condition will be
a minimizer to a corresponding variational problem. The precise behavior
of solutions to \eqref{eq:classtrans} can be deduced relatively easily
from standard theory, for instance by flattening the boundary, rewriting
as an elliptic system, and applying the results in \cite{ADN1,ADN2}.

Consider now a simple local-nonlocal version of the above variational
problem, where the energy is given by:
\begin{equation}
E[u]=\int_{\Ol}|\nabla u|^{2}+\int_{\Or}\int_{\Or}\frac{|u(x)-u(y)|^{2}}{|x-y|^{n+2s}}dxdy+\nu\int_{\Or}\int_{\Ol}\frac{|u(x)-u(y)|^{2}}{|x-y|^{n+2s}}dydx,\label{eq:var}
\end{equation}
Here $\Ol$ and $\Or$ are smooth open sets partitioning the entire
$\Rn$, and the minimization is performed over $\{u\in H^{s}(\Rn)\cap H^{1}(\Ol)|u=u_{0}\text{ on }\Omega^{c}\}$.
The first term is the Dirichlet energy on $\Ol$, while the second
is the Gagliardo norm for $H^{s}(\Or)$. The third should be interpreted
as a nonlocal transmission term, placing a second, fractional-order
constraint on the behavior of minimizers across the interface. When
we discuss the structure of solutions near $\Gamma$, we will prove
a more intuitive characterization of the transmission in terms of
the parameter $\nu$, which will explain the effect of this extra
term. 

Existence and uniqueness for the Dirichlet problem for this (simplified)
example are straightforward consequences of the uniform convexity
of the energy in $u$, and solutions satisfy the following weak-form
Euler-Lagrange equation:
\begin{align}
\int_{\Ol}\langle\nabla& u,\nabla\phi\rangle+\int_{\Or}\int_{\Or}\frac{[u(x)-u(y)][\phi(x)-\phi(y)]}{|x-y|^{n+2s}}dydx \nonumber\\
&+\nu\int_{\Or}\int_{\Ol}\frac{[u(x)-u(y)][\phi(x)-\phi(y)]}{|x-y|^{n+2s}}dydx=0\label{eq:weakform}
\end{align}
for all $\phi\in C_{c}^{\infty}(\Omega).$ Our first task, then, is
to show that solutions to this equation are continuous. For the classical
version of the problem, that was an immediate application of the De
Giorgi-Nash estimate \cite{DeGiorgi}, but this doesn't apply to our
nonlocal energy.

There have recently been several efforts to bring De Giorgi or Moser
iteration methods to the nonlocal framework. This is done most famously
in \cite{CV}, where just such an iteration is applied to prove global
well-posedness of the critical SQG. Generally
speaking, two ingredients are needed to apply the method of De Giorgi:
some form of localized energy estimate, and invariance of the equation
(or at least of the class of equations considered) under dilation.
Equation \eqref{eq:weakform} readily admits an energy inequality (obtained
by setting $\phi=u-u_{0}$, for example), but this proves insufficient.
First, it needs to be localized to be useful in the iteration. In
\cite{CV}, a localization was performed with the help of the extension
property of the fractional Laplacian; both for the sake of improved
generality and simplicity of proofs, we instead follow the localization
performed in \cite{CCV}. Their approach uses functions of the form
$\phi=(u-\psi)_{+}$ in \eqref{eq:weakform} for some suitably chosen
$\psi$ that grows fast enough as $|x|\rightarrow\infty$ to control
the tails of the rescaled $u$.

There is a second, and more substantial, obstruction to applying De
Giorgi's argument: our equation lacks scale invariance. Indeed, as
one zooms in on a point in $\Gamma,$ the local term over $\Ol$ has
a higher order than the nonlocal terms, and so becomes more and more
dominant. As a result, the local energy estimate always gives good
control of $\nabla u$ over $\Ol,$ but a worse and worse bound on
the nonlocal energy over $\Or.$ To overcome this, we prove a second,
less standard, energy inequality which uses $w=u-R[u]$ as a test
function, where $R[u]$ represents (in the case of $\Gamma$ a hyperplane)
the even reflection of $u|_{\Ol}$ across $\Gamma$. The function
$w$ has the advantage of being supported on $\Or,$ and so the local
term in \eqref{eq:weakform} drops out, no longer obscuring the smaller
nonlocal terms we need to study. In the end, we manage to estimate
both $u|_{\Ol}$and the amount by which $u$ exceeds $R[u]$ in a
scale-invariant way, which suffices to prove continuity of solutions.

The approach above can be motivated in terms of the following example
in the ``classical'' situation, which we describe heuristically.
Consider once again our first variational problem \eqref{eq:classvar},
but now with a parameter $\epsilon\rightarrow0$:
\[
E[w]=\int_{\Ol}\langle A_{1}\nabla w,\nabla w\rangle+\epsilon\int_{\Or}\langle A_{2}\nabla w,\nabla w\rangle.
\]
Let $u_{\epsilon}$ be the minimizer. While the De Giorgi-Nash estimate
works for each $\epsilon$ to give that $u_{\epsilon}$ is continuous,
the ellipticity ratio (of order $\epsilon^{-1})$ deteriorates as
$\epsilon\rightarrow0$, meaning the modulus obtained this way is
not uniform. On the other hand, as $\epsilon\rightarrow0$, we expect
(if uniform estimates were available) that $u_{\epsilon}$converges
to a solution of
\[
\begin{cases}
\text{Tr}A_{1}D^{2}u=0 & x\in\Ol\\
\text{Tr}A_{2}D^{2}u=0 & x\in\Or\\
\langle A_{1}\nabla_{\Ol}u,n\rangle=0 & x\in\Gamma,
\end{cases}
\]
which admits a unique continuous solution obtained by solving the
Neumann problem over $\Ol$ and then using that solution as data for
the Dirichlet problem on $\Or$. Indeed, the regularity for this limiting
problem is easily deduced by first noting that $R[u]$ solves an equation
with smooth coefficients, and then that $u-R[u]$ is in $H_{0}^{1}(\Or)$,
and so readily admits energy estimates. Using this as motivation,
one may then try doing the same for each function $u_{\epsilon}$,
and this gives uniform energy estimates analogous to the ones we will
derive for the nonlocal problem.

Recall that our motivating problem \eqref{eq:2DModel} contained a drift term, so to treat it we must deal with the more general equation (satisfied by solutions
$u$ for every $\phi\in C_{c}^{\infty}(\Omega)):$
\begin{align}
\int_{\Ol}\langle A(x)\nabla u,\nabla\phi\rangle&+\int_{\Or}\int_{\Rn}\frac{[u(x)-u(y)]a(x,y)[\phi(x)-\phi(y)]}{|x-y|^{n+2s}}dydx\nonumber\\
&-\int u\langle\boldsymbol{b},\nabla\phi\rangle= \int f \phi.\label{eq:weakform-1-1}
\end{align}
Here the symmetric matrix $A(x)$ and function $a(x,y)=a(y,x)$ are assumed to satisfy $\lambda I\leq A(x) \leq \Lambda I$ and $\lambda \leq a \leq \Lambda$ for positive numbers $\lambda,\Lambda$. This equation no longer corresponds to a minimization problem. Nevertheless,
if $s>1/2$, $\boldsymbol{b}$ has little effect on any of the theory
developed above. If $\boldsymbol{b}\in L^{\infty}$ and $s=\frac{1}{2}$,
the regularity theory goes through as well. In the model, however, $\boldsymbol{b}=G\vec{R}u$,
where $\vec{R}=\nabla(-\triangle)^{-1/2}$ is the vector of Riesz
transforms and $G$ is a skew-symmetric matrix. In this case,
it is easy to show that $u\in L^{\infty}$, which implies that $\boldsymbol{b}\in BMO$.
This by itself does not seem to suffice for the H\"{o}lder estimate.
The argument in \cite{CV}, for instance, uses critically the parabolic
nature of their equation, and, as discussed below, the parabolic version
of our problem is substantially more difficult. Fortunately, we are still able
to handle drifts which are explicitly given by Calderon-Zygmund operators
of $u$ using an argument like that in \cite{S}, re-estimating the
$BMO$ norm of $\boldsymbol{b}$ in each step of the De Giorgi iteration. This leads to the following theorem (the proof of which occupies Section 4):

\begin{thm}\label{thm:IT2} Let $\Gamma$ be a Lipschitz graph on $B_1$ with $0\in \Gamma$. Assume $u$ is an admissible solution of \eqref{eq:weakform-1-1}
on $B_{1}$ with $|u|\leq1$ on $\Rn$. Assume further that $\|f\|_{L^{q}}\leq 1$
with $q>\frac{n}{s},$ and either that $\boldsymbol{b}=0$, that
$s>1/2$, $\|\boldsymbol{b}\|_{L^{q}}\leq 1$ and $q\geq\frac{n}{2s-1},$ or that $s=\frac{1}{2}$, $\|u\|_{L^2(\Rn)}\leq 1$, and $\boldsymbol{b}=G\vec{R}u$ as above. 
Then there is a number $\alpha>0$ such that
\[
[u]_{C^{0,\alpha}(B_{1/2})}\leq C_1,
\]
where $C_1,\alpha$ depend on $\Gamma,n,s,\lambda$, and $\Lambda$, and if $s=\frac{1}{2}$, then also on the norm of $G$.  
\end{thm}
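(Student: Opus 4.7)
The plan is to carry out a De Giorgi iteration adapted to the local--nonlocal transmission setting. The target is an oscillation decay: if $\osc_{B_1} u \leq 2$, then $\osc_{B_{1/2}} u \leq 2 - \theta$ for a fixed $\theta > 0$ depending only on the allowed parameters. Iterating this on a dyadic sequence of balls centered at any point of $B_{1/2}$ and tracking how the data and the tails transform under rescaling gives the desired $C^{0,\alpha}$ estimate in the standard way. The central difficulty is that under the rescaling $u_r(x) = u(rx)$, the local term over $\Ol$ and the nonlocal term over $\Or$ do not scale the same way, so any estimate used in the iteration must be uniform as $r \to 0$.

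Two energy inequalities are needed. The first is the usual one, obtained by testing \eqref{eq:weakform-1-1} with $\phi = (u-\psi)_+$ for a truncated cutoff $\psi$ that grows at infinity as in \cite{CCV}; this controls $\nabla u$ on $\Ol$ and the Gagliardo seminorm of $u$ on $\Or$ in terms of the excess $(u-k)_+$ and a tail, but degenerates as one zooms toward $\Gamma$ since the local term dominates. The second, key, inequality uses $w = (u - R[u])_+$ as a test function, where $R[u]$ is (after locally flattening $\Gamma$ via its Lipschitz graph structure) the even reflection of $u|_{\Ol}$ across $\Gamma$. Because $w$ is supported in $\Or$, the local term vanishes; what remains, after commuting the kernel with the reflection, is a scale-invariant estimate controlling how much $u$ on $\Or$ exceeds its reflection from $\Ol$. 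Together the two inequalities produce a pair of estimates that survive the rescaling.

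With these in hand, De Giorgi's method runs in two stages. The $L^\infty$-reduction lemma says that if $\int_{B_1} (u-1/2)_+^2$ is small enough, then $u \leq 1 - \delta_0$ on $B_{1/2}$; this is proved by iterating the energy inequalities along the level sequence $k_m = 1/2 - 2^{-m}\delta_0$, using fractional Sobolev embedding on $\Or$ (together with the classical Sobolev embedding on $\Ol$) and absorbing the nonlocal tails using the global bound $|u| \leq 1$. The intermediate-value lemma then says that on $B_1$ one cannot simultaneously have $\{u \leq 0\}$ and $\{u \geq 1-\delta\}$ of positive measure without the intermediate set $\{0 < u < 1-\delta\}$ having measure bounded below; this is an isoperimetric consequence of the energy inequality applied to the truncation $(u)_+\wedge (1-\delta)$, using the same reflection trick to get a useful Poincar\'e-type estimate near $\Gamma$. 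Combining the two lemmas in the usual manner yields the oscillation decay. The forcing $f$ is folded in as an $L^q$ perturbation via the hypothesis $q > n/s$; the drift for $s > 1/2$ with $\boldsymbol{b} \in L^q$, $q \geq n/(2s-1)$, is handled by H\"older and Sobolev inequalities in the energy estimate.

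The main obstacle is the critical drift case $s = \tfrac12$, $\boldsymbol{b} = G\vec R u$, where $u \in L^\infty$ only gives $\boldsymbol{b} \in BMO$ and $BMO$ alone is not enough to close the iteration. The fix, following \cite{S}, is to re-estimate $\|\boldsymbol{b}\|_{BMO(B_r)}$ at each scale of the iteration using the Riesz-transform identity $\boldsymbol{b} = G\vec R u$ and the $L^2$-bound on $u$: at the scale where $\osc_{B_r} u \leq \omega_r$, the relevant oscillation of $\boldsymbol{b}$ inherits a corresponding decay, so that the drift contribution to the energy inequality on the rescaled function stays strictly subordinate to the diffusive terms. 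This lets the oscillation decay propagate through all scales and produces the $\alpha > 0$ depending on $\Gamma$, $n$, $s$, $\lambda$, $\Lambda$ (and $\|G\|$ in the critical case), completing the proof.
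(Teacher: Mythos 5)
Your proposal follows essentially the same route as the paper: the CCV-style localization with a slowly growing truncation, the key reflection-based energy inequality obtained by testing with the positive part of $u$ minus its extension $R[u]$ (so the local form drops out and the estimate survives the scaling), the two-lemma De Giorgi scheme ($L^\infty$-reduction plus a measure/isoperimetric step, the latter handled near $\Gamma$ via the reflection and the nonlocal ``flux'' term), and, for $s=\tfrac12$ with $\boldsymbol{b}=G\vec{R}u$, re-estimating the $BMO$ norm of the drift at every scale of the iteration as in \cite{S}. This matches the structure of Lemmas \ref{lem:en2}--\ref{lem:DG2} and Theorems \ref{thm:Holder}--\ref{thm:Holder-1}, so the plan is correct in outline and not a genuinely different argument.
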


After the continuity of solutions is established, the next natural topic
to consider is the qualitative behavior near $\Gamma$. We first treat
the simplest possible case of this, where we go back to equation \eqref{eq:weakform} and assume $\Gamma$ is a hyperplane
(say $\{x_{n}=0\}$, with $\Ol=\{x_n\leq 0\}$). Using a bootstrap argument and barriers, we
derive the optimal regularity and asymptotic expansion for $u$. There
is an exponent $\alpha_{0}\in((2s-1)_{+},2s)$, depending only on
$\nu$ and $s$, such that $|u(x',x_{n})-u(x',0)|\leq C|x_{n}|^{\alpha_{0}}.$
The value of $\alpha_{0}$ is explicitly computable from $\nu$ and
$s$. On $\Ol,$we have that, in fact, $|u(x',x_{n})-u(x',0)|\leq C|x_{n}|^{\alpha_{0}+2-2s}$,
and moreover there is some (again explicitly computable) number $M_{0}$
such that for $(x',0)\in\Gamma$,
\begin{equation}
\lim_{t\rightarrow0^{+}}\frac{u(x',0)-u(x',-t)}{t^{\alpha_{0}+2-2s}}=M_{0}\lim_{t\rightarrow0^{+}}\frac{u(x',0)-u(x',t)}{t^{\alpha_{0}}}.\label{eq:trans}
\end{equation}
We will construct an explicit solution that demonstrates this is optimal,
in the sense that neither side of \eqref{eq:trans} is $0$. This property
is analogous to the transmission relation in \eqref{eq:classtrans},
but with different powers on the two sides of the interface. With
some extra effort, a full asymptotic expansion for $u$ can be derived
in a similar way. The following theorem follows from Lemma \ref{lem:nonlocopt} and Theorem \ref{thm:7final}:
\begin{thm}\label{thm:IT3}
Let $u$ satisfy \eqref{eq:weakform} on $B_2$, with $\Gamma=\{x_{n}=0\}$ as above and $|f|,|u|\leq 1$. Then $u$ is smooth in the directions orthogonal to $e_n$, lies in $C^{0,\alpha_0}(B_1)$ if $\alpha_0\leq 1$ and in $C^{1,\alpha_0 -1}(B_1)$ if $\alpha_0>1$, and also is in $C^{1,\alpha_{0}+1-2s}(B_1\cap \{x_n \leq 0\})$. Lastly. there is a number
$l\in\RR$ such that
\begin{equation*}
u(0,x_{n})-u(0,0)=l\left[(x_{n})_{+}^{\alpha_0}+M_{0}(-x_{n})_{+}^{\alpha_{0}+2-2s}\right]\left[1+q(x_n)\right],
\end{equation*}
where $|q(t)|\leq C|t|^{\beta}$ for some $C,\beta$ independent of $u$.
\end{thm}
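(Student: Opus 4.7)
The plan is to decouple the problem into tangential and normal directions, construct explicit ``transmission profiles'' that encode the exponents $\alpha_0$ and $M_0$, and then match $u$ to such a profile through a Campanato-type iteration at a point of $\Gamma$.

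First, since $\Gamma = \{x_n = 0\}$ is flat and the coefficients of \eqref{eq:weakform} are independent of the tangential variables $x' = (x_1, \dots, x_{n-1})$, the translates $\tau_h u(x) := u(x + he_i)$ for $i < n$ again satisfy \eqref{eq:weakform} on slightly smaller balls. I would form the difference quotients $h^{-1}(\tau_h u - u)$, observe that they satisfy an equation of the same type with forcing given by the tangential difference quotient of $f$ (which remains uniformly bounded as $h \to 0$ since $f$ is smooth), and apply Theorem \ref{thm:IT2} to them with bounds uniform in $h$. Sending $h \to 0$ yields $\partial_{x'} u \in C^{0,\alpha}$, and iterating in all tangential directions gives the first conclusion and reduces the remaining regularity assertions to statements about the one-variable profile $u_0(x_n) := u(0, x_n)$.

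Second, I would construct explicit homogeneous ``transmission profiles'' with the ansatz
\[
\Phi_{\alpha,M}(x) = (x_n)_+^\alpha + M (-x_n)_+^{\alpha+2-2s} + P_{\alpha,M}(x),
\]
where $P_{\alpha,M}$ is a tangential polynomial correction chosen so that $\Delta \Phi_{\alpha,M} = 0$ on $\Ol$ (the leading term $(-x_n)^{\alpha+2-2s}$ is not harmonic when $n \geq 2$). On $\Or$, using the known formula for the fractional Laplacian of $(x_n)_+^\alpha$ together with the scaling $\Phi_{\alpha,M}(rx) = r^\alpha \Phi_{\alpha,M}(x)$, the nonlocal operator applied to $\Phi_{\alpha,M}$ evaluated at $(0, x_n)$ with $x_n > 0$ is a multiple of $x_n^{\alpha - 2s}$, whose coefficient is an explicit function of $\alpha$, $M$, $\nu$, $s$ that includes contributions from the cross-kernel term weighted by $\nu$. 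Imposing that this coefficient vanishes, and the analogous transmission condition on $\Gamma$ matching the conormal derivative from $\Ol$ to the ``fractional conormal derivative'' from $\Or$, yields two algebraic constraints whose unique admissible solution is the pair $(\alpha_0, M_0)$ with $\alpha_0 \in ((2s-1)_+, 2s)$.

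Third, with the profile in hand I would run a Campanato-type iteration at a point of $\Gamma$, which after translation we can take to be the origin, normalizing so that $u(0) = 0$. The inductive claim is that at each dyadic scale $r_k = 2^{-k}$ there exist $l_k \in \RR$ and a polynomial $p_k(x')$ of degree controlled by $\alpha_0$ with
\[
\sup_{B_{r_k}} \bigl| u - l_k \Phi_{\alpha_0, M_0} - p_k \bigr| \leq C r_k^{\alpha_0 + \beta}
\]
for some $\beta > 0$. The induction step is proved by subtracting off the current approximation, rescaling the residual by $r_k^{-\alpha_0}$, and applying a compactness/Liouville argument: the rescaled residuals satisfy the homogeneous equation with vanishing data in the limit, and by the classification of homogeneous profiles from the second paragraph they must improve by a factor $r_k^\beta$ after again splitting off their $\Phi_{\alpha_0, M_0}$ and polynomial components. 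Passing to the limit yields $l = \lim_k l_k$ and the pointwise asymptotic along the normal direction. Tracking the iteration across all points of $\Gamma$ and combining with the tangential smoothness from the first step gives the one-sided $C^{1,\alpha_0+1-2s}$ regularity on $\{x_n \leq 0\}$ and the two-sided interior $C^{0,\alpha_0}$ or $C^{1,\alpha_0-1}$ regularity.

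The main obstacle is the profile construction: the algebraic relation determining $(\alpha_0, M_0)$ from $(\nu, s)$ is transcendental in $\alpha$, and verifying that it has a unique solution in $((2s-1)_+, 2s)$ requires a careful evaluation of fractional-Laplacian integrals with two distinct weights separated by $\Gamma$. A secondary difficulty is making the compactness step in the Campanato iteration uniform across scales; this is where the scale-invariant energy estimate developed for Theorem \ref{thm:IT2} is indispensable, since an ordinary rescaling breaks the balance between the local and nonlocal parts of the energy.
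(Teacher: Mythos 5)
There is a genuine gap, and it sits exactly where the work of the theorem lies: the mechanism by which a general solution is forced to match the homogeneous profile. Your first two ingredients are broadly in the spirit of the paper (tangential difference quotients, then explicit profiles $\rho_{\alpha_0}(x_n)+M_0\rho_{\alpha_0+2-2s}(-x_n)$ determining $\alpha_0$ and $M_0$), but your third step replaces the paper's argument by a Campanato iteration whose improvement step rests on an unproved classification: you assert that rescaled residuals converge to solutions of a ``homogeneous equation'' which, by the constructions of your second paragraph, must be multiples of $\Phi_{\alpha_0,M_0}$ plus polynomials. Constructing particular homogeneous solutions is not a Liouville theorem for all entire solutions with growth below $\alpha_0+\beta$, and such a theorem is delicate here precisely because the problem is not scale invariant: under the blow-up $u(rx)/r^{\alpha_0}$ the local term on $\Ol$ dominates the nonlocal terms, so the limiting equation is not the original one and the compactness step you flag as a ``secondary difficulty'' is in fact the main one. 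The paper avoids compactness entirely: it first bootstraps to near-optimal regularity with barriers of the form $\rho_{\alpha'}$ and the distributional vanishing of $\partial_{e_n}u$ from $\Ol$ (Lemmas \ref{lem:Localboot}, \ref{lem:Nonlocboot}, \ref{lem:normaldervanish}, Theorem \ref{thm:lalnearopt}), then reaches the exact exponent with a refined barrier (Lemma \ref{lem:nonlocopt}), and finally proves oscillation decay of $u/x_n^{\alpha_0}$ by a Krylov-type boundary Harnack argument using the special subsolution of Lemma \ref{lem:barrier} (Lemma \ref{lem:nlbdryharnack}); the expansion and the $C^{1,\alpha_0+1-2s}(\Ol)$ statement then follow by integrating the equation \eqref{eq:OL} in $x_n$ (Theorem \ref{thm:7final}). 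Without either that comparison-principle route or a proved Liouville/classification statement adapted to the degenerating scaling, your induction step does not close.

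Two further concrete problems. First, your profile construction on $\Ol$ is set up against the wrong equation: the $\Ol$ part of the profile must satisfy \eqref{eq:OL}, i.e.\ its Laplacian must balance the coupling term $\nu\int_{\Or}\frac{u(x)-u(y)}{|x-y|^{n+2s}}dy$ generated by $\rho_{\alpha_0}$; that balance is what determines $M_0$. Demanding $\triangle\Phi=0$ on $\Ol$ (and trying to achieve it with a tangential polynomial, which cannot cancel a function of $x_n$ alone) produces a profile that does not solve the problem, and ``matching conormal to fractional conormal derivative'' is not an imposed constraint here --- the relation \eqref{eq:trans} is a consequence of the construction, and in fact $\partial_{e_n}\Phi(x',0^-)=0$ since $\alpha_0+2-2s>1$. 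Second, tangential smoothness does not reduce matters to the one-variable function $u(0,x_n)$: the nonlocal operator couples all variables, so no such dimensional reduction is available. The paper's actual reduction is to subtract $v(x',x_n)=u(x',0)$, verify that $v$ solves a problem of the same type with bounded right-hand side, and work with $u-v$, which vanishes on $\Gamma$; some substitute for this step is needed before any barrier or iteration argument at $\Gamma$ can start.
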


The next generalization is to equations with translation invariance
and flat boundary, but where the coefficients are not identically
one like above. For the classical transmission problem, this is hardly
more general: the jump condition is now on the conormal derivatives,
as we saw above. The nonlocal version proves somewhat more subtle.
The problem takes the form
\begin{align}
\int_{\Ol}\langle A\nabla u,\nabla\phi\rangle&+\int_{\Or}\int_{\Or}\frac{[u(x)-u(y)]a_{s,1}(x-y)[\phi(x)-\phi(y)]}{|x-y|^{n+2s}}dydx\nonumber \\
&+\int_{\Or}\int_{\Ol}\frac{[u(x)-u(y)]a_{s,2}(x-y)[\phi(x)-\phi(y)]}{|x-y|^{n+2s}}dydx=0.\label{eq:weakform-1}
\end{align}
where the $a_{s,i}$ are symmetric. Some regularity of $a_{s,i}$ needs to be imposed to get any meaningful
improvement over the De Giorgi estimate. Roughly speaking, we assume
that $a_{s,i}(z)$ are Lipschitz along rays departing from the origin. This
is enough to justify a barrier construction, which allows us to prove
results analogous to the above provided $\alpha_{0}<3-2s$; here $\alpha_{0}$
is computed from weighted spherical averages of the limits $a_{s,i}^{(0)}(\hat{z})=\lim_{t\rightarrow0^{+}}a_{s,i}(t\hat{z})$.
The direction $e_n$ is replaced by the conormal direction $A e_n$
on $\Ol,$ and on $\Or$ by another direction $\nu^*$ computed from spherical
averages and moments of the limits $a_{s,i}^{(0)}$. In the case that
$\alpha_{0}\geq3-2s$, it appears an extra structural condition on
$a_{i}^{(0)}$ and $A$ needs to be satisfied for $u$ to actually
have the expected behavior at $\Gamma.$ We call this condition \emph{compatibility}. The theorem below is an analogue of Theorem \ref{thm:IT3} in this setting; see Definition \ref{def:compat} for the precise definition of compatibility and Definition \ref{def:kernels} for the meaning of the assumptions on $a$. For the proof, see Theorem \ref{thm:ConstCoeffEst}.
\begin{thm}Assume $\Gamma$ is as in Theorem \ref{thm:IT3}. Let $u$ solve \eqref{eq:weakform-1} on $B_2$ with $a_{s,i}\in\mathcal{L}_{2}\cap\mathcal{L}_{1}^{*}$.
\begin{enumerate}
 \item Then $u\in C^{0,\alpha}(B_{1}\cap\bar{\Or})$ for every $\alpha<\alpha_{0},$
$u\in C^{1,\alpha}(B_{1}\cap\bar{\Ol})$ for every $\alpha<\min\{\alpha_{0}+1-2s,2-2s\}$,
and $\partial_{Ae_{n}}u(x',0^{-})=0$ for $|x'|<1$.
 \item Moreover, if
$\alpha_{0}>1,$ then $u\in C^{1,\alpha}(B_{1}\cap\bar{\Or})$ for
each $\alpha<\min\{\alpha_{0}-1,2-2s\}$ and $\partial_{\nu^*}u(x',0)=0.$
 \item If in addition $a_{s,i},A$ are compatible, $u\in C^{1,\alpha}(B_{1}\cap\bar{\Ol})$
for every $\alpha<\alpha_{0}+1-2s$ and $u\in C^{1,\alpha}(B_{1}\cap\bar{\Or})$
for each $\alpha<\alpha_{0}-1$.
\end{enumerate}
\end{thm}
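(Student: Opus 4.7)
The plan is to follow the barrier-construction strategy indicated in the introduction, bootstrapping from the baseline H\"older continuity provided by Theorem \ref{thm:IT2}. I would proceed in four steps.

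First, I would use tangential translation invariance to reduce to a one-dimensional problem. Since $A$ is constant and $a_{s,i}(x-y)$ depends on $x,y$ only through the difference, while $\Gamma=\{x_n=0\}$ is invariant under translations in $x'$, finite-difference quotients $h^{-1}[u(\,\cdot\,+he_i)-u(\,\cdot\,)]$ for $i<n$ again solve \eqref{eq:weakform-1} with zero forcing. Applying Theorem \ref{thm:IT2} to them and iterating gives $u\in C^\infty$ in the $x'$ variables inside $B_1$, so it suffices to prove the stated normal-direction estimates at a single point on $\Gamma$.

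Second, I would identify the characteristic exponent $\alpha_0$ and build a corresponding barrier. The ansatz
\begin{equation*}
\Phi(x)=(\langle \nu^*,x\rangle)_+^{\alpha_0}+M_0\,(-\langle Ae_n,x\rangle)_+^{\alpha_0+2-2s}
\end{equation*}
should be a distributional solution of the limiting constant-kernel problem obtained by replacing each $a_{s,i}(z)$ with its radial limit $a_{s,i}^{(0)}(\hat z)$. The exponent $\alpha_0$, the direction $\nu^*$, and the ratio $M_0$ are all pinned down by demanding cancellation of certain spherical moments of $a_{s,i}^{(0)}$ evaluated against powers of $x_n$, exactly paralleling the explicit computation behind Theorem \ref{thm:IT3}. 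The hypothesis $a_{s,i}\in\mathcal{L}_1^*$ (Lipschitz along rays) ensures that when $\Phi$ is tested against the actual operator in \eqref{eq:weakform-1}, the defect gains a full power of $|x|$ over the homogeneous term, so $\Phi$ functions as a genuine barrier at small scales.

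Third, I would run a dyadic iteration centered at a point of $\Gamma$. At each scale $r=2^{-k}$, approximate $u$ by its best fit $c_k\Phi+p_k(x')$ (tangential Taylor polynomial plus a multiple of the barrier), show that the rescaled residual solves an equation in the same class with a forcing controlled by the Lipschitz-along-rays defect, and apply Theorem \ref{thm:IT2} to obtain geometric decay of the $L^\infty$ norm. This yields pointwise control $|u(x)-u(0)-p(x')|\le C|x_n|^{\alpha_0}$ on $\Or$ and $\le C|\langle Ae_n,x\rangle|^{\alpha_0+2-2s}$ on $\Ol$, and hence (1) and, when $\alpha_0>1$, (2). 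The vanishing of $\partial_{Ae_n}u(x',0^-)$ is then automatic: the profile on $\Ol$ has order $\alpha_0+2-2s>1$ in the $Ae_n$ direction (since $\alpha_0>(2s-1)_+$), and likewise $\partial_{\nu^*}u(x',0)=0$ follows from the order-$\alpha_0>1$ profile on $\Or$ in part (2).

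Fourth, and this is the main obstacle: for part (3), the iteration must be pushed past the threshold $\alpha_0=3-2s$, beyond which $\Phi$ alone no longer captures the leading behavior, and an additional polynomial correction $\tilde\Phi$ (of order $\alpha_0+2-2s$ on $\Ol$ or $\alpha_0$ on $\Or$ above a linear piece) must be incorporated into the barrier. For the corrected barrier $\Phi+\tilde\Phi$ to again solve the limiting problem, higher spherical moments of the $a_{s,i}^{(0)}$ must cancel against the second-order terms contributed by $A$; the generic failure of this cancellation is precisely the obstruction giving rise to the $2-2s$ caps in (1) and (2). The compatibility condition of Definition \ref{def:compat} is exactly the statement that this obstruction vanishes, and under it the enhanced barrier is available and the iteration yields $C^{1,\alpha_0-1}(\overline{\Or})$ and $C^{1,\alpha_0+1-2s}(\overline{\Ol})$. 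The hard part of the proof is precisely this step: isolating the obstruction explicitly as a moment identity on $a_{s,i}^{(0)}$ and $A$, and verifying that the Lipschitz-along-rays defect remains lower-order throughout the corrected iteration in a scale-invariant manner, in the same spirit as the special energy estimate that underlies Theorem \ref{thm:IT2}.
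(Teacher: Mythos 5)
Your step 1 (tangential smoothness via difference quotients) matches the paper, and the general spirit — barriers built from the powers $\rho_{\alpha}$, an exponent $\alpha_{0}$ determined by spherical averages of $a_{s,i}^{(0)}$, and a bootstrap between the two sides — is right. But the core of your argument rests on an unproved and, in general, unavailable object: the two-sided homogeneous profile $\Phi(x)=(\langle\nu^{*},x\rangle)_{+}^{\alpha_{0}}+M_{0}(-\langle Ae_{n},x\rangle)_{+}^{\alpha_{0}+2-2s}$ that you assert solves the limiting constant-kernel problem. In the anisotropic setting no such exact profile is constructed in the paper — the closing remark of Section 8 states explicitly that recovering a transmission-type profile is possible only when $\alpha_{0}<1$ or under compatibility, and even then the construction is harder than in Section 7. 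Moreover, your dyadic scheme approximating $u$ by $c_{k}\Phi+p_{k}(x')$ overlooks the key difficulty of this theorem: the tangential polynomial $p_{k}$ itself, fed into the anisotropic kernel, generates a forcing of order $|x_{n}|^{1-2s}$ through the moments $M_{s,1},M_{s,2}$ and the conormal ratio $\nu_{1}$, and when $\alpha_{0}>1$ this is \emph{not} lower order relative to $|x_{n}|^{\alpha_{0}-2s}$. Cancelling it is exactly what forces the paper's barrier to take the form $C_{1}\rho_{\alpha}(-x_{n})+C_{2}\rho_{\alpha_{1}}(x_{n})+u(x'\mp\nu_{i}x_{n},0)$, i.e.\ the boundary data sheared along $\nu_{1}$ on $\Ol$ and along the effective nonlocal conormal $\nu_{2}$ on $\Or$, with $\nu_{2}$ defined precisely so that the $I_{1}(\infty),I_{2}(\infty)$ terms vanish; putting a fixed direction $\nu^{*}$ inside the power $(\langle\nu^{*},x\rangle)_{+}^{\alpha_{0}}$ does not produce the needed correction, because the dangerous term is proportional to the solution's local tangential gradient and so cannot be absorbed by a profile chosen once and for all. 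Working with $\alpha_{1}<\alpha_{0}$ strictly (so that $q(s,\alpha_{1})>q(s,\alpha_{0})$ absorbs the errors) is also what lets the paper avoid needing an exact profile at all for parts (1) and (2), which is why the statement is ``for every $\alpha<\alpha_{0}$.''

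Two further points. First, you defer the entire content of part (3) to ``isolating the obstruction as a moment identity,'' which is the step you would actually have to carry out; and your description of it is off: compatibility in the paper is the first-moment identity $\frac{1}{2s}\nu_{2}A_{s,2}-\frac{2s-1}{2s}\nu_{1}A_{s,2}+M_{s,2}=0$, arising when the first-order expansion of $u$ on $\Or$ (using $\partial_{(\nu_{2},1)}u=0$) is inserted into the nonlocal term of the equation on $\Ol$ — not a cancellation of higher spherical moments against second-order terms of $A$, and on $\Ol$ it already matters for $\alpha_{0}>1$, not only past $3-2s$. Second, the vanishing of $\partial_{Ae_{n}}u(x',0^{-})$ is not ``automatic'' in your scheme: in the paper it is proved by a separate distributional argument (the analogue of Lemma \ref{lem:normaldervanish}, valid once $u\in C^{0,\alpha}$ with $\alpha>(2s-1)_{+}$) and is then an \emph{input} to the local-side bootstrap; in your proposal it is derived from the $\Ol$-side expansion, which itself presupposes the unproved profile $\Phi$, so the logic is circular as written.
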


We then discuss how to handle the case of variable coefficients and
non-flat interfaces. The method involves a straightening procedure
followed by a Schauder-type argument. There are two possible approaches
to the Schauder theory: one based on the method of Campanato, and
another in a more localized improvement-of-flatness spirit. The Campanato
method is generally simpler and is enough for most purposes, except
when the compatibility of the frozen-coefficient equation is used.
The other approach uses a somewhat more complicated $L^{\infty}$
approximation estimate, the principal difficulty being that even in
the simplest situations the solution to the constant-coefficient equation
can be rather rough on $\Or$. An application of this method proves Theorem \ref{thm:IT1}.

We conclude with a pair of questions that we leave open. The first
concerns the continuity of weak solutions to the parabolic version
of this equation. Existence of solutions satisfying energy estimates
is straightforward, as is the regularity when none of the parameters
depend on time (this is explained below). However, in the case of
time-dependent coefficients (and especially time-dependent drift,
which appears in the quasigeostrophic problem above), the analysis
appears substantially harder. Not only does the equation lack scale
invariance, but also the natural time scales differ on $\Ol$ and
$\Or$, meaning there is no clear generalization of our special energy
estimates.

The other question is the case of a small $\nu\leq 0$ in equation
\eqref{eq:weakform}. In the case $s<\frac{1}{2}$, this presumably
leads to discontinuous and possibly non-unique solutions, but if $s>\frac{1}{2}$,
as a consequence of fractional Hardy inequality there should be a
unique solution with finite energy. A heuristic analysis of the construction
we use for the constant-coefficient estimate suggests that there is
a continuous solution with finite energy to the weak formulation at
least for $\phi$ supported away from $\Gamma.$ However, the De Giorgi
argument we rely on so heavily used $\nu>0$ in an essential way,
so it can not be applied here.

\section{Notation and Definitions}

We will use standard notation for Lebesgue, H\"{o}lder, and Sobolev spaces.
Occasionally, when no ambiguity is possible, the notation $C^{\alpha}$
will be used to refer to $C^{\left\lfloor \alpha\right\rfloor ,\alpha-\lfloor\alpha\rfloor}$
where $\lfloor\alpha\rfloor$ denotes the greatest integer below $\alpha$.
The letter $C$ will be reserved for constants and may change values
from line to line. When important, the independence of $C$ on some
parameter will be explicitly noted. Other letters (e.g. $C_{0},C_{1}$)
will be used for constants whose values are important for subsequent
arguments, but may be reused in later sections.

\subsection{Basic Definitions}

Let $\Gamma\subset\Rn$ be a connected locally Lipschitz submanifold
of codimension $1$, separating $\Rn$ into two open, disjoint domains
denoted by $\Ol$ and $\Or$. We will always assume that $0\in\Gamma$
and that locally we have that $\Gamma$ is a Lipschitz graph, with
$\Gamma\cap\{(x',x_{n})||x'|<5,|x_{n}|<10L\}=\{(x',x_{n})||x'|<5,x_{n}=g(x')\}$
for a Lipschitz function $g$ with Lipschitz constant $L_{0}$ and
$L^{2}=L_{0}^{2}+1$ (with the convention $\Ol\cap\{(x',x_{n}\}||x'|<5,-10L<|x_{n}|<10L\}=\{(x',x_{n})||x'|<5,-10L<x_{n}<g(x')\}$).
There is usually no loss of generality, as this can always be obtained
after a translation, dilation, and rotation depending only on the
local Lipschitz character of $\Gamma$. We will use the notation $E_{r}=\{|x_{n}|<2Lr,|x'|<r\}$
for the cylinder, which will often be more convenient to work with
than the ball for technical reasons. See Figure \ref{fig:F1} for an illustration.

\begin{figure}
    \centering
    \fbox{
      \includegraphics[width=0.4\textwidth]{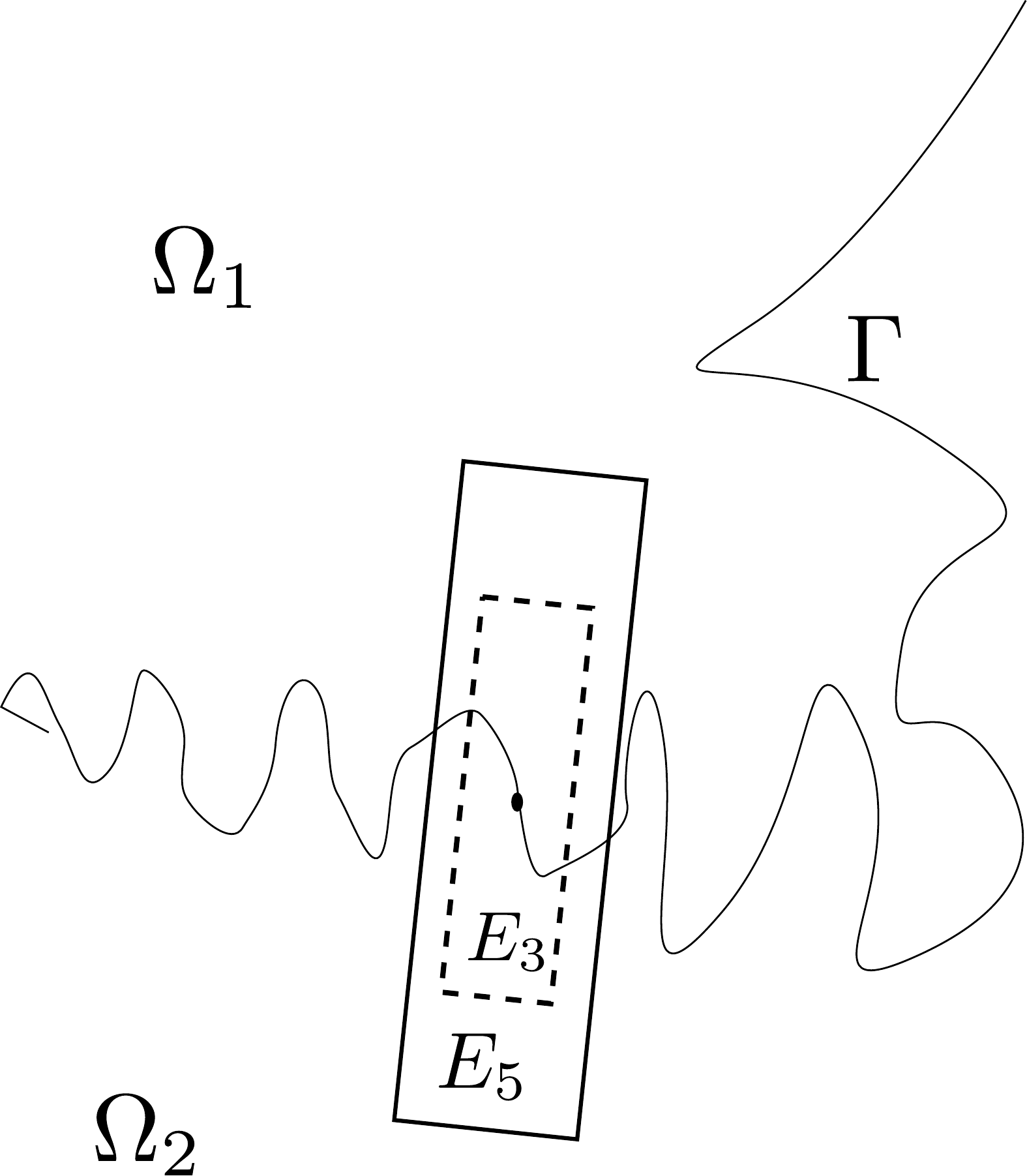}
    }
    \caption{This is the situation described, with $\Gamma$ given locally as a graph near the distinguished point $0$. The cylinders $E_{r}$ will only intersect $\Gamma$ on their lateral sides provided $r\leq 5$.}
    \label{fig:F1}
\end{figure}

For $A:\Ol\rightarrow\mathbb{S}^{n}$ measurable (where $\mathbb{S}^{n}$
are the symmetric matrices in $n$ variables) and uniformly elliptic
in the sense of 
\[
\lambda|\xi|^{2}\leq\langle A\xi,\xi\rangle\leq\Lambda|\xi|^{2}
\]
for all $\xi\in\Rn$ and some constants $0<\lambda<\Lambda<\infty$,
we define the bilinear form
\[
B_{L}[u,v]=\int_{\Ol}\langle A\nabla u,\nabla v\rangle
\]
for $u,v\in H^{1}(\Ol).$

We say that a symmetric measurable function $a:\Rn\times\Rn\backslash(\Ol\times\Ol)\rightarrow[0,\Lambda]$
is \emph{uniformly elliptic} if 
\[
\lambda\leq a(x,y)\qquad\forall x\in\Or,y\in\Rn,
\]
and for $s\in(0,1)$ fixed, define the nonlocal bilinear form
\[
B_{N}[u,v]=\int\int_{\Rn \times \Rn \backslash (\Ol \times \Ol) }\frac{[u(x)-u(y)]a(x,y)[v(x)-v(y)]dxdy}{|x-y|^{n+2s}}
\]
for $u,v\in H^{s}(\Rn).$ 

For $f\in L^{2}(\Rn),\boldsymbol{b}\in[L^{2}(\Rn)]^{n}$ with $\div\boldsymbol{b}=0$
in the distributional sense, we say that a function $u\in H^{1}(\Ol)\cap H^{s}(\Rn)$
is a weak solution of $(P)$ on a domain $\Omega$ if for every $\phi\in C_{c}^{\infty}(\Omega)$
we have
\begin{equation}
B_{L}[u,\phi]+B_{N}[u,\phi]=\int_{\Rn}f\phi+u\langle\boldsymbol{b},\nabla\phi\rangle.\label{eq:P}
\end{equation}

We note that $(P)$ is not scale-invariant: there is no scaling that
preserves the equation. Nevertheless, in our efforts to prove regularity
of $(P)$, we will need to work with the rescaled version. We will
say $w$ solves the rescaled equation $(P_{\epsilon})$ in $\Omega$
if for every $\phi\in C_{c}^{\infty}(\Omega)$ we have
\begin{equation}
B_{L}^{\epsilon}[w,\phi]+\epsilon^{2(1-s)}B_{N}^{\epsilon}[w,\phi]=\epsilon^{2}\int_{\Rn}f^{\epsilon}\phi+\epsilon\int_{\Rn}w\langle\boldsymbol{b}^{\epsilon},\nabla\phi\rangle,\label{eq:Pepsilon}
\end{equation}
where $f^{\epsilon}(x)=f(\epsilon x)$, $\boldsymbol{b}^{\epsilon}(x)=\boldsymbol{b}(\epsilon x)$,
\[
B_{L}^{\epsilon}[u,v]=\int_{\Ol/\epsilon}\langle A(\epsilon x)\nabla u(x),\nabla v(x)\rangle dx,
\]
 and
\[
B_{N}^{\epsilon}[u,v]=\int\int_{\Rn \times \Rn \backslash (\Ol/\epsilon \times \Ol/\epsilon)}\frac{[u(x)-u(y)]a(\epsilon x,\epsilon y)[v(x)-v(y)]dxdy}{|x-y|^{n+2s}}.
\]
If $u$ solves $(P)$ on $\Omega$, then $u(\epsilon x)$ solves $(P_{\epsilon})$
on $\Omega/\epsilon$.

\subsection{Tools}

Next we mention some tools which will be useful in later sections;
the proofs may be found in the appendix. We will require the following
lemma about Sobolev extension operators on Lipschitz domains.
\begin{lem}
\label{lem:Extend}There is a bounded linear operator $T:V\rightarrow H_{0}^{1}(E_{1})$,
where $V$ is the closure of $\{u\in C^{\infty}(\Ol\cap E_{1}):u|_{\partial E_{1}\cap\Ol}=0\}$
in $H^{1}(\Ol\cap E_{1})$, with $\|T\|$ depending only on $L,$
satisfying the following properties:\end{lem}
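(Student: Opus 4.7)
The plan is to extend $u$ across $\Gamma$ by a bi-Lipschitz reflection that simultaneously carries the top lid of the cylinder $E_1$ onto its bottom. Since $V$-functions already vanish on the lateral and bottom portions of $\partial E_1\cap\Ol$, the reflected values will automatically vanish on the remaining portions of $\partial E_1\cap\Or$, producing a function in $H_0^1(E_1)$.

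For $|x'|<1$, the assumption $g(0)=0$ and the Lipschitz bound on $g$ give $|g(x')|\leq L_0<L$. Define the decreasing affine bijection $\psi_{x'}:[g(x'),2L]\to[-2L,g(x')]$ by
\begin{equation*}
\psi_{x'}(t)=g(x')-\frac{2L+g(x')}{2L-g(x')}\bigl(t-g(x')\bigr),
\end{equation*}
and let $\Phi(x',x_n)=(x',\psi_{x'}(x_n))$ on $\Or\cap E_1$. Since $2L-g(x')\geq L$, the slope is bounded above and below by constants depending only on $L$, so $\Phi$ is a bi-Lipschitz homeomorphism $\Or\cap E_1\to\Ol\cap E_1$ with constants depending only on $L$. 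By construction $\Phi$ fixes $\Gamma$ pointwise, sends the top face $\{x_n=2L\}\cap\Or\cap E_1$ onto the bottom face $\{x_n=-2L\}\cap\Ol\cap E_1$, and maps the lateral face $\{|x'|=1\}\cap\Or\cap E_1$ into the lateral face $\{|x'|=1\}\cap\overline{\Ol}\cap E_1$. Then set
\begin{equation*}
Tu(x)=\begin{cases} u(x) & x\in\Ol\cap E_1,\\ u(\Phi(x)) & x\in\Or\cap E_1.\end{cases}
\end{equation*}

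Since $\Phi$ restricts to the identity on $\Gamma$, the traces of the two pieces of $Tu$ agree on $\Gamma$, so $Tu\in H^1(E_1)$, and the change-of-variables formula for bi-Lipschitz maps yields $\|Tu\|_{H^1(E_1)}\leq C(L)\|u\|_{H^1(\Ol\cap E_1)}$. The trace of $Tu$ on $\partial E_1\cap\Ol$ vanishes because $u\in V$; on $\partial E_1\cap\Or$ the trace of $u\circ\Phi$ equals the trace of $u$ at $\Phi(\,\cdot\,)\in\partial E_1\cap\overline{\Ol}$, which also vanishes. Approximating by elements of the defining smooth subset of $V$ and passing to the limit in the uniform bound gives $Tu\in H_0^1(E_1)$ for all $u\in V$; linearity is immediate.

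Any pointwise-type auxiliary properties one might wish to assert, such as preservation of sign, compatibility with truncations $T((u-k)_+)=(Tu-k)_+$, or $L^\infty$ bounds, follow at once from the fact that $Tu$ is either $u$ or a pullback of $u$ by a homeomorphism. The main technical point is the $L$-only dependence of the operator norm: this follows by computing the Jacobian of $\Phi$, whose entries involve $g'$, $g$, and $(2L\pm g)^{-1}$, and are therefore uniformly bounded in terms of $L_0$ and $L$, hence in terms of $L$ alone since $L_0\leq L$.
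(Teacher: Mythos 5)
Your construction does establish the parts of the lemma that you address: the anisotropic affine reflection $\Phi$ fitted to the cylinder is bi-Lipschitz with constants controlled by $L$, it fixes $\Gamma$, carries the top face to the bottom face and the lateral face into itself, so the glued function has matching traces along $\Gamma$, lies in $H^{1}(E_{1})$, vanishes on $\partial E_{1}$, and the resulting $T$ is linear, bounded by $C(L)$, restricts to $u$ on $\Ol\cap E_{1}$, and preserves sign. This is a legitimate alternative route to those conclusions (the paper instead extends $v$ by zero to the infinite cylinder and reflects by $x_{n}\mapsto 4g(x')-3x_{n}$, letting the image overshoot $E_{1}$ where the zero extension takes over).

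The genuine gap is item (3) of the lemma, the level-set estimate $|\{Tv>l\}|\leq\frac{4}{3}|\{v>l\}\cap\Ol\cap E_{1}|$, which you neither state nor verify: the remark that pointwise-type properties ``follow at once'' from $Tu$ being a pullback by a homeomorphism does not cover it, since a bound on the measure of level sets requires a Jacobian estimate, not merely a homeomorphism. Worse, for your map the estimate with constant $\frac{4}{3}$ is false: $|\{Tv>l\}|=|\{v>l\}\cap\Ol\cap E_{1}|+|\Phi^{-1}(\{v>l\}\cap\Ol\cap E_{1})|$, and $|\det D\Phi^{-1}|=\frac{2L-g(x')}{2L+g(x')}$, which equals $1$ when $g\equiv0$ and can be as large as $\frac{2L+L_{0}}{2L-L_{0}}<3$ in general; so your operator only satisfies the bound with a constant between $2$ and $4$ (with a flat interface and $v$ a bump supported near $\Gamma$, the constant $2$ is attained). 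This is precisely why the paper reflects with the vertical stretching factor $3$, i.e.\ $x_{n}\mapsto 4g(x')-3x_{n}$: the reflected piece then has Jacobian $\frac{1}{3}$, so the reflected level set contributes at most one third of the original measure, giving $\frac{4}{3}$; that same factor pushes the image past the far face of $E_{1}$, which is why the preliminary zero-extension to the infinite cylinder is needed there. Since the De Giorgi arguments of Section 4 only use this property through a constant depending on $L$, your construction could be salvaged by making the Jacobian computation explicit and weakening (3) accordingly, but as a proof of the lemma as stated, property (3) is missing and your $T$ does not satisfy it.
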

\begin{enumerate}
\item $Tv|_{\Ol\cap E_{1}}=v$ a.e.
\item If $v\geq0$, $Tv\geq0$.
\item For every
$v\in V$ and $l>0$, 
\[
|\{Tv>l\}|\leq\frac{4}{3}|\{v>l\}\cap\Ol\cap E_{1}|.
\]
\end{enumerate}
\begin{notation}
We will denote this extension operator by $R[v]$; note that such
a construction can be carried out on any cylinder $E_{r}$.
\end{notation}
In the case $s=\frac{1}{2}$ it will be of interest to consider drifts
given by singular integral operators of $u$. In the case of $s>\frac{1}{2}$
we will prove regularity of solutions of $(P)$ for drifts in $L^{p}$
for sufficiently large $p$, and this combined with Calderon-Zygmund
theory will immediately apply to such a nonlinear problem. On the
other hand, when $s=\frac{1}{2},$ the critical Lebesgue space (in
the sense of being scale-invariant for the problem) for $\boldsymbol{b}$
is $L^{\infty},$ while the drift will generally only lie in $BMO.$
Treating general $BMO$ drifts presents technical challenges, and
so we will use the explicit nature of the nonlinearity, together with
the following lemma from harmonic analysis, whose proof follows an
argument in \cite{Stein}.

Let $T$ be a Calderon-Zygmund operator with kernel $K$ satisfying
the strengthened Hormander condition
\begin{equation}
|K(x-y)-K(x)|\leq C\frac{|y|^{\gamma}}{|x|^{n+\gamma}}\label{eq:Hormander}
\end{equation}
for some $\gamma>0$ and all $|x|>2|y|$, as well as the usual cancellation
\[
\int_{r<|x|<r'}K(y)dy=0
\]
and boundedness
\[
|K(y)|\leq C|y|^{-n}
\]
criteria. For smooth functions $T$ is given as the pricipal value
integral
\[
Tu(x)=P.V.\int_{\Rn}K(x-y)u(y)dy.
\]
We claim the following:
\begin{lem}
\label{lem:BMO}Assume $u:\Rn\rightarrow\RR$ satisfies $|u|\leq1+|x|^{\alpha}$for
some $\alpha<\gamma$. Then there are constants $\{c_{B}\}$ such
that 
\[
\sup_{B\subset B_{1}}\frac{1}{|B|}\int_{B}|Tu-c_{B}|\leq C,
\]
where $C$ depends only on $T$ and $\gamma-\alpha$.
\end{lem}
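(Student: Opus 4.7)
The plan is to adapt the classical proof that Calder\'{o}n--Zygmund operators map $L^{\infty}$ into $BMO$, using the strengthened H\"{o}rmander condition \eqref{eq:Hormander} to absorb the polynomial growth of $u$. Fix a ball $B=B_r(x_0)\subset B_1$, so that $r\leq 1$ and $|x_0|\leq 1$, and decompose $u=u_1+u_2$ with $u_1=u\chi_{2B}$ and $u_2=u\chi_{(2B)^c}$. Because $\int K(x-y)u_2(y)\,dy$ fails to converge at infinity even for bounded $u$, I take as the definition of $Tu$ modulo constants the identity $Tu(x)-c_B=Tu_1(x)+F(x)$ on $B$, where
\[
F(x):=\int_{(2B)^c}[K(x-y)-K(x_0-y)]u_2(y)\,dy.
\]

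For the local piece, $2B\subset B_3$ combined with $|u|\leq 1+|x|^{\alpha}$ gives $\|u_1\|_{L^{\infty}}\leq C$, whence $\|u_1\|_{L^2}\leq Cr^{n/2}$. The standard $L^2$-boundedness of $T$ and Cauchy--Schwarz yield $\tfrac{1}{|B|}\int_B|Tu_1|\leq |B|^{-1/2}\|Tu_1\|_{L^2}\leq C$. For $F$, the H\"{o}rmander condition applies whenever $|x_0-y|>4r$ and gives $|K(x-y)-K(x_0-y)|\leq Cr^{\gamma}|x_0-y|^{-n-\gamma}$ uniformly for $x\in B$; the thin annulus $\{2r<|x_0-y|\leq 4r\}$ is handled directly using $|K|\leq C|\cdot|^{-n}$ and contributes $O(1)$. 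I then split the far region into $\{|y|<2\}$, where $|u_2|\leq C$, and $\{|y|\geq 2\}$, where $|x_0-y|\geq |y|/2$ and $|u_2(y)|\leq 2|y|^{\alpha}$, reducing $|F(x)|$ to a sum of the form
\[
Cr^{\gamma}\int_{|z|>2r}|z|^{-n-\gamma}\,dz+Cr^{\gamma}\int_{2}^{\infty}\rho^{\alpha-\gamma-1}\,d\rho.
\]
The first integral equals $Cr^{-\gamma}$, so the $r^{\gamma}$ cancels and the contribution is $O(1)$; the second converges precisely because $\alpha<\gamma$ and is bounded by $C/(\gamma-\alpha)$.

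The principal subtlety here is conceptual rather than analytic: the strengthened H\"{o}rmander condition is exactly what makes the \emph{difference} $Tu_2(x)-Tu_2(x_0)$ converge as an integral, even though neither term is individually well-defined, and the gain of $|x-x_0|^{\gamma}/|x_0-y|^{n+\gamma}$ in the kernel beats the $|y|^{\alpha}$ growth at infinity if and only if $\alpha<\gamma$---this is the sole role of that hypothesis. Since the statement only asks for the existence of constants $\{c_B\}$ (the natural $BMO$ setup), defining $Tu$ modulo constants via the identity above is sufficient, and combining the two bounds yields $\tfrac{1}{|B|}\int_B|Tu-c_B|\leq C$ with $C$ depending only on $T$ and $\gamma-\alpha$.
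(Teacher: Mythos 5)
Your proof is correct and follows essentially the same route as the paper: decompose $u$ over the doubled ball, control the near part via $L^{2}$-boundedness of $T$ and Cauchy--Schwarz, and for the far part take $c_{B}=\int K(x_{0}-y)u(y)\,dy$ over the complement so that the strengthened H\"{o}rmander condition \eqref{eq:Hormander} gives the kernel difference bound, with $\alpha<\gamma$ ensuring convergence at infinity. Your extra care in defining $Tu$ modulo constants and in splitting off the thin annulus and the region $\{|y|\ge 2\}$ is harmless bookkeeping that the paper simply glosses over.
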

A simple computation reveals that this Lemma immediately gives that
\[
\sup_{B\subset B_{1}}\frac{1}{|B|}\int_{B}|Tu-\fint_{B}Tu|\leq C
\]
instead, which places $u$ in $BMO(B_{1})$. The John-Nirenberg inequality
now applies to give
\[
\left\|Tu-\fint_{B_{1}}Tu\right\|_{L^{p}}\leq C_{p}
\]
for every $p<\infty$.

\section{Existence and Crude Energy Estimates}

As the drift term and asymmetry in $a$ make the problem non-variational,
the questions of existence and uniqueness of solutions require attention.
Moreover, energy estimates essential to the regularity theory to follow
are most conveniently justified concurrently with the construction
of the solutions, as they require some extra regularity of the drift
to obtain an essential cancellation. This kind of issue appears frequently
in nonlinear equations coming from fluid dynamics. In this section
we will show how to construct solutions to an approximate problem
and prove a very weak regularity property for them. Then we will use
this to justify a family of energy estimates uniform in the approximation;
passing to the limit we obtain weak solutions satisfying these estimates.

We say $u$ solves $(P^{\delta})$ on $\Omega$ if for all $\psi\in C_{c}^{\infty}(\Omega)$
we have that
\[
B_{L}[u,\psi]+B_{N}[u,\psi]+\delta\int_{\Omega}\langle\nabla u,\nabla\psi\rangle=\int_{\Omega}f\psi+u\langle\boldsymbol{b},\nabla\psi\rangle.
\]
Likewise define the scaled problem $(P_{\epsilon}^{\delta}).$
\begin{lem}
\label{lem:exist}For each $\epsilon,\delta>0$, $\Omega$ bounded,
$f,\boldsymbol{b}\in L^{2}(\Omega)$, $\div\boldsymbol{b}=0$, $u_{0}\in C_{\text{loc}}^{0,1}(\Rn)$,
$|u_{0}(x)|\leq1+|x|^{s/2},$ there exists a unique function $u\in H^{1}(\Rn)$
such that $u$ solves $(P_{\epsilon}^{\delta})$ in $\Omega$ and
$u=u_{0}$ on $\Omega^{c}$.\end{lem}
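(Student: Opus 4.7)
The plan is to reformulate the boundary-value problem as a coercive variational equation on $H^1_0(\Omega)$ and apply the Lax--Milgram theorem. Writing $u = u_0 + v$ with $v \in H^1_0(\Omega)$ automatically enforces the exterior data $u = u_0$ on $\Omega^c$. Substituting into the weak form of $(P_\epsilon^\delta)$ and moving the $u_0$-dependent quantities to the right-hand side gives $a(v,\psi) = \ell(\psi)$ for every $\psi \in C_c^\infty(\Omega)$, where
\[
a(v,w) = B_L^\epsilon[v,w] + \epsilon^{2(1-s)} B_N^\epsilon[v,w] + \delta\int_\Omega \nabla v \cdot \nabla w - \epsilon\int_\Omega v\,\langle \mathbf{b}^\epsilon, \nabla w\rangle,
\]
and $\ell$ absorbs $\epsilon^2\int f^\epsilon \psi$ together with minus the bilinear forms applied to $(u_0,\psi)$ and the drift term involving $u_0$.

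First I would check that $\ell$ extends to a bounded linear functional on $H^1_0(\Omega)$. The local and drift contributions from $u_0$ are controlled using $u_0\in C^{0,1}_{\mathrm{loc}}$ and $\mathbf{b}^\epsilon\in L^2(\Omega)$, and the only delicate point is $B_N^\epsilon[u_0,\psi]$: because $\psi$ is compactly supported in $\Omega$ and $|u_0(y)| \leq 1 + |y|^{s/2}$, the far-field integrand decays like $|y|^{-n-3s/2}$, which is integrable. This is exactly why the growth hypothesis on $u_0$ takes the form it does. Next, the bilinear form $a$ is continuous on $H^1_0(\Omega)$ (with the nonlocal piece handled by $H^1(\Omega)\hookrightarrow H^s(\Omega)$ on the bounded set $\Omega$), and it is coercive: the $\delta$-term supplies $\delta\|\nabla v\|_{L^2}^2 \geq c(\delta,\Omega)\|v\|_{H^1_0}^2$ by Poincar\'e, the forms $B_L^\epsilon, B_N^\epsilon$ are nonnegative, and the drift term vanishes on the diagonal: for $v\in H^1_0(\Omega)$ and $\div \mathbf{b}^\epsilon = 0$,
\[
\int_\Omega v\,\langle \mathbf{b}^\epsilon, \nabla v\rangle = \tfrac{1}{2}\int_\Omega \langle \mathbf{b}^\epsilon, \nabla(v^2)\rangle = 0.
\]
Lax--Milgram then produces a unique $v\in H^1_0(\Omega)$ solving the identity in the sense of the distributional pairing above, and hence a unique $u = u_0 + v$.

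The main obstacle is making the diagonal drift cancellation fully rigorous when $\mathbf{b}$ lies only in $L^2(\Omega)$: in high dimensions $v^2$ need not have enough regularity to be paired directly with $\div \mathbf{b}^\epsilon\in H^{-1}$, and the map $(v,w)\mapsto \int v\,\mathbf{b}^\epsilon\cdot\nabla w$ is not manifestly bounded on $H^1_0\times H^1_0$ by Sobolev embedding alone. I would resolve this by approximating $\mathbf{b}$ by smooth, compactly supported, divergence-free fields $\mathbf{b}_k\to\mathbf{b}$ in $L^2(\Omega)$, solving at each level via Lax--Milgram where every step is justified, and passing to the limit using the $k$-uniform energy bound that coercivity provides (the diagonal cancellation being preserved under the approximation) together with weak compactness in $H^1_0(\Omega)$. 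Uniqueness for the limiting equation follows by subtracting two putative solutions, testing against their difference, and invoking the same cancellation.
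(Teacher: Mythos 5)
Your existence argument follows the paper's route almost exactly: subtract the data, work with the form $E[v,\psi]=B_{L}^{\epsilon}[v,\psi]+\epsilon^{2(1-s)}B_{N}^{\epsilon}[v,\psi]+\delta\int_{\Omega}\langle\nabla v,\nabla\psi\rangle-\epsilon\int_{\Omega}v\langle\boldsymbol{b}^{\epsilon},\nabla\psi\rangle$ on $H_{0}^{1}(\Omega)$, check that the $u_{0}$-terms give a bounded functional (your observation that the growth bound $|u_{0}|\leq1+|x|^{s/2}$ is exactly what makes $B_{N}[u_{0},\psi]$ finite is the same point the paper relies on), and obtain coercivity from the $\delta$-term together with the divergence-free cancellation. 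The paper stops there: it treats $E$ as a bounded, coercive bilinear form on $H_{0}^{1}(\Omega)$ and applies Lax--Milgram once, so existence and uniqueness are delivered simultaneously, with the cancellation $\int_{\Omega}\psi\langle\boldsymbol{b},\nabla\psi\rangle=0$ used only for coercivity.

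The genuine gap in your version is the uniqueness step. Having introduced the smooth approximations $\boldsymbol{b}_{k}$ precisely because you do not trust the pairing $(v,w)\mapsto\int_{\Omega}v\langle\boldsymbol{b},\nabla w\rangle$ on $H_{0}^{1}\times H_{0}^{1}$, you can no longer invoke Lax--Milgram for the limiting problem, and your substitute --- subtract two solutions, test with their difference $w\in H_{0}^{1}(\Omega)$, and ``invoke the same cancellation'' --- uses exactly the pairing you flagged: with $\boldsymbol{b}\in L^{2}$ only, the integral $\int_{\Omega}w\langle\boldsymbol{b},\nabla w\rangle$ need not converge absolutely (for $n\geq3$ one would want roughly $\boldsymbol{b}\in L^{n}$), and $w$ is not an admissible test function for an equation you have verified only against $C_{c}^{\infty}(\Omega)$; enlarging the test class is again a continuity statement for the drift form. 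As written the uniqueness claim is therefore circular: either prove continuity of the drift pairing in the space where it is actually used (which is how the paper's single application of Lax--Milgram settles uniqueness), or give an independent uniqueness argument for the limit equation, which is not routine at this level of integrability. A smaller point: divergence-free $L^{2}$ fields on $\Omega$ cannot in general be approximated by smooth divergence-free fields compactly supported in $\Omega$ (only the zero-normal-trace subspace can); since the paper's $\boldsymbol{b}$ is divergence-free on all of $\Rn$, you should mollify globally and drop the compact-support requirement, which costs nothing in your scheme.
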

\begin{proof}
Assume $\epsilon=1$; the general case works similarly or can be deduced
from scaling. We look for a $v\in H^{1}(\Rn)$ with $v\equiv0$ in
$\Omega^{c}$ satisfying for every $\psi\in H_{0}^{1}(\Omega)$ (extended
by $0$ to make sense of $B_{N}$)
\begin{align}
E[v,\psi]&:=B_{L}[v,\psi]+B_{N}[v,\psi]+\delta\int_{\Omega}\langle\nabla v,\nabla\psi\rangle-\int_{\Omega}v\langle\boldsymbol{b},\nabla\psi\rangle\nonumber\\
&=\int_{\Omega}f\psi+\langle u_{0}\boldsymbol{b}-\delta\nabla u_{0},\nabla\psi\rangle-B_{L}[u_{0},\psi]-B_{N}[u_{0},\psi].\label{eq:Eform}
\end{align}
Observe that $E$ is a bounded coercive bilinear form on $H_{0}^{1}$:
\[
E[\psi_{1},\psi_{2}]\leq C\Lambda\|\psi_{1}\|_{H_{0}^{1}(\Omega)}\|\psi_{2}\|_{H_{0}^{1}(\Omega)}
\]
from fractional Sobolev embedding of $H^{1}$ into $H^{s}$ to estimate
the nonlocal term. On the other hand, for $\psi\in H_{0}^{1}(\Omega),$
\[
E[\psi,\psi]\geq\delta\int_{\Omega}|\nabla\psi|^{2}-\int_{\Omega}\psi\langle\boldsymbol{b},\nabla\psi\rangle
\]
since the other terms are positive. But the second term, from integration
by parts and the fact that $\div\boldsymbol{b=0}$, is
\[
\int_{\Omega}\psi\langle\boldsymbol{b},\nabla\psi\rangle=-\int_{\Omega}\langle\nabla\psi,\boldsymbol{b}\rangle\psi=0,
\]
implying the coercivity. On the other hand, the right-hand side in
\eqref{eq:Eform} is easily seen to be a bounded linear functional on
$H_{0}^{1}(\Omega)$, so by Lax-Milgram theorem, there is a unique
$v$ satisfying \eqref{eq:Eform}. Finally, observe that $v$ satisfying
\eqref{eq:Eform} is equivalent to $v+u_{0}$ satisfying $(P^{\delta})$,
giving the conclusion of the lemma.
\end{proof}
Next, an auxiliary definition: for $\phi:\Rn\rightarrow[0,\infty)$
Lipschitz and growing sufficiently slowly at infinity, define
\[
M(\phi)=\max\{\sup_{x,y\in\Rn}\frac{|\phi(x)-\phi(y)|}{|x-y|},\sup_{\Rn}\frac{\phi(x)}{1+|x|^{s/2}}\}.
\]
Functions with finite $M$ will be used to localize the estimates
below, and $M$ can be thought of as a measure of the flatness of
$\phi$.
\begin{lem}
\label{lem:enloc}The $u$ in Lemma $\ref{lem:exist}$ satisfies the
following estimate: if $u_{0}\leq\phi$ on $\Omega^{c}$, $M(\phi)<\infty$,
and 
\[
\|\epsilon\boldsymbol{b}^{\epsilon}\|_{L^{q}(\Omega)}\leq S,
\]
then
\begin{align}
\int_{\Ol}|\nabla(u-\phi)_{+}|^{2}+ & \epsilon^{2(1-s)}\int_{\Or}\int_{\Rn}\frac{[(u-\phi)_{+}(x)-(u-\phi)_{+}(y)]^{2}dxdy}{|x-y|^{n+2s}}\nonumber \\
\leq & C_{0}\left[\left(\|\epsilon^{2}f^{\epsilon}\|_{L^{q}(\Omega)}^{2}+M(\phi)^{2}\right)|\{u>\phi\}|^{\frac{q-2}{q}}+\int_{\Omega}(u-\phi)_{+}^{2}\right],\label{eq:enloc}
\end{align}
where $C_{0}=C_{0}(n,\lambda,\Lambda,\Omega,q,S)$ is independent
of $\epsilon,\delta,$ or $\phi$. If we allow the constant to depend
on $\epsilon$ and $\phi$ (but not $\delta$), we may further deduce
\begin{equation}
\|(u-\phi)_{+}\|_{H^{s}(\Rn)\cap H^{1}(\Ol)}\leq C(\epsilon,\phi)\left(\|f\|_{L^{q}(\Omega)}+1\right).\label{eq:enloc2}
\end{equation}
\end{lem}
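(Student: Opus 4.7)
My plan is to test $(P_\epsilon^\delta)$ with $w:=(u-\phi)_+$. Since $u=u_0\le\phi$ on $\Omega^c$, this $w$ is compactly supported in $\overline\Omega$ and in $H_0^1(\Omega)$ by Lemma \ref{lem:exist}, so it is an admissible test function. Decomposing $u=\phi+w-w_-$ where $w_-=(u-\phi)_-$, and using $w\cdot w_-=0$ (so $\nabla w\cdot\nabla w_-=0$ a.e.), the local and $\delta$ terms split cleanly as $B_L^\epsilon[u,w]=B_L^\epsilon[w,w]+B_L^\epsilon[\phi,w]$, and analogously for $\delta\int\langle\nabla u,\nabla w\rangle$. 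For the nonlocal form, the pointwise expansion
\[
(u(x)-u(y))(w(x)-w(y))=(w(x)-w(y))^2+(\phi(x)-\phi(y))(w(x)-w(y))+w_-(x)w(y)+w_-(y)w(x)
\]
together with the nonnegativity of its last two summands (forced by the disjoint supports of $w$ and $w_-$) yields $B_N^\epsilon[u,w]\ge B_N^\epsilon[w,w]+B_N^\epsilon[\phi,w]$ since $a\ge 0$.

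For the drift, I would note that $\nabla w$ vanishes where $w_->0$, so $u$ may be replaced by $\phi+w$ under $\langle b^\epsilon,\nabla w\rangle$; then $\int w\langle b^\epsilon,\nabla w\rangle=\tfrac12\int\langle b^\epsilon,\nabla(w^2)\rangle=0$ by $\div b=0$, and a distributional integration by parts converts $\int\phi\langle b^\epsilon,\nabla w\rangle$ into $-\int w\langle b^\epsilon,\nabla\phi\rangle$. Each remaining error term is then handled by H\"older and Young: for the $f$ and drift contributions, the inequality $\|w\|_{L^{q'}(\{w>0\})}\le\|w\|_{L^2}|\{w>0\}|^{1/2-1/q}$ gives a bound of the form $C(\|\epsilon^2 f^\epsilon\|_{L^q}^2+M(\phi)^2 S^2)|\{w>0\}|^{(q-2)/q}+\tfrac12\|w\|_{L^2}^2$; the cross terms $B_L^\epsilon[\phi,w]$ and $\delta\int\langle\nabla\phi,\nabla w\rangle$ are bounded via $|\nabla\phi|\le M(\phi)$ and Cauchy--Schwarz by $\Lambda M(\phi)\|\nabla w\|_{L^2(\Omega_1)}|\{w>0\}|^{1/2}$ and absorbed into $\tfrac\lambda2\|\nabla w\|_{L^2(\Omega_1)}^2$. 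The trivial inequality $|\{w>0\}|\le|\Omega|^{2/q}|\{w>0\}|^{(q-2)/q}$ collapses the various error exponents to the single one appearing in \eqref{eq:enloc}.

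The main obstacle is the nonlocal cross term $B_N^\epsilon[\phi,w]$: the Lipschitz estimate $|\phi(x)-\phi(y)|\le M(\phi)|x-y|$ is not globally integrable against the kernel, while the growth bound $|\phi|\le M(\phi)(1+|x|^{s/2})$ is not integrable in the Gagliardo sense. I would first extract a factor of $\bigl(\epsilon^{2(1-s)}B_N^\epsilon[w,w]\bigr)^{1/2}$ by Cauchy--Schwarz, then exploit that $w$ is compactly supported in $\overline\Omega$ to split the $\phi$-factor into a near-field piece where both points lie in a fixed enlargement of $\overline\Omega$, controlled by the Lipschitz bound and $\int\!\!\int_{\mathrm{bdd}}|x-y|^{2-n-2s}dxdy<\infty$, and a far-field piece where one point is outside this enlargement and the kernel decay $|x-y|^{-n-2s}$ dominates the growth $|y|^{s/2}$ with margin $3s/2>0$. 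Both pieces yield a bound of the form $C(\Omega)M(\phi)^2|\{w>0\}|$, and after Young's inequality and absorption of $\tfrac12\epsilon^{2(1-s)}B_N^\epsilon[w,w]$ and $\tfrac\lambda2\|\nabla w\|_{L^2(\Omega_1)}^2$ into the left-hand side, \eqref{eq:enloc} follows. For \eqref{eq:enloc2}, compact support of $w$ and fractional Poincar\'e give $\|w\|_{L^2(\Omega)}\le C(\Omega)[w]_{H^s(\mathbb{R}^n)}$, while the missing Gagliardo piece over $\Omega_1\times\Omega_1$ is controlled by $\|w\|_{H^1(\Omega_1)}$ via $H^1\hookrightarrow H^s$; combining these with \eqref{eq:enloc} and absorbing $\int(u-\phi)_+^2$ yields the desired $H^s\cap H^1(\Omega_1)$ bound, with $\epsilon$-dependence arising solely from the $\epsilon^{2(1-s)}$ factor in \eqref{eq:enloc}.
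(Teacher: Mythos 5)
Your proposal is correct and follows essentially the same route as the paper's proof: test $(P_\epsilon^\delta)$ with $(u-\phi)_+$, split each form into a coercive quadratic part, a sign-definite part coming from the disjoint supports of $(u-\phi)_+$ and $(u-\phi)_-$, and a cross term in $\phi$; kill the drift's quadratic part by the divergence-free structure; estimate the $f$ and drift errors by H\"older/Young against $|\{u>\phi\}|^{\frac{q-2}{q}}$; and control $B_N[\phi,(u-\phi)_+]$ by Cauchy--Schwarz plus the Lipschitz bound near the diagonal and the $|x|^{s/2}$ growth bound far away (the paper packages this as the integrable majorant $\min\{|x-y|^2,|x-y|^s\}$; your near/far-field split is the same estimate, and the stated decay margin is immaterial).

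One caveat on \eqref{eq:enloc2}: as written, ``combining with \eqref{eq:enloc} and absorbing $\int(u-\phi)_+^2$'' does not quite work, because in \eqref{eq:enloc} the coefficient of $\int(u-\phi)_+^2$ is the fixed constant $C_0$, while the $H^s$ coercivity on the left carries the small factor $\epsilon^{2(1-s)}$; after applying Sobolev embedding and fractional Poincar\'e you would need $C_0\,C(\Omega)\,\epsilon^{-2(1-s)}<1$, which fails. The correct (and easy) fix, which the paper states explicitly, is to re-run your own Young inequalities for the $f$ and drift terms with a parameter $\mu$ chosen small relative to $\epsilon^{2(1-s)}$ and the Poincar\'e constant, at the price of a larger ($\epsilon$-dependent) constant on the remaining terms -- which is permitted since the constant in \eqref{eq:enloc2} may depend on $\epsilon$ and $\phi$.
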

\begin{proof}
We drop $\epsilon$ superscripts for clarity. Observe that as $(u-\phi)_{+}\in H^{1}(\Rn)$
vanishes outside $\Omega,$ it is a valid test function for $(P_{\epsilon}^{\delta})$.
This gives
\begin{align}
B_{L}[u,(u-\phi)_{+}]&+\epsilon^{2(1-s)}B_{N}[u,(u-\phi)_{+}]+\delta\int_{\Omega}\langle\nabla u,\nabla(u-\phi)_{+}\rangle\nonumber\\
&=\int_{\Omega}\epsilon^{2}f(u-\phi)_{+}+\epsilon u\langle\boldsymbol{b},\nabla(u-\phi)_{+}\rangle.\label{eq:eqn}
\end{align}
We will show how to estimate each term, starting with the first one
on the left:
\begin{align*}
\lambda\int_{\Ol}|\nabla(u-\phi)_{+}|^{2} & \leq B_{L}[(u-\phi)_{+},(u-\phi)_{+}]\\
 & =B_{L}[u,(u-\phi)_{+}]-B_{L}[\phi,(u-\phi)_{+}]\\
 & \leq B_{L}[u,(u-\phi)_{+}]+\frac{\lambda}{2}B_{L}[(u-\phi)_{+},(u-\phi)_{+}]+\frac{\Lambda}{2\lambda}\int_{\Omega}|\nabla\phi|^{2}
\end{align*}
where the last step used Cauchy inequality. Reabsorbing the second
term on the right and estimating the integral,
\begin{equation}
B_{L}[u,(u-\phi)_{+}]\geq\frac{\lambda}{2}\int_{\Ol}|\nabla(u-\phi)_{+}|^{2}-CM(\phi)^{2}|\{u>\phi\}|.\label{eq:locest}
\end{equation}
Next, the term with $B_{N}$ is treated similarly:
\[
B_{N}[u,(u-\phi)_{+}]=B_{N}[(u-\phi)_{+},(u-\phi)_{+}]+B_{N}[(u-\phi)_{-},(u-\phi)_{+}]+B_{N}[\phi,(u-\phi)_{+}].
\]
The first term, using ellipticity of $a$, controls the integral quantity
in the estimate:
\[
\frac{\lambda}{2}\int_{\Or}\int_{\Rn}\frac{[(u-\phi)_{+}(x)-(u-\phi)_{+}(y)]^{2}dxdy}{|x-y|^{n+2s}}\leq B_{N}[(u-\phi)_{+},(u-\phi)_{+}].
\]
The second term is positive, and so can be dropped:
\begin{alignat}{1}
B_{N}&[(u-\phi)_{-},(u-\phi)_{+}]\nonumber\\
 & =\int\int_{\Rn \times \Rn \backslash (\Ol \times \Ol)}\frac{[(u-\phi)_{-}(x)-(u-\phi)_{-}(y)]a(x,y)[(u-\phi)_{+}(x)-(u-\phi)_{+}(y)]dxdy}{|x-y|^{n+2s}}\nonumber\\
 & =\int\int_{\Rn \times \Rn \backslash (\Ol \times \Ol)}\frac{-(u-\phi)_{-}(y)[a(x,y)+a(y,x)](u-\phi)_{+}(x)dxdy}{|x-y|^{n+2s}}\geq0.\label{eq:flux} 
\end{alignat}
The final part is estimated as follows:
\begin{align*}
B_{N}[\phi,(u-\phi)_{+}] & =\int\int_{\Rn \times \Rn \backslash (\Ol \times \Ol)}\frac{[\phi(x)-\phi(y)]a(x,y)[(u-\phi)_{+}(x)-(u-\phi)_{+}(y)]dxdy}{|x-y|^{n+2s}}\\
 & \leq\frac{\lambda}{4}B[(u-\phi)_{+},(u-\phi)_{+}]\nonumber\\
&\qquad+C\int_{\Or}\int_{\Rn}\frac{|\phi(x)-\phi(y)|^{2}[1_{\{u>\phi\}}(x)+1_{\{u>\phi\}}(y)]}{|x-y|^{n+2s}}dxdy.
\end{align*}
The first of these can be reabsorbed, while the second can be further
estimated as
\begin{align*}
\leq C & \int_{\{u>\phi\}}\int_{\Rn}\frac{|\phi(x)-\phi(y)|^{2}}{|x-y|^{n+2s}}dxdy\\
\leq & CM(\phi)^{2}\int_{\{u>\phi\}}\int_{\Rn}\frac{\min\{|x-y|^{2},|x-y|^{s}\}}{|x-y|^{n+2s}}dxdy\\
\leq & CM(\phi)^{2}|\{u>\phi\}|,
\end{align*}
where the first step used that up to enlarging the domain to $\Rn\times\Rn$,
the two terms in the integrand are the same, the second estimated
$\phi$, and the third evaluated the inner integral to some fixed
value depending only on $s$. To summarize, we have the following
estimate on the nonlocal term:
\begin{equation}
B_{N}[u,(u-\phi)_{+}]\geq\frac{\lambda}{4}\int_{\Or}\int_{\Rn}\frac{[(u-\phi)_{+}(x)-(u-\phi)_{+}(y)]^{2}dxdy}{|x-y|^{n+2s}}-CM(\phi)^{2}|\{u>\phi\}|.\label{eq:nonlocest}
\end{equation}

Next we bound the term in \eqref{eq:enloc} with the coefficient $\delta$.
As we do not want $\delta$ dependence in the final estimate, we only
need to show it is the sum of a nonnegative quantity and something
controlled by the right-hand side of \eqref{eq:enloc}. We proceed as
for $B_{L}$:
\begin{equation}
\int_{\Omega}\langle\nabla u,\nabla(u-\phi)_{+}\rangle\geq\frac{1}{2}\int_{\Omega}|\nabla(u-\phi)_{+}|^{2}-|\nabla\phi|^{2}1_{\{u>\phi\}}\geq-M(\phi)^{2}|\{u>\phi\}|.\label{eq:delta}
\end{equation}

For the drift term, we exploit the divergence-free property of $\boldsymbol{b}$:
\begin{align*}
\int_{\Omega}u\langle\boldsymbol{b},\nabla(u-\phi)_{+}\rangle&=-\int_{\Omega}\langle\nabla u,\boldsymbol{b\rangle}(u-\phi)_{+}\\
&=-\int_{\Omega}\langle\nabla(u-\phi)_{+},\boldsymbol{b\rangle}(u-\phi)_{+}+\int_{\Omega}\langle\nabla\phi,\boldsymbol{b\rangle}(u-\phi)_{+},
\end{align*}
the first of which vanishes. The other we estimate directly to get
\begin{align}
\left|\epsilon\int_{\Omega}u\langle\boldsymbol{b},\nabla(u-\phi)_{+}\rangle\right|&\leq M(\phi)\|(u-\phi)_{+}\|_{L^{2}}\|\epsilon\boldsymbol{b}\|_{L^{q}(\Omega)}\|1_{\{u>\phi\}}\|_{L^{\frac{2q}{q-2}}}\nonumber\\
&\leq M(\phi)^{2}|\{u>\phi\}|^{\frac{q-2}{q}}\|\epsilon\boldsymbol{b}\|_{L^{q}(\Omega)}^{2}+\int_{\Omega}(u-\phi)_{+}^{2}.\label{eq:drift}
\end{align}

Finally, the $f$ term can be estimated as follows:
\begin{align}
\left|\int_{\Omega}\epsilon^{2}f(u-\phi)_{+}\right|&\leq\|\epsilon^{2}f\|_{L^{q}(\Omega)}\|(u-\phi)_{+}\|_{L^{2}}|\{u>\phi\}|^{\frac{q-2}{2q}}\nonumber\\
&\leq\|\epsilon^{2}f\|_{L^{q}(\Omega)}^{2}|\{u>\phi\}|^{\frac{q-2}{q}}+\int_{\Omega}(u-\phi)_{+}^{2}.\label{eq:rhs}
\end{align}

Putting together \eqref{eq:locest},\eqref{eq:nonlocest},\eqref{eq:delta},\eqref{eq:drift},
and \eqref{eq:rhs}, we deduce \eqref{eq:enloc}. To see \eqref{eq:enloc2},
proceed as above, but notice that the coefficient in front of the
term
\[
\int(u-\phi)_{+}^{2}
\]
can be made arbitrarily small at the expense of a larger constant
in front of the remaining terms. Then the left-hand side controls
the $H^{s}$ seminorm of $(u-\phi)_{+}$ from the fractional Sobolev
embedding:
\begin{align*}
\int_{\Rn\times\Rn}&\frac{|(u-\phi)_{+}(x)-(u-\phi)_{+}(y)|^{2}dxdy}{|x-y|^{n+2s}}\\
&\leq C(\Omega)\int_{\Ol}|\nabla(u-\phi)_{+}|^{2}+2\int_{\Or\times\Rn}\frac{|(u-\phi)_{+}(x)-(u-\phi)_{+}(y)|^{2}dxdy}{|x-y|^{n+2s}}.
\end{align*}
Now apply the fractional Poincar\'{e} inequality, choose the coefficient
small enough so that term can be reabsorbed, and use that $|\{u>\phi\}|\leq|\Omega|\leq C.$\end{proof}
\begin{rem}
\label{improvementsenergyest}While the attention to the precise dependence
on the various quantities on the right-hand side of \eqref{eq:enloc}
may seem tedious, it will be used in many future arguments. Basically,
the quantity $\epsilon^{2}\|f\|_{L^{q}}$ scales to become much smaller
than the others, and so can generally be assumed to be tiny. Some
of the other terms will be made small via choosing a very flat function
$\phi$, thus decreasing $M(\phi)$, or by having control over $(u-\phi)_{+}$,
thereby ensuring that the $L^{2}$ term is small. If $\phi$ is constant,
the $\boldsymbol{b}$ term may be omitted. \end{rem}
\begin{thm}
\label{thm:admis}Assume $u_{0}$ is uniformly Lipschitz and satisfies
$|u_{0}|\leq\psi$ for some $\psi$ with finite $M(\psi)$. Assume
also that $f,\boldsymbol{b}\in L^{q}(\Omega)$ for some $q\geq2$,
and
\[
\|\epsilon\boldsymbol{b}^{\epsilon}\|_{L^{q}(\Omega)}\leq S
\]
for some $S<\infty.$ Then for every $\epsilon>0$ there is a $u$
in $H^{1}(\Ol)\cap H^{s}(\Rn)$ such that $u=u_{0}$on
$\Omega^{c}$ and $u$ satisfies $(P_{\epsilon})$. This function
$u$ is the $H^{1}(\Ol)\cap H^{s}(\Rn)$ weak limit of solutions to
$(P_{\epsilon}^{\delta})$. Moreover, it satisfies the following property:
For every $\phi$ with $|u_{0}|\leq\phi$ on $\Omega^{c}$ and $M(\phi)<\infty$,
\begin{align}
\int_{\Ol}|\nabla(u-\phi)_{+}|^{2}+ & \epsilon^{2(1-s)}\int_{\Or}\int_{\Rn}\frac{[(u-\phi)_{+}(x)-(u-\phi)_{+}(y)]^{2}dxdy}{|x-y|^{n+2s}}\nonumber \\
\leq & C_{0}\left[\left(\|\epsilon^{2}f^{\epsilon}\|_{L^{q}(\Omega)}^{2}+M(\phi)^{2}\right)|\{u>\phi\}|^{\frac{q-2}{q}}+\int_{\Omega}(u-\phi)_{+}^{2}\right].\label{eq:enreal}
\end{align}
Here $C_{0}$ is as in Lemma \ref{lem:enloc}.\end{thm}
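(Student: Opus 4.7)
The plan is vanishing viscosity. For each $\delta>0$, Lemma~\ref{lem:exist} produces a unique $u^\delta\in H^1(\Rn)$ solving $(P_\epsilon^\delta)$ with $u^\delta=u_0$ on $\Omega^c$. I will obtain uniform-in-$\delta$ bounds from Lemma~\ref{lem:enloc}, extract a weak subsequential limit $u^{\delta_j}\rightharpoonup u$, and then verify both the equation and the energy inequality in the limit.

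For the uniform bounds, apply Lemma~\ref{lem:enloc} with $\phi=\psi$, invoking the conclusion \eqref{eq:enloc2} (whose constant is independent of $\delta$) to control $(u^\delta-\psi)_+$ in $H^1(\Ol)\cap H^s(\Rn)$. Since $(P_\epsilon^\delta)$ is linear in $u$ and $\boldsymbol{b}$ is divergence-free, $-u^\delta$ is the unique solution with data $-u_0,-f$; the same lemma bounds $(-u^\delta-\psi)_+$. Combined with the Lipschitz growth of $\psi$, this gives $u^\delta$ uniformly bounded in $H^1(\Ol)\cap H^s(\Rn)$. Extract $u^{\delta_j}\rightharpoonup u$ in this space; passing to a further subsequence via Rellich-Kondrachov yields $u^{\delta_j}\to u$ in $L^2_{\mathrm{loc}}(\Rn)$ and almost everywhere.

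To pass to the limit in $(P_\epsilon^\delta)$ tested against a fixed $\phi\in C_c^\infty(\Omega)$, the bilinear terms $B_L^\epsilon[u^{\delta_j},\phi]$ and $B_N^\epsilon[u^{\delta_j},\phi]$ converge by weak convergence of $u^{\delta_j}$ with $\phi$ fixed, and the drift $\int u^{\delta_j}\langle\boldsymbol{b}^\epsilon,\nabla\phi\rangle$ converges by strong $L^2$ convergence on $\mathrm{supp}\,\phi$. For the viscosity term, integrate by parts to write $\delta\int\nabla u^{\delta_j}\cdot\nabla\phi=-\delta\int u^{\delta_j}\Delta\phi$, which is bounded by $\delta\|u^{\delta_j}\|_{L^2(\mathrm{supp}\,\phi)}\|\Delta\phi\|_{L^2}\to 0$. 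Thus $u$ solves $(P_\epsilon)$.

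The main obstacle is obtaining the energy inequality \eqref{eq:enreal} in the limit. Apply Lemma~\ref{lem:enloc} to each $u^{\delta_j}$ at the prescribed $\phi$: the left-hand side is a quadratic convex functional weakly lower semicontinuous in $H^1(\Ol)\cap H^s(\Rn)$, so it is at most $\liminf_j$ of its values along the sequence, and $\int(u^{\delta_j}-\phi)_+^2$ passes to the limit by strong $L^2$ convergence. The measure $|\{u^{\delta_j}>\phi\}|$ is more delicate: pointwise convergence yields $\mathbf{1}_{\{u^{\delta_j}>\phi\}}\to\mathbf{1}_{\{u>\phi\}}$ only off $\{u=\phi\}$, which may have positive measure. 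I sidestep this by applying the $\delta$-estimate with $\phi+t$ in place of $\phi$; since $M(\phi+t)=M(\phi)$ all constants are preserved, and for almost every $t>0$ the level set $\{u=\phi+t\}$ is null, so dominated convergence gives $|\{u^{\delta_j}>\phi+t\}|\to|\{u>\phi+t\}|$ and we obtain \eqref{eq:enreal} with $\phi+t$ in place of $\phi$. Letting $t\to 0^+$ through such good values, monotone/dominated convergence on both sides recovers the inequality for the original $\phi$.
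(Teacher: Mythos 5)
Your vanishing-viscosity skeleton matches the paper's, and your treatment of the limit in the equation and of the energy inequality is workable: in particular the $\phi+t$ device for the level-set measures, combined with weak lower semicontinuity on the left and Fatou/monotone convergence as $t\to0^{+}$, is a legitimate alternative to the paper's route. The paper instead passes to the limit in the intermediate inequality \eqref{eq:prelimit}, whose right-hand side is \emph{linear} in $(u^{\delta_{k}}-\phi)_{+}$, and only afterwards carries out the absorption estimates on the limit function, so the level set that appears is that of $u$ itself and no convergence of $|\{u^{\delta_j}>\phi\}|$ is ever needed; your way and the paper's way both reach \eqref{eq:enreal}.

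There is, however, a genuine gap at the compactness step. Applying Lemma \ref{lem:enloc} with $\phi=\psi$ to $\pm u^{\delta}$ only bounds $(u^{\delta}-\psi)_{+}$ and $(-u^{\delta}-\psi)_{+}$ in $H^{1}(\Ol)\cap H^{s}(\Rn)$, i.e.\ the excess of $|u^{\delta}|$ over $\psi$. The truncated part $\max\{\min\{u^{\delta},\psi\},-\psi\}$ is bounded pointwise by $\psi$, but its energy is not controlled by these two quantities: on $\{|u^{\delta}|<\psi\}$ its gradient equals $\nabla u^{\delta}$, about which you have no uniform information (the only a priori bound coming from the Lax--Milgram construction degenerates like $\delta^{-1}$). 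So ``combined with the Lipschitz growth of $\psi$'' does not give uniform boundedness of $u^{\delta}$ (equivalently of $u^{\delta}-u_{0}$) in $H^{1}(\Ol)\cap H^{s}(\Rn)$, and without that bound there is no weak subsequential limit, which the entire remainder of your argument rests on. The repair is the paper's choice: take the comparison function to be the boundary datum itself, $\phi=u_{0}$, applied to $\pm u^{\delta}$ --- this is exactly what the hypothesis that $u_{0}$ is uniformly Lipschitz is for --- because then the two positive parts sum to $|u^{\delta}-u_{0}|$, which vanishes on $\Omega^{c}$, and \eqref{eq:enloc2} yields $\|u^{\delta}-u_{0}\|_{H^{1}(\Ol)\cap H^{s}(\Rn)}\leq C(\epsilon,\Omega,f,\boldsymbol{b},u_{0})$ uniformly in $\delta$. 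With that substitution the rest of your proof goes through.
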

\begin{proof}
Again, we suppress $\epsilon$ superscripts. Let $u^{\delta}$ be
the solution to $(P_{\epsilon}^{\delta})$ obtained from Lemma \ref{lem:exist}.
Applying Lemma \ref{lem:enloc} (taking $\phi=u_{0}$ and applying
to $\pm u^{\delta}$) gives
\[
\|u^{\delta}-u_{0}\|_{H^{s}(\Rn)\cap H^{1}(\Ol)}^{2}\leq C(\epsilon,\Omega,f,\boldsymbol{b},u_{0})
\]
We have that $u^{\delta}$ are uniformly bounded in $H^{s}(\Rn)\cap H^{1}(\Ol)$,
and so admit a weakly convergent subsequence $u^{\delta_{k}}\rightharpoonup u$,
with $u=u_{0}$ on $\Omega^{c}$.

Take a function $w\in C_{c}^{\infty}(\Omega)$ and use it as a test
function for $(P_{\epsilon}^{\delta_{k}})$, integrating the term
with $\delta$ by parts:
\[
B_{L}[u^{\delta_{k}},w]+\epsilon^{2(1-s)}B_{N}[u^{\delta_{k}},w]-\delta\int_{\Omega}u^{\delta_{k}}\triangle w=\int_{\Omega}\epsilon^{2}fw+u^{\delta_{k}}\langle\epsilon\boldsymbol{b},\nabla w\rangle.
\]
Now send $\delta\rightarrow0$ and use the fact that $B_{L}[\cdot,w]+\epsilon^{2(1-s)}B_{N}[\cdot,w]$
is a continuous linear functional on $H^{s}(\Rn)\cap H^{1}(\Ol)$
to recover
\[
B_{L}[u,w]+\epsilon^{2(1-s)}B_{N}[u,w]=\int_{\Omega}\epsilon^{2}fw+u\langle\epsilon\boldsymbol{b},\nabla w\rangle
\]
in the limit. This means $u$ is a weak solution of $(P_{\epsilon}).$

Now we show $u$ inherits the family of energy estimates \eqref{eq:enreal}.
For each $u^{\delta_{k}}$, from combining intermediate estimates
in the proof of Lemma \ref{lem:enloc}, we obtain
\begin{align}
B_{L}[(u^{\delta_{k}}-\phi)_{+},(u^{\delta_{k}}-\phi)_{+}] & +\epsilon^{2(1-s)}B_{N}[(u^{\delta_{k}}-\phi)_{+},(u^{\delta_{k}}-\phi)_{+}]\label{eq:prelimit}\\
 & \leq\int_{\Omega}\left[\epsilon^{2}f(u^{\delta_{k}}-\phi)_{+}-\epsilon(u^{\delta_{k}}-\phi)_{+}\langle\boldsymbol{b},\nabla\phi\rangle+\delta|\nabla\phi|^{2}\right]\nonumber \\
 & \qquad+B_{L}[\phi,(u^{\delta_{k}}-\phi)_{+}]+\epsilon^{2(1-s)}B_{N}[\phi,(u^{\delta_{k}}-\phi)_{+}].\nonumber 
\end{align}
The right-hand side is easily seen to converge as $\delta_{k}\rightarrow0$.
For the left-hand side, note that $w\mapsto B_{L}[w,w]+\epsilon^{2(1-s)}B_{N}[w,w]$
is a continuous, convex function on $H^{1}(\Ol)\cap H^{s}(\Rn)$,
and so is weakly lower semicontinuous. Up to a subsequence, we have
that $(u^{\delta_{k}}-\phi)_{+}\rightharpoonup(u-\phi)_{+}$ in $H^{1}(\Ol)\cap H^{s}(\Rn)$,
thus giving
\begin{align*}
B_{L}[(u-\phi)_{+},(u-\phi)_{+}] & +\epsilon^{2(1-s)}B_{N}[(u-\phi)_{+},(u-\phi)_{+}]\\
 & \leq\int_{\Omega}\epsilon^{2}f(u-\phi)_{+}-\epsilon(u-\phi)_{+}\langle\boldsymbol{b},\nabla\phi\rangle+B_{L}[\phi,(u-\phi)_{+}]\\
&\qquad+\epsilon^{2(1-s)}B_{N}[\phi,(u-\phi)_{+}].
\end{align*}
Proceed as in Lemma \ref{lem:enloc} to obtain \eqref{eq:enreal}.\end{proof}
\begin{defn}\label{def:admis}
A solution $u$ obtained as an $H^{1}(\Ol)\cap H^{s}(\Rn)$ weak limit
of solutions to $(P_{\epsilon}^{\delta_{k}})$ is called an \emph{admissible
weak solution}. In particular, the solution obtained in Theorem \ref{thm:admis}
is admissible. \end{defn}
\begin{rem}
It is easily seen that the energy estimate \eqref{eq:enreal} did not
require that $u_{0}\in C^{0,1}(\Rn)$, rather only that $u_{0}\in H^{1}(\Ol)\cap H^{s}(\Rn)$
and $u$ is admissible. This comment will be used frequently; for
instance, frequently an energy estimate on a subdomain is needed but
regularity of the solution (and hence the data outside the subdomain)
is unknown.
\end{rem}
At this point a natural question to consider is whether weak solutions
to the Dirichlet problem for $(P)$ are unique. This is in fact easy
to see in the case $\boldsymbol{b}=0$ by applying the Lax-Milgram
theorem, as the arguments above give that $B_{L}+B_{N}$ is a coercive
continuous bilinear form on $H^{s}(\Rn)\cap H^{1}(\Ol)$. When $\boldsymbol{b}\neq0$
this argument no longer suffices, and we will need to show some differentiability
of the solution on $\Or$ to prove the uniqueness of admissible solutions.
As this result is not essential to the regularity theory to follow,
we delay it until Section 8 for translation-invariant problems and
Section 9 for the more general setting. An alternative approach is to rewrite the drift term in a way that makes it a continuous bilinear form on $H^{s}$; we avoid this as it does not work well for the nonlinear version of the problem.

\section{Fine Energy Estimates and Local H\"{o}lder Continuity}

In this section we will show that solutions of $(P)$ admit a H\"{o}lder
modulus of continuity at points in $\Gamma\cap\Omega$, employing
the method of De Giorgi. To this end, we prove a localized oscillation
improvement estimate which can then be scaled under sufficient assumptions
on $f$ and $\boldsymbol{b}$. In particular, it will be vital that
the estimate is uniform in $\epsilon$, in the sense that it holds
with the same constants for each problem $(P_{\epsilon})$ with $\epsilon\leq1$. 

In this section, $L$ will always refer to $\sqrt{1+L_{0}^{2}},$
where $L_{0}$ is the Lipschitz constant of the function $g$ which
locally describes $\Gamma$ as a graph of $x'\in\RR^{n-1}$. We also
assume from here on that either $\boldsymbol{b}=0$ or $s\geq\frac{1}{2}$.

Our first objective is an improved energy estimate. The main improvement
we desire is a control of a Sobolev norm of $u$ over $\Or$ which
is independent of $\epsilon$. We will not manage to obtain such an
estimate, but fortunately it turns out that bounding the amount by
which $u$ exceeds its reflection from $\Ol$ to $\Or$ will be sufficient.
More precisely, let $R[v]$ be the extension operator from Lemma \ref{lem:Extend},
associated to the domain $E_{1}.$
\begin{lem}
\label{lem:en2}Assume $u$ is an admissible solution to $(P_{\epsilon})$
on $E_{1}$, $M(\phi)<\infty$, $f,\boldsymbol{b}\in L^{q}(\Omega)$
for $q\geq\frac{n}{s}$, $u\leq\phi$ in $\Rn\backslash E_{1}$, and
\[
\|\epsilon^{2s-1}\boldsymbol{b}^{\epsilon}\|_{L^{q}(E_{1})}\leq S<\infty.
\]
 Then we have that for $h(x)=(u-\phi)_{+}-R[(u-\phi)_{+}],$

\begin{align}
\int_{\Or}\int_{\Rn}&\frac{[h_{+}(x)-h_{+}(y)]^{2}}{|x-y|^{n+2s}}dxdy  +\int_{\Or}\int_{\Rn}\frac{-h_{+}(y)(u-\phi)_{-}(x)}{|x-y|^{n+2s}}dxdy\nonumber \\
 & \leq C\left[\left(\|\epsilon^{2s}f^{\epsilon}\|_{L^{q}(E_{1})}^{2}+M(\phi)^{2}\right)|\{u>\phi\}|^{\frac{q-2}{q}}+\int_{E_{1}}(u-\phi)_{+}^{2}\right].\label{eq:goal}
\end{align}

where $C$ depends on $n,\lambda,\Lambda$,$L$, and $S$, but not
$\epsilon$.
\end{lem}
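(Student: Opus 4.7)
The strategy is to use $h_{+}$ itself as the test function in $(P_{\epsilon})$. By property 1 of Lemma \ref{lem:Extend}, $R[(u-\phi)_{+}]$ coincides with $(u-\phi)_{+}$ on $\Ol\cap E_{1}$, so $h$ and hence $h_{+}$ vanish on $\Ol$; combined with the hypothesis $u\leq\phi$ outside $E_{1}$ (which makes $(u-\phi)_{+}=0$ there, while $R[(u-\phi)_{+}]$ is already supported in $E_{1}$), this confines $h_{+}$ to $\overline{\Or}\cap E_{1}$. The crucial payoff is the cancellation $B_{L}^{\epsilon}[u,h_{+}]=0$, which removes the scale-wrong local term and isolates the nonlocal bilinear form, exactly the mechanism advertised in the introduction. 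Since $u$ is only admissible, I would carry out the entire argument first at the approximating level $u^{\delta}\in H^{1}(\Rn)$ from Lemma \ref{lem:exist} (where $h_{+}^{\delta}$ is a legitimate $H_{0}^{1}(E_{1})$ test function), and then pass to the limit via weak lower semi-continuity of the convex form $B_{N}^{\epsilon}[w,w]$, exactly as in the proof of Theorem \ref{thm:admis}; the spurious term $\delta\int\langle\nabla u^{\delta},\nabla h_{+}^{\delta}\rangle$ contributes only a $\delta$-uniform error, controlled as in \eqref{eq:delta}.

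Once $B_{L}^{\epsilon}[u,h_{+}]$ drops out, divide the equation by $\epsilon^{2(1-s)}$ and decompose $u=h_{+}-h_{-}+R[(u-\phi)_{+}]-(u-\phi)_{-}+\phi$ inside $B_{N}^{\epsilon}[u,h_{+}]$. The diagonal piece $B_{N}^{\epsilon}[h_{+},h_{+}]$ dominates the first target integral by ellipticity $a\geq\lambda$. The terms $B_{N}^{\epsilon}[-h_{-},h_{+}]$ and $B_{N}^{\epsilon}[-(u-\phi)_{-},h_{+}]$ involve functions with supports disjoint from that of $h_{+}$, so the flux computation of \eqref{eq:flux} applies: both expressions are nonnegative and contribute to the left-hand side, and the second produces (up to a factor of $\lambda$) precisely the quantity needed to match the second target integral in \eqref{eq:goal}. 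The term $B_{N}^{\epsilon}[\phi,h_{+}]$ is absorbed as in Lemma \ref{lem:enloc}, contributing $CM(\phi)^{2}|\{u>\phi\}|$. The genuinely new piece $B_{N}^{\epsilon}[R[(u-\phi)_{+}],h_{+}]$ is handled by Cauchy-Schwarz followed by reabsorption into the target $T_{1}$, using $\|R[(u-\phi)_{+}]\|_{H^{s}(\Rn)}\leq C\|(u-\phi)_{+}\|_{H^{1}(\Ol)}$ from Lemma \ref{lem:Extend} together with the basic energy estimate \eqref{eq:enreal} of Theorem \ref{thm:admis} to bound the remaining factor.

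The principal obstacle is the drift, which after the division by $\epsilon^{2(1-s)}$ reads $\epsilon^{2s-1}\int u^{\delta}\langle\boldsymbol{b}^{\epsilon},\nabla h_{+}^{\delta}\rangle$; this is exactly why the hypothesis controls $\|\epsilon^{2s-1}\boldsymbol{b}^{\epsilon}\|_{L^{q}}$ rather than $\|\epsilon\boldsymbol{b}^{\epsilon}\|_{L^{q}}$. At the $\delta$-level the cancellation $\int h_{+}^{\delta}\langle\boldsymbol{b}^{\epsilon},\nabla h_{+}^{\delta}\rangle=0$ (from $\mathrm{div}\,\boldsymbol{b}=0$) reduces the drift to cross-terms $\int(\text{rest})\langle\boldsymbol{b}^{\epsilon},\nabla h_{+}^{\delta}\rangle$, where ``rest''${}=R[(u^{\delta}-\phi)_{+}]-h_{-}^{\delta}-(u^{\delta}-\phi)_{-}+\phi$ has a $\delta$-uniform $H^{1}$-norm bound from Lemma \ref{lem:enloc} applied to $u^{\delta}$. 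Hölder's inequality with exponent $q\geq n/s$, the fractional Sobolev embedding $H^{s}(\Rn)\hookrightarrow L^{2n/(n-2s)}$ applied to $h_{+}^{\delta}$, and reabsorption of a small multiple of $T_{1}$ then produce the $|\{u>\phi\}|^{(q-2)/q}$ factor at the correct scaling. The forcing term $\epsilon^{2s}\int f^{\epsilon}h_{+}$ is handled analogously but more simply. Passage to the $\delta\to 0$ limit then yields \eqref{eq:goal}.
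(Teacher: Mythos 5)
Your proposal follows the paper's own proof essentially step for step: test $(P_{\epsilon}^{\delta_k})$ with $h_{+}^{\delta_k}$, use its vanishing on $\Ol$ to drop $B_{L}$, expand $B_{N}[u,h_{+}]$ with the same five-term decomposition (keeping the $(u-\phi)_{-}$ cross term on the left, dropping the $h_{-}$ term, absorbing $B_{N}[\phi,h_{+}]$ and $B_{N}[R[(u-\phi)_{+}],h_{+}]$ via $M(\phi)$ and the $H^{1}$ bound on the reflection from Lemma \ref{lem:enloc}/Theorem \ref{thm:admis}), handle the drift through the divergence-free cancellation and H\"older with $q\geq n/s$, and pass to the $\delta\to 0$ limit. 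The only loosely stated points are minor and fixable exactly as in the paper: the second left-hand term is not of the form $B_{N}[w,w]$, so its limit requires Fatou (after strong $L^{2}$ convergence) rather than weak lower semicontinuity of the quadratic form alone, and your ``rest'' is not literally bounded in $H^{1}(\Rn)$ uniformly --- but the offending pieces $-h_{-}^{\delta}$, $-(u^{\delta}-\phi)_{-}$ vanish a.e.\ against $\nabla h_{+}^{\delta}$ (resp.\ against $h_{+}^{\delta}$ after integrating by parts), and $\phi$ enters only through $\nabla\phi$ on $\{u>\phi\}$, which is precisely how the paper estimates $I_{3},I_{4}$.
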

Notice the different scaling of the $L^{q}$ norm of $\boldsymbol{b}$
assumed here (compared to Theorem \ref{thm:admis}). This reflects
the difference in the orders of $B_{N}$ and $\langle\boldsymbol{b},\nabla\cdot\rangle$,
and is the invariant scaling for the equation restricted to $\Or$.
The second term on the left in equation \eqref{eq:goal} will be used
later as a nonlocal analogue of the De Giorgi isoperimetric inequality;
this idea was introduced in \cite{CCV}.
\begin{proof}
We omit $\epsilon$ superscripts. Let $u^{\delta_{k}}$solve $(P_{\epsilon}^{\delta_{k}})$
with the same data as $u$ and $u^{\delta_{k}}\rightharpoonup u$
in $H^{1}(\Ol)\cap H^{s}(\Rn)$. Such a sequence is guaranteed from
the admissibility assumption on $u$. Then set $h^{\delta_{k}}=(u^{\delta_{k}}-\phi)_{+}-R[(u^{\delta_{k}}-\phi)_{+}]$
and use $h_{+}^{\delta_{k}}$ as a test function for $(P_{\epsilon}^{\delta_{k}})$;
note that the reflected portion is supported on $\Omega$. Since this
function vanishes on $\Ol,$ the $B_{L}$ term drops out, giving
\begin{align*}
\epsilon^{2(1-s)}B_{N}[u^{\delta_{k}},h_{+}^{\delta_{k}}]+\delta_{k}\int_{E_{1}}\langle\nabla u^{\delta_{k}},\nabla h_{+}^{\delta_{k}}\rangle & =\int_{E_{1}}\epsilon^{2}fh_{+}^{\delta_{k}}+\epsilon u^{\delta_{k}}\langle\boldsymbol{b},\nabla h_{+}^{\delta_{k}}\rangle.
\end{align*}
We show how to estimate each term, starting with the nonlocal one.
We have
\begin{align*}
B_{N}[u^{\delta_{k}},h_{+}^{\delta_{k}}] & =B_{N}[(u^{\delta_{k}}-\phi)_{+},h_{+}^{\delta_{k}}]+B_{N}[(u^{\delta_{k}}-\phi)_{-},h_{+}^{\delta_{k}}]+B_{N}[\phi,h_{+}^{\delta_{k}}]\\
 & =B_{N}[h_{+}^{\delta_{k}},h_{+}^{\delta_{k}}]+B_{N}[h_{-}^{\delta_{k}},h_{+}^{\delta_{k}}]+B_{N}[R[(u^{\delta_{k}}-\phi)_{+}],h_{+}^{\delta_{k}}]\\
&\qquad +B_{N}[(u^{\delta_{k}}-\phi)_{-},h_{+}^{\delta_{k}}]+B_{N}[\phi,h_{+}^{\delta_{k}}].
\end{align*}
The second of these is positive, and so will be dropped. For the term
with coefficient $\delta_{k},$ use
\begin{align*}
\int_{E_{1}}\langle\nabla u^{\delta_{k}},\nabla h_{+}^{\delta_{k}}\rangle & =\int_{E_{1}}\langle\nabla(u^{\delta_{k}}-\phi)_{+},\nabla h_{+}^{\delta_{k}}\rangle+\int_{E_{1}}\langle\nabla\phi,\nabla h_{+}^{\delta_{k}}\rangle\\
 & \geq\frac{1}{2}\int_{E_{1}}\langle\nabla h_{+}^{\delta_{k}},\nabla h_{+}^{\delta_{k}}\rangle+\int_{E_{1}}\langle\nabla R[(u^{\delta_{k}}-\phi)_{+}],\nabla h_{+}^{\delta_{k}}\rangle-\int_{E_{1}}|\nabla\phi|^{2}\\
 & \geq-\int_{E_{1}}\left|\nabla R[(u^{\delta_{k}}-\phi)_{+}]\right|^{2}-\int_{E_{1}}|\nabla\phi|^{2}.
\end{align*}
As $\|R[(u^{\delta_{k}}-\phi)_{+}]\|_{H^{1}}\leq C$ uniformly in
$\delta$ by Lemma \ref{lem:enloc}, this term is bounded from below.
Now the second term on the right:
\begin{align*}
\int_{E_{1}}u^{\delta_{k}}\langle\boldsymbol{b},\nabla h_{+}^{\delta_{k}}\rangle & =\int_{E_{1}}h_{+}^{\delta_{k}}\langle\boldsymbol{b},\nabla h_{+}^{\delta_{k}}\rangle+\int_{E_{1}}R[(u^{\delta_{k}}-\phi)_{+}]\langle\boldsymbol{b},\nabla h_{+}^{\delta_{k}}\rangle+\int_{E_{1}}\phi\langle\boldsymbol{b},\nabla h_{+}^{\delta_{k}}\rangle\\
 & =-\int_{E_{1}}\langle\nabla R[(u^{\delta_{k}}-\phi)_{+}],\boldsymbol{b\rangle}h_{+}^{\delta_{k}}-\int_{E_{1}}\langle\nabla\phi,\boldsymbol{b}\rangle h_{+}^{\delta_{k}},
\end{align*}
with the second line coming from the divergence-free condition. We
thus have
\begin{align*}
B_{N}[h_{+}^{\delta_{k}}&,h_{+}^{\delta_{k}}]+B_{N}[(u^{\delta_{k}}-\phi)_{-},h_{+}^{\delta_{k}}]  \leq-B_{N}[R[(u^{\delta_{k}}-\phi)_{+}],h_{+}^{\delta_{k}}]-B_{N}[\phi,h_{+}^{\delta_{k}}]-C\delta^{k}\\
 & -\epsilon^{2s-1}\left[\int_{E_{1}}\langle\nabla R[(u^{\delta_{k}}-\phi)_{+}],\boldsymbol{b\rangle}h_{+}^{\delta_{k}}-\int_{E_{1}}\langle\nabla\phi,\boldsymbol{b}\rangle h_{+}^{\delta_{k}}\right]+\int_{E_{1}}\epsilon^{2s}fh_{+}^{\delta_{k}}.
\end{align*}

Now note that $R[(u^{\delta_{k}}-\phi)_{+}]$ converges weakly in
$H^{1}(E_{1}),$and so strongly in $H^{s}(\Rn)$. It follows that
the functions $h_{+}^{\delta_{k}}\rightharpoonup h_{+}$ weakly in
$H^{1}(\Ol)\cap H^{s}(\Rn)$, and so strongly in $L^{2}(\Rn).$ The
same can be said for $(u^{\delta_{k}}-\phi)_{-}.$ The map $v\mapsto B_{N}[v,v]$
is convex and bounded, so it is lower semicontinuous under weak convergence;
this gives that
\[
B_{N}[h_{+},h_{+}]\leq\liminf_{k\rightarrow\infty}B_{N}[h_{+}^{\delta_{k}},h_{+}^{\delta_{k}}].
\]
Passing to a subsequence, we have that the product $0\leq-h_{+}^{\delta_{k}}(x)(u^{\delta_{k}}-\phi)_{-}(y)\rightarrow-h_{+}(x)(u-\phi)_{-}(y)$
for almost every $(x,y)\in\Rn\times\Rn$. Then by Fatou's Lemma, we
have that
\begin{align*}
B_{N}&[(u-\phi)_{-},h_{+}]  =\int_{\Rn \times \Rn \backslash (\Ol \times \Ol)}\frac{-[(u-\phi)_{-}(x)h_{+}(y)+(u-\phi)_{-}(y)h_{+}(x)]a(x,y)}{|x-y|^{n+2s}}dxdy\\
 & \leq\liminf_{k\rightarrow\infty}\int_{\Rn \times \Rn \backslash (\Ol \times \Ol)}\frac{-[(u^{\delta_{k}}-\phi)_{-}(x)h_{+}^{\delta_{k}}(y)+(u^{\delta_{k}}-\phi)_{-}(y)h_{+}^{\delta_{k}}(x)]a(x,y)}{|x-y|^{n+2s}}dxdy\\
 & =\liminf_{k\rightarrow\infty}B_{N}[(u^{\delta_{k}}-\phi)_{-},h_{+}^{\delta_{k}}].
\end{align*}
On the right-hand side, each term converges to the expected limit;
for the first one use that $R[(u^{\delta_{k}}-\phi)_{+}]\rightarrow R[(u-\phi)_{+}]$
strongly in $H^{s}(\Rn),$ while the rest are straightforward. Passing
to the limit, we obtain

\begin{align*}
B_{N}&[h_{+},h_{+}]+B_{N}[(u-\phi)_{-},h_{+}]  \leq-B_{N}[R[(u-\phi)_{+}],h_{+}]-B_{N}[\phi,h_{+}]\\
 & -\epsilon^{2s-1}\left[\int_{E_{1}}\langle\nabla R[(u-\phi)_{+}],\boldsymbol{b\rangle}h_{+}-\int_{E_{1}}\langle\nabla\phi,\boldsymbol{b}\rangle h_{+}\right]+\int_{E_{1}}\epsilon^{2s}fh_{+}\\
 & :=I_{1}+I_{2}+I_{3}+I_{4}+I_{5}.
\end{align*}

The left-hand side controls the quantities in \ref{eq:goal}. We bound
each of the terms on the right. 
\[
|I_{1}|\leq\frac{1}{8}B_{N}[h_{+},h_{+}]+C\|(u-\phi)_{+}\|_{H^{1}(\Ol)}^{2},
\]
so the first part is reabsorbed while the second is controlled from
Theorem \ref{thm:admis}. For the next one,
\[
|I_{2}|\leq\frac{1}{8}B_{N}[h_{+},h_{+}]+C\int_{\Or\times\Rn}\frac{[\phi(x)-\phi(y)]^{2}[1_{\{h>0\}}(x)+1_{\{h>0\}}(y)]dxdy}{|x-y|^{n+2s}}.
\]
This integral can be controlled by $M(\phi)^{2}|\{u>\phi\}|$ as in
the proof of Lemma \ref{lem:enloc}, using the fact that $\{h>0\}\subset\{u>\phi\}.$
\begin{align*}
|I_{3}|&\leq\|(u-\phi)_{+}\|_{H^{1}(\Ol)}\|\epsilon^{2s-1}\boldsymbol{b}\|_{L^{q}(E_{1})}\|h_{+}\|_{L^{\frac{2q}{q-2}}(E_{1})}\\
&\leq C_{\mu}\|\epsilon^{2s-1}\boldsymbol{b}\|_{L^{q}(E_{1})}^{2}\|(u-\phi)_{+}\|_{H^{1}(\Ol)}^{2}+\mu\|h_{+}\|_{H^{s}}^{2},
\end{align*}
where the last step used the lower bound on $q$ and the fractional
Poincar\'{e} inequality. The terms $I_{4}$ and $I_{5}$ can be estimated
in the same way as the corresponding terms in Lemma \ref{lem:enloc},
completing the argument.
\end{proof}
Armed with these energy inequalities, we now prove a local $L^{\infty}$
estimate.
\begin{lem}
\label{lem:DG1}Let $u$ be an admissible solution of $(P_{\epsilon})$
on $E_{1}$ and $\phi$ a function with $M(\phi)\leq1$ and $\phi\equiv0$
on $E_{1}$. Assume $u\leq1+\phi$ on $\Rn$, $\|\epsilon^{2s-1}\boldsymbol{b}\|_{L^{q}}+\|\epsilon^{2s}f\|_{L^{q}}\leq C_{0}$
for some $q>\frac{n}{s}$. Then there is a $\delta>0$ depending on
$n,\Lambda,\lambda,C_{0},L,q$ but not $\epsilon,$such that if 
\[
|\{u>0\}\cap E_{1}|<\delta,
\]
then
\[
\sup_{E_{1/2}}u\leq\frac{1}{2}.
\]
\end{lem}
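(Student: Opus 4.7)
The strategy is the first De~Giorgi lemma, where the uniform-in-$\epsilon$ estimate of Lemma~\ref{lem:en2} is precisely what compensates for the lack of scale invariance of~$(P_\epsilon)$ near $\Gamma$. Introduce De~Giorgi levels $l_k = \tfrac{1}{2}(1 - 2^{-k})$ and radii $r_k = \tfrac{1}{2}(1 + 2^{-k})$, and smooth cutoffs $\eta_k:\Rn\to[0,1]$ with $\eta_k \equiv 0$ on $E_{r_{k+1}}$, $\eta_k \equiv 1$ outside $E_{r_k}$, and $\|\nabla \eta_k\|_\infty \le C 2^k$. Take the barrier $\phi_k = l_k + (1 - l_k)\eta_k + \phi$; because $\phi \equiv 0$ on $E_1$ and $u \le 1 + \phi$ on $\Rn$, we have $\phi_k \equiv l_k$ on $E_{r_{k+1}}$, $\phi_k \ge u$ on $\Rn \setminus E_{r_k}$, and $M(\phi_k) \le C 2^k$. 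The objects to track are $A_k = |\{u > l_k\} \cap E_{r_k}|$ and $U_k = \int_{E_{r_k}} (u-l_k)_+^2$; since $u\le 1$ on $E_1$, $U_k \le A_k$.

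Apply Theorem~\ref{thm:admis} and Lemma~\ref{lem:en2} with the barrier $\phi_k$; both are applicable because $\|\epsilon \boldsymbol{b}^\epsilon\|_{L^q}\le\|\epsilon^{2s-1}\boldsymbol{b}^\epsilon\|_{L^q}\le C_0$ and $\|\epsilon^{2}f^\epsilon\|_{L^q}\le\|\epsilon^{2s}f^\epsilon\|_{L^q}\le C_0$ for $\epsilon,s\le 1$. Writing $h^k = (u-\phi_k)_+ - R[(u-\phi_k)_+]$ and $\sigma = (q-2)/q$, and using $M(\phi_k)^2 \le C\cdot 4^k$ together with $U_k \le A_k \le A_k^\sigma$ (valid once $A_0\le\delta<1$, since $\sigma\le 1$ and $A_k$ is decreasing), the two estimates combine to
\begin{align*}
\|(u-\phi_k)_+\|_{H^1(\Ol)}^2 + [h^k_+]_{H^s(\Rn)}^2 \le C\cdot 4^k A_k^\sigma
\end{align*}
uniformly in $\epsilon$. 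Since $(u-\phi_k)_+ \le h^k_+ + R[(u-\phi_k)_+]$ pointwise, $h^k_+$ is supported in $\Or\cap E_1$, and Lemma~\ref{lem:Extend} gives $\|R[(u-\phi_k)_+]\|_{H^1(E_1)}\le C\|(u-\phi_k)_+\|_{H^1(\Ol)}$, the fractional Sobolev embedding yields the scale-invariant bound
\begin{align*}
\|(u-\phi_k)_+\|_{L^{2^*}(\Rn)} \le C\cdot 2^k A_k^{\sigma/2}, \qquad 2^* = \tfrac{2n}{n-2s}.
\end{align*}

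On $E_{r_{k+1}}$, where $\phi_k = l_k$, Chebyshev's inequality at level $l_{k+1} = l_k + 2^{-k-2}$ gives $A_{k+1} \le 2^{(k+2)2^*}\|(u-l_k)_+\|_{L^{2^*}(E_{r_{k+1}})}^{2^*} \le C^k A_k^{\sigma\cdot 2^*/2}$. The hypothesis $q > n/s$ is equivalent to $\sigma\cdot 2^*/2 = \tfrac{(q-2)n}{q(n-2s)} = 1 + \beta$ for some $\beta>0$, producing a nonlinear recursion $A_{k+1}\le C^k A_k^{1+\beta}$. The classical lemma on such sequences supplies a threshold $\delta>0$, depending only on $C$ and $\beta$, such that $A_0 \le \delta$ forces $A_k \to 0$, whence $|\{u>1/2\}\cap E_{1/2}| = 0$. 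The key point (and the only nontrivial one) is exactly the motivation for Lemma~\ref{lem:en2}: a direct appeal to Theorem~\ref{thm:admis} alone bounds the $H^s(\Or)$ seminorm of $(u-\phi_k)_+$ only after multiplication by $\epsilon^{-2(1-s)}$, which destroys the iteration as $\epsilon\to 0$; splitting $(u-\phi_k)_+$ into the reflected part $R[(u-\phi_k)_+]$ (controlled in $H^1$) and the excess $h^k$ (controlled in $H^s$ uniformly in $\epsilon$) is the scale-free replacement that makes the De~Giorgi machinery go through.
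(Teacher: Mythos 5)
Your proof is correct and follows essentially the same route as the paper's: combine the energy estimate of Theorem \ref{thm:admis} with the reflection estimate of Lemma \ref{lem:en2}, decompose $(u-\phi_k)_+ \le R[(u-\phi_k)_+] + h^k_+$, and run Chebyshev plus fractional Sobolev embedding to get the nonlinear recursion with exponent $\tfrac{(q-2)n}{q(n-2s)}>1$ exactly when $q>\tfrac{n}{s}$. The only difference is cosmetic bookkeeping: the paper packages the levels and the spatial localization into the single family $\psi_k = 1+\phi+[F+\tfrac12(1-2^{-k})]_-$ (so $M(\psi_k)\le C$ uniformly), whereas you use shrinking cylinders with cutoffs of Lipschitz constant $\sim 2^k$, which only changes the harmless $C^k$ factor in the iteration.
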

\begin{proof}
Let $F:\Rn\rightarrow[-1,0]$ be a smooth auxiliary function satisfying:
\[
\begin{cases}
F(x)\equiv-1 & x\in E_{1/2}\\
F(x)\equiv0 & x\in E_{1}^{c}\\
|\nabla F|\leq C
\end{cases}
\]
Also, construct $\psi_{k}=1+\phi+[F+\frac{1}{2}(1-2^{-k})]_{-}$ (see Figure \ref{fig:F2}),
and observe the following properties of $\psi_{k}$ and the corresponding
functions $u_{k}=(u-\psi_{k})_{+}$:
\begin{itemize}
\item $\psi_{k}$ is Lipschitz, and has $M(\psi_{k})\leq C$.
\item $0\leq\psi_{k}\leq1+\phi$, so that in particular $|\{u>\psi_{0}\}|\leq|\{u>0\}\cap E_{1}|<\delta$.
\item $\psi_{k}\leq\frac{1}{2}$ on $E_{1/2}$, so $|\{u>\frac{1}{2}\}\cap E_{1/2}|\leq\limsup_{k}|\{u>\psi_{k}\}|.$
\item If $u_{k}(x)>0$, then $u_{k-1}(x)>2^{-k}$.
\item Applying Theorem \ref{thm:admis} and Lemma \ref{lem:en2} on $u$
and $\psi_{k}$, we obtain that
\[
\|u_{k}\|_{H^{1}(\Ol)}^{2}+\|(u_{k}-R[u_{k}])_{+}\|_{H^{s}(\Rn)}^{2}\leq C\left[|\{u_{k}>0\}|^{\frac{q-2}{q}}+\int u_{k}^{2}\right]\leq C|u_{k}>0|^{\frac{q-2}{q}};
\]
notice that we absorbed all of the $f,\boldsymbol{b}$ dependence
into $C$ and dropped some terms on the left. The last step used the
fact that $u_{k}\leq1$.
\end{itemize}

Now set $A_{k}=|\{u_{k}>0\}|$. The lemma will follow if we can show
that there is a $\delta>0$ such that if $A_{0}<\delta$, then $A_{k}\rightarrow0$.
We will show a nonlinear recurrence relation of the form $A_{k}\leq C^{k}A_{k-1}^{\beta}$
for some $\beta>1$; this implies the conclusion. Let $v_{k}=\max\{u_{k},R[u_{k}]\}=R[u_{k}]+(u_{k}-R[u_{k}])_{+}.$
Then
\[
A_{k}\leq|\{u_{k-1}>2^{-k}\}|\leq2^{kp}\int u_{k-1}^{p}\leq C^{k}\int v_{k-1}^{p}\leq C^{k}\|v_{k-1}\|_{H^{s}(\Rn)}^{p}
\]
where $p=\frac{2n}{n-2s}$. The first step comes from Chebyshev's
inequality, the second uses the fact that $u_{k}\leq v_{k},$ and
the last one follows from the fractional Poincar\'{e} inequality. Using
the second expression for $v_{k-1},$we see that
\[
A_{k}\leq C^{k}\left(\|R[u_{k-1}]\|_{H^{s}(\Rn)}^{2}+\|(u_{k-1}-R[u_{k-1}])_{+}\|_{H^{s}(\Rn)}^{2}\right)^{p/2}\leq C^{k}A_{k-1}^{\frac{p(q-2)}{2q}}.
\]
A computation gives that $\frac{p(q-2)}{2q}>1$ provided $q>\frac{n}{s}$,
and so we have the recursion as claimed.
\end{proof}

\begin{figure}
    \centering
    \fbox{
      \includegraphics[width=0.75\textwidth]{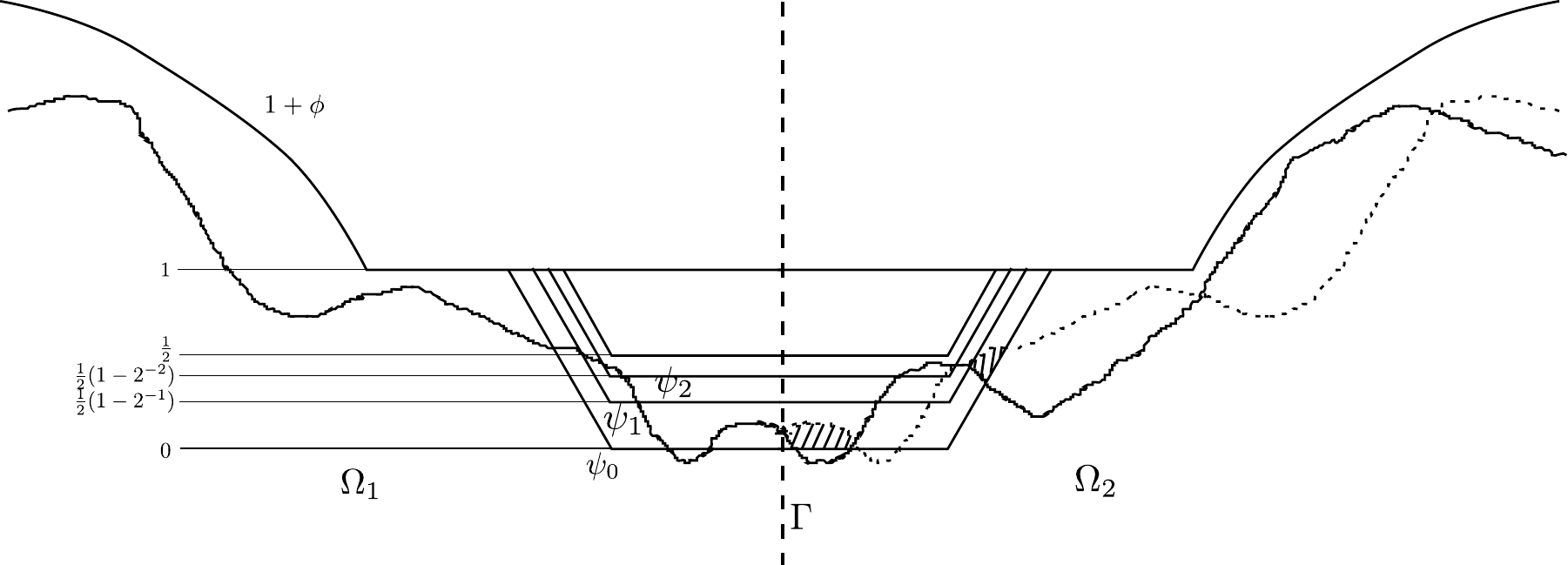}
    }
    \caption{Here $n=1$, $\Gamma$ is the point in the center of the drawing, and the vertical axis represents the values of the function $u$, rendered as the solid curve. Under the hypotheses of Lemma \ref{lem:DG1}, $u$ is guaranteed to lie beneath the graph of $1+\phi$, which is shown. The first three auxilliary functions $\psi_{k}$ are displayed, as well as the limiting function $\lim_{k}\psi_{k}$; notice that they are truncated vertical translates of each other. Finally, the dashed curve represents the reflection $R[u]$, and the area of the shaded region is the integral of $(u_{0}-R[u_{0}])_{+}$.}
    \label{fig:F2}
\end{figure}

The next two lemmas show that the measure of the set where the oscillation
of $u$ improves is almost full. This kind of measure estimate is
the second component of De Giorgi's argument \cite{DeGiorgi}, and
will be combined with Lemma \ref{lem:DG1} to prove that the oscillation
of $u$ decays near $\Gamma$.
\begin{lem}
\label{lem:DG2loc}Let $u$ be an admissible solution to $(P_{\epsilon})$
on $E_{3}$ with $\|\epsilon^{2s-1}\boldsymbol{b}\|_{L^{q}(E_{3})}^{2}\leq C_{0}$
(for some $q>\frac{n}{s}$) satisfying $u\leq1+\phi$ on $\Rn$ (where
$\phi\equiv0$ on $E_{3}$). Then for every $\mu,\delta>0$ there
are $\theta_{0},\eta_{0}>0$, depending on $n,\lambda,\Lambda,q,L,C_{0}$,
but not $\epsilon,$ such that if $\|\epsilon^{2s}f\|_{L^{q}(E_{3})}+M(\phi)<\eta_{0}$
and
\begin{equation}
|\{u\leq0\}\cap E_{1}\cap\Ol|>\mu,\label{eq:mu}
\end{equation}
then
\begin{equation}
|\{u>1-\theta_{0}\}\cap E_{2}\cap\Ol|<\delta.\label{eq:delta-1}
\end{equation}
\end{lem}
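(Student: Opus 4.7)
The plan is to adapt De Giorgi's classical second lemma (measure shrinking of level sets for sub-solutions) to the Lipschitz domain $E_{2}\cap\Ol$. The two main inputs are the local component of the energy estimate from Theorem \ref{thm:admis}, and a De Giorgi-type isoperimetric inequality on $E_{2}\cap\Ol$ (which holds with constants depending on the Lipschitz character $L$ via a bi-Lipschitz straightening of $\Gamma$ reducing to the half-cylinder case). Because the conclusion concerns only the local side $\Ol$, the nonlocal term in the equation plays a secondary role here, and it is essentially a local De Giorgi argument made uniform in $\epsilon$.

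First, I would introduce a sequence of levels $\lambda_{k}$ spanning $[1-2\theta_{0},1-\theta_{0}]$ (for instance, linearly spaced $\lambda_{k}=1-2\theta_{0}+k\theta_{0}/K$ for $k=0,\dots,K$) together with cutoff functions $\phi_{k}$ that equal $\lambda_{k}$ on $E_{5/2}$ and interpolate linearly in the collar $E_{3}\setminus E_{5/2}$ to the ambient upper bound $1+\phi$, so that $\phi_{k}\geq u$ outside $E_{5/2}$ and $M(\phi_{k})\leq C$ uniformly in $k$ and $\epsilon$. Applying Theorem \ref{thm:admis} with $\phi=\phi_{k}$ gives, after choosing $\eta_{0}$ small enough to absorb the drift and forcing contributions,
\[
\|\nabla(u-\lambda_{k})_{+}\|_{L^{2}(\Ol)}^{2}\leq C\left[N_{k}^{(q-2)/q}+\theta_{0}^{2}\right],
\]
where $N_{k}=|\{u>\lambda_{k}\}\cap E_{2}\cap\Ol|$.

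Second, apply the De Giorgi isoperimetric inequality on the Lipschitz domain $E_{2}\cap\Ol$ to the truncation $w_{k}=\min((u-\lambda_{k})_{+},\lambda_{k+1}-\lambda_{k})$. Hypothesis (\ref{eq:mu}) ensures the large zero set $|\{w_{k}=0\}\cap E_{2}\cap\Ol|\geq\mu$, so the inequality yields
\[
(\lambda_{k+1}-\lambda_{k})\,\mu\,N_{k+1}\leq C\int_{D_{k}}|\nabla u|\leq C|D_{k}|^{1/2}\,\|\nabla(u-\lambda_{k})_{+}\|_{L^{2}(\Ol)},
\]
where $D_{k}=\{\lambda_{k}<u<\lambda_{k+1}\}\cap E_{2}\cap\Ol$. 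Under the contradiction hypothesis $N_{K}\geq\delta$, one has $N_{k+1}\geq\delta$ for every $k\leq K-1$. Summing the resulting inequalities, using the telescoping bound $\sum_{k}|D_{k}|\leq|E_{2}\cap\Ol|$ together with Cauchy--Schwarz in $k$ and the energy estimate from the first step, produces a balance of the form $\mu\delta\theta_{0}\leq C$, which is violated by taking $\theta_{0}$ (and $\eta_{0}$) appropriately in terms of $\mu,\delta,L,q,C_{0}$. This yields the desired contradiction and establishes the conclusion.

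The main obstacle is the careful construction of the cutoffs $\phi_{k}$: they must simultaneously equal the target level inside $E_{5/2}$, dominate $u$ outside (so Theorem \ref{thm:admis} applies), and have uniformly bounded $M$-seminorm so that the energy bound is truly uniform across the level iteration and across the scaling parameter $\epsilon$. A secondary technical point is verifying the De Giorgi isoperimetric inequality on $E_{2}\cap\Ol$ (a Lipschitz domain rather than a ball), which reduces to the standard version after straightening $\Gamma$, with constants depending on $L$; and the refined form of the energy estimate exploiting the smallness of $(u-\lambda_{k})_{+}$ when $\lambda_{k}$ is close to the ceiling must be used to ensure that the $\theta_{0}$ produced is a genuine function of $\mu$ and $\delta$ alone.
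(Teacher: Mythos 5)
Your overall skeleton (local energy estimate plus a De Giorgi isoperimetric inequality, with the nonlocal term playing a secondary role) matches the paper's, but the level iteration you set up does not close, and the claimed contradiction is not a contradiction. With levels $\lambda_k$ linearly spaced in $[1-2\theta_0,1-\theta_0]$, the isoperimetric step gives $\mu\,\delta\,(\theta_0/K)\leq C|D_k|^{1/2}\|\nabla(u-\lambda_k)_+\|_{L^2}$, and the best gradient bound you can actually extract from Theorem \ref{thm:admis} is of order one: any admissible barrier $\phi_k$ that dominates $u$ outside $E_{5/2}$ and equals $\lambda_k\approx 1$ inside has $M(\phi_k)\gtrsim 1$ (the second quantity in the definition of $M$ is at least $\phi_k(0)\approx 1$), so the right-hand side of \eqref{eq:enreal} carries the term $M(\phi_k)^2|\{u>\phi_k\}|^{(q-2)/q}$, which is not $O(\theta_0^2)$; the ``refined'' bound proportional to the slab height invoked in your last paragraph is therefore not available from the stated energy estimate. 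Consequently $|D_k|\gtrsim(\mu\delta\theta_0/K)^2$, and summing over the $K$ disjoint sets gives only $\mu^2\delta^2\theta_0^2/K\leq C$, i.e.\ exactly the ``balance $\mu\delta\theta_0\leq C$'' you announce --- but that inequality is always satisfied (its left side is at most one), and shrinking $\theta_0$ or enlarging $K$ only makes it easier to satisfy, so it cannot be ``violated by taking $\theta_0$ appropriately''. Even granting the ideal bound $\|\nabla(u-\lambda_k)_+\|_{L^2}\lesssim\theta_0$, the sum is $\gtrsim\mu^2\delta^2/K$, which still never exceeds $|E_2|$. The structural point is that De Giorgi's measure lemma needs the level gap to be comparable to the height of the truncation at every step, which forces geometric (dyadic) levels, not linearly spaced levels inside a slab of height $\theta_0$.

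This is precisely what the paper's proof arranges: it uses the dyadic levels hidden in $\phi_k=2^k(1+\phi+2^{-k}F)$ and rescales the solution, $u_k=2^k u$, at each step, so the truncation $v_k=(u_k-\phi_k)_+$ is normalized to unit size and the energy bound $\|v_k\|_{H^1(\Ol)}\leq C$ is uniform in $k$; the isoperimetric inequality (applied to the extension $R[v_k]$ on the convex cylinder $E_2$ via Lemma \ref{lem:Extend}, rather than on the Lipschitz domain $E_2\cap\Ol$ --- your straightening idea is plausible but unnecessary) then yields a fixed gain $|\{0<v_k<\tfrac{1}{2}\}\cap E_2\cap\Ol|\geq\gamma$ independent of $k$, and since these sets lie in disjoint dyadic slabs of $u$, pigeonhole over $k_0\approx|E_2|/\gamma$ steps gives the conclusion with $\theta_0=2^{-k_0-1}$. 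The hypothesis $\eta_0<2^{-k_0}$ is exactly the price of those rescalings (they amplify $f$ and $M(\phi)$ by $2^k$), and it is how $\theta_0,\eta_0$ come to depend on $\mu,\delta$. To repair your argument you would need to replace the linearly spaced levels by this dyadic rescaling scheme (or an equivalent device restoring the cancellation between level gap and truncation height); as written, the iteration cannot produce a contradiction.
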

\begin{proof}
Construct an auxiliary function $F:\Rn\rightarrow[-1,0]$ satisfying
the following properties:

\[
\begin{cases}
F(x)\equiv-1 & x\in E_{2}\\
F(x)\equiv0 & x\in E_{3}^{c}\\
|\nabla F|\leq C
\end{cases}
\]
Now consider the functions $\phi_{k}=2^{k}(1+\phi+2^{-k}F)$, which
have $M(\phi_{k})\leq C\eta_{0}2^{k},$ $u_{k}=2^{k}u$, which are
admissible solutions to $(P_{\epsilon})$ with right-hand side $2^{k}f$,
and $v_{k}=(u_{k}-\phi_{k})_{+}.$ 
\begin{claim*}
There is a constant $\gamma=\gamma(n,\lambda,\Lambda,L,q,C_{0},\delta,\mu)>0$
such that provided $\eta_{0}<2^{-k},$ either
\[
 |\{v_{k}>\frac{1}{2}\}\cap E_{2}\cap\Ol\}|<\delta
\]
or 
\[
|\{0<v_{k}<\frac{1}{2}\}\cap E_{2}\cap\Ol|\geq\gamma. 
\]
\end{claim*}
\begin{proof}
By the estimate in Theorem \ref{thm:admis}, we have that
\[
\|v_{k}\|_{H^{1}(\Ol)}^{2}\leq C\left[\left(\|2^{k}\epsilon^{2}f\|_{L^{q}(E_{3})}^{2}+M(\phi_{k})^{2}\right)|\{u_{k}>\phi_{k}\}|^{\frac{q-2}{q}}+\int_{\Omega}v_{k}^{2}\right]\leq C,
\]
where we use the assumption that $\eta_{0}<2^{-k}$, together with
the fact that $v_{k}\leq1$ and is supported on $E_{3}$, in order
to estimate the right-hand side by a constant $C$ which is independent
of $k$. From \eqref{eq:mu}, we have that 
\[
|\{v_{k}\leq0\}\cap E_{1}\cap\Ol|>\mu. 
\]
Assume that 
\[
|\{v_{k}>\frac{1}{2}\}\cap E_{2}\cap\Ol\}|\geq\delta.          
\]
Then by the De Giorgi isoperimetric inequality applied to $R[v_{k}]$
on the full cylinder $E_{2}$ (the proof may be found in the appendix
of \cite{CV}) and the level set estimate in Lemma \ref{lem:Extend},
we have
\begin{align*}
\delta^{1-\frac{1}{n}}\mu&\leq|\{v_{k}>\frac{1}{2}\}\cap E_{2}\cap\Ol\}|^{1-\frac{1}{n}}|\{v_{k}\leq0\}\cap E_{2}\cap\Ol|\\
&\leq C\|v_{k}\|_{H^{1}(E_{2}\cap\Ol)}|\{0<v_{k}<\frac{1}{2}\}\cap E_{2}\cap\Ol\}|^{\frac{1}{2}}.
\end{align*}
This immediately gives $|\{0<v_{k}<\frac{1}{2}\}\cap\Ol\cap E_{2}|\geq c(\delta,\mu):=\gamma$.
\end{proof}
Now observe that the sets $\{0<v_{k}<\frac{1}{2}\}\cap E_{2}\cap\Ol$
are disjoint for distinct $k$. Choose $k_{0}=\left\lceil \frac{|E_{2}|}{\gamma}\right\rceil +1$
and $\eta_{0}<2^{-k_{0}}.$ Then the claim applies for each $0\leq k\leq k_{0}$.
However, if $|\{0<v_{k}<\frac{1}{2}\}\cap E_{2}\cap\Ol|\geq\gamma$
for each of these $k$, that would mean
\[
|E_{2}|\geq\sum_{k=0}^{k_{0}}|\{0<v_{k}<\frac{1}{2}\}\cap E_{2}\cap\Ol|\geq|E_{2}|+\gamma,
\]
which is absurd. It follows that for some $k\leq k_{0},$ we must have $|\{v_{k}>\frac{1}{2}\}\cap E_{2}\cap\Ol\}|<\delta$.
After rescaling this gives $|\{u>1-2^{-k-1}\}\cap E_{2}\cap\Ol\}|<\delta$,
and so \eqref{eq:delta-1} holds with $\theta_{0}=2^{-k_{0}-1}.$\end{proof}
\begin{lem}
\label{lem:DG2nonloc}Under the hypotheses of Lemma \ref{lem:DG2loc},
for some (smaller) $\theta_{1},\eta_{1}>0$, we have the additional
conclusion that
\[
|\{u>1-\theta_{1}\}\cap E_{1}\cap\Or|<\delta.
\]
\end{lem}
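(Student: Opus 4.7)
My plan is to mirror the proof of Lemma \ref{lem:DG2loc} very closely, reusing the same truncation sequence $\phi_k=2^k(1+\phi+2^{-k}F)$, $u_k=2^ku$, and $v_k=(u_k-\phi_k)_+$, but additionally tracking the reflected function $h_k=v_k-R[v_k]$ that appears in Lemma \ref{lem:en2}. I will aim to prove the following dichotomy for each $k$ in a range $0\leq k\leq k_0$ (provided $\eta_1<2^{-k_0}$): either $|\{v_k>1/2\}\cap E_1\cap\Or|<\delta$, which gives the conclusion with $\theta_1=2^{-k-1}$, or else $|\{0<v_k<1/2\}\cap E_2\cap\Ol|\geq\gamma_1$ for a fixed $\gamma_1>0$. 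Since the transition sets in $\Ol$ are pairwise disjoint across $k$, the second alternative can hold at most $\lceil|E_2|/\gamma_1\rceil$ times, forcing the first for some $k\leq k_0$ depending only on $\delta,\mu$.

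To prove the dichotomy I would assume the first alternative fails and decompose $\{v_k>1/2\}\cap E_1\cap\Or=A_k\cup B_k$, where $A_k$ is the subset on which $R[v_k]\geq 1/4$. At least one piece has measure at least $\delta/2$. The $A_k$ case is settled essentially as in Lemma \ref{lem:DG2loc}: Lemma \ref{lem:Extend}(3) transfers the lower bound to $|\{v_k>1/4\}\cap\Ol\cap E_1|\geq(3/4)|A_k|$, and, together with the hypothesis $|\{v_k\leq 0\}\cap\Ol\cap E_1|\geq\mu$, the classical De Giorgi isoperimetric inequality applied to $R[v_k]$ on the full cylinder $E_2$ (using the $H^1$ bound on $v_k$ from Theorem \ref{thm:admis} and the extension estimate in Lemma \ref{lem:Extend}) yields the desired intermediate-set lower bound.

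For $B_k$, where $(h_k)_+\geq 1/4$, I would apply Lemma \ref{lem:en2} to the rescaled problem for $u_k,\phi_k$ and exploit the cross-term:
\[
\int_{\Or}\int_{\Rn}\frac{(h_k)_+(y)(u_k-\phi_k)_-(x)}{|x-y|^{n+2s}}\,dx\,dy\leq C,
\]
with $C$ independent of $\epsilon$ and, under the smallness hypothesis $\eta_1<2^{-k_0}$, bounded uniformly for $k\leq k_0$ by the same mechanism employed in Lemma \ref{lem:DG2loc}. On $\{u\leq 0\}\cap\Ol\cap E_1$, which has measure at least $\mu$, a direct calculation gives $(u_k-\phi_k)_-\geq 2^{k-1}$. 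Using the uniform lower bound $|x-y|^{-n-2s}\geq c>0$ for $x,y\in E_2$ and restricting the double integral to $y\in B_k$ and $x$ in this set produces
\[
c\cdot\tfrac{1}{4}\cdot 2^{k-1}\cdot\mu\cdot|B_k|\leq C,
\]
so $|B_k|\leq C'/(\mu 2^k)$. Choosing $k\geq k_*(\delta,\mu)$ large enough that $|B_k|<\delta/2$ forces $|A_k|\geq\delta/2$, reducing us to the $A_k$ case.

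The hard part will be keeping the constant $C$ in the cross-term estimate uniform in $k\leq k_0$, since naively the $M(\phi_k)^2$-type contributions in Lemma \ref{lem:en2} grow exponentially in $k$. The remedy is the same as in Lemma \ref{lem:DG2loc}: first fix $k_0$ from the disjoint-set counting argument (which depends only on $\delta,\mu,\gamma_1$), then take $\eta_1$ smaller than a universal constant times $2^{-k_0}$, which absorbs all the dangerous $2^k$ factors at once. Setting $\theta_1=2^{-k_0-1}$ and $\eta_1$ to be the minimum of this quantity and the $\eta_0$ from Lemma \ref{lem:DG2loc} completes the argument.
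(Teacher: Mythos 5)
Your proposal is essentially correct, but it takes a genuinely different route from the paper's. The paper's argument is a one-shot estimate with a continuous parameter: it sets $\phi_{\theta}=1+\phi+\theta F$ (with $F=-1$ on $E_{1}$, $0$ off $E_{2}$), applies Lemma \ref{lem:en2} to bound the cross term in \eqref{eq:goal} by $C\theta^{2}$ (using $(u-\phi_{\theta})_{+}\leq\theta$ and $M(\phi_{\theta})\leq C\theta$), then uses the kernel lower bound, the measure-$\mu$ hypothesis, and Chebyshev at level $\theta/4$ to get $|\{(u-\phi_{\theta})_{+}>R[(u-\phi_{\theta})_{+}]+\theta/4\}|\leq C\theta/\mu<\delta/4$, and finally disposes of the remaining part of $\{u>1-\theta/2\}\cap E_{1}\cap\Or$, where $R[(u-\phi_{\theta})_{+}]>\theta/4$, by importing the already-proved conclusion of Lemma \ref{lem:DG2loc} (applied with $\delta/4$) through the level-set estimate of Lemma \ref{lem:Extend}; the smallness comes from sending $\theta\to0$. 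You instead re-run the dyadic iteration of Lemma \ref{lem:DG2loc}: your $B_{k}$ branch plays the role of the paper's Chebyshev step, with the gain coming from $|(u_{k}-\phi_{k})_{-}|\geq2^{k}-1$ on $\{u\leq0\}\cap\Ol\cap E_{1}$ rather than from $(u-\phi_{\theta})_{+}\leq\theta$, and your $A_{k}$ branch re-derives the $\Ol$ control internally via the isoperimetric inequality and the disjoint-set counting instead of citing Lemma \ref{lem:DG2loc}'s conclusion. Both proofs rest on the same two ingredients (the second term on the left of \eqref{eq:goal} paired with the hypothesis \eqref{eq:mu}, and the reflection level-set estimate), so they are cousins; the paper's is shorter and produces $\theta_{1}$ essentially explicitly from $\theta_{0}(\delta/4,\mu)$ and $c\mu\delta$, while yours is self-contained modulo the hypotheses of Lemma \ref{lem:DG2loc}, at the cost of redoing the counting and of the uniform-in-$k$ bookkeeping for the constant in Lemma \ref{lem:en2} — which you correctly identify and handle by taking $\eta_{1}\lesssim2^{-k_{0}}$ (the additive constant $2^{k}$ in $\phi_{k}$ is harmless since only differences and gradients of $\phi_{k}$ enter the estimates, cf.\ Remark \ref{improvementsenergyest}).

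A few points to tighten, none fatal: your $B_{k}$ bound only yields $|B_{k}|<\delta/2$ for $k\geq k_{*}(\delta,\mu)$, so the dichotomy and the disjoint-set count must be run over $k_{*}\leq k\leq k_{0}$ with $k_{0}=k_{*}+\lceil|E_{2}|/\gamma_{1}\rceil+1$ (both depend only on the allowed data, so nothing is lost, but say so). Since $v_{k}$ vanishes only outside $E_{3}$, the reflection must be the one built on the cylinder $E_{3}$, so transferred level sets a priori live in $\Ol\cap E_{3}$; restrict the intermediate sets you count to $E_{2}\cap\Ol$, where $F\equiv-1$ makes them disjoint across $k$, exactly as in the paper's Lemma \ref{lem:DG2loc} (the same glossing occurs there). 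Finally, with the paper's sign convention $(u-\phi)_{-}\leq0$, the cross term should be written with $-h_{+}(y)(u_{k}-\phi_{k})_{-}(x)$.
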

\begin{proof}
This time let $F:\Rn\rightarrow[-1,0]$
\[
\begin{cases}
F(x)\equiv-1 & x\in E_{1}\\
F(x)\equiv0 & x\in E_{2}^{c}\\
|\nabla F|\leq C
\end{cases}
\]
and set $\phi_{\theta}=1+\phi+\theta F$. Observe that $M(\phi_{\theta})\leq C(\eta_{1}+\theta)\leq C\theta$
provided $\eta_{1}<\theta$, and so from Lemma \ref{lem:en2} we have
that
\begin{align*}
\int_{\Or}\int_{\Rn}&\frac{-(u-\phi_{\theta})_{-}(x)((u-\phi_{\theta})_{+}-R[(u-\phi_{\theta})_{+}])_{+}(y)dxdy}{|x-y|^{n+2s}}\\
 & \leq C\left[\left(\|\epsilon^{2s}f\|_{L^{q}(E_{1})}^{2}+M(\phi_{\theta})^{2}\right)|\{u>\phi_{\theta}\}|^{\frac{q-2}{q}}+\int_{E_{1}}(u-\phi_{\theta})_{+}^{2}\right].\\
 & \leq C\left(\eta_{1}^{2}+M(\phi_{\theta})+\theta^{2}\right)\leq C\theta^{2};
\end{align*}
for the last term use the fact that $(u-\phi_{\theta})_{+}\leq\theta$.
From \eqref{eq:mu} we know that $|\{u\leq0\}\cap E_{1}\cap\Ol|>\mu,$
which implies that (provided $\theta<\frac{1}{2}$, say)
\[
 |\{-(u-\phi_{\theta})_{-}\geq\frac{1}{2}\}\cap E_{1}|\geq\mu.
\]
As $\left((u-\phi_{\theta})_{+}-R[(u-\phi_{\theta})_{+}]\right)_{+}$ is supported
on $E_{2}$, we can use a lower bound on the kernel $|x-y|^{-n-2s}\geq c$
for $x\in E_{1},$ $y\in E_{2}$ to deduce that
\begin{align*}
C\theta^{2} & \geq\int_{\Or}\int_{\Rn}\frac{-(u-\phi_{\theta})_{-}(x)\left((u-\phi_{\theta})_{+}-R[(u-\phi_{\theta})_{+}]\right)_{+}(y)dxdy}{|x-y|^{n+2s}}\\
 & \geq\int_{\Or\cap E_{2}}\int_{\{-(u-\phi_{\theta})_{-}\geq\frac{1}{2}\}\cap E_{1}}\frac{-(u-\phi_{\theta})_{-}(x)\left((u-\phi_{\theta})_{+}-R[(u-\phi_{\theta})_{+}]\right)_{+}(y)dxdy}{|x-y|^{n+2s}}\\
 & \geq c\mu\int_{E_{2}\cap\Or}\left((u-\phi_{\theta})_{+}-R[(u-\phi_{\theta})_{+}]\right)_{+}\\
 & \geq c\mu\theta\left|\{(u-\phi_{\theta})_{+}>R[(u-\phi_{\theta})_{+}]+\frac{\theta}{4}\}\right|,
\end{align*}
with the last step by Chebyshev's inequality. By choosing $\theta$
small enough, we can guarantee that
\[
\left|\{(u-\phi_{\theta})_{+}>R[(u-\phi_{\theta})_{+}]+\frac{\theta}{4}\}\right|<\frac{\delta}{4}.
\]
On the other hand, provided $\theta$ is smaller than $\frac{1}{2}\theta_{0}(\frac{\delta}{4},\mu)$
(and $\eta_{1}$ is chosen smaller), the conclusion of Lemma \ref{lem:DG2loc}
gives that $|\{u>1-\theta\}\cap E_{2}\cap\Ol|<\frac{\delta}{4}.$
It then follows that $|\{R[(u-\phi_{\theta})_{+}]>0\}|<\frac{\delta}{3}$.
Combining these,
\begin{align*}
|\{u>1-\frac{\theta}{2}\}\cap& E_{1}\cap\Or\}|\leq|\{(u-\phi_{\theta})_{+}>\frac{\theta}{2}\}\cap\Or|\\
&\leq|\{(u-\phi_{\theta})_{+}>R[(u-\phi_{\theta})_{+}]+\frac{\theta}{4}\}|+\left|\{R[(u-\phi_{\theta})_{+}]>\frac{\theta}{4}\}\right|\\
&<\delta.
\end{align*}
Choose $\theta_{1}=\theta/2$ to deduce the conclusion. \end{proof}
\begin{lem}
\label{lem:DG2}Under the hypotheses of \ref{lem:DG2loc}, for some
(smaller) $\theta_{2},\eta_{2}>0$, we have the additional conclusion
that
\[
\sup_{E_{1/2}}u\leq1-\theta_{2}.
\]
\end{lem}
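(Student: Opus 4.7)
The plan is to execute the closing step of the De Giorgi oscillation decay: the two measure-shrinking estimates in Lemmas \ref{lem:DG2loc} and \ref{lem:DG2nonloc} (which handle the local and nonlocal sides of $\Gamma$ separately) together verify the smallness hypothesis of Lemma \ref{lem:DG1}, but only after an affine renormalization of $u$ so that the super-level set $\{u > 1 - \theta\}$ becomes the positivity set of a new admissible solution bounded above by $1$.

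Concretely, let $\delta_0$ and $C_0'$ denote the threshold and admissible forcing/drift bound appearing in Lemma \ref{lem:DG1} (for the present values of $n, \lambda, \Lambda, q, L$). I would first apply Lemmas \ref{lem:DG2loc} and \ref{lem:DG2nonloc} with target measure $\delta_0/2$, producing constants $\theta_0, \theta_1, \eta_0, \eta_1 > 0$. Setting $\theta := \min\{\theta_0, \theta_1\}$ and requiring $\eta_2 \leq \min\{\eta_0, \eta_1\}$, both measure estimates apply simultaneously, giving
\[
|\{u > 1 - \theta\} \cap E_1| \leq |\{u > 1 - \theta\} \cap E_2 \cap \Ol| + |\{u > 1 - \theta\} \cap E_1 \cap \Or| < \delta_0.
\]

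Now set $\tilde u := (u - (1-\theta))/\theta$. The bilinear forms $B_L$ and $B_N$ depend only on differences, and the constant shift in the drift term $\int u\langle \boldsymbol{b}, \nabla \phi\rangle$ integrates against a divergence-free field to $0$; hence $\tilde u$ is an admissible solution of $(P_\epsilon)$ on $E_1$ with the same drift $\boldsymbol{b}$ and with forcing $f/\theta$. Moreover $\tilde u \leq 1 + \tilde\phi$ on $\Rn$ with $\tilde\phi := \phi/\theta$; since $\phi \equiv 0$ on $E_3 \supset E_1$ one has $\tilde\phi \equiv 0$ on $E_1$, and the conditions $M(\tilde\phi) \leq \eta_2/\theta \leq 1$ and $\|\epsilon^{2s}(f/\theta)\|_{L^q(E_1)} \leq \eta_2/\theta \leq C_0'$ needed by Lemma \ref{lem:DG1} are both enforced by taking $\eta_2 \leq \theta \min\{1, C_0'\}$. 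Since $\{\tilde u > 0\} \cap E_1 = \{u > 1 - \theta\} \cap E_1$ has measure less than $\delta_0$, Lemma \ref{lem:DG1} yields $\sup_{E_{1/2}} \tilde u \leq 1/2$, which translates to $\sup_{E_{1/2}} u \leq 1 - \theta/2$; setting $\theta_2 := \theta/2$ finishes the argument.

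I do not expect any real analytic difficulty here: the content of the argument was already in the three preceding lemmas, and what remains is essentially constant bookkeeping. The only point to be careful with is the preservation of admissibility under the affine change $u \mapsto (u - (1 - \theta))/\theta$; this relies on $\div \boldsymbol{b} = 0$ to absorb the shift in the drift term, without which $\tilde u$ would pick up an extra forcing contribution $(1-\theta)\div\boldsymbol{b}$ and Lemma \ref{lem:DG1} could no longer be invoked directly.
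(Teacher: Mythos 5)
Your proposal is correct and follows the same route as the paper: combine the measure estimates of Lemmas \ref{lem:DG2loc} and \ref{lem:DG2nonloc} to make $|\{u>1-\theta\}\cap E_{1}|$ smaller than the threshold of Lemma \ref{lem:DG1}, then apply Lemma \ref{lem:DG1} to the affinely renormalized function $(u-(1-\theta))/\theta$, shrinking $\eta_{2}$ so that the rescaled forcing and $M(\phi)$ still satisfy its hypotheses. Your extra remark about admissibility and the divergence-free drift absorbing the constant shift is the same mechanism the paper uses implicitly (e.g.\ for $u_{k}=2^{k}u$ in Lemma \ref{lem:DG2loc}), so nothing is missing.
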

\begin{proof}
Fix $\delta$ as in Lemma \ref{lem:DG1} and take $2\theta_{2},\eta_{2}$
small enough so that both Lemma \ref{lem:DG2loc} and Lemma \ref{lem:DG2nonloc}
apply with this $\delta.$ Then we have that $|\{u>1-2\theta_{2}\}\cap E_{1}|<\delta.$
Let $v=(2\theta_{2})^{-1}(u-1+2\theta_{2})$; then provided $\eta_{2}$ is
chosen even smaller, Lemma \ref{lem:DG1} applies to $v$ to give
\[
\sup_{E_{1/2}}v\leq\frac{1}{2},
\]
which scales to
\[
\sup_{E_{1/2}}u\leq1-\theta_{2}.
\]

\end{proof}
We are now in a position to iterate Lemma \ref{lem:DG2} to obtain
geometric oscillation decay. 
\begin{thm}
\label{thm:Holder}Assume $u$ is an admissible solution of $(P)$
on $E_{1}$ with $|u|\leq1$ on $\Rn$. Assume further that $\|f\|_{L^{q}}\leq1$
with $q>\frac{n}{s},$ and either that $\boldsymbol{b}=0$ or that
$s>1/2$, $\|\boldsymbol{b}\|_{L^{q}}\leq C_{0}$ and $q\geq\frac{n}{2s-1}.$
Then there are $r,\theta>0$ such that
\[
\osc_{E_{r^{k}}}u\leq2(1-\theta)^{k}.
\]
\end{thm}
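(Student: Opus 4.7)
The proof is the standard De Giorgi iteration scheme, taking advantage of the fact that the oscillation improvement in Lemma \ref{lem:DG2} is uniform in the scale parameter $\epsilon \leq 1$; this uniformity is precisely what replaces the scale invariance that $(P)$ itself lacks. I would prove by induction on $k$ that there exist real numbers $m_k \leq M_k$ with $M_k - m_k = 2(1-\theta)^k$, with $(m_k)$ nondecreasing and $(M_k)$ nonincreasing, and $m_k \leq u \leq M_k$ on $E_{3r^k}$. The buffer factor of $3$ is needed because Lemma \ref{lem:DG2loc} demands tail control on the cylinder $E_3$ rather than merely $E_1$. At each step the rescaled function
\[
v_k(x) = (1-\theta)^{-k}\bigl(u(r^k x) - \tfrac{m_k + M_k}{2}\bigr)
\]
solves the rescaled equation $(P_{r^k})$ on $E_{r^{-k}}$ and satisfies $|v_k| \leq 1$ on $E_3$ by the inductive hypothesis.

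The constants are fixed in the following order: take $\delta$ from Lemma \ref{lem:DG1} and $\mu = \tfrac12 |E_1 \cap \Ol|$; then obtain $\theta_2, \eta_2$ from Lemma \ref{lem:DG2}; set $\theta = \theta_2/2$; choose $\alpha \in (0, s/2)$ small enough that the barrier $\phi$, defined to vanish on $E_3$ and to grow like $C_\star |x|^\alpha$ for $|x|\geq 3$ (with $C_\star$ a universal constant emerging from the tail bound below), satisfies $M(\phi) < \eta_2/2$; finally set $r = (1-\theta)^{1/\alpha}$, which can be made as small as desired by further shrinking $\alpha$. An initial normalization of $u$ by a small factor makes $\|(r^k)^{2s} f^{r^k}\|_{L^q(E_3)} \leq (r^k)^{2s - n/q}\|f\|_{L^q}$ smaller than $\eta_2/2$ for every $k \geq 0$ (this quantity is decreasing in $k$ because $q > n/s$); the resulting conclusion of the form $\osc u \leq C(1-\theta)^k$ can then be brought to $2(1-\theta)^k$ by a slight readjustment of $\theta$ and $r$.

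For the induction step, the tail bound $|v_k(x)| \leq 1 + C_\star |x|^\alpha$ is verified scale by scale: for $x \in E_{3r^{-j}} \setminus E_{3r^{-j+1}}$ with $1 \leq j \leq k$, the point $r^k x$ lies in $E_{3r^{k-j}}$, and the inductive oscillation bound combined with the nesting $[m_k,M_k] \subset [m_{k-j},M_{k-j}]$ gives $|u(r^k x) - (m_k+M_k)/2| \leq 2(1-\theta)^{k-j}$, whence $|v_k(x)| \leq 2(1-\theta)^{-j} \lesssim |x|^\alpha$ via the identity $(1-\theta) = r^\alpha$; for $|x| \geq 3 r^{-k}$, only the a priori bound $|u| \leq 1$ applies and yields $|v_k(x)| \leq 2(1-\theta)^{-k} = 2 r^{-k\alpha} \lesssim |x|^\alpha$. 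The drift scaling $\|(r^k)^{2s-1}\boldsymbol{b}^{r^k}\|_{L^q(E_3)}$ is uniformly bounded in $k$ exactly because $q \geq n/(2s-1)$ (with $\boldsymbol{b} = 0$ trivial). After possibly replacing $v_k$ by $-v_k$, the dichotomy forces $|\{v_k \leq 0\} \cap E_1 \cap \Ol| \geq \mu$, so Lemma \ref{lem:DG2} applies to give $\sup_{E_{1/2}} v_k \leq 1 - \theta_2$. Unscaling, $\osc_{E_{r^k/2}} u \leq (2-\theta_2)(1-\theta)^k = 2(1-\theta)^{k+1}$; since $3r \leq 1/2$ (ensured by taking $\alpha$ small), $E_{3r^{k+1}} \subset E_{r^k/2}$, closing the induction.

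The principal technical point is the simultaneous balancing of the four parameters $\theta, r, \alpha, \eta_2$: smallness of $M(\phi)$ forces $\alpha$ small, which forces $r = (1-\theta)^{1/\alpha}$ very small, automatically guaranteeing the buffer condition $3r \leq 1/2$; meanwhile the data norms must remain admissible under all rescalings, which is exactly the content of the hypotheses $q > n/s$ for the forcing and $q \geq n/(2s-1)$ for the drift in the case $s > 1/2$. Everything else follows the familiar pattern of De Giorgi's oscillation-decay argument.
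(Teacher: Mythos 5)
Your overall scheme (rescale, control the tails by a slowly growing $\phi$, invoke Lemma \ref{lem:DG2}, iterate) is the right one, but the step where you claim the barrier exists fails. You rescale exactly by $r^k$, so the inductive hypothesis gives $|v_k|\leq 1$ only on $E_3$ itself, with no margin: at points immediately outside $\partial E_3$ the only available bound is the one inherited from scale $k-1$, namely $|v_k|\leq 2(1-\theta)^{-1}>2$ (your centering at $(m_k+M_k)/2$ cannot improve this, since the previous scale's oscillation bound is twice the current one). Lemma \ref{lem:DG2loc} requires $v_k\leq 1+\phi$ on all of $\Rn$ with $\phi\equiv 0$ on $E_3$ and $M(\phi)<\eta_0$; hence $\phi$ would have to climb from $0$ to roughly $1$ across $\partial E_3$ in zero distance, which is incompatible with any bound on its Lipschitz constant, and even the second part of $M(\phi)$, namely $\sup\phi/(1+|x|^{s/2})$, stays of order one because your $C_\star\approx 2(1-\theta)^{-1}$ is a fixed constant, not small. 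Shrinking $\alpha$ only flattens the far-field growth $|x|^{\alpha}$; it does nothing about this unit-size jump at the inner edge, so the assertion ``choose $\alpha$ small enough that $M(\phi)<\eta_2/2$'' is false as stated.

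The paper avoids exactly this by two devices. First, it zooms only to the intermediate scale $\epsilon=\rho r^{l-1}$ with $\rho^{2}=r$, so that $|v|\leq 1$ holds on the huge cylinder $E_{\rho^{-1}}\supset\supset E_3$ and the excess over $1$ appears only at distance $\rho^{-1}=r^{-1/2}$, giving the barrier ample room to grow with tiny slope ($|\nabla\phi|\lesssim\alpha\rho^{1-\alpha}$). Second, it centers at $a_{l-1}$, the midpoint of the actual max and min of $u$ on $E_{r^{l-1}}$, which yields the sharper annulus bounds $|v|\leq 2(1-\theta)^{-j}-1$; in particular the required barrier value at the inner edge is only $O(\theta)$, not $O(1)$. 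Your argument can be repaired in this spirit (e.g.\ strengthen the induction to a cylinder $E_{Kr^{k}}$ with $K$ large in terms of $\eta_2$, or adopt the $\rho=\sqrt r$ rescaling outright), but as written the barrier step, and hence the induction, does not close. Two smaller points: the rescaled forcing carries the factor $(1-\theta)^{-k}$, so its smallness requires $\alpha<2s-n/q$ (harmless, but your displayed bound omits it), and the measure threshold $\mu$ must be taken relative to the rescaled domain $\Ol/\epsilon$, which is where the Lipschitz character of $\Gamma$ enters to make $\mu$ uniform in $k$.
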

\begin{proof}
The conclusion is immediate from the assumptions when $k=0$. Assume
by induction that it holds for every $k<l$; we will show it holds
for $l$. Let
\[
a_{l-1}=\frac{\max_{E_{r^{l-1}}}u+\min_{E_{r^{l-1}}}u}{2};
\]
then if we set $v(x)=\frac{u(\rho r^{l-1}x)-a_{l-1}}{(1-\theta)^{l-1}}$,
we have from induction that $|v|\leq1$ on $E_{\rho^{-1}}.$ Moreover,
\[
\sup_{E_{\rho r^{j}}}|v|\leq2(1-\theta)^{-j}-1
\]
for $0\leq j\leq l-1$. By choosing $\rho^{2}=r,$ It follows that
for $|x|>\rho^{-1}$,
\begin{equation}
|v(x)|\leq2(1-\theta)^{-\frac{1}{2}}|x|^{\frac{\log(1-\theta)}{\log r}}-1:=1+\phi.\label{eq:wings}
\end{equation}
Notice that $\phi$ is radial, $\partial_{r}\phi>0$, and $\partial_{rr}\phi<0$.
Thus $|\nabla\phi(|x|)|\leq|\nabla\phi(\rho^{-1})|\leq C\alpha\rho^{1-\alpha}$,
which in particular can be made arbitrarily small by choosing $\rho$
(and hence $r$) smaller. Then provided $\alpha=\frac{\log(1-\theta)}{\log r}$
is small enough (which, again, is arranged by choosing $r$ smaller),
we can make sure $M(\phi)$ is arbitrarily small as well.

Next, observe that $v$ solves $(P_{\epsilon})$ with $\epsilon=\rho r^{l-1},$
$\boldsymbol{b}^{\epsilon}(x)=\boldsymbol{b}(\rho r^{l-1}x)$, and
$f^{\epsilon}(x)=(1-\theta)^{l-1}f(\rho r^{l-1}x)$. Notice that $\|\epsilon^{2s-1}\boldsymbol{b}^{\epsilon}\|_{L^{q}(E_{3})}^{2}\leq\epsilon^{2(2s-1-n/q)}C_{0}.$
Then if $q\geq\frac{n}{2s-1}$, this is bounded by $C_{0}.$ Also,
after an initial dilation, we may assume that 
\[\|\epsilon^{2s}f^{\epsilon}\|_{L^{q}(E_{3})}^{2}\leq\epsilon^{2(2s-n/q)}(1-\theta)^{l-1}\|f\|_{L^{q}(E_{1})}^{2}\leq\eta\]
if $\theta$ is small enough. Applying Lemma \ref{lem:DG2} with $\mu=|E_{1}\cap\Ol/\epsilon|/2$
to either $v$ or $-v$, we obtain that if $\eta,$ $r$ are small
enough, there is a constant $\theta_{2}$ such that either
\[
\sup_{E_{1/2}}v\leq1-\theta_{2}
\]
or
\[
\inf_{E_{1/2}}v\geq-1+\theta_{2}.
\]
As $\Ol$ has Lipschitz constant $L$ near $0$, we have that $|\Ol/\epsilon\cap E_{1}|\geq c(L)|E_{1}|,$
meaning $\mu$ can be chosen uniformly in $\epsilon.$ In other words,
\[
\osc_{E_{1/2}}v\leq2(1-\theta_{2}/2).
\]
Scaling back, making sure that $\rho<\frac{1}{6},$ and setting $\theta=\theta_{2}/2$,
we get
\[
\osc_{E_{r^{l}}}v\leq2(1-\theta)^{l},
\]
as desired.
\end{proof}
We now turn our attention to the case $s=\frac{1}{2}.$ The above
argument goes through unchanged if $\boldsymbol{b}\in L^{\infty}.$
We are also interested, however, in the nonlinear problem where $u$
and $\boldsymbol{b}$ are related by
\[
\boldsymbol{b}=Tu,
\]
where $T$ is a vector-valued translation invariant Calderon-Zygmund
operator satisfying \eqref{eq:Hormander} and having the property that
$\div Tu=0$ (in the distributional sense). The standard example arises
from the surface quasigeostrophic equation, where $n=2$ and $T=(-R_{2},R_{1})$
with $R_{i}$ Riesz transforms.
\begin{thm}
\label{thm:Holder-1}Assume $u$ is an admissible solution of $(P)$
on $E_{1}$ with $|u|\leq1$ on $\Rn$. Assume further that $s=\frac{1}{2}$
$\|f\|_{L^{q}}\leq1$ with $q>\frac{n}{s}$, $\boldsymbol{b}=Tu$
with $T$ as above, and $|\int_{B_{1}}\boldsymbol{b}|\leq C_{0}$.
Then there are $r,\theta>0$ such that
\[
\osc_{E_{r^{k}}}u\leq2(1-\theta)^{k}
\]
and
\begin{equation}
\sup_{B\subset B_{r^{k}}}\fint_{B}\left|\boldsymbol{b}-\fint_{B}\boldsymbol{b}\right|\leq C(1-\theta)^{k-1}\label{eq:concBMO}
\end{equation}
\end{thm}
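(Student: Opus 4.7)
The plan is to reproduce the induction of Theorem \ref{thm:Holder}, where the key difficulty---replacing the fixed $L^q$ bound on $\boldsymbol{b}$---is handled by bootstrapping the BMO bound \eqref{eq:concBMO} at previous scales into a uniform $L^q$ bound on the rescaled drift, via Lemma \ref{lem:BMO} and John--Nirenberg. The two conclusions are proved by simultaneous induction on $k$.

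The base case $k=0$ is immediate for the oscillation; \eqref{eq:concBMO} at $k=0$ follows from Lemma \ref{lem:BMO} applied to $u$ (which satisfies $|u|\le 1\le 1+|x|^\alpha$ for any $\alpha<\gamma$) together with the remark after that lemma. For the inductive step, assume both statements hold for $k<l$, and construct the rescaled function $v(x)=(u(\rho r^{l-1}x)-a_{l-1})/(1-\theta)^{l-1}$ exactly as in Theorem \ref{thm:Holder}. As shown there, $|v|\le 1+\phi$ globally for a function $\phi$ of polynomial growth $|x|^\alpha$ with $\alpha=\log(1-\theta)/\log r$ and with $M(\phi)$ arbitrarily small once $r,\theta$ are small. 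Since $T$ is translation-invariant, homogeneous of degree zero, and kills constants (by the cancellation condition on $K$), we have $Tv(x)=(1-\theta)^{-(l-1)}\boldsymbol{b}(\epsilon x)$ with $\epsilon=\rho r^{l-1}$, so that the drift $\boldsymbol{b}^\epsilon$ appearing in the rescaled equation for $v$ equals $(1-\theta)^{l-1}Tv$.

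The main step is to verify $\|\boldsymbol{b}^\epsilon\|_{L^q(E_3)}\le C$ uniformly in $l$ (note $\epsilon^{2s-1}=1$ here), as required by Lemma \ref{lem:DG2}. Shrinking $r$ so that $\alpha<\gamma$, Lemma \ref{lem:BMO} applies to $v$ and together with the remark after it and John--Nirenberg yields $\|Tv-\fint_{B_3}Tv\|_{L^q(B_3)}\le C$ for every $q<\infty$. The mean $\fint_{B_3}Tv$ is controlled by telescoping through the earlier scales: using \eqref{eq:concBMO} at each scale $r^j$, $j<l$, and that $|\fint_{B_{r^{j+1}}}\boldsymbol{b}-\fint_{B_{r^j}}\boldsymbol{b}|\le r^{-n}\fint_{B_{r^j}}|\boldsymbol{b}-\fint_{B_{r^j}}\boldsymbol{b}|$,
\begin{equation*}
\left|\fint_{B_{3\epsilon}}\boldsymbol{b}\right|\le\left|\fint_{B_1}\boldsymbol{b}\right|+\sum_{j=0}^{l-1}\left|\fint_{B_{r^{j+1}}}\boldsymbol{b}-\fint_{B_{r^j}}\boldsymbol{b}\right|\le C_0+Cr^{-n}\sum_{j=0}^{l-1}(1-\theta)^{j-1}\le C(C_0,\theta,r).
\end{equation*}
Changing variables gives $\fint_{B_3}Tv=(1-\theta)^{-(l-1)}\fint_{B_{3\epsilon}}\boldsymbol{b}$, so multiplying by $(1-\theta)^{l-1}\le 1$ yields $\|\boldsymbol{b}^\epsilon\|_{L^q(E_3)}\le C$ independent of $l$.

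Once this uniform drift bound is in hand, the remaining hypotheses of Lemma \ref{lem:DG2} (on $\|\epsilon^{2s}f^\epsilon\|_{L^q}$, $M(\phi)$, and $\mu=|E_1\cap\Ol/\epsilon|/2$) are verified exactly as in Theorem \ref{thm:Holder}, and applying the lemma to $\pm v$ followed by rescaling yields the oscillation decay at $k=l$. The BMO estimate \eqref{eq:concBMO} at $k=l$ is obtained by one more application of Lemma \ref{lem:BMO} to this same $v$: the bound $\fint_B|Tv-c_B|\le C$ for $B\subset B_1$ translates, via $Tv(x)=(1-\theta)^{-(l-1)}\boldsymbol{b}(\epsilon x)$ and the inclusion $B_{r^l}\subset B_\epsilon$, into the desired $\fint_B|\boldsymbol{b}-c'_B|\le C(1-\theta)^{l-1}$ on subballs of $B_{r^l}$. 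The main obstacle throughout is ensuring that the telescoping sum of mean values stays uniformly bounded, which succeeds precisely because \eqref{eq:concBMO} guarantees the BMO seminorms decay geometrically along the scales.
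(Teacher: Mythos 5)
Your proposal is correct and follows essentially the same route as the paper: simultaneous induction on the oscillation decay and the BMO bound, rescaling to $v$, applying Lemma \ref{lem:BMO} plus John--Nirenberg to the rescaled drift, and controlling the accumulated mean by telescoping through the previous scales using \eqref{eq:concBMO}, so that Lemma \ref{lem:DG2} applies uniformly in $l$ and \eqref{eq:concBMO} at the new scale follows by rescaling the current BMO estimate. The only (harmless) differences are organizational --- you telescope the $L^1$ means directly and invoke John--Nirenberg once at the current scale, whereas the paper converts \eqref{eq:concBMO} to $L^p$ bounds at every scale and telescopes those --- and you should take the ball in the John--Nirenberg step large enough to contain $E_3$ (e.g.\ $B_{(4\rho)^{-1}}$ with $\rho$ small in terms of $L$, as in the paper) rather than $B_3$.
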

\begin{proof}
We proceed as in the proof of Theorem \ref{thm:Holder}. The base
case $k=0$ follows immediately from applying Lemma \ref{lem:BMO}
and using $|u|\leq1$. Assume as before that the conclusion holds
for $k<l$ and set 
\[
v(x)=\frac{u(\rho r^{l-1}x)-a_{l-1}}{(1-\theta)^{l-1}}.
\]
Note that $v$ satisfies $|v|\leq1+\phi$ with $\phi$ as in \eqref{eq:wings}.
Moreover, $v$ satisfies $(P_{\epsilon})$ with $\epsilon=\rho r^{l-1}$
and
\[
\boldsymbol{b}^{\epsilon}(x)=\boldsymbol{b}(\rho r^{l-1}x)=(1-\theta)^{l-1}Tv(x).
\]
Making sure that $r,\theta$ are small enough, we have that $\phi\leq(|x|-\frac{1}{4\rho})_{+}^{\alpha}$
for some $\alpha<\gamma$ (with $\gamma$ as in \eqref{eq:Hormander}).
Applying Lemma \ref{lem:BMO}, we have that
\begin{equation}
\sup_{B\subset B_{(4\rho)^{-1}}}\fint_{B}\left|\boldsymbol{b}^{\epsilon}-\fint_{B}\boldsymbol{b}^{\epsilon}\right|\leq C(1-\theta)^{l-1}.\label{eq:intBMO}
\end{equation}
Next, we may apply the John-Nirenberg inequality to the inductive
assumption \eqref{eq:concBMO}, we see that
\[
\left(\fint_{B_{r^{k+1}}}\left|\boldsymbol{b}-\fint_{B_{r^{k}}}\boldsymbol{b}\right|^{p}\right)^{1/p}\leq r^{-n/p}C(1-\theta)^{k-1}
\]
and
\[
\left(\fint_{E_{3\rho r^{l-1}}}\left|\boldsymbol{b}-\fint_{B_{r^{l-1}}}\boldsymbol{b}\right|^{p}\right)^{1/p}\leq C\rho^{-n/p}(1-\theta)^{l-1}
\]
for $k<l$ and $p$ large and to be chosen below. Since the averages
are being taken over subdomains, we incur the factors of $r^{-n/p}$
on the right. In the second inequality we used the fact that $E_{3}\subset B_{\frac{1}{4\rho}}$,
which can be guaranteed by choosing $\rho$ to be small in terms of
$L$. Now we can compute
\begin{align*}
\left(\int_{E_{3}}|\boldsymbol{b}^{\epsilon}|^{p}\right)^{1/p} & \leq\left|\fint_{B_{r^{l-1}}}\boldsymbol{b}\right|+\left(\fint_{E_{3\rho r^{l-1}}}\left|\boldsymbol{b}-\fint_{B_{r^{l-1}}}\boldsymbol{b}\right|^{p}\right)^{1/p}\\
 & \leq\left(\fint_{B_{r^{l-1}}}|\boldsymbol{b}|^{p}\right)^{1/p}+\left(\fint_{E_{3\rho r^{l-1}}}\left|\boldsymbol{b}-\fint_{B_{r^{l-1}}}\boldsymbol{b}\right|^{p}\right)^{1/p}\\
 & \leq\left(\fint_{B_{r^{l-2}}}|\boldsymbol{b}|^{p}\right)^{1/p}+\left(\fint_{B_{r^{l-1}}}\left|\boldsymbol{b}-\fint_{B_{r^{l-2}}}\boldsymbol{b}\right|^{p}\right)^{1/p}+\left(\fint_{E_{3\rho r^{l-1}}}\left|\boldsymbol{b}-\fint_{B_{r^{l-1}}}\boldsymbol{b}\right|^{p}\right)^{1/p}\\
 & \leq C\left[\left(\fint_{E_{3\rho r^{l-1}}}\left|\boldsymbol{b}-\fint_{B_{r^{l-1}}}\boldsymbol{b}\right|^{p}\right)^{1/p}+\sum_{k=1}^{l-1}\left(\fint_{B_{r^{k}}}\left|\boldsymbol{b}-\fint_{B_{r^{k-1}}}\boldsymbol{b}\right|^{p}\right)^{1/p}\right]+C_{0}\\
 & \leq C\sum_{k=1}^{l}r^{-n/p}(1-\theta)^{k-1}+C_{0}\leq C,
\end{align*}
with $C$ independent of $l$. Now we make sure $p$ is large enough
so Lemma \ref{lem:DG2} applies to give
\[
\osc_{E_{1/2}}v\leq2(1-\theta),
\]
and proceed as in the proof of Theorem \ref{thm:Holder}. To obtain
\eqref{eq:concBMO}, rescale equation \eqref{eq:intBMO}.\end{proof}
\begin{rem}
The assumption on the $L^{q}$ space that $f$ belongs to is not optimal;
the best possible range is $q>\frac{n}{2s}$. This improvement requires
only minor changes to the proofs above, estimating the terms with
$f$ as in \cite{GT}. We chose to suppress the greater generality
only to simplify the exposition. On the other hand, the Lebesgue spaces
for $\boldsymbol{b}$ can not be improved.
\end{rem}

\section{Global Results for the Elliptic Problem}

In this section, we consider the general problem on $\Rn$ with Lipschitz
interface. We demonstrate that solutions exist and are globally bounded,
and then give a procedure to flatten the interface in a neighborhood
of the origin to obtain a local solution to a problem. This is made
more complicated by the nonlocal nature of the equation, but nevertheless
is generally possible. This flattening will be used in later sections
to study the finer behavior of a solution $u$ near a smooth stretch
of interface. 

We begin with existence and global bounds for the problem. The procedure
is identical to that in Section 3, except now care is taken with regard
to compactness issues arising from the unbounded domain. We will require
the following strengthened assumption on $\Gamma$:
\begin{condition}
\label{StrongLip} Both $\Ol,$ $\Or$ admit a Gagliardo-Nirenberg-Sobolev
inequality of the type
\[
\|u\|_{L^{p_{1}}(\Ol)}\leq C_{1}\|\nabla u\|_{L^{2}(\Ol)}
\]
with $p_{1}=\frac{2n}{n-2}$ for $u$ with bounded support, and
\[
\|u\|_{L^{p_{2}}(\Or)}\leq C_{2}\left(\int_{\Or\times\Or}\frac{|u(x)-u(y)|^{2}}{|x-y|^{n+2s}}dxdy\right)^{1/2}
\]
for $p_{2}=\frac{2n}{n-2s}$ and $u$ with bounded support.
\end{condition}
These are satisfied by images of a half-space under globally bilipschitz
maps, for instance. In fact, only somewhat weaker conditions are needed,
but we do not pursue this point. In the lemma below, the solutions
we will construct will not lie in $L^{2}(\Rn)$ because they decay
too slowly at infinity (this is an inevitable feature of the global
problem). Nevertheless, they have finite energy and they embed in
appropriate Lebesgue spaces via the Gagliardo-Nirenberg-Sobolev inequality
above. We carry over the concept of admissibility to this situation.
\begin{lem}
\label{lem:globalexist}Let $f\in L^{1}\cap L^{2}(\Rn),$ $\boldsymbol{b}\in L^{2}(\Rn)$.
Assume Condition \ref{StrongLip}. Then there exists an admissible
finite-energy solution to $(P)$ on $\Rn$. This solution satisfies
the following additional estimate, for each $l\geq0$:
\[
B_{L}[(u-l)_{+},(u-l)_{+}]+B_{N}[(u-l)_{+},(u-l)_{+}]\leq\int_{B}f(u-l)_{+}
\]
\end{lem}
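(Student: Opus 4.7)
The plan is to mimic the construction from Section 3, exhausting $\Rn$ by balls to handle the unbounded domain, and using Condition \ref{StrongLip} to obtain energy bounds without appealing to Poincar\'{e}-type inequalities (which fail globally). For each $k$ and $\delta>0$, apply Lemma \ref{lem:exist} with $\Omega=B_{k}$, $u_{0}\equiv 0$, and $\epsilon=1$ to produce $u_{k}^{\delta}\in H^{1}(\Rn)$ supported in $B_{k}$ and solving $(P^{\delta})$ on $B_{k}$.

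The key uniform energy bound comes from testing the equation against $u_{k}^{\delta}$ itself, which is legitimate by a density argument from $C_{c}^{\infty}(B_{k})$. The drift term vanishes because
\[
\int u_{k}^{\delta}\langle\boldsymbol{b},\nabla u_{k}^{\delta}\rangle = \tfrac{1}{2}\int\langle\boldsymbol{b},\nabla (u_{k}^{\delta})^{2}\rangle = -\tfrac{1}{2}\int(u_{k}^{\delta})^{2}\,\mathrm{div}\,\boldsymbol{b} = 0,
\]
leaving $\lambda B_{L}[u_{k}^{\delta},u_{k}^{\delta}] + \lambda B_{N}[u_{k}^{\delta},u_{k}^{\delta}] + \delta\|\nabla u_{k}^{\delta}\|_{L^{2}}^{2}\leq \int f u_{k}^{\delta}$. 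Splitting $\int f u_{k}^{\delta}=\int_{\Ol}+\int_{\Or}$, applying H\"{o}lder with the dual exponents $p_{1}'=\frac{2n}{n+2}$ and $p_{2}'=\frac{2n}{n+2s}$ (both of which interpolate between $L^{1}$ and $L^{2}$), using Condition \ref{StrongLip}, and absorbing via Young's inequality, one obtains a bound
\[
B_{L}[u_{k}^{\delta},u_{k}^{\delta}] + B_{N}[u_{k}^{\delta},u_{k}^{\delta}] + \|u_{k}^{\delta}\|_{L^{p_{1}}(\Ol)}^{2} + \|u_{k}^{\delta}\|_{L^{p_{2}}(\Or)}^{2} \leq C(\|f\|_{L^{1}\cap L^{2}},C_{1},C_{2}),
\]
uniform in $k$ and $\delta$. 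By a diagonal argument extract $\delta_{k}\to 0$ and a subsequence $u_{k}^{\delta_{k}}\rightharpoonup u$ weakly in $H^{1}_{\mathrm{loc}}(\Ol)\cap H^{s}_{\mathrm{loc}}(\Rn)\cap L^{p_{1}}(\Ol)\cap L^{p_{2}}(\Or)$, strongly in $L^{2}_{\mathrm{loc}}$ by Rellich-Kondrachov. Testing $(P^{\delta_{k}})$ against a fixed $\phi\in C_{c}^{\infty}(\Rn)$ and passing to the limit, the bilinear forms $B_{L}$ and $B_{N}$ converge by weak convergence, the drift term converges by strong $L^{2}_{\mathrm{loc}}$ convergence paired with $\boldsymbol{b}\in L^{2}$, the $f$ term is straightforward, and the $\delta_{k}$ term is made to vanish after one integration by parts against $\phi$. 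The limit $u$ therefore solves $(P)$ on $\Rn$, has finite energy by weak lower semicontinuity of $B_{L}$ and $B_{N}$, and is admissible by construction.

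The truncation estimate is obtained by repeating the test-function computation with $(u_{k}^{\delta}-l)_{+}$ in place of $u_{k}^{\delta}$. This is valid because $(u_{k}^{\delta}-l)_{+}$ is compactly supported in $B_{k}$ and lies in $H^{1}(\Rn)$; the drift contribution still vanishes as
\[
\int u_{k}^{\delta}\langle\boldsymbol{b},\nabla(u_{k}^{\delta}-l)_{+}\rangle = \int (u_{k}^{\delta}-l)_{+}\langle\boldsymbol{b},\nabla (u_{k}^{\delta}-l)_{+}\rangle + l\int\langle\boldsymbol{b},\nabla(u_{k}^{\delta}-l)_{+}\rangle,
\]
both of which are zero by the divergence-free property and integration by parts. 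The $\delta$-term is nonnegative and may be dropped, and the local/nonlocal bilinear forms satisfy $B_{L}[u_{k}^{\delta},(u_{k}^{\delta}-l)_{+}]\geq B_{L}[(u_{k}^{\delta}-l)_{+},(u_{k}^{\delta}-l)_{+}]$ and similarly for $B_{N}$, as in the argument for Lemma \ref{lem:enloc}. Passing to the limit, using weak lower semicontinuity on the left and almost-everywhere convergence together with dominated convergence on the right, yields the stated estimate.

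The main obstacle is orchestrating the joint passage $k\to\infty$, $\delta_{k}\to 0$ so that all of the forms converge appropriately in the appropriate topologies simultaneously, and verifying that the nonlocal quadratic form remains lower semicontinuous on the larger function space adapted to the global problem (where $u\notin L^{2}(\Rn)$); this is where Condition \ref{StrongLip} is indispensable, as it provides the only a priori integrability of $u$ outside of $H^{s}$-seminorm control.
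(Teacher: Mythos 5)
Your proof is essentially correct and follows the same basic strategy as the paper: solve $(P^{\delta})$ on balls via Lemma \ref{lem:exist} with zero exterior data, test against the solution itself (the drift dying by the divergence-free structure), use H\"older with the dual exponents of $p_{1},p_{2}$ together with Condition \ref{StrongLip} and Young to absorb, and pass to weak limits using convexity/lower semicontinuity of $B_{L}+B_{N}$, repeating with $(u-l)_{+}$ for the level-set version. The one genuine difference is how the limits are organized. The paper takes them in two stages: first $R\to\infty$ at fixed $\delta$ (using a $\delta$-dependent bound coming from the regularizing term and the global Sobolev inequality), producing a solution $u^{\delta}$ of $(P^{\delta})$ on all of $\Rn$, and only then $\delta\to 0$ with the Condition \ref{StrongLip} bound; you instead prove the Condition \ref{StrongLip} bound already at the level of $u_{k}^{\delta}$ (legitimate, since these have bounded support) and take a single diagonal limit $u_{k}^{\delta_{k}}\rightharpoonup u$. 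Your uniform-in-$(k,\delta)$ bound is correct and in fact streamlines the paper's first stage, but the diagonal route slightly obscures admissibility: Definition \ref{def:admis} asks for $u$ to be a weak limit of solutions of $(P^{\delta_{k}})$, whereas your approximants solve $(P^{\delta_{k}})$ only on $B_{k}$. This is not a fatal gap --- on any fixed bounded subdomain all but finitely many members of your sequence do solve $(P^{\delta_{k}})$, which is how admissibility is actually exploited later --- but either add that remark or, more simply, use your uniform bound to first send $k\to\infty$ at fixed $\delta$ (recovering the paper's $u^{\delta}$) and then $\delta\to 0$. Two small polish points: the self-testing identity holds with $B_{L}+B_{N}$ themselves (ellipticity is only needed afterwards to bound the Gagliardo seminorms from below), and in the limit of the right-hand side $\int f(u_{k}^{\delta_{k}}-l)_{+}$ plain dominated convergence has no obvious dominating function; use instead that $(u_{k}^{\delta_{k}}-l)_{+}$ is bounded in $L^{p_{1}}(\Ol)$ and $L^{p_{2}}(\Or)$ and converges a.e., hence weakly, and pair this with $f\in L^{p_{1}'}\cap L^{p_{2}'}$.
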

\begin{proof}
We construct global solutions to the approximate problem $(P^{\delta})$
with the same data. To do so, let $u_{R}^{\delta}$ solve $(P^{\delta})$
on $B_{R}$ with boundary data identically zero and $f,\boldsymbol{b}$
as right-hand side and drift; these exist by Lemma \ref{lem:exist}.
Using $u_{R}^{\delta}$ as a test function and proceeding as in, say,
the proof of Lemma \ref{lem:enloc}, we obtain
\[
B_{L}[u_{R}^{\delta},u_{R}^{\delta}]+B_{N}[u_{R}^{\delta},u_{R}^{\delta}]+\delta\int_{B_{R}}|\nabla u_{R}^{\delta}|^{2}=\int_{B_{R}}fu_{R}^{\delta}.
\]
Estimating both sides using ellipticity and the condition on $f$,
\begin{align*}
\int_{\Ol}|\nabla u_{R}^{\delta}|^{2}&+\int_{\Rn\times\Or}\frac{|u_{R}^{\delta}(x)-u_{R}^{\delta}(y)|^{2}}{|x-y|^{n+2s}}dxdy+\delta\int|\nabla u_{R}^{\delta}|^{2} \\
& \leq\|f\|_{L^{\frac{p_{1}}{p_{1}-1}}(\Rn)}\|u_{R}^{\delta}\|_{L^{p_{1}}(\Rn)}\\
 & \leq C(n,\lambda,\Lambda,\nu)\|f\|_{L^{1}\cap L^{2}(\Rn)}^{2}+\nu\|u_{R}^{\delta}\|_{L^{p_{1}}(\Rn)}^{2}
\end{align*}
where $p_{1}=\frac{2n}{n-2}$. Choosing $\nu$ to be small in terms
of $\delta$ and applying Gagliardo-Nirenberg-Sobolev inequality,
we obtain
\begin{equation}
\|\nabla u_{R}^{\delta}\|_{L^{2}}+\|u_{R}^{\delta}\|_{L^{p_{1}}}\leq C(n,\lambda,\Lambda,\delta)\|f\|_{L^{1}\cap L^{2}}.\label{eq:unifest}
\end{equation}
Passing to a weak limit, we obtain a solution $u^{\delta}$ to $(P^{\delta})$
on $\Rn$ with the inequality
\[
B_{L}[u^{\delta},u^{\delta}]+B_{N}[u^{\delta},u^{\delta}]+\delta\int|\nabla u^{\delta}|^{2}\leq\int fu^{\delta}.
\]
This implies that, if $p_{2}=\frac{2n}{n-2s}$, then 
\begin{align*}
\int_{\Ol}|\nabla u^{\delta}|^{2}&+\int_{\Or\times\Or}\frac{|u^{\delta}(x)-u^{\delta}(y)|^{2}}{|x-y|^{n+2s}}dxdy\\
&\leq C(n,\lambda,\Lambda,\nu)\|f\|_{L^{1}\cap L^{2}}^{2}+\nu\left[\|u^{\delta}\|_{L^{p_{1}}(\Ol)}^{2}+\|u^{\delta}\|_{L^{p_{2}}(\Or)}^{2}\right].
\end{align*}
Choosing $\nu$ to be small in terms of the quantities in Condition
\ref{StrongLip} and applying Gagliardo-Nirenberg-Sobolev inequality
to the domains $\Ol,$ $\Or$ we see that
\begin{align*}
\|u^{\delta}\|_{L^{p_{1}}(\Ol)}&+\|\nabla u^{\delta}\|_{L^{2}(\Ol)}+\|u^{\delta}\|_{L^{p_{2}}(\Or)}+\left(\int_{\Or\times\Or}\frac{|u^{\delta}(x)-u^{\delta}(y)|^{2}}{|x-y|^{n+2s}}dxdy\right)^{1/2}\\
&\leq C(n,\lambda,\Lambda,\Gamma)\|f\|_{L^{1}\cap L^{2}}.
\end{align*}
This estimate is uniform in $\delta,$ and passing to the weak limit
we obtain an admissible solution $u$ to $(P)$. The energy inequality
also passes to the limit, as, clearly, does the following level-set
version of it (for $l\geq0$):
\[
B_{L}[(u-l)_{+},(u-l)_{+}]+B_{N}[(u-l)_{+},(u-l)_{+}]\leq\int f(u-l)_{+}.
\]

\end{proof}
This solution has a global $L^{\infty}$ estimate, whose proof we
give below.
\begin{lem}
\label{lem:bdd}Assume $f\in L^{q}(\Rn)$ with $q>\frac{n}{2s}$.
The solution $u$ from Lemma \ref{lem:globalexist} is bounded, with
\[
\|u\|_{L^{\infty}(\Rn)}\leq C(n,\lambda,\Lambda,\Gamma)\|f\|_{L^{q}\cap L^{1}}.
\]
Assume instead $u$ is an admissible solution to $(P)$ on a bounded
Lipschitz domain $\Omega$, with bounded data $u_{0}.$ Then $u$
is bounded, with
\[
\|u\|_{L^{\infty}(\Omega)}\leq C(n,\lambda,\Lambda,\Gamma,\Omega)\left[\|f\|_{L^{q}}+\|u_{0}\|_{L^{\infty}}\right]
\]
\end{lem}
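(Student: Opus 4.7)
The plan is a standard De Giorgi level-set iteration powered by the energy inequality at the end of the previous lemma. For the global case I will use both embeddings of Condition \ref{StrongLip}, reducing everything to the smaller exponent $p := p_2 = 2n/(n-2s)$.

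Fix $M>0$ to be chosen, set $l_k = M(2-2^{-k})$, and put $A_k = |\{u>l_k\}|$. Applied with level $l_k$, the final inequality in Lemma \ref{lem:globalexist} combined with ellipticity of $B_L,B_N$ yields
\[
\|\nabla(u-l_k)_+\|_{L^2(\Ol)}^{2}+\int_{\Or\times\Or}\frac{|(u-l_k)_+(x)-(u-l_k)_+(y)|^{2}}{|x-y|^{n+2s}}\,dx\,dy \lesssim \int f(u-l_k)_+.
\]
Condition \ref{StrongLip} and H\"older (using $p_1 \geq p_2$ on $\Ol$) then give $\|(u-l_k)_+\|_{L^p(\Rn)}^{2} \lesssim \int f(u-l_k)_+$, while the right-hand side is bounded by H\"older in exponents $(q,p,r)$ with $r^{-1}=1-q^{-1}-p^{-1}$ as
\[
\int f(u-l_k)_+ \leq \|f\|_{L^q}\,\|(u-l_k)_+\|_{L^p}\,A_k^{1-1/q-1/p}.
\]
Dividing through and applying Chebyshev with $(u-l_k)_+ \geq M 2^{-k-1}$ on $\{u>l_{k+1}\}$ produces the recursion
\[
A_{k+1} \leq C\|f\|_{L^q}^{p}\,(2M^{-1})^{p(k+1)}\,A_k^{\beta}, \qquad \beta := p-1-p/q.
\]
A short computation shows $\beta>1$ precisely when $q>n/(2s)$, the standing hypothesis. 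Hence a standard nonlinear iteration lemma forces $A_k\to 0$ whenever $A_0$ is small enough. By Chebyshev, $A_0 = |\{u>M\}| \leq M^{-p}\|u\|_{L^p(\Rn)}^{p}$, which the $L^p$ bound already established in Lemma \ref{lem:globalexist} controls by $CM^{-p}\|f\|_{L^1\cap L^2}^{p}$. Choosing $M$ a sufficiently large multiple of $\|f\|_{L^1\cap L^q}$ therefore starts the iteration and gives $u\leq 2M$; applying the same argument to $-u$ closes the global case.

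For the bounded-domain assertion, I take Theorem \ref{thm:admis} with the \emph{constant} test function $\phi\equiv l \geq \|u_0\|_{L^\infty}$. Since $\nabla\phi=0$, the drift term vanishes by integration by parts against $\div\boldsymbol{b}=0$ and the $M(\phi)$ factor drops (cf.\ Remark \ref{improvementsenergyest}), producing an estimate of the same form as before, except that now $|\Omega|$ is available as a crude a priori bound on $A_0$ and classical Sobolev embedding on $\Omega$ replaces Condition \ref{StrongLip}. The residual $\int(u-l)_+^{2}$ term on the right is reabsorbed via $\int w^{2} \leq \|w\|_{L^p}^{2}|\{w>0\}|^{1-2/p}$ after shifting the starting level to $\|u_0\|_{L^\infty}$. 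The same recursion and the same threshold argument go through. The only real obstacle throughout is verifying that $\beta>1$ under the assumption $q>n/(2s)$, which is the two-line computation above; everything else is routine De Giorgi bookkeeping.
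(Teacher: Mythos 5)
Your global half is essentially the paper's proof: the same level-set energy inequality from Lemma \ref{lem:globalexist}, Condition \ref{StrongLip}, Chebyshev, and the same supercriticality computation ($\beta>1$ exactly when $q>n/(2s)$). The only cosmetic differences are that you iterate measures of super-level sets with levels scaled by $M$, while the paper normalizes $\|f\|$ small and iterates the $L^{p_1}(\Ol)$ and $L^{p_2}(\Or)$ norms of the truncations; keeping the two Sobolev exponents separate also spares you the reduction $\|\cdot\|_{L^{p_2}(\Ol)}\lesssim\|\cdot\|_{L^{p_1}(\Ol)}$ (which needs the finite measure of the level set) and the bound $A_0\leq M^{-p}\|u\|_{L^p(\Rn)}^p$, which should really be run separately on $\Ol$ and $\Or$ with the exponents $p_1,p_2$. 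These are bookkeeping issues, not gaps.

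The bounded-domain half, however, has a genuine gap. You quote the estimate \eqref{eq:enreal} of Theorem \ref{thm:admis} with $\phi\equiv l$, but in that estimate the $f$-term has already been processed into $\|f\|_{L^q}^2|\{u>\phi\}|^{(q-2)/q}+\int(u-\phi)_+^2$. With this form your recursion exponent is no longer $p-1-p/q$ but $\tfrac p2-\tfrac pq$ (you can only divide out one power of $\|w_k\|_{L^p}$ when the right-hand side is linear in $w_k$, which it no longer is), and $\tfrac p2-\tfrac pq>1$ forces $q>n/s$ rather than the claimed $q>n/(2s)$; so ``the same two-line computation'' does not go through. Moreover, your reabsorption of the residual $\int(u-l)_+^2$ via $\int w^2\leq\|w\|_{L^p}^2|\{w>0\}|^{1-2/p}$ requires $A_k$ to be small, which is not available at $k=0$: shifting the starting level to $\|u_0\|_{L^\infty}$ only ensures $(u-l)_+$ is supported in $\Omega$, and $A_0\leq|\Omega|$ is finite, not small. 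The fix -- which is what the paper's ``as improved in Remark \ref{improvementsenergyest}'' is pointing at -- is to rerun the proof of Theorem \ref{thm:admis} with the constant level: every contribution carrying $M(\phi)$, as well as the drift and $\delta$ terms, comes from $\nabla\phi$ or $\phi(x)-\phi(y)$ and hence vanishes (note $M(\phi)=l\neq0$ itself, so this needs the one-line justification from the proof, not just the remark, which only addresses the $\boldsymbol{b}$ term), and one keeps the raw inequality $B_L[(u-l)_+,(u-l)_+]+B_N[(u-l)_+,(u-l)_+]\leq\int f(u-l)_+$, exactly as in Lemma \ref{lem:globalexist}. With that inequality the bounded case becomes word-for-word your global case (classical Sobolev embedding on $\Omega$ replacing Condition \ref{StrongLip}), and the crude bound $A_0\leq|\Omega|$ does suffice, because the smallness threshold of the nonlinear iteration grows like a positive power of $M$, so choosing $M$ a large multiple (depending on $\Omega$) of $\|f\|_{L^q}+\|u_0\|_{L^\infty}$ starts it and yields the stated bound.
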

\begin{proof}
To prove the first conclusion, we will show that there is a $\delta>0$
such that if $\|f\|_{L^{1}\cap L^{q}}\leq\delta$, then 
\[
\sup_{\Rn}u\leq1.
\]
Then the lemma follows from scaling. Let $l_{k}=1-2^{-k}$, and set
\[
A_{k}=\|(u-l_{k})_{+}\|_{L^{p_{1}}(\Ol)}+\|(u-l_{k})_{+}\|_{L^{p_{2}}(\Or)}.
\]
We know that $A_{0}<\delta$ by Lemma \ref{lem:globalexist}, and
will show that $A_{k}\rightarrow0$ as $k\rightarrow\infty.$ For
$k\geq1,$
\begin{align*}
A_{k}^{2}&\leq C(\Gamma,n)\left[\|(u-l_{k})_{+}\|_{H^{s}(\Or)}^{2}+\|(u-l_{k})_{+}\|_{H^{1}(\Ol)}^{2}\right]\\
&\leq C(n,\lambda,\Lambda,\Gamma)\left(B_{L}[(u-l_{k})_{+},(u-l_{k})_{+}]+B_{N}[(u-l_{k})_{+},(u-l_{k})_{+}]+\int(u-l_{k})_{+}^{2}\right).
\end{align*}
Applying the energy estimate, we get
\begin{align*}
A_{k}^{2}&\leq C\left(\int f(u-l_{k})_{+}+\int(u-l_{k})_{+}^{2}\right)\\
&\leq C\left(\|f\|_{L^{q}}\left(\int(u-l_{k})_{+}^{q'}\right)^{1/q'}+\int(u-l_{k})_{+}^{2}\right)\\
&\leq C^{k}(\|f\|_{L^{q}}+1)\left(A_{k-1}^{p_{2}/q'}+A_{k-1}^{p_{1}/q'}\right),
\end{align*}
with the last step by Chebyshev inequality. By the assumption on $q$,
we see that both powers of $A_{k-1}$ are strictly greater than $2.$
It follows that there is a $\delta>0$ such that $A_{k}\rightarrow0$
as desired.

In the other case, proceed analogously, using the energy estimate
from Theorem \ref{thm:admis} (as improved in Remark \ref{improvementsenergyest},
and using the modified $l_{k}=\sup_{\Omega^{c}}u_{0}+1-2^{-k}$, which
always stay above $u_{0}$).
\end{proof}
A straightforward modification of this procedure gives existence of
admissible solutions to the nonlinear problem with $\boldsymbol{b}=Tu$.
We sketch the argument below.
\begin{lem}
\label{lem:nonlinbd}Let $T$ be as in Theorem \ref{thm:Holder-1},
$s=\frac{1}{2}$, and $f\in L^{1}\cap L^{q}$ for some $q>s$. Assume
Condition \ref{StrongLip}. Then there exists a function $u$ with
finite energy satisfying (for every $\phi\in C_{c}^{\infty}(\Rn)$)
\[
B_{L}[u,\phi]+B_{N}[u,\phi]=\int f\phi+\int u\langle Tu,\nabla\phi\rangle.
\]
Moreover, $u$ is admissible, and $u\in L^{\infty}(\Rn)$, with
\[
\|u\|_{L^{\infty}(\Rn)}\leq C(n,f,\Gamma,T).
\]
\end{lem}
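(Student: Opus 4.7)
The plan is to combine the $\delta$-regularized construction of Lemma \ref{lem:globalexist} with a fixed-point scheme to close the nonlinearity $\boldsymbol{b}=Tu$, then to pass to the limit, and finally to recover the $L^\infty$ bound by the De Giorgi iteration of Lemma \ref{lem:bdd}. The key observation driving all the estimates is that for any $v$, since $\operatorname{div}(Tv)=0$ distributionally, testing any of the equations below against $v$ (or $(v-l)_+$) makes the drift term vanish, so all of the energy estimates in Lemmas \ref{lem:globalexist} and \ref{lem:bdd} still apply.

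First, fix $R,\delta>0$ and define a map $\Phi\colon L^2(B_R)\to L^2(B_R)$ as follows. Given $w\in L^2(B_R)$, extend $w$ by $0$ outside $B_R$, and let $u=\Phi(w)$ be the unique $H^1_0(B_R)$ solution of $(P^\delta)$ on $B_R$ with drift $\boldsymbol{b}=Tw$ and zero boundary data, which exists by Lemma \ref{lem:exist} (using that $Tw\in L^2_{\mathrm{loc}}$ by Calder\'on--Zygmund theory and $\operatorname{div}(Tw)=0$). Testing with $u$ and using the divergence-free cancellation gives the same energy bound as in the proof of Lemma \ref{lem:globalexist}, so $\|u\|_{H^1(B_R)}\leq C(R,\delta,\|f\|_{L^1\cap L^2})$, independent of $w$. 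The map $\Phi$ is continuous on $L^2(B_R)$ since if $w_k\to w$ in $L^2$, then $Tw_k\to Tw$ in $L^2_{\mathrm{loc}}$ and standard linear theory gives $\Phi(w_k)\rightharpoonup\Phi(w)$ in $H^1_0$, hence strongly in $L^2(B_R)$. It is also compact because its image sits in a bounded subset of $H^1_0(B_R)$, which embeds compactly into $L^2(B_R)$. Schaefer's fixed point theorem thus yields $u_R^\delta$ solving $(P^\delta)$ on $B_R$ with $\boldsymbol{b}=Tu_R^\delta$.

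Next, I would follow the scheme of Lemmas \ref{lem:globalexist} and \ref{lem:bdd} to obtain uniform a priori bounds. Testing the equation for $u_R^\delta$ against $u_R^\delta$ itself kills the drift (by the divergence-free cancellation identical to the one shown at equation \eqref{eq:drift}), and Condition \ref{StrongLip} then yields the same finite-energy bound
\[
\|u_R^\delta\|_{H^1(\Ol)}+\|u_R^\delta\|_{H^s(\Rn)}\leq C(n,\lambda,\Lambda,\Gamma)\|f\|_{L^1\cap L^2},
\]
uniform in $R$ and $\delta$. For the sup bound, I would repeat the proof of Lemma \ref{lem:bdd} using the test function $(u_R^\delta-l_k)_+$: the drift contributes
\[
\int u_R^\delta \langle Tu_R^\delta,\nabla(u_R^\delta-l_k)_+\rangle=\tfrac{1}{2}\int \langle Tu_R^\delta,\nabla(u_R^\delta-l_k)_+^2\rangle+l_k\int \langle Tu_R^\delta,\nabla(u_R^\delta-l_k)_+\rangle=0
\]
since $\operatorname{div}(Tu_R^\delta)=0$. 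The De Giorgi iteration therefore goes through verbatim, giving $\|u_R^\delta\|_{L^\infty(\Rn)}\leq C(n,\Gamma,\|f\|_{L^1\cap L^q})$, again uniform in $R,\delta$.

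Finally, I would pass to the limit, first $R\to\infty$ along a subsequence (using a diagonal argument over compact subdomains) and then $\delta\to 0$, to obtain an admissible solution $u$. The main obstacle is the passage to the limit in the nonlinear drift term: from the uniform energy bound we get weak convergence $u_R^\delta\rightharpoonup u^\delta$ in $H^s(\Rn)\cap H^1(\Ol)$, and by Rellich--Kondrachov we get strong convergence in $L^p_{\mathrm{loc}}$ for every $p<p_1$. Since $T$ is $L^p$-bounded for all $1<p<\infty$ and $u_R^\delta$ is bounded in every such $L^p$ (by interpolation between the energy bound and the uniform $L^\infty$ bound), we obtain $Tu_R^\delta\to Tu^\delta$ strongly in $L^p_{\mathrm{loc}}$, and hence $u_R^\delta Tu_R^\delta\to u^\delta Tu^\delta$ in $L^1_{\mathrm{loc}}$, which suffices to pass to the limit in the weak formulation against any $\phi\in C_c^\infty(\Rn)$. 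The same argument, applied once more as $\delta\to 0$, produces the admissible solution $u$ claimed in the lemma, and the $L^\infty$ bound is preserved by the weak$^*$ lower semicontinuity of the norm.
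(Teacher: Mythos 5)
Your proposal follows essentially the same route as the paper: the paper also freezes the drift, solves the $\delta$-regularized problem on $B_R$ via Lemma \ref{lem:exist}, uses the divergence-free cancellation to get estimates uniform in the frozen drift, closes the nonlinearity with a Leray--Schauder (equivalently Schaefer-type) fixed point in $L^2(B_R)$, sends $R\rightarrow\infty$ and then $\delta\rightarrow 0$, and obtains the $L^\infty$ bound by running the level-set/De Giorgi argument of Lemma \ref{lem:bdd} on the limit. One step of your limit passage is stated too strongly: since $T$ is nonlocal, strong convergence of $u_R^{\delta}$ in $L^p_{\mathrm{loc}}$ does \emph{not} yield strong convergence of $Tu_R^{\delta}$ in $L^p_{\mathrm{loc}}$ (that would require global strong convergence, which you do not have without tightness at infinity). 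The fix is standard and costs nothing: the uniform global bounds give $u_R^{\delta}\rightharpoonup u^{\delta}$ weakly in a global $L^p$, hence $Tu_R^{\delta}\rightharpoonup Tu^{\delta}$ weakly by boundedness of $T$, and pairing this weak convergence with the strong $L^p_{\mathrm{loc}}$ convergence of $u_R^{\delta}$ is enough to pass to the limit in $\int u_R^{\delta}\langle Tu_R^{\delta},\nabla\phi\rangle$ for compactly supported $\phi$; with that adjustment your argument is correct and matches the paper's.
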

\begin{proof}
We will show that the corresponding approximate problem admits a solution
$u^{\delta}$ satisfying the same energy estimates as in Lemma \ref{lem:globalexist}.
In other words, we construct a $u^{\delta}$ such that for each $\phi\in C_{c}^{\infty}(\Rn)$
we have
\[
B_{L}[u^{\delta},\phi]+B_{N}[u^{\delta},\phi]+\delta\int|\nabla u^{\delta}|^{2}=\int f\phi+\int u^{\delta}\langle Tu^{\delta},\nabla\phi\rangle,
\]
which in addition satisfies
\[
B_{N}[u^{\delta},u^{\delta}]+B_{N}[u^{\delta},u^{\delta}]\leq C(f).
\]
We can then extract a subsequence $u^{\delta_{k}}\rightarrow u$ in
$L_{\text{loc}}^{2}$, with $\nabla u^{\delta_{k}}\rightharpoonup\nabla u$
weakly in $L^{2}$ and $u^{\delta_{k}}\rightharpoonup u$ weakly in
the $H^{s}$ seminorm. Since $\phi$ is compactly supported, we recover
our problem $(P)$ in the limit, as well as the energy estimate. The
level set energy estimates from Lemma \ref{lem:globalexist} are justified
similarly, and applying the proof of Lemma \ref{lem:bdd} gives the
final conclusion.

To find a solution to the approximate problem, set $F(h,\delta,R)$
to be the solution of the problem
\[
B_{L}[v,\phi]+B_{N}[v,\phi]+\delta\int|\nabla v|^{2}=\int f\phi+\int v\langle Th,\nabla\phi\rangle
\]
for all $\phi\in C_{c}^{\infty}(B_{R})$, with $v\equiv0$ outside
$B_{R}$. By Lemma \ref{lem:globalexist}, this exists for any $h\in L^{2}(B_{R})$
and satisfies
\[
B_{L}[F(h,\delta),F(h,\delta)]+B_{N}[F(h,\delta),F(h,\delta)]\leq C(f).
\]
This implies that the mapping $h\rightarrow F(sh,\delta,R)$ is a
compact map $L^{2}(B_{R})\rightarrow L^{2}(B_{R})$ for each $h,s$.
Applying the Leray-Schauder fixed point theorem shows there is a $u_{R}^{\delta}$
satisfying $u_{R}^{\delta}=F(u_{R}^{\delta},\delta,R)$. Now take
$R\rightarrow\infty,$ as in the proof of Lemma \ref{lem:globalexist},
to obtain a solution to $(P^{\delta})$ with appropriate energy estimates.
\end{proof}
We turn to the question of H\"{o}lder regularity for the Dirichlet problem
for $(P)$. The following theorem is basically an immediate consequence
of Theorem \ref{thm:Holder}, together with standard modifications
near the boundary and the $L^{\infty}$ bound above. We only sketch
the proof. 
\begin{thm}
\label{thm:BryReg}Let $u$ be an admissible solution of $(P)$ on
a bounded Lipschitz domain $\Omega$, with $u_{0}$ bounded and $f\in L^{q}$
for $q>\frac{n}{2s}.$ Then:
\begin{enumerate}
\item If $s\geq\frac{1}{2}$, $\boldsymbol{b}\in L^{q}$ and $q>\frac{n}{2s-1}$,
or if $\boldsymbol{b}=0$ and $q>\frac{n}{s}$, then there is an $\alpha>0$
such that for every $\Omega'\subset\subset\Omega$ there is a constant
$C(\Omega')$ such that
\[
\|u\|_{C^{0,\alpha}(\Omega')}\leq C\left(\|f\|_{L^{q}(\Omega)}+\|\boldsymbol{b}\|_{L^{q}(\Omega)}+\|u_{0}\|_{L^{\infty}}\right).
\]

\item If, in addition to the assumptions in $(1)$, at $x_{0}\in\partial\Omega$,
$u_{0}$ satisfies $|u_{0}(x_{0})-u_{0}(y)|\leq C_{0}|x_{0}-y|^{\alpha_{0}}$,
then there are $\alpha,C$ depending on $\alpha_{0},C_{0}$ such
that
\[
|u(x_{0})-u(y)|\leq C\left(1+\|f\|_{L^{q}(\Omega)}+\|\boldsymbol{b}\|_{L^{q}(\Omega)}+\|u_{0}\|_{L^{\infty}}\right)|x_{0}-y|^{\alpha}.
\]

\item If, in addition to the assumptions in $(1)$, $u_{0}\in C^{0,\alpha_{0}}(\Omega^{c})$,
then then there are $\alpha,C$ depending on $\alpha_{0}$ such that
\[
\|u\|_{C^{0,\alpha}(\Omega)}\leq C\left(\|f\|_{L^{q}(\Omega)}+\|\boldsymbol{b}\|_{L^{q}(\Omega)}+\|u_{0}\|_{C^{0,\alpha_{0}}(\Omega^{c})}\right).
\]

\end{enumerate}
\end{thm}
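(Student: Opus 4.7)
The plan is to reduce all three parts to (iterated applications of) Theorem \ref{thm:Holder} and Theorem \ref{thm:Holder-1}, using Lemma \ref{lem:bdd} to normalize. Throughout write $M=\|f\|_{L^{q}(\Omega)}+\|\boldsymbol{b}\|_{L^{q}(\Omega)}+\|u_{0}\|_{L^{\infty}}$, and by Lemma \ref{lem:bdd} assume after rescaling that $|u|\leq 1$ on $\Omega$ and all data norms are $\leq 1$. Note that since $\Omega$ is bounded, the $L^{q}$ hypotheses of Theorem \ref{thm:Holder} on $f$ and $\boldsymbol{b}$ are satisfied under the present assumptions (the condition $q>\frac{n}{2s}$ is enough for $f$ after mild modifications, cf.\ the remark after Theorem \ref{thm:Holder-1}).

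For part (1), let $x_{0}\in\Omega'$ and set $d=\tfrac{1}{4}\mathrm{dist}(\Omega',\partial\Omega)$. I split into three cases according to the position of $x_{0}$ relative to $\Gamma$. \emph{Case A: $\mathrm{dist}(x_{0},\Gamma)\geq d$ and $B_{d}(x_{0})\subset\Ol$.} Then on $B_{d}(x_{0})$ the equation $(P)$ reduces to a divergence-form second-order equation with bounded measurable coefficients (the nonlocal term becomes a bounded right-hand side since $|u|\leq 1$ globally on $\Omega$, plus a tail term controlled by the integrability of $|x-y|^{-n-2s}$ away from the diagonal). Classical De Giorgi--Nash--Moser applies. \emph{Case B: $\mathrm{dist}(x_{0},\Gamma)\geq d$ and $B_{d}(x_{0})\subset\Or$.} Then $(P)$ localizes to a nonlocal equation with bounded measurable kernel on $B_{d}(x_{0})$, and the nonlocal De Giorgi--Nash--Moser theorem (e.g.\ in the form proved in \cite{CCV}) applies. \emph{Case C: $\mathrm{dist}(x_{0},\Gamma)<d$.} Pick $y_{0}\in\Gamma$ with $|x_{0}-y_{0}|<d$, and after a translation/rotation put $y_{0}$ at the origin with $\Gamma$ given locally as a Lipschitz graph; apply Theorem \ref{thm:Holder} (or Theorem \ref{thm:Holder-1} in the nonlinear $s=\frac{1}{2}$ regime) on the cylinder $E_{cd}$, which covers $B_{d/2}(x_{0})$ for a geometric constant $c$ depending on $L$. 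A finite cover of $\Omega'$ by such balls gives the uniform $C^{0,\alpha}$ estimate.

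For part (2), the key is to show a geometric oscillation decay at $x_{0}\in\partial\Omega$. The idea is to repeat the inductive argument in the proof of Theorem \ref{thm:Holder}, but with the ``outer'' function $\phi$ enlarged to absorb the Hölder boundary data. Namely, let $\tilde{u}=u-u_{0}(x_{0})$; after scaling assume $\mathrm{osc}_{B_{1}(x_{0})}\tilde{u}\leq 2$ and $|\tilde{u}(y)|\leq C_{0}|x_{0}-y|^{\alpha_{0}}$ on $\Omega^{c}\cap B_{1}(x_{0})$. I claim
\[
\mathrm{osc}_{B_{r^{k}}(x_{0})}\tilde{u}\leq 2(1-\theta)^{k}
\]
for appropriate $r,\theta>0$ and the same $r,\theta$ supplied by Theorem \ref{thm:Holder} (after possibly shrinking $\theta$). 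The inductive step is exactly the one in the proof of Theorem \ref{thm:Holder}: rescale $v(y)=(1-\theta)^{-(k-1)}\tilde{u}(\rho r^{k-1}y+x_{0})$; then $|v|\leq 1$ on $E_{\rho^{-1}}$ by induction, while on $\{y:\rho r^{k-1}y+x_{0}\in\Omega^{c}\}$ we have the extra bound $|v|\leq C_{0}(1-\theta)^{-(k-1)}(\rho r^{k-1})^{\alpha_{0}}|y|^{\alpha_{0}}$, which fits under the $\phi$ in \eqref{eq:wings} provided $\alpha<\alpha_{0}$ and $r$ is chosen so that $(1-\theta)/r^{\alpha_{0}}<1$. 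The flatness parameter $M(\phi)$ remains as small as needed by choosing $r$ small, so Lemma \ref{lem:DG2} applies to $v$ (or $-v$), using either the measure of $\Ol\cap E_{1}$ or of $\Omega^{c}\cap E_{1}$ (which is bounded below by the Lipschitz character of $\partial\Omega$, playing the role of the lower bound on $|\Ol/\epsilon\cap E_{1}|$ in the proof of Theorem \ref{thm:Holder}).

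For part (3), combine parts (1) and (2) by a standard interpolation: if $x,y\in\overline{\Omega}$ with $|x-y|$ small, either both lie in a common interior neighborhood and part (1) applies, or the nearer one to $\partial\Omega$ is close to some $x_{0}\in\partial\Omega$ and part (2) at $x_{0}$ controls both.

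The main obstacle is part (2). The delicate issue is the interplay between the boundary data bound and the tail term $\phi$ in the De Giorgi iteration: one needs $\phi$ to simultaneously (i) dominate $v$ outside the reference cylinder, (ii) keep $M(\phi)$ small enough for Lemma \ref{lem:DG1} and Lemma \ref{lem:DG2} to apply, and (iii) be compatible with the geometric factor $(1-\theta)^{-(k-1)}$ that appears when rescaling the exterior Hölder data. All three conditions force $\alpha<\alpha_{0}$ and sufficiently small $r$; this is the only place where $\alpha$ must shrink relative to the interior exponent given by Theorem \ref{thm:Holder}. A secondary point is that at boundary points where $\Gamma$ meets $\partial\Omega$, the rescaled domain has both an interface and a Lipschitz boundary nearby, but the hypotheses and flexibility of Theorem \ref{thm:Holder}/Lemma \ref{lem:DG2}, which only rely on a lower bound on $|\Ol\cap E_{1}|$ and on the energy estimate \eqref{eq:enreal}, accommodate this without modification.
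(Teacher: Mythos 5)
Your treatment of parts (1) and (3) is essentially the paper's route: Theorem \ref{thm:Holder} (or Theorem \ref{thm:Holder-1}) at points of $\Gamma$, classical local and nonlocal De Giorgi--Nash--Moser off $\Gamma$, and a combination argument. One small imprecision there: Theorem \ref{thm:Holder} only gives an oscillation decay, hence a modulus, \emph{at the center} of the cylinder, so your Case C does not by itself give a $C^{0,\alpha}$ bound on all of $B_{d/2}(x_0)$; you must combine the pointwise moduli at $\Gamma$-points with scaled interior estimates (the paper does this through a weighted modulus of the form $|u(x)-u(y)|\leq C|x-y|^{\alpha}d(x,\Gamma)^{-p}$). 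That is standard and easily repaired.

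The genuine gap is in part (2), which is exactly the part the paper compresses into ``a modification identical to the local case, as in GT''. Two problems. First, your induction rescales $v(y)=(1-\theta)^{-(k-1)}\tilde u(\rho r^{k-1}y+x_0)$ \emph{without} recentering, so the inductive bound $\osc_{B_{r^{k-1}}}\tilde u\leq 2(1-\theta)^{k-1}$ does not give $|v|\leq 1$; and if you recenter at the midpoint as in Theorem \ref{thm:Holder}, the rescaled exterior data is pinned near a constant that may sit anywhere in $[-1,1]$, and then improving ``$v$ or $-v$'' (the interior dichotomy) only shrinks the oscillation -- it does not shrink $\sup|u-u_0(x_0)|$, which is what (2) asserts. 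At the boundary the improvement must happen on the side \emph{away} from the datum $u_0(x_0)$, so the measure hypothesis has to be supplied by the exterior set for that specific sign; you cannot choose the convenient sign, and you also need the exterior data to lie below every cut level $\psi_k,\phi_\theta$ so that the test functions are supported in $\Omega$ (your ``enlarge $\phi$'' remark only works once the iteration is anchored at the datum). Second, and more seriously, the hypothesis \eqref{eq:mu} of Lemma \ref{lem:DG2loc} is a lower bound on $|\{v\leq 0\}\cap E_1\cap\Ol|$ -- mass specifically on the \emph{local} side -- because its proof applies the De Giorgi isoperimetric inequality to the $H^1$ extension $R[v_k]$ from $\Ol$, and Lemmas \ref{lem:DG2nonloc} and \ref{lem:DG2} are downstream of it. Your proposed substitute, ``the measure of $\Omega^c\cap E_1$'', does not verify this hypothesis when $\Omega^c$ near $x_0$ lies entirely in $\overline{\Or}$, which is perfectly possible for a bounded Lipschitz $\Omega$: take $\Gamma=\{x_n=0\}$ and $\Omega$ locally equal to $\{x_n<|x'|\}$, so that $x_0=0\in\partial\Omega\cap\Gamma$, $\Omega\supset\Ol$ near $0$, and the exterior data pins $v$ only on a subset of $\Or$. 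In that configuration the chain DG2loc $\to$ DG2nonloc $\to$ DG2 (and hence Lemma \ref{lem:DG1}) cannot be started from the boundary information, and the only link between the $\Ol$ side and the datum is the nonlocal coupling across $\Gamma$ -- so the boundary estimate is true but requires an additional ingredient (exterior mass in $\Ol$ when available, and otherwise a barrier or an argument transferring the exterior pinning across $\Gamma$), not the unmodified application of Lemma \ref{lem:DG2} you claim. The asymmetry of the Section 4 machinery between $\Ol$ and $\Or$ is precisely why the GT boundary modification is not automatic here.
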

\begin{proof}
For $(1)$, use Lemma \ref{lem:bdd} to estimate $u\in L^{\infty}.$
After a dilation (which does not increase the norms of $f,\boldsymbol{b}),$
a cylinder $A(E_{1})$, where $A$ is an isometry, can be found so
that $A(E_{1})$ is centered at any point $x\in\Omega'\cap\Gamma$
and contained in $\Omega$. Theorem \ref{thm:Holder} then gives a
uniform H\"{o}lder modulus of continuity at each $x\in\Omega'\cap\Gamma.$
On the other hand, standard regularity theory (see \cite{GT,CCV})
can be applied to give a modulus of continuity at every $x\notin\Gamma$
of the form $|u(x)-u(y)|\leq C|x-y|^{\alpha}d(x,\Gamma)^{-p}$ for
some $p.$ Combining these gives the conclusion.

For $(2)$, if $x_{0}$ is in $\Gamma^{c}$ this is immediate from
standard regularity theory, while if $x_{0}\in\Gamma$ a modification
of the local estimate Theorem \ref{thm:Holder} can be applied; the
modification is identical to the local case, as in \cite{GT}. Finally,
$(3)$ follows easily from $(2).$
\end{proof}
The same conclusions hold if $\boldsymbol{b}=Tu$ and $s=\frac{1}{2}$;
simply apply Theorem \ref{thm:Holder-1} in place of Theorem \ref{thm:Holder}.
Next, we show how a solution can be localized on a cylinder about
the origin.
\begin{lem}
\label{lem:truncate}Let $u$ be an admissible solution to $(P)$
on $B_{1}.$ Let $\eta\in C_{c}^{\infty}(E_{2})$ be a smooth cutoff
with $\eta\equiv1$ on $E_{3/2}.$ Then $\eta u$ is an admissible
solution to $(P)$ on $E_{1}$ with the same drift and with right-hand
side $\tilde{f}=f+f_{1}$ satisfying the following estimate:
\[
\|f_{1}\|_{L^{\infty}(E_{1})}\leq C(n,\lambda,\Lambda)\left\|\frac{u}{(1+|y|)^{n+2s}}\right\|_{L^{1}(\Rn)}.
\]
\end{lem}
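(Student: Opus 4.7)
\medskip

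The plan is to verify the weak form of $(P)$ on $E_1$ for $\eta u$ by testing against an arbitrary $\phi\in C_c^\infty(E_1)$ and comparing, term by term, with the equation already satisfied by $u$. Since $\mathrm{supp}(\phi)\subset E_1\subset E_{3/2}$ and $\eta\equiv 1$ on $E_{3/2}$, one has $\nabla(\eta u)=\nabla u$ wherever $\nabla\phi\neq 0$. Consequently $B_L[\eta u,\phi]=B_L[u,\phi]$ and $\int (\eta u)\langle\boldsymbol{b},\nabla\phi\rangle=\int u\langle\boldsymbol{b},\nabla\phi\rangle$, so the only nontrivial modification lives in the nonlocal form.

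Write $w:=(1-\eta)u$, which is supported in $\mathbb{R}^n\setminus E_{3/2}$. Then
\[
B_N[\eta u,\phi]=B_N[u,\phi]-B_N[w,\phi].
\]
Because $\mathrm{supp}(w)\cap \mathrm{supp}(\phi)=\emptyset$, the ``diagonal'' contributions in $[w(x)-w(y)][\phi(x)-\phi(y)]$ vanish, leaving $-w(x)\phi(y)-w(y)\phi(x)$. Using the symmetry $a(x,y)=a(y,x)$ together with the invariance of the integration domain $\mathbb{R}^n\times\mathbb{R}^n\setminus(\Omega_1\times\Omega_1)$ under $(x,y)\leftrightarrow(y,x)$, the two remaining terms are equal, and
\[
B_N[w,\phi]=-2\iint \frac{a(x,y)\,w(y)\,\phi(x)}{|x-y|^{n+2s}}\,dx\,dy=-\int_{E_1} f_1(x)\phi(x)\,dx,
\]
where we define
\[
f_1(x):=2\int \frac{a(x,y)\,(1-\eta(y))\,u(y)}{|x-y|^{n+2s}}\,dy,\qquad x\in E_1,
\]
with the $y$-integral extended over the $y$'s allowed by the definition of $B_N$ (all of $\mathbb{R}^n$ when $x\in\Omega_2\cup\Gamma$, and $\Omega_2\cup\Gamma$ when $x\in\Omega_1$). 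Substituting and using that $u$ solves $(P)$, we conclude that $\eta u$ satisfies the weak form of $(P)$ on $E_1$ with the same drift and right-hand side $\tilde f=f+f_1$.

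For the $L^\infty$ bound on $f_1$: the support of $(1-\eta)$ lies in $E_{3/2}^c$, and $\mathrm{dist}(E_1,E_{3/2}^c)\geq c(L)>0$, so for bounded $|y|$ we have $|x-y|\geq c(L)$, while for $|y|$ large $|x-y|\geq\tfrac12|y|$. Either way $|x-y|^{-n-2s}\leq C(1+|y|)^{-n-2s}$, and since $|a|\leq\Lambda$ and $|1-\eta|\leq 1$ we obtain
\[
\|f_1\|_{L^\infty(E_1)}\leq C(n,\lambda,\Lambda)\left\|\frac{u}{(1+|y|)^{n+2s}}\right\|_{L^1(\mathbb{R}^n)}.
\]
Admissibility of $\eta u$ follows because if $u^{\delta_k}$ are solutions of $(P^{\delta_k})$ with $u^{\delta_k}\rightharpoonup u$ in $H^1(\Omega_1)\cap H^s(\mathbb{R}^n)$, then the same algebraic computation (including the $\delta_k$-term, which also reduces via $\eta\equiv 1$ on $\mathrm{supp}(\phi)$) shows that $\eta u^{\delta_k}$ solves $(P^{\delta_k})$ on $E_1$ with right-hand side $f+f_1^{\delta_k}$, and $\eta u^{\delta_k}\rightharpoonup \eta u$ in $H^1(\Omega_1)\cap H^s(\mathbb{R}^n)$ since multiplication by $\eta$ is a bounded operator.

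The main obstacle is the cancellation identity for $B_N[w,\phi]$: one must carefully separate the defining integrand into pieces that vanish by support-disjointness versus the two remaining cross-terms, and then exploit the symmetry of $a$ and of the integration domain to collapse them into a single linear functional in $\phi$. Once this reduction is done, the $L^\infty$ estimate is just a kernel bound, and admissibility is automatic from the multiplication property.
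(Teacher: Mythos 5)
Your proof is correct and follows essentially the same route as the paper: the local and drift terms are unchanged because $\eta\equiv1$ on the support of the test function, and the nonlocal form is split off as $B_N[(1-\eta)u,\phi]$, whose cross terms collapse (by disjointness of supports and the symmetry of $a$ and of the integration domain) into $-\int f_1\phi$ with the same kernel representation of $f_1$, followed by the same distance/decay bound for the $L^\infty$ estimate and the same remark that repeating the computation for the approximations $u^{\delta_k}$ yields admissibility.
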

\begin{proof}
We perform the computation below on $u$; an analogous argument for
the approximations $u^{\delta}$ would give the admissibility of $\eta u.$
Let $\psi\in C_{c}^{\infty}(E_{1})$ and use $\psi$ as a test function
for $u$.
\[
B_{L}[u,\psi]+B_{N}[u,\psi]=\int u\langle\boldsymbol{b},\nabla\psi\rangle+f\psi.
\]
The first term on the right can be rewritten as
\[
\int u\langle\boldsymbol{b},\nabla\psi\rangle=\int\eta u\langle\boldsymbol{b},\nabla\psi\rangle
\]
since $\eta=1$ on the support of $\psi.$ Likewise, the local term
on the left can be rewritten as
\[
B_{L}[u,\psi]=B_{L}[\eta u,\psi].
\]
 For the nonlocal term, we must take care of some other quantities:

\begin{align*}
&B_{N}[u,\psi]  =B_{N}[\eta u,\psi]+B_{N}[(1-\eta)u,\psi]\\
 & =B_{N}[\eta u,\psi]-\int_{\Rn \times \Rn \backslash (\Ol \times \Ol)}\frac{a(x,y)[(1-\eta)(x)u(x)\psi(y)+(1-\eta)(y)u(y)\psi(x)]}{|x-y|^{n+2s}}dxdy\\
 & =B_{N}[\eta u,\psi]-\int_{E_{1}}\psi(x)\int_{\Rn}\frac{[a(x,y)+a(y,x)]1_{\Rn \times \Rn \backslash (\Ol \times \Ol)}(x,y)(1-\eta)(y)u(y)}{|x-y|^{n+2s}}dydx\\
 & =B_{N}[\eta u,\psi]-\int_{E_{1}}\psi f_{1}.
\end{align*}
We thus have
\[
B_{N}[\eta u,\psi]+B_{L}[\eta u,\psi]=\int\eta u\langle\boldsymbol{b},\nabla\psi\rangle+(f+f_{1})\psi,
\]
and
\[
\sup_{E_{1}}|f_{1}|\leq2\Lambda\int_{E_{3/2}^{c}}\frac{|u(y)|}{|y|^{n+2s}}dy\leq C\left\|\frac{u}{(1+|y|)^{n+2s}}\right\|_{L^{1}(\Rn)},
\]
completing the proof.
\end{proof}
\begin{rem}
If $f$ was smooth, $f_{1}$ will not be. However, smoothness within
each of $\Ol,$ $\Or$ will be preserved by this procedure. This is
a feature of the way we chose to express the problem, as $f$ serves
as the right-hand side for two effectively different equations on
the two domains.
\end{rem}
Now we combine the previous lemma with a standard boundary flattening
argument to relate the global solution $u$ to a local solution in
a regularized domain.
\begin{lem}
\label{lem:Flattening}Let $u$ be an admissible solution to $(P)$
on $E_{1}$, and assume $u\in H^{1}(\Ol\cap E_{2})\cap H^{s}(E_{2})$,
as well as that $\frac{u}{(1+|y|)^{n+2s}}$ is integrable. Then there
is a bilipschitz transformation $Q:E_{2}\rightarrow\Rn$ and a function
$w:\Rn\rightarrow\RR$ such that:
\begin{itemize}
\item $E_{r}\subset Q(E_{1})\subset E_{2r}$.
\item $Q(\Gamma\cap E_{2})\subset\{x_{n}=0\}$.
\item $w(Qx)=u(x)$ for every $x\in E_{1}$.
\item $w$ solves $(P)$ on $E_{r}$ with data $w_{0},\bar{f},\bar{\boldsymbol{b}},\bar{A},\bar{a}$.
\item There exist $0<\bar{\lambda}<\bar{\Lambda}<\infty$ depending on $\text{ }n,\lambda,\Lambda,Q$
such that $\bar{\lambda}I\leq A\leq\bar{\Lambda}I$ and $\bar{\lambda}\leq a\leq\bar{\Lambda}$.
\item $|w_{0}|\leq|u\circ Q^{-1}|\text{ on }Q(E_{2})$.
\item $\text{supp} w\subset Q(E_{2})$.
\item $\|\bar{f}\|_{L^{p}}\leq C\left[\|f\|_{L^{p}}+\|\frac{u}{(1+|y|)^{n+2s}}\|_{L^{1}}\right]$.
\item $\boldsymbol{\bar{b}}=(\nabla Q^{T}\boldsymbol{b})\circ Q^{-1}.$
\end{itemize}
\end{lem}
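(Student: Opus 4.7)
The plan is to combine the truncation from Lemma \ref{lem:truncate} with an explicit change of variables that straightens the Lipschitz graph $\Gamma$. The overall structure is: first truncate so that $\eta u$ is compactly supported on $E_2$ and solves $(P)$ with a modified right-hand side, then push everything forward by a bilipschitz map $Q$ that sends $\Gamma \cap E_2$ into $\{x_n = 0\}$, and finally verify that the pushed-forward equation has the same form as $(P)$ with new coefficients of the same ellipticity type.

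For the map, I would take $Q(x', x_n) = (x', x_n - g(x'))$, where $g$ is the Lipschitz graph function from the basic setup. This is bilipschitz with constant depending only on $L_0$, maps $\Gamma \cap E_2$ into $\{x_n = 0\}$, and, because $|g| \leq L_0 |x'|$ with $L^2 = 1 + L_0^2$, it satisfies an inclusion $E_r \subset Q(E_1) \subset E_{2r}$ for an appropriate $r = r(L)$. I would then apply Lemma \ref{lem:truncate} (on a slightly larger cylinder, adjusting the cutoff) to produce a compactly supported admissible solution $\tilde u = \eta u$ with a right-hand side $f + f_1$ controlled in $L^\infty$ by the tail integral of $u$, and define $w(y) = \tilde u(Q^{-1}(y))$, with $w_0$ being the corresponding pushforward of $\tilde u|_{Q(E_2)^c}$ (which is zero, up to the part coming from $(1-\eta)u$; after truncation $|w_0|\leq |u\circ Q^{-1}|$ trivially).

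To derive the equation for $w$, I would test $(P)$ for $\tilde u$ with $\psi = \phi\circ Q$ for $\phi \in C_c^\infty(E_r)$ and change variables. The local form $B_L[\tilde u,\psi] = \int_{\Ol}\langle A\nabla \tilde u,\nabla\psi\rangle$ becomes $\int_{Q(\Ol)}\langle \bar A \nabla w,\nabla\phi\rangle$ with $\bar A(y) = |\det\nabla Q^{-1}(y)|\,\nabla Q(Q^{-1}y)\,A(Q^{-1}y)\,\nabla Q(Q^{-1}y)^T$, which is symmetric and uniformly elliptic because $Q$ is bilipschitz. The drift term transforms into $\int w \langle \bar{\boldsymbol b},\nabla\phi\rangle$ with $\bar{\boldsymbol b} = (\nabla Q^T \boldsymbol b)\circ Q^{-1}$ (the Jacobian factor is absorbed into $\nabla Q^T$ combined with $|\det\nabla Q^{-1}|$, but one has to check divergence-freeness is preserved, which follows from the chain rule since $Q$ is a diffeomorphism). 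For the right-hand side, $\bar f = |\det\nabla Q^{-1}|(f+f_1)\circ Q^{-1}$ inherits its bound from Lemma \ref{lem:truncate}.

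The main technical point, and the part requiring most care, is the nonlocal bilinear form. Changing variables in
\[
\int\int \frac{[\tilde u(x)-\tilde u(y)]\,a(x,y)\,[\psi(x)-\psi(y)]}{|x-y|^{n+2s}}\,dx\,dy
\]
produces a kernel of the form $|Q^{-1}(\xi)-Q^{-1}(\eta)|^{-n-2s}|\det \nabla Q^{-1}(\xi)||\det \nabla Q^{-1}(\eta)|$. I would rewrite this as $\bar a(\xi,\eta)/|\xi-\eta|^{n+2s}$ by defining
\[
\bar a(\xi,\eta) = a(Q^{-1}\xi,Q^{-1}\eta)\,|\det\nabla Q^{-1}(\xi)|\,|\det\nabla Q^{-1}(\eta)|\,\frac{|\xi-\eta|^{n+2s}}{|Q^{-1}\xi-Q^{-1}\eta|^{n+2s}}.
\]
This is symmetric because $Q$ is a diffeomorphism and $a$ is symmetric, and the bilipschitz bound $c|\xi-\eta|\leq |Q^{-1}\xi-Q^{-1}\eta|\leq C|\xi-\eta|$ gives uniform upper and lower bounds on $\bar a$ depending only on $n,\lambda,\Lambda,L$. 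The domain of integration $\Rn\times\Rn\setminus(\Ol\times\Ol)$ pushes forward to $\Rn\times\Rn\setminus(Q(\Ol)\times Q(\Ol))$, which is consistent with the definition of $B_N$ for the flattened domain since $Q(\Ol)$ is the ``below the hyperplane'' side by construction. The admissibility of $w$ follows because the same construction applied to the approximating sequence $u^{\delta_k}$ yields an approximating sequence $w^{\delta_k}$ solving $(P^{\delta_k})$ for the new data, with uniform $H^1\cap H^s$ bounds transferred through $Q$, and these converge weakly to $w$.
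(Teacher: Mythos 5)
Your proposal is correct and follows essentially the same route as the paper: the same shear map $Q(x)=x-g(x')e_{n}$, the same preliminary truncation via Lemma \ref{lem:truncate}, and the same change of variables in $B_{L}$, $B_{N}$, the drift, and the right-hand side, with ellipticity of $\bar{A}$ and $\bar{a}$ coming from the bilipschitz bounds (your Jacobian-determinant factors are identically $1$ here since $\det\nabla Q=1$, which is why the paper omits them, and the paper likewise handles the extension of $\bar{a}$ outside $Q(E_{2})\times Q(E_{2})$ by noting $w$ and $\phi$ are supported there). Your additional remarks on admissibility of $w$ via the approximating sequence and on the transformed drift are consistent with the paper's treatment.
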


\begin{proof}
Recall the standard assumption that $\Gamma$ is given locally as
a graph $\{x_{n}=g(x')\},$ with $g$ Lipschitz. Set $Q(x)=x-g(x')e_{n};$
then $Q$ is Lipschitz on $E_{2},$ $Q(0)=0$, $Q(\Gamma\cap E_{2})\subset\{x_{n}=0\}$,
$\det\nabla Q=1$, and $c_{1}\leq|\nabla Q|\leq\frac{1}{c_{1}}.$
Fix a cutoff $\eta\in C_{c}^{\infty}(E_{2})$, with $\eta\equiv1$
on $E_{3/2},$ and set
\[
w(x)=\begin{cases}
\eta(Qx)u(Qx) & x\in E_{2}\\
0 & x\notin E_{2}
\end{cases}.
\]
From Lemma \ref{lem:truncate}, we know that $\eta u$ satisfies $(P)$
with the same drift and a right-hand side $f+f_{1}$. We now compute
the equation for $w$. Let $\phi\in C_{c}^{\infty}(Q(E_{1}))$, and
use $\phi\circ Q$as a test function for $\eta u:$
\[
B_{L}[\eta u,\phi\circ Q]+B_{N}[\eta u,\phi\circ Q]=\int_{E_{1}}\eta u\langle\boldsymbol{b},\nabla\phi\circ Q\rangle+(f+f_{1})\phi\circ Q.
\]
The first term transforms in the standard way:
\begin{align*}
B_{L}[\eta u,\phi\circ Q]&=\int\langle A(x)\nabla\eta u(x),\nabla Q\nabla\phi(Q(x))\rangle dx\\
&=\int\langle A(Q^{-1}y)\nabla Q(Q^{-1}y)\nabla w(y),\nabla Q(Q^{-1}y)\nabla\phi(y)\rangle dy\\
&=\bar{B}_{L}[w,\phi],
\end{align*}
where $\bar{A}=(\nabla Q{}^{T}A\nabla Q)\circ Q^{-1}.$ For the nonlocal
term, we can do a similar computation:
\begin{align*}
B_{N}&[\eta u,\phi\circ Q]  =\int_{E_2 \times E_2 \backslash (\Ol \times \Ol)}\frac{[\eta u(x)-\eta u(y)]a(x,y)[\phi(Qx)-\phi(Qy)]dxdy}{|x-y|^{n+2s}}\\
 & =\int_{Q(E_2) \times Q(E_2) \backslash (\Ol \times \Ol)}\frac{[w(x')-w(y')]a(Q^{-1}x',Q^{-1}y')[\phi(x')-\phi(y')]}{|Q^{-1}x'-Q^{-1}y'|^{n+2s}}dx'dy'\\
 & =\bar{B}_{N}[w,\phi]
\end{align*}
where $\bar{a}$ is given by
\[
\bar{a}(x,y)=\begin{cases}
a(Q^{-1}x,Q^{-1}y)\left(\frac{|x-y|}{|Q^{-1}x'-Q^{-1}y|}\right)^{n+2s} & x,y\in Q(E_{2})\\
1 & \text{otherwise}
\end{cases}.
\]
Note that as $Q$ is bilipschitz, the quantity $\left(\frac{|x-y|}{|Q^{-1}x'-Q^{-1}y|}\right)^{n+2s}$
is bounded above and below. Moreover, as both $w$ and $\phi$ are
compactly supported on $Q(E_{2})$, $\bar{a}$ can be extended arbitrarily
for other $x,y$ provided it stays elliptic and symmetric.

For the terms on the right-hand side, we get
\begin{align*}
\int_{E_{1}}u(x)\langle\boldsymbol{b}(x),\nabla Q(x)\nabla\phi(Qx)\rangle dx&=\int_{Q(E_{1})}w(y)\langle\nabla Q^{T}(Q^{-1}y)\boldsymbol{b}(Q^{-1}y),\nabla\phi(y)\rangle dy\\
&=\int_{Q(E_{1})}w\langle\bar{\boldsymbol{b}},\nabla\phi\rangle
\end{align*}
and
\[
\int_{E_{1}}[f(x)+f_{1}(x)]\phi(Qx)=\int_{Q(E_{1})}[f(Q^{-1}y)+f_{1}(Q^{-1}x)]\phi(y)dy=\int_{Q(E_{1})}\bar{f}\phi.
\]
The estimates then follow immediately.\end{proof}

\begin{figure}
    \centering
    \fbox{
      \includegraphics[width=0.75\textwidth]{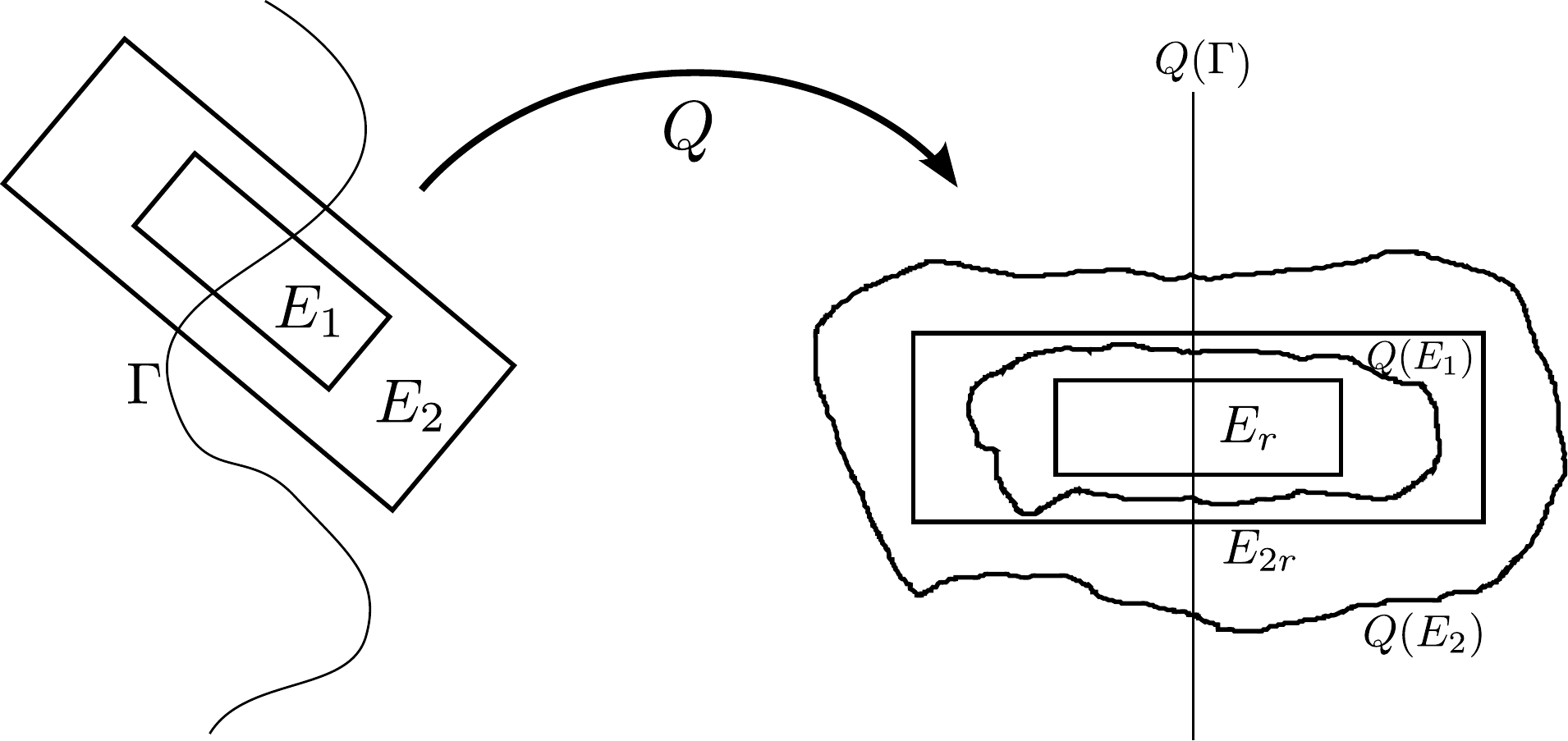}
    }
    \caption{This is a diagram of the interface flattening. In fact, the flattening we consider preserves more structure than the illustration shows (for instance, the image of the top and bottom faces of $E_{1}$ will still be graphs), but this will never be required.}
    \label{fig:F3}
\end{figure}

\begin{rem}
If $\Gamma$ is smooth, it is simple to check that smoothness of the
transformed functions is preserved (except $f$; see the previous
remark).
\end{rem}

\section{Parabolic Problem: H\"{o}lder Continuity}

This section shows an elementary argument for deducing H\"{o}lder estimates
for solutions of the natural parabolic analogue to $(P)$ in the simple
case that the coefficients are independent of time. The more general
case seems substantially more difficult, since the rescalings not
only exhibit the increasing ellipticity ratio of $(P_{\epsilon})$
but also evolve at different time scales.

Set $H=H^{1}(\Ol)\cap H^{s}(\Rn)$ and $H^{*}$ the dual space to
$H.$ We say $u\in L^{2}([0,T];H)\cap H^{1}([0,T];H^{*})$ is a weak
solution of $(P^{*})$ on $\Omega\times[0,T]$ if for a.e. $t\in(0,T]$
and $v\in C_{c}^{\infty}(\Omega)$ we have
\[
\langle\langle\partial_{t}u,v\rangle\rangle+B_{L}[u,v]+B_{N}[u,v]=\int_{\Omega}fv,
\]
where $\langle\langle\cdot,\cdot\rangle\rangle$ denotes the $H,H^{*}$
duality pairing. For the rest of this section, we ignore the issue
of justifying energy inequalities; analogously to Section 3, weak
solutions satisfying appropriate families of energy inequalities can
be recovered from limits of, say, Galerkin approximations.

Our method is as follows: an argument analogous to that in Lemma \ref{lem:bdd}
easily gives an $L^{\infty}$ estimate. If the coefficients are independent
of $t$, then the time derivative will satisfy a similar equation,
and so will also be bounded. This means for each fixed time, the elliptic
theory gives H\"{o}lder continuity of $u$ in space, treating the time
derivative term as part of the right-hand side.
\begin{lem}
\label{lem:Parabdd}Let $u$ be a solution to $(P^{*})$ on $\Omega\times[0,2],$
$f\in L^{\infty},$ and assume that
\[
\sup_{\Omega^{c}\times[0,2]}|u|\leq1.
\]
Then $u\in L^{\infty}(\Omega\times[1,2]),$ and
\[
\sup_{\Omega\times[1,2]}|u|\leq C\left(1+\|u\|_{L^{2}(\Omega\times[0,2])}+\|f\|_{L^{\infty}}\right).
\]
\end{lem}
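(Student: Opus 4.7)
The plan is to run a De~Giorgi--Stampacchia type iteration in the parabolic setting at level sets $\{u>l\}$ with $l\geq 1$, using the level set energy inequality and parabolic Sobolev embedding. First, by a linear rescaling $u\mapsto u/\bar M$ with $\bar M = 1+\|u\|_{L^2(\Omega\times[0,2])}+\|f\|_{L^\infty}$, it suffices to prove that under the assumptions $\|u\|_{L^2}\leq 1$, $\|f\|_{L^\infty}\leq 1$, and $|u|\leq 1$ on $\Omega^c\times[0,2]$, one has $\sup_{\Omega\times[1,2]}|u|\leq C$. An application to $-u$ handles the lower bound symmetrically.

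For any $l\geq 1$ the function $(u-l)_+$ is supported in $\Omega$ (since $u\leq 1\leq l$ outside), so it is a valid test function. Testing $(P^*)$ against $(u-l)_+\,\chi_{[t_1,t_2]}$ and splitting $B_N[u,(u-l)_+]$ exactly as in the proofs of Lemma~\ref{lem:enloc} and Lemma~\ref{lem:en2} (dropping the nonnegative cross term $B_N[(u-l)_-,(u-l)_+]\geq 0$ from \eqref{eq:flux}, and noting $B_L[(u-l)_+,(u-l)_+]\leq B_L[u,(u-l)_+]$ trivially since $(u-l)_+\cdot \mathbf{1}_{\Ol}$ has the right sign), the time-independence of coefficients gives the clean inequality
\begin{align*}
\tfrac12\!\int_\Omega (u-l)_+^2(t_2) - \tfrac12\!\int_\Omega (u-l)_+^2(t_1) &+ \int_{t_1}^{t_2}\!\Big(B_L[(u-l)_+,(u-l)_+]+B_N[(u-l)_+,(u-l)_+]\Big)\,dt\\
&\leq \int_{t_1}^{t_2}\!\!\int_\Omega f\,(u-l)_+\,dx\,dt.
\end{align*}
By ellipticity of $A$ and $a$, the left-hand side controls $\tfrac12\int (u-l)_+^2(t_2)+ \lambda\int_{t_1}^{t_2}\big(\|(u-l)_+\|_{H^1(\Ol)}^2+\|(u-l)_+\|_{H^s(\Rn)}^2\big)\,dt$, up to absorbing the endpoint at $t_1$.

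Next I set up the iteration. Fix $M$ to be determined (eventually an absolute constant). Let $l_k = 1 + M(1-2^{-k})$, $t_k = 1-2^{-k}$, and
\[
A_k = \sup_{t\in[t_k,2]}\int_\Omega (u-l_k)_+^2(t)\,dx + \int_{t_k}^{2}\!\Big(\|(u-l_k)_+\|_{H^1(\Ol)}^2+\|(u-l_k)_+\|_{H^s(\Rn)}^2\Big)\,dt.
\]
Choosing an intermediate time $t_1^\ast\in[t_k,t_{k+1}]$ with $\int(u-l_{k+1})_+^2(t_1^\ast)\leq 2^{k+1}\int_{t_k}^{t_{k+1}}\!\!\int(u-l_{k+1})_+^2$, applying the energy inequality on $[t_1^\ast,t_2]$ for arbitrary $t_2\in[t_{k+1},2]$, and using Condition~\ref{StrongLip} together with the interpolation $L^\infty_t L^2_x\cap L^2_t L^{p_\ast}_x\hookrightarrow L^{q}$ for some $q=q(n,s)>2$, one obtains a parabolic Sobolev-type bound
\[
\int_{t_{k+1}}^{2}\!\!\int_\Omega (u-l_{k+1})_+^{q}\,dx\,dt \leq C\,A_k^{q/2}.
\]
Combining this with Chebyshev's inequality $|\{u>l_{k+1}\}\cap \Omega\times[t_k,2]|\leq (M\,2^{-k-1})^{-2}\int_{t_k}^{2}\!\!\int(u-l_k)_+^2 \leq C\cdot 4^k M^{-2} A_k$, and estimating the right-hand side $\int f(u-l_{k+1})_+ \leq \|f\|_\infty\cdot (\text{Hölder})\cdot |\{u>l_{k+1}\}|^{1-2/q}$, yields the nonlinear recurrence
\[
A_{k+1}\leq C\,b^k\, M^{-\varepsilon}\, A_k^{1+\beta}
\]
for some $b,\varepsilon,\beta>0$ depending only on $n,s,\lambda,\Lambda,\Gamma$. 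Choosing $M$ large enough (an absolute constant, since $A_0\leq C$ already from the normalization) forces $A_k\to 0$, which gives $u\leq 1+M$ a.e.\ on $\Omega\times[1,2]$, and undoing the rescaling yields the claim with $C=1+M$.

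The main obstacle, and the reason the time-independence of the coefficients matters implicitly here, is proving that the nonlocal term genuinely contributes to coercivity after the level-set truncation: this is where the splitting $B_N[u,(u-l)_+]=B_N[(u-l)_+,(u-l)_+]+B_N[(u-l)_-,(u-l)_+]$ and the positivity of the cross term (shown in \eqref{eq:flux}) are essential, since without them the nonlocal energy could not be absorbed on the left. Once this step is in hand, all other estimates are routine adaptations of the elliptic arguments in Section~3, and the parabolic Sobolev interpolation is standard.
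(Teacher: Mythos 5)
Your overall strategy is the same as the paper's: a parabolic De~Giorgi level-set iteration, with the test function $(u-l_k)_+$ (valid because $l_k\geq1$ and $|u|\leq1$ outside $\Omega$), the same splitting of $B_L$ and $B_N$ with the nonnegative cross term dropped as in \eqref{eq:flux}, Chebyshev, and the parabolic embedding $L^{\infty}_tL^2_x\cap L^2_tH\hookrightarrow L^q$ with $q>2$. The only substantive variation is how smallness enters: the paper rescales $u$ by $\sqrt{\delta/2C_\delta}/(1+\|u\|_{L^2}+\|f\|_{L^\infty})$ so that $A_0$ itself is small and uses fixed levels $l_k=2-2^{-k}$, whereas you keep the normalization of size one and extract the gain from the factor $M^{-\varepsilon}$ produced by Chebyshev across level gaps $M2^{-k-1}$. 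Both are standard and either works. (Two cosmetic remarks: Condition \ref{StrongLip} is not an assumption of this lemma and is not needed — $(u-l_k)_+$ is compactly supported and lies in $H^s(\Rn)$, so the global fractional Sobolev inequality suffices, as in the paper; and your closing claim that time-independence of the coefficients is what makes the energy inequality or the coercivity of the nonlocal term work is spurious — nothing in this lemma uses time-independence, which is only needed in the \emph{next} lemma for $\partial_t u$.)

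There is, however, one real slip: your assertion that ``$A_0\leq C$ already from the normalization.'' With your definition, $A_0$ contains $\sup_{t\in[0,2]}\int_\Omega(u-1)_+^2$ and $\int_0^2\bigl(\|(u-1)_+\|^2_{H^1(\Ol)}+\|(u-1)_+\|^2_{H^s(\Rn)}\bigr)dt$, and neither quantity is controlled by the hypotheses: the normalization only bounds the space--time $L^2$ norm, and there is no control on $u(\cdot,0)$ inside $\Omega$ or on the energy near $t=0$. This is exactly why the paper defines $A_k$ as a pure space--time integral of $(u-l_k)_+$ (plus its first power, to absorb the $f$-term), so that $A_0\leq \|u\|_{L^2}^2+C\|u\|_{L^2}$ is immediate, and recovers the sup-in-time and energy quantities only as intermediate bounds at each step. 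The fix in your scheme is standard and already implicit in your own step: run the $k=0$ step using only $\int_0^{1/2}\!\int(u-l_1)_+^2\leq\|u\|_{L^2}^2\leq1$ (via the mean-value choice of $t_1^*$), bounding the $f$-term by Cauchy--Schwarz against the space--time $L^2$ norm, which gives $A_1\leq C$ without ever invoking $A_0$; then start the nonlinear recurrence at $k=1$. With that adjustment (or with the paper's choice of $A_k$), your argument goes through.
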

\begin{proof}
Assume first that $\|f\|_{L^{\infty}}\leq1$. Set $l_{k}=2-2^{-k}$
and use $(u-l_{k})_{+}$ as a test function, observing it is supported
on $\Omega$. This gives
\[
\int_{\Omega}\partial_{t}u(u-l_{k})_{+}+B_{L}[u,(u-l_{k})_{+}]+B_{N}[u,(u-l_{k})_{+}]=\int_{\Omega}f(u-l_{k})_{+},
\]
which easily yields
\[
\sup_{[1-2^{-k-1},2]}\int_{\Omega}(u-l_{k})_{+}^{2}+\int_{1-2^{-k-1}}^{2}\|(u-l_{k})_{+}\|_{H}^{2}\leq C^{k}\int_{1-2^{-k}}^{2}(u-l_{k})_{+}^{2}+|f|(u-l_{k})_{+}.
\]
Set 
\[
A_{k}=\int_{1-2^{-k}}^{2}\int_{\Omega}(u-l_{k})_{+}^{2}+(u-l_{k})_{+}.
\]
Then applying Chebyshev's inequality,
\[
A_{k+1}\leq C^{k}\int_{1-2^{-k-1}}^{2}\int_{\Omega}(u-l_{k})_{+}^{p}
\]
with $p$ such that the inclusion $L^{p}(\Rn\times\RR)\subset L^{\infty}(\RR;L^{2}(\Rn))\cap L^{2}(\RR;L^{p_{1}}(\Rn))$
is valid for $p_{1}=\frac{2n}{n-2s}$ (importantly, $p>2$). From
Sobolev embedding and interpolation of Lebesgue spaces,
\[
A_{k+1}\leq C^{k}\left(\|(u-l_{k})_{+}\|_{L^{\infty}([1-2^{-k-1},2];L^{2}(\Omega))}^{2}+\|(u-l_{k})_{+}\|_{L^{2}([1-2^{-k-1},2];H)}^{2}\right)^{p/2}\leq C^{k}A_{k}^{p/2}.
\]
The final step used the energy estimate. Thus if $A_{0}\leq\delta$
for some small universal $\delta,$ $A_{k}\rightarrow0$ and 
\[
\sup_{\Omega\times[1,2]}u\leq2.
\]
But we have that $A_{0}\leq\frac{\delta}{2}+C_{\delta}\|u\|_{L^{2}(\Omega\times[0,2])}^{2}<\delta$
if $\|u\|_{L^{2}(\Omega}<\sqrt{\delta/2C_{\delta}}$. Apply this to
$\pm\sqrt{\delta/2C_{\delta}}u/(1+\|u\|_{L^{2}}+\|f\|_{L^{\infty}})$
to get the desired result.\end{proof}
\begin{lem}
Let $u$ be a solution of $(P^{*})$ on $\Omega\times[0,3]$ and assume
that none of $A,a,f$ depend on $t$. Assume further that $u=u_{0}$
on $\Omega^{c}$, with
\[
\left\|\sup_{t,s\in[0,3]}\frac{|u_{0}(x,t)-u_{0}(x,s)|}{|x-t|}\right\|_{H^{1}(\Ol)\cap H^{s}(\Rn)\cap L^{\infty}(\Rn)}\leq1.
\]
Then $\partial_{t}u$ is given by a bounded function, and satisfies
the estimate
\[
\sup_{\Omega\times[2,3]}|\partial_{t}u|\leq C\left(1+\|u\|_{L^{2}(\Omega\times[0,3])}+\|f\|_{L^{2}(\Omega\times[0,3])}\right).
\]
\end{lem}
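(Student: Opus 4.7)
Since $A, a, f$ are time-independent and the equation is linear, the difference quotient $v_h(x,t) := h^{-1}(u(x, t+h) - u(x, t))$ is an admissible weak solution of $(P^{*})$ on $\Omega \times (0, 3-h)$ with right-hand side identically zero and exterior data $v_h^0 := h^{-1}(u_0(\cdot, t+h) - u_0(\cdot, t))$. The hypothesis on $u_0$ yields the pointwise bound $|v_h^0(x, t)| \leq L(x) \leq 1$, where $L$ is the Lipschitz-in-$t$ constant function, and (interpreting the hypothesis in the natural way) uniform control of $v_h^0$ in $L^\infty_t(H^1(\Ol) \cap H^s(\Rn))$. The plan is to apply Lemma \ref{lem:Parabdd} to $v_h$ with zero right-hand side on the shifted interval $[1, 3-h]$, yielding
\[
\sup_{\Omega \times [2, 3-h]} |v_h| \leq C\bigl(1 + \|v_h\|_{L^2(\Omega \times [1, 3-h])}\bigr),
\]
and then to pass to the limit $h \to 0^+$ via weak-$*$ compactness in $L^\infty$, identifying the limit as $\partial_t u$.

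The heart of the argument is the uniform $L^2$ bound on $v_h$. To avoid assuming $\partial_t u \in L^2(H)$ a priori (which is what we are trying to prove), I use the Steklov average $\bar u(x, t) := h^{-1} \int_t^{t+h} u(x, s)\, ds$. Then $\partial_t \bar u = v_h$ automatically lies in $L^2([0, 3-h]; H)$, and by linearity and time-independence of coefficients $\bar u$ weakly solves the same equation as $u$. Test this weak form with $\phi = (v_h - v_h^0)\zeta(t)^2$, where $\zeta \in C^\infty_c(0, 3-h)$ is identically $1$ on $[1, 3-h-\delta]$ and vanishes at both endpoints. Using the symmetry of $B_L$ and $B_N$ together with the identity $B_L[\bar u, \partial_t \bar u] = \tfrac12 \partial_t B_L[\bar u, \bar u]$, integration by parts in $t$ gives
\[
\int_0^{3-h} \zeta^2 B_L[\bar u, \partial_t \bar u]\, dt = -\int_0^{3-h} \zeta\zeta' B_L[\bar u, \bar u]\, dt,
\]
and analogously for $B_N$; the crucial feature is that the boundary terms at $t = 0$ and $t = 3-h$ vanish, so no knowledge of $u(\cdot, 0) \in H$ is required. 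Combining this with the leading coercive term $\int \zeta^2 v_h(v_h - v_h^0) \geq \tfrac12 \int \zeta^2 v_h^2 - \tfrac12 \int \zeta^2 (v_h^0)^2$, bounding $\|\bar u\|_{L^2([0,3]; H)} \leq C(\|u\|_{L^2} + \|f\|_{L^2})$ via the basic energy inequality for $u$, and estimating the remaining terms (those involving $B_L[\bar u, v_h^0]$, $B_N[\bar u, v_h^0]$, $f v_h^0$, and $|\zeta\zeta'| \|\bar u\|_H^2$) using the uniform control on $v_h^0$, we deduce $\|v_h\|_{L^2(\Omega \times [1, 3-h])}^2 \leq C(1 + \|u\|_{L^2}^2 + \|f\|_{L^2}^2)$.

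The principal obstacle is the absence of any prescribed initial condition, which precludes a naive energy estimate for $\partial_t u$; the time cutoff $\zeta$ vanishing at $t = 0$, combined with the slack interval $[0, 1]$ left by working on $[0, 3]$ to estimate on $[2, 3]$, is precisely the device that circumvents this. Meanwhile, the Steklov average $\bar u$ ensures that $\partial_t \bar u = v_h \in L^2(H)$ (rather than merely in the dual, as is a priori the case for $\partial_t u$), so all integrations by parts in $t$ are genuinely justified without circular regularity assumptions. Once the $L^2$ bound is in hand, the conclusion follows from Lemma \ref{lem:Parabdd} applied to $v_h$ and passage to the limit $h \to 0^+$.
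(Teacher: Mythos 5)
Your proposal is correct and takes essentially the same route as the paper: the time derivative (realized in your case as a difference quotient $v_h$) solves $(P^{*})$ with zero right-hand side and exterior data bounded by $1$, a spacetime $L^{2}$ bound on it is obtained from an energy estimate built on the identity $B_{L}[u,\partial_{t}u]=\tfrac{1}{2}\partial_{t}B_{L}[u,u]$ (and likewise for $B_{N}$) together with the basic energy inequality for $u$, and then Lemma \ref{lem:Parabdd} gives the $L^{\infty}$ bound on $\Omega\times[2,3]$. The only difference is your Steklov-average and temporal-cutoff device, which rigorously justifies the integrations by parts in $t$ that the paper performs formally (it explicitly defers such justifications to Galerkin approximation at the start of Section 6).
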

\begin{proof}
Use $\partial_{t}(u-u_{0})$ as a test function for $(P^{*})$:
\begin{align*}
\int_{\Omega}&|\partial_{t}u|^{2}+B_{L}[u,\partial_{t}u]+B_{N}[u,\partial_{t}u]\\
&=\int_{\Omega}f\partial_{t}(u-u_{0})+B_{L}[u,\partial_{t}u_{0}]+B_{N}[u,\partial_{t}u_{0}]+\int_{\Omega}\partial_{t}u\partial_{t}u_{0}
\end{align*}
and then reabsorb to get
\[
\int_{\Omega}|\partial_{t}u|^{2}+\partial_{t}B_{L}[u,u]+\partial_{t}B_{N}[u,u]\leq C\left(1+\int_{\Omega}f^{2}+B_{L}[u,u]+B_{N}[u,u]\right).
\]
Integrating in time gives the following:
\begin{align}
\int_{1}^{3}&\int_{\Omega}|\partial_{t}u|^{2}+\sup_{[1,3]}\left(B_{L}[u,u]+B_{N}[u,u]\right)\nonumber\\
&\leq C\left[1+\int_{\Omega\times[0,3]}f^{2}+\int_{\frac{1}{2}}^{3}B_{L}[u,u]+B_{N}[u,u]\right]\nonumber\\
&\leq C\left[1+\int_{0}^{3}\int_{\Omega}f^{2}+u^{2}\right]\label{eq:paux}
\end{align}
with the last step from an energy estimate as in the previous lemma.
Now observe that $\partial_{t}u$ solves $(P^{*})$ with $0$ right-hand
side and boundary data $\partial_{t}u|_{\Omega^{c}}$, which were
assumed to be bounded. Thus Lemma \ref{lem:Parabdd} applies to give
\[
\sup_{\Omega\times[2,3]}|\partial_{t}u|\leq C\left(1+\|\partial_{t}u\|_{L^{2}(\Omega\times[1,3])}\right)\leq C\left(1+\|f\|_{L^{2}(\Omega\times[0,3])}+\|u\|_{L^{2}(\Omega\times[0,3])}\right)
\]
by combining with equation \eqref{eq:paux}.\end{proof}
\begin{rem}
This lemma can be iterated (provided the data is smooth) to obtain
arbitrary regularity in $t$. At this point, Theorem \ref{thm:BryReg}
can be applied to obtain $u\in C^{0,\alpha}(\Omega\times[\frac{5}{2},3])$,
and solutions will generally behave like the solutions to the elliptic
problem. This argument also works with time-independent drift, or
with coefficients varying smoothly with time.
\end{rem}

\section{Optimal Regularity for Simple Case}

\subsection{General Discussion, Tangential Regularity, and Reductions}

In this section we begin with a discussion of a simplified situation,
outlined as follows:
\begin{itemize}
\item Let $\Gamma=\{x_{n}=0\}$ be flat.
\item Set $\Omega=E_{2}=\{|x'|<2,|x_{n}|<2\}$.
\item The coefficients are $A=I$ and $a=1_{\Or\times\Or}+\nu1_{\Rn\backslash(\Or\times\Or)}$
for a strictly positive $\nu$.
\item The drift $\boldsymbol{b}=0$ .
\item The right-hand side $f$ is smooth in $\Ol$ and $\Or,$ and also
bounded.
\item The data $u_{0}$ is smooth and globally bounded. 
\end{itemize}
We begin by deriving the classical form of the equation $(P)$ satisfied
by (an admissible) $u$ on $E_{2}.$ In this case, solutions correspond
to minimizers of the strictly convex energy \eqref{eq:var}, and so
are easily seen to be unique. First, take $\psi\in C_{c}^{\infty}(E_{2}\cap\Ol)$.
Then we obtain the following:
\begin{align*}
\int_{\Ol}f\psi & \int_{\Ol}\langle\nabla\psi,\nabla u\rangle+\int_{\Or}\int_{\Rn}\frac{[\psi(x)-\psi(y)]a(x,y)[u(x)-u(y)]dxdy}{|x-y|^{n+2s}}\\
 & =-\int_{\Ol}u\triangle\psi+\nu\int_{\Rn}\psi(x)\int_{\Or}\frac{u(x)-u(y)}{|x-y|^{n+2s}}dydx.
\end{align*}

As $u$ is bounded from Lemma \ref{lem:bdd} (indeed, from Theorem
\ref{thm:Holder}, we know that $u\in C^{0,\alpha}(\bar{E}_{2})$),
the integral on the right is bounded by $Cd(\supp\psi,\Gamma)^{-2s}$.
It follows that $u$ is a distributional solution of
\begin{equation}
-\triangle u(x)=f(x)-\nu\int_{\Or}\frac{u(x)-u(y)}{|x-y|^{n+2s}}dy\qquad\text{ for }x\in E_{2}\cap\Ol.\label{eq:OL}
\end{equation}
 and so \eqref{eq:OL} is satisfied classically and $u\in C^{\infty}(E_{2}\cap\Ol).$

Alternatively, take a smooth $\psi\in C_{c}^{\infty}(E_{1}\cap\Or)$
and use it as a test function:
\begin{align*}
\int_{\Or}f\psi&=\int_{\Or}\int_{\Rn}\frac{[\psi(x)-\psi(y)]a(x,y)[u(x)-u(y)]dxdy}{|x-y|^{n+2s}}\\
&=\int_{\Or}\psi(x)\left[2\int_{\Rn}\frac{u(x)-u(y)}{|x-y|^{n+2s}}dy-(2-\nu)\int_{\Ol}\frac{u(x)-u(y)}{|x-y|^{n+2s}}dy\right],
\end{align*}
meaning $u$ solves
\begin{equation}
\frac{2}{c_{s}}(-\triangle)^{s}u(x)=f(x)+(2-\nu)\int_{\Ol}\frac{u(x)-u(y)}{|x-y|^{n+2s}}dy\label{eq:OR}
\end{equation}
distributionally on $\Or\cap E_{2}.$ As on the other side, we have
that $u\in C^{\infty}(E_{3/2}\cap\Or)$ and solves the equation classically.

Next, notice that for any unit vector $e$ orthogonal to $e_{n},$
$\partial_{e}u$ solves a similar equation. Indeed, let $\psi\in C_{c}^{\infty}(E_{3/2})$
and use $\delta_{e,h}\psi(x)=\frac{u(x+he)-u(x)}{h}$ as a test function
for $|h|<\frac{1}{2}$:
\[
0=B_{L}[u,\delta_{e,h}\psi]+B_{N}[u,\delta_{e,h}\psi]-\int f\delta_{e,h}\psi=B_{L}[\delta_{e,h}u,\psi]+B_{N}[\delta_{e,h}u,\psi]-\int\psi\delta_{e,h}f,
\]
where we have used the translation-invariance of the forms $B_{L},B_{N}.$
We claim it follows that $\delta_{e,h}u$ is uniformly bounded in
$H^{1}(\Ol\cap E_{1})\cap H^{s}(E_{1})$ (this requires a standard
argument with difference quotients outlined in the next section in
a more general situation; see Lemma \ref{lem:Constcoefftang}). This
means that $\partial_{e}u$ is in the same space and solves
\[
B_{L}[\partial_{e}u,\psi]+B_{N}[\partial_{e}u,\psi]=\int\psi\partial_{e}f
\]
in $E_{1}.$ Applying Lemma \ref{lem:bdd} gives that $\partial_{e}u$
is bounded, and indeed we know it is H\"{o}lder continuous. This can be
iterated to show all higher tangential derivatives of $u$ are in
$C^{0,\alpha}(\bar{E}_{1}).$ In particular, the restriction $u|_{\Gamma}\in C^{\infty}.$

We now discuss the behavior in the normal direction $e_{n}$, which
is not a trivial consequence of standard elliptic theory, like the
above, and is more subtle. First, set
\[
v(x',x_{n})=u(x',0),
\]
and take any $\psi\in C_{c}^{\infty}(E_{1}).$ Then using the smoothness
of $v$,
\begin{align*}
B_{L}&[v,\psi]+B_{N}[v,\psi]\\
 & = \int_{\Or\times\Rn}\frac{[v(x)-v(y)]a(x,y)[\psi(x)-\psi(y)]}{|x-y|^{n+2s}}dxdy+\int_{\Ol}\sum_{i<n}\partial_{i}v\partial_{i}\psi\\
 & =\int\psi(x)\left[\int_{\Or}\frac{[v(x)-v(y)]a(x,y)dy}{|x-y|^{n+2s}}+1_{\Or}(x)\int_{\Rn}\frac{[v(x)-v(y)]a(x,y)dy}{|x-y|^{n+2s}}\right]dx\\
 &\qquad+\int_{\Ol}\psi(-\triangle v).
\end{align*}
But as $v$ doesn't depend on $x_{n},$the integrand is bounded (when
interpreted in the principal value sense):
\begin{align*}
\left|\int_{\Or}\frac{[v(x)-v(y)]a(x,y)dy}{|x-y|^{n+2s}}\right| & \leq C\|v\|_{C^{2}\cap L^{\infty}},
\end{align*}
while the other term is just the fractional Laplacian. Thus
\[
B_{L}[v,\psi]+B_{N}[v,\psi]=\int\tilde{f}\psi,
\]
with $\tilde{f}$ bounded. It can be verified that $\tilde{f}$ is
piecewise smooth as well. It follows that $u-v$ satisfies the same
hypotheses as $u$ and vanishes on $\Gamma.$ We will therefore restrict
the discussion from here on to $u$ vanishing along $\Gamma$. A solution
$u$ satisfying all of the above will be said to satisfy $(H)$. More
precisely:
\begin{defn}
A solution to $(P)$ is said to \emph{satisfy $(H)$} if:
\begin{enumerate}
\item $u$ solves $(P)$ in $E_{2}$
\item $u\in C^{0,\alpha_{*}}(\Rn)\cap H^{s}(E_{4})\cap H^{1}(E_{4}\cap\Ol)$
\item $u\in C^{\infty}(\Ol\cap E_{2})\cap C^{\infty}(\Or\cap E_{2})$
\item For any $\{e^{i}\}_{i=1}^{k}$ with each $e^{i}\perp e_{n}$, $\partial_{e^{1}}\partial_{e^{2}}\cdots\partial_{e^{k}}u\in C^{0,\alpha_{*}}(E_{2})$
\item $u=0$ on $\Gamma$
\item $f\in C^{\infty}(\Or\cap E_{2})\cap C^{\infty}(\Ol\cap E_{2})\cap L^{\infty}(E_{2})$
\end{enumerate}
\end{defn}
Here we have changed the size of the cylinders for convenience. We
will now give a formal construction of a function $\Phi$ which vanishes
on $\Gamma$ and is a solution to \eqref{eq:OL} and \eqref{eq:OR} simultaneously
(for some $f$).

Let 
\[
A(n,s)=\int_{\RR^{n-1}}\frac{1}{(1+|y'|^{2})^{\frac{n+2s}{2}}}dy'.
\]
Consider homogeneous functions of one variable of the form
\[
\rho_{\alpha}(t)=1_{\{t\geq0\}}t^{\alpha}.
\]
Then for $t>0$,
\[
\int_{\RR}\frac{\rho_{\alpha}(t)-\rho_{\alpha}(r)}{|t-r|^{1+2s}}dr=t^{\alpha-2s}\left[\int_{0}^{\infty}\frac{1-r^{\alpha}}{|1-r|^{1+2s}}dr+\int_{0}^{\infty}\frac{1}{(1+r)^{1+2s}}dr\right]=t^{\alpha-2s}q(s,\alpha)
\]
is homogeneous of degree $\alpha-2s$ (where the integral is interpreted
in the principal value sense). 
\begin{clm}
For $s$ fixed, $q$ is continuous and concave in $\alpha\in(0,2s),$
with
\[
\lim_{\alpha\rightarrow0^{+}}q(s,\alpha)=\frac{1}{2s}
\]
\[
\lim_{\alpha\rightarrow2s^{-}}q(s,\alpha)=-\infty.
\]
If $s\leq\frac{1}{2},$ then $q(s,\cdot)$ is decreasing. If $s\geq\frac{1}{2},$
then $q$ is symmetric about $\alpha=s-\frac{1}{2}$; in particular,
$q(s,2s-1)=\frac{1}{2s}$ and $q$ is decreasing on $(2s-1,2s).$
In addition, $q(s,1)=\frac{1}{2s}+\frac{1}{1-2s}$.\end{clm}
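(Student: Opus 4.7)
The plan is to derive and exploit a single manifestly symmetric formula for $q$. I would split the defining principal-value integral at $r=1$ and apply the substitution $r\mapsto 1/r$ to the $(1,\infty)$ piece; after checking that the PV cutoffs cancel properly and the combined integrand is absolutely integrable, one obtains
\[
q(s,\alpha) - \frac{1}{2s} = \int_0^1 \frac{(1-r^\alpha)(1-r^{2s-1-\alpha})}{(1-r)^{1+2s}}\,dr.
\]
The integrand is visibly invariant under $\alpha \leftrightarrow 2s-1-\alpha$, which yields the stated symmetry about $\alpha = s - \tfrac12$; taking the limit $\alpha \to 0^+$ and reading off the mirror value at $\alpha = 2s-1$ gives $q(s, 2s-1) = \tfrac{1}{2s}$.

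Next, continuity and concavity on $(0,2s)$: each factor $1 - r^\beta$ vanishes to first order at $r=1$, so the integrand above is $O(|1-r|^{1-2s})$, integrable for $s<1$, and dominated convergence yields continuity in $\alpha$ as well as the right to differentiate twice under the integral. The second derivative, taken on the original PV form, is $-\int_0^\infty r^\alpha(\log r)^2 |1-r|^{-1-2s}\,dr$; the simple zero of $\log r$ at $r=1$ keeps the integrand $O(|r-1|^{1-2s})$ there and strictly positive elsewhere, so the second derivative is strictly negative and $q(s,\cdot)$ is strictly concave.

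The endpoint limits are handled directly. For $\alpha \to 0^+$, pointwise convergence to $0$ with a uniform dominant gives $q \to 1/(2s)$. For $\alpha \to 2s^-$ the divergence is isolated in the tail of the original PV form: the leading term $-\int_R^\infty r^{\alpha-1-2s}\,dr = -R^{\alpha-2s}/(2s-\alpha)$ goes to $-\infty$, while the remainder is bounded via dominated convergence on compacts. The explicit value $q(s,1)$ comes from a direct PV computation of $\int_0^\infty \operatorname{sgn}(1-r)|1-r|^{-2s}\,dr$ using the antiderivative $|1-r|^{1-2s}/(1-2s)$; the singular pieces at $r=1$ cancel and one is left with the $1/(1-2s)$ boundary contribution (this requires $s\neq 1/2$, and the condition $\alpha=1<2s$ needed for tail convergence forces $s>1/2$).

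The monotonicity claims are then corollaries of concavity and symmetry. When $s \leq 1/2$ the symmetry axis $\alpha = s - 1/2$ lies at or below $0$, so $q(s,\cdot)$ is decreasing on all of $(0,2s)$; when $s \geq 1/2$, it is increasing on $(0, s-1/2)$ and decreasing on $(s-1/2, 2s)$, hence decreasing on $(2s-1, 2s)$. The principal technical obstacle is rigorously justifying the rearrangement that produces the symmetric formula—splitting a PV integral and applying a change of variables requires tracking the cutoffs and verifying the singular parts cancel as they vanish—but this is a routine $\epsilon$-computation rather than a genuine difficulty.
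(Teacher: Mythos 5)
Your proposal follows essentially the same route as the paper: the paper simply quotes the symmetric formula
$q(s,\alpha)=\frac{1}{2s}-\int_{0}^{1}\frac{(t^{\alpha}-1)(1-t^{2s-1-\alpha})}{(1-t)^{1+2s}}\,dt$
from \cite{BB} (which is your identity, up to moving a sign into the first factor) and reads off the symmetry, concavity, endpoint limits and the value $q(s,1)$ from it, exactly as you do. Your derivation of the formula by splitting the principal value at $r=1$ and substituting $r\mapsto 1/r$ is correct (the cutoff mismatch lives on an interval of length $O(\epsilon^{2})$ where the integrand is $O(\epsilon^{-2s})$, hence contributes $O(\epsilon^{2-2s})\to 0$), and it makes the proof self-contained where the paper cites a reference; your second-derivative formula, the dominated-convergence limit at $0^{+}$, the tail blow-up as $\alpha\to 2s^{-}$, and the computation of $q(s,1)$ (valid since $\alpha=1<2s$ forces $s>\tfrac12$) are all fine.

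The one step that needs tightening is the monotonicity for $s\le\tfrac12$. As written, ``the symmetry axis $\alpha=s-\tfrac12$ lies at or below $0$, so $q$ is decreasing on $(0,2s)$'' is not yet an argument: symmetry about a point outside the interval on which $q$ has been defined carries no information, and concavity alone does not force monotonicity (a concave function with $q(0^{+})=\tfrac1{2s}$ and $q(2s^{-})=-\infty$ could still increase initially). Two quick fixes: (i) note that your symmetric formula converges for all $\alpha\in(-1,2s)$, so $q$ extends to an interval containing the axis, is symmetric and strictly concave there, hence attains its maximum at $s-\tfrac12$ and is strictly decreasing to the right of it; or (ii) avoid the extension altogether by observing that
$\partial_{\alpha}\bigl[(1-t^{\alpha})(1-t^{2s-1-\alpha})\bigr]=\log t\,\bigl(t^{2s-1-\alpha}-t^{\alpha}\bigr)$,
which for each $t\in(0,1)$ is strictly negative precisely when $\alpha>s-\tfrac12$; pointwise monotonicity of the integrand then gives $q$ strictly decreasing on $\bigl(\max\{0,s-\tfrac12\},\,2s\bigr)$, which covers both the case $s\le\tfrac12$ and the interval $(2s-1,2s)$ for $s\ge\tfrac12$ in one stroke.
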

\begin{proof}
In \cite{BB}, the integral defining $q(s,\alpha)$ is shown to admit
the following simplified form:
\[
q(s,\alpha)=\frac{1}{2s}-\int_{0}^{1}\frac{(t^{\alpha}-1)(1-t^{2s-1-\alpha})}{(1-t)^{1+2s}}dt.
\]
The integrand is symmetric across $\alpha=s-\frac{1}{2}$ for each
$t$ and convex, so $q$ is symmetric and concave. It is clear from
this expression that $q(s,0^{+})=q(s,2s-1)=\frac{1}{2s}$, and $q$
is decreasing for $s>(2s-1)_{+}.$ The limit as $\alpha\rightarrow2s$
follows from monotone convergence theorem. The special value $q(s,1)$
can be obtained by applying elementary integration techniques. See
also \cite{BD} for the value of $q(s,s-\frac{1}{2})$ and additional
properties.
\end{proof}
We return to the construction. Fix $\chi(t)$ a cutoff compactly supported
on $[-2,2]$ and identically $1$ on $[-1,1].$ Then find $\alpha_{0}\in((2s-1)_{+},2s)$
such that $q(s,\alpha_{0})=(1-\frac{\nu}{2})\frac{1}{2s}<\frac{1}{2s}$.
Set $v_{0}(x',x_{n})=\rho_{\alpha_{0}}(x_{n}).$ Then for $x\in\Or,$
\begin{align*}
\frac{1}{c_{s}}(-\triangle)^{s}v_{0}(x) & =\int_{\Rn}\frac{\rho_{\alpha_{0}}(x_{n})-\rho_{\alpha_{0}}(y_{n})}{|x-y|^{n+2s}}dy\\
 & =\int_{\RR}\int_{\RR^{n-1}}\frac{\rho_{\alpha_{0}}(x_{n})-\rho_{\alpha_{0}}(y_{n})}{\left(|x_{n}-y_{n}|^{2}+|x'-y'|^{2}\right)^{\frac{n+2s}{2}}}dy'dy_{n}\\
 & =\int_{\RR}\frac{\rho_{\alpha_{0}}(x_{n})-\rho_{\alpha_{0}}(y_{n})}{|x_{n}-y_{n}|{}^{1+2s}}dy_{n}\int_{\RR^{n-1}}\frac{1}{(1+|y'|^{2})^{\frac{n+2s}{2}}}dy'\\
 &=q(s,\alpha_{0})A(n,s)|x_{n}|^{\alpha_{0}-2s}=(1-\frac{\nu}{2})\frac{1}{2s}A(n,s)|x_{n}|^{\alpha_{0}-2s}.
\end{align*}
On the other hand,
\begin{align*}
(1-\frac{\nu}{2})\int_{\Ol}\frac{v_{0}(x)}{|x-y|^{n+2s}}dy & =(1-\frac{\nu}{2})|x_{n}|^{\alpha_{0}-2s}\int_{\RR^{n}}\frac{1}{\left((|y_{n}|+1)^{2}+|y'|^{2}\right)^{\frac{n+2s}{2}}}dy\\
 & =(1-\frac{\nu}{2})A(n,s)|x_{n}|^{\alpha_{0}-2s}\int_{0}^{\infty}\frac{1}{(t+1)^{1+2s}}dt\\
 & =(1-\frac{\nu}{2})A(n,s)\frac{1}{2s}|x_{n}|^{\alpha_{0}-2s}=\frac{1}{c_{s}}(-\triangle)^{s}v_{0}(x).
\end{align*}
Thus $v_{0}$ satisfies \eqref{eq:OR} with $0$ right-hand side. Let
$\tilde{v}_{0}(x',x_{n})=\chi(x_{n})\rho_{\alpha_{0}+(2-2s)}(-x_{n})$
(note that $2+\alpha_{0}-2s<2$). Then for $x\in\Ol$,
\begin{align*}
-\triangle(v_{0}+M_{0}\tilde{v}_{0})(x) & =-M_{0}(2+\alpha_{0}-2s)(1+\alpha_{0}-2s)\chi(x_{n})|x_{n}|^{\alpha_{0}-2s}+C,
\end{align*}
while
\begin{align*}
-\nu&\int_{\Or}\frac{[v_{0}+M_{0}\tilde{v}_{0}](x)-[v_{0}+M_{0}\tilde{v}_{0}](y)}{|x-y|^{n+2s}}dy\\
 & =-\nu\left[M_{0}A(n,s)\chi(x_{n})|x_{n}|^{2+\alpha_{0}-4s}-A(n,s)\int_{0}^{\infty}\frac{t^{\alpha_{0}}}{(1+t)^{1+2s}}dt|x_{n}|^{\alpha_{0}-2s}\right]
\end{align*}
Since $\alpha_{0}>1-2s,$ $1+\alpha_{0}-2s\neq0$, so set
\[
M_{0}=-\frac{\nu}{(2+\alpha_{0}-2s)(1+\alpha_{0}-2s)}A(n,s)\int_{0}^{\infty}\frac{t^{\alpha_{0}}}{(1+t)^{1+2s}}dt.
\]
Then $\Phi_{0}=v_{0}+M_{0}\tilde{v}_{0}$ satisfies \eqref{eq:OL} with
right-hand side homogeneous of degree $\alpha_{0}-2s+(2-2s)$. If
this is nonnegative, the right-hand side is bounded, as is the right-hand
side in \eqref{eq:OR}, and the construction is complete. If not, we
can add additional terms to reduce the homogeneity of the right-hand
sides, as shown below. 

Set $v_{1}=L_{1}\rho_{\alpha_{0}+(2-2s)}(x_{n}),$ so that
\begin{align*}
\frac{1}{c_{s}}(-\triangle)^{s}(\Phi_{0}+v_{1})(x) & -(1-\frac{\nu}{2})\int_{\Ol}\frac{(\Phi_{0}+v_{1})(x)-(\Phi_{0}+v_{1})(y)}{|x-y|^{n+2s}}dy\\
 & =|x_{n}|^{\alpha_{0}-2s+(2-2s)}A(n,s)L_{1}\left[q(s,\alpha_{0}+(2-2s))-(1-\frac{\nu}{2})\frac{1}{2s}\right]\\
 & \qquad-\frac{\nu}{2}M_{0}A(n,s)\int_{0}^{\infty}\frac{\chi(t)t^{\alpha_{0}}}{(1+t)^{1+2s}}dt|x_{n}|^{\alpha_{0}-2s+(2-2s)}.
\end{align*}
As $q(s,\alpha_{0}+(2-2s))<(1-\frac{\nu}{2})\frac{1}{2s}=q(s,\alpha_{0})$,
$L_{1}$ can be chosen so this is $0.$ Now if $\tilde{v}_{1}=\chi(x_{n})M_{1}\rho_{\alpha_{0}+2(2-2s)}(-x_{n})$,
a computation as before will reveal that if $\alpha_{0}+2(2-2s)\neq1$
(which is always satisfied), some choice of $M_{1}$ gives that $\Phi_{1}=\Phi_{0}+v_{1}+\tilde{v}_{1}$
satisfies \eqref{eq:OL} with right-hand side homogeneous of degree
$\alpha_{0}-2s+2(2-2s)$ (plus a bounded term). 

Now set $k_{*}$ to be the smallest integer such that $\alpha_{0}-2s+(k_{*}+1)(2-2s)\geq0$
and continue this procedure until $\Phi:=\chi(x_{n})\Phi_{k}$ has
been constructed. Then $\Phi$ solves \eqref{eq:OL} and \eqref{eq:OR}
with (locally) bounded right-hand side, vanishes on $\Gamma,$ and
to leading order behaves like $1_{\{x_{n}>0\}}|x_{n}|^{\alpha_{0}}+M_{0}1_{\{x_{n}<0\}}|x_{n}|^{\alpha_{0}+(2-2s)}.$
We will justify below that this function actually solves $(P)$ in
the weak sense. From the construction this is clear for test functions
compactly supported on either $\Ol$ or $\Or$, but we still need
to check what happens when the test function is supported near $\Gamma$.
In the classical transmission problem, this kind of test function
is related to the transmission condition.
\begin{prop}
The function $\Phi$ solves $(P)$ on $E_{1}$ with bounded right-hand
side. \end{prop}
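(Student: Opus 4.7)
The plan is to verify the weak form $B_L[\Phi,\psi]+B_N[\Phi,\psi]=\int f\psi$ for every $\psi\in C_c^\infty(E_1)$ by first establishing it for test functions supported away from $\Gamma$ --- a direct consequence of \eqref{eq:OL} and \eqref{eq:OR} --- and then passing to the limit. The essential content of the statement is an algebraic cancellation: when the local part is integrated by parts and the nonlocal part rearranged by Fubini, the ``wrong-side'' contributions of $\psi$ (e.g., $\psi(x)$ for $x\in\Ol$ interacting via the kernel with $\Phi$ on $\Or$) appear in both $B_L$ and $B_N$ with opposite signs and cancel exactly.

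First I record the regularity of $\Phi$ that the construction supplies. On each side of $\Gamma$ it is smooth; globally it is $C^{\alpha_0}$ and compactly supported, and satisfies $\Phi\in H^s(\Rn)\cap H^1(\Ol\cap E_4)$ (using $\alpha_0>s-\tfrac{1}{2}$, which follows from $\alpha_0>(2s-1)_+$, together with the one-variable nature of the singularity). On $\Ol$ it belongs to $C^{1,\alpha_0+1-2s}$ up to $\Gamma$ with $\partial_n\Phi|_\Gamma=0$ from $\Ol$, since every $(-x_n)$-power appearing is strictly greater than $1$ (as $\alpha_0>2s-1$). Crucially, the corrections $M_i\tilde v_i$ were chosen so that after the full recursion each of $\triangle\Phi$ on $\Ol$, $(-\triangle)^s\Phi$ on $\Or$, and the cross integrals
\begin{equation*}
g^{\Or\to\Ol}(x)=\int_\Or \frac{\Phi(x)-\Phi(y)}{|x-y|^{n+2s}}\,dy\ (x\in\Ol),\qquad g^{\Ol\to\Or}(x)=\int_\Ol \frac{\Phi(x)-\Phi(y)}{|x-y|^{n+2s}}\,dy\ (x\in\Or)
\end{equation*}
is pointwise bounded on $E_1$; so \eqref{eq:OL} and \eqref{eq:OR} hold classically with a bounded $f$.

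Let $\eta_\delta\in C^\infty(\RR)$ vanish on $[-\delta,\delta]$ and equal $1$ outside $[-2\delta,2\delta]$, and set $\psi_\delta(x)=\eta_\delta(x_n)\psi(x)$, which decomposes as $\psi_\delta=\psi_\delta^+ + \psi_\delta^-$ with $\psi_\delta^+\in C_c^\infty(\Or)$ and $\psi_\delta^-\in C_c^\infty(\Ol)$. Multiplying \eqref{eq:OR} by $\psi_\delta^+$ and \eqref{eq:OL} by $\psi_\delta^-$ and integrating --- the local integration by parts carries no boundary term since $\psi_\delta^-$ vanishes near $\Gamma$, and Fubini in the nonlocal part is immediate because the relevant inner integrals are proper away from $\Gamma$ --- and then expanding $B_N[\Phi,\psi_\delta]$ via the identity
\begin{equation*}
B_N[\Phi,\varphi]=2\int_\Or\varphi(x)\int_\Or\frac{\Phi(x)-\Phi(y)}{|x-y|^{n+2s}}\,dy\,dx + \nu\int_\Or\varphi\,g^{\Ol\to\Or} + \nu\int_\Ol\varphi\,g^{\Or\to\Ol}
\end{equation*}
(obtained by expanding $[\Phi(x)-\Phi(y)][\varphi(x)-\varphi(y)]$ and applying Fubini and the kernel symmetry, valid here by absolute convergence since $\varphi$ is supported away from $\Gamma$), one sees that the last term of $B_N$ cancels exactly the term $-\nu\int_\Ol\psi_\delta^-\,g^{\Or\to\Ol}$ produced by \eqref{eq:OL} in the local part, yielding $B_L[\Phi,\psi_\delta]+B_N[\Phi,\psi_\delta]=\int f\psi_\delta$ for every $\delta>0$.

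The main obstacle is passing to the limit $\delta\to 0$. The right-hand side converges by dominated convergence. For $B_L[\Phi,\psi-\psi_\delta]$, which is supported in $\Ol\cap\{-2\delta<x_n<0\}$, combining $|\nabla\Phi|\le C(-x_n)^{\alpha_0+1-2s}$ with $|\nabla(\psi-\psi_\delta)|\le C\delta^{-1}$ produces $|B_L[\Phi,\psi-\psi_\delta]|\le C\delta^{\alpha_0+1-2s}\to 0$. For $B_N[\Phi,\psi-\psi_\delta]\to 0$, I extend the displayed Fubini identity to arbitrary $\varphi\in C_c^\infty(E_1)$ by a density argument (approximating $\varphi$ by its cutoff $\varphi_{\delta'}$ and using the $L^\infty$ bounds on all inner kernels, including $\frac{2}{c_s}(-\triangle)^s\Phi-2g^{\Ol\to\Or}$ on $\Or$), and apply it with $\varphi=\psi-\psi_\delta$ to obtain $|B_N[\Phi,\psi-\psi_\delta]|\le C\|\psi-\psi_\delta\|_{L^1}=O(\delta)\to 0$.
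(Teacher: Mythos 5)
Your overall strategy is the same as the paper's: verify the weak identity for test functions cut off away from $\Gamma$ (where \eqref{eq:OL} and \eqref{eq:OR} can be integrated against the test function), then show the contribution of the strip $\{|x_n|<2\delta\}$ to $B_L$ and $B_N$ vanishes as $\delta\to0$. Your treatment of the local term is correct: $|\nabla\Phi|\leq C|x_n|^{\alpha_0+1-2s}$ on $\Ol$ against $|\nabla(\psi-\psi_\delta)|\leq C\delta^{-1}$ on a strip of volume $O(\delta)$ gives $O(\delta^{\alpha_0+1-2s})$, which is exactly the paper's estimate.

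However, there is a genuine error in the input to your nonlocal estimate. You assert that after the recursion \emph{each} of $\triangle\Phi$ on $\Ol$, $(-\triangle)^{s}\Phi$ on $\Or$, and the two cross integrals $g^{\Or\to\Ol}$, $g^{\Ol\to\Or}$ is pointwise bounded on $E_1$. This is false: each of these quantities blows up like $|x_n|^{\alpha_0-2s}$ as $x_n\to0$ (note $\alpha_0<2s$). This is visible in the paper's own computations, e.g. $\frac{1}{c_s}(-\triangle)^{s}\rho_{\alpha_0}=q(s,\alpha_0)A(n,s)|x_n|^{\alpha_0-2s}$ with $q(s,\alpha_0)=(1-\tfrac{\nu}{2})\tfrac{1}{2s}\neq0$ in general, $-\triangle(v_0+M_0\tilde v_0)\propto|x_n|^{\alpha_0-2s}$, and $\int_{\Ol}\frac{v_0(x)}{|x-y|^{n+2s}}dy\propto|x_n|^{\alpha_0-2s}$; the whole point of the choices of $\alpha_0$, $M_0$, $L_k$, $M_k$ is that only the \emph{combinations} appearing in \eqref{eq:OL} and \eqref{eq:OR} are bounded, not the individual pieces. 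Your first use of the claim (that the equations hold classically with bounded right-hand side) survives, since only the combinations are needed there. But your final step does not: after Fubini, the density of $B_N[\Phi,\cdot]$ against the test function on $\Ol$ is (a multiple of) $g^{\Or\to\Ol}$, which is unbounded near $\Gamma$, so the inequality $|B_N[\Phi,\psi-\psi_\delta]|\leq C\|\psi-\psi_\delta\|_{L^1}$ is not justified as stated. The conclusion can be repaired: since $\alpha_0>(2s-1)_+$ gives $\alpha_0-2s>-1$, the weight $|x_n|^{\alpha_0-2s}$ is locally integrable across $\Gamma$, so the strip contribution to $B_N$ is $O(\delta^{1+\alpha_0-2s})\to0$ rather than $O(\delta)$ — which is, in substance, the scaling computation the paper performs directly on the explicit homogeneous profiles. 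With that correction (and the corresponding restatement of the Fubini identity using the integrable, not bounded, densities), your argument goes through and coincides with the paper's.
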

\begin{proof}
First, note that by the assumption on $\alpha_{0},$ $\Phi$ is in
the energy space $H^{1}(\Ol)\cap H^{s}(\Rn).$ Take any $w\in C_{c}^{\infty}(E_{1})$
and let $\tau_{\delta}(t)=(1-|t|/\delta)_{+}.$ Then we easily have
that
\[
B_{L}[\Phi,w(1-\tau_{\delta})]+B_{N}[\Phi,w(1-\tau_{\delta})]=\int Fw(1-\tau_{\delta})
\]
for some bounded $F$. We show that
\[
B_{L}[\Phi,w\tau_{\delta}]+B_{N}[\Phi,w\tau_{\delta}]\rightarrow0
\]
as $\delta\rightarrow0;$ this will prove the proposition.

To see this, we compute the two forms directly:
\begin{align*}
B_{L}[\Phi,w\tau_{\delta}] & =-\frac{1}{\delta}\sum_{k=0}^{k_{*}}M_{k}\int_{-\delta}^{0}\int_{\RR^{n-1}}w(x',x_{n})\frac{|x_{n}|^{-1+\alpha_{0}+(k+1)(2-2s)}}{\alpha_{0}+(k+1)(2-2s)}+o_{\delta}(1)\\
 & =-\int_{\RR^{n-1}}w(x',0)\sum_{k=0}^{k_{*}}M_{k}\delta^{-1+\alpha_{0}+(k+1)(2-2s)}+o_{\delta}(1)\\
 & =o_{\delta}(1),
\end{align*}
because $\alpha_{0}+2-2s>2-2s+2s-1>1.$ On the other hand,
\begin{align*}
&|B_{N}[\Phi,w\tau_{\delta}]| \\
 &\leq o_{\delta}(1)+ C\sum_{k=0}^{k_{*}}\left|\int_{\Or}\int_{\Or}\frac{\left(|x_{n}|^{\alpha_{0}+2k(1-s)}-|y_{n}|^{\alpha_{0}+2k(1-s)}\right)\left(w\tau_{\delta}(x)-w\tau_{\delta}(y)\right)dxdy}{|x-y|^{n+2s}}\right|\\
 & +\nu\left|\int_{\Ol}\int_{\Or}\frac{\left(L_{k}|x_{n}|^{\alpha_{0}+2k(1-s)}-M_{k}|y_{n}|^{\alpha_{0}+2(k+1)(1-s)}\right)\left(w\tau_{\delta}(x)-w\tau_{\delta}(y)\right)dxdy}{|x-y|^{n+2s}}\right|.
\end{align*}
The various terms work similarly; for example:
\begin{align*}
\int_{\Or}\int_{\Or} & \frac{\left(|x_{n}|^{\alpha_{0}+k(2-2s)}-|y_{n}|^{\alpha_{0}+k(2-2s)}\right)\left(w\tau_{\delta}(x)-w\tau_{\delta}(y)\right)dxdy}{|x-y|^{n+2s}}\\
 & =2A(n,s)\int_{\RR^{n-1}}wdx'\int_{0}^{\infty}\int_{0}^{\infty}\frac{t^{\alpha_{0}+k(2-2s)}\left(\tau_{\delta}(t)-\tau_{\delta}(s)\right)}{|t-s|^{1+2s}}dsdt+o_{\delta}(1)\\
 & =2CA(n,s)\int_{\RR^{n-1}}wdx'\delta^{-1+\alpha_{0}+(k+1)(2-2s)}+o_{\delta}(1)=o_{\delta}(1),
\end{align*}
where the first step integrated in $x',y'$ and the second scaled
out the $\delta$ dependence. The other terms can be dealt with the
same way.
\end{proof}
We note that the $\alpha_{0}$ chosen in the construction is unique
in the sense that no other value $\alpha\in(0,2s)$ would lead to
an $x_{n}-$ homogeneous solution of \eqref{eq:OR}. More complex behavior
may occur in the region of $\nu\leq0$ and $s>\frac{1}{2}$, where
two distinct simultaneous solutions to \eqref{eq:OR} and \eqref{eq:OL}
can be constructed, exactly one of which will have finite energy.
These, however, do not correspond to $(P)$ in the form we are considering.

\subsection{Bootstrap Machinery, Near-Optimal Estimates}

The following two lemmas work in parallel to bootstrap regularity
for solutions $u$ satisfying $(H)$.
\begin{lem}
\label{lem:Localboot}Let $u$ be a solution satisfying $(H)$ and
$r>1$. Then we have:
\begin{enumerate}
\item If $u\in C^{0,\alpha}(E_{r})$ then $u\in C^{0,(\alpha+2-2s)\wedge1}(E_{1}\cap\bar{\Omega}_{1}).$
\item If $u\in C^{0,\alpha}(E_{r})$, $2>\alpha+2-2s>1,$ and in addition
$\partial_{e_{n}}u(x',0^{-})=0$ (from $\Ol,$in the sense of distributions),
then \textup{$u\in C^{1,\alpha+1-2s}(E_{1}\cap\bar{\Omega}_{1})$}
\item If $u\in C^{1,\alpha}(E_{r})$ and $\partial_{e_{n}}u(x',0)=0$, then
\textup{$u\in C^{1,\alpha'+2-2s}(E_{1}\cap\bar{\Omega}_{1})$ for
all $\alpha'\leq\alpha$ with $\alpha'<2s-1.$}
\end{enumerate}
\end{lem}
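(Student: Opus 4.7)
The plan is to reduce to a classical equation on $\Ol$ and then use standard elliptic boundary regularity. Testing against $\psi \in C_c^\infty(E_{3/2}\cap \Ol)$ as in the derivation of \eqref{eq:OL}, $u$ satisfies $-\triangle u = f - \nu I[u]$ pointwise on $E_{3/2}\cap\Ol$, where $I[u](x) = \int_{\Or} \frac{u(x)-u(y)}{|x-y|^{n+2s}}\,dy$. The key preliminary estimate I would establish first is that $u\in C^{0,\alpha}(E_r)$ with $u=0$ on $\Gamma$ implies $|I[u](x)| \leq C|x_n|^{\alpha-2s}$ for $x\in\Ol$ with $|x_n|$ small (assuming $\alpha<2s$, which is the only nontrivial range). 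Splitting $I[u](x) = u(x)\int_{\Or}|x-y|^{-n-2s}\,dy - \int_{\Or} u(y)|x-y|^{-n-2s}\,dy$ and using $|u(x)|\leq C|x_n|^\alpha$ for the first term and $|u(y)|\leq C|y_n|^\alpha$ near $\Gamma$ for the second, one integrates out $y'$ and substitutes $y_n=|x_n|t$; the resulting $t$-integral converges precisely because $\alpha<2s$, and each piece evaluates to $C|x_n|^{\alpha-2s}$. Consequently $h:=f-\nu I[u]$ obeys $|h(x)|\leq C(1+|x_n|^{\alpha-2s})$ on $\Ol\cap E_1$.

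For part (1), I would compare $\pm u$ to a barrier $\Phi(x) = C|x_n|^\beta$ on a thin slab $\{-\delta<x_n<0\}\cap\Ol\cap E_1$, with $\beta = \min(\alpha+2-2s,\,1-\eta)$ for small $\eta>0$. Since $\beta\in(0,1)$, $-\triangle\Phi = C\beta(1-\beta)|x_n|^{\beta-2}$, which dominates $|h|$ once $C$ is taken large. The boundary values of $\Phi\pm u$ are controlled: both vanish on $\Gamma$, and on $\{x_n=-\delta\}$ and the lateral sides, interior regularity already gives pointwise bounds on $u$ that are absorbed by taking $C$ larger. The maximum principle yields $|u(x)|\leq C|x_n|^\beta$ on $\Ol\cap E_1$, and a standard rescaling of interior Schauder estimates for $-\triangle u=h$ on balls of radius $\sim|x_n|/2$ promotes this pointwise decay to $u\in C^{0,\beta}(\bar\Ol\cap E_1)$. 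The endpoint $\beta=1$ requires a logarithmically modified barrier.

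For part (2), now with $\partial_{e_n}u(x',0^-)=0$ and $\alpha+2-2s\in(1,2)$, I would integrate the identity $-\partial_{nn}u = \triangle_{x'}u + h$ twice in the normal direction starting from $\Gamma$. Condition $(H)$ ensures $\triangle_{x'}u$ is bounded on $\Ol\cap E_{3/2}$, and the first paragraph controls $h$. The Neumann-type hypothesis gives
\[ \partial_n u(x',x_n) = -\int_0^{x_n}\bigl[\triangle_{x'}u + h\bigr](x',t)\,dt, \]
so $|\partial_n u(x',x_n)|\leq C|x_n|^{\alpha+1-2s}$. Applying the same argument to $\partial_{x_i}\partial_n u$, using that tangential differentiation commutes with $I[u]$ and the tangential $C^{0,\alpha_*}$ smoothness furnished by $(H)$, controls mixed derivatives with the same rate. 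A second integration yields $|u(x)|\leq C|x_n|^{\alpha+2-2s}$, so at any boundary point the zero polynomial is a degree-$1$ Taylor approximation with error $\leq Cr^{\alpha+2-2s}$ on $B_r\cap\Ol$; Campanato's characterization (combined with rescaled interior estimates away from $\Gamma$) then delivers $u\in C^{1,\alpha+1-2s}(\bar\Ol\cap E_1)$.

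Part (3) reduces to part (2) after sharpening the pointwise control of $h$. Given $u\in C^{1,\alpha}(E_r)$ and $\partial_{e_n}u(x',0)=0$, the Taylor expansion of $u$ at $(x',0)$ on either side of $\Gamma$ gives $|u(y)|\leq C|y_n|^{1+\alpha'}$ for any $\alpha'\leq\alpha$ and $y$ near $\Gamma$. Repeating the first-paragraph computation with this stronger vanishing produces $|I[u](x)|\leq C|x_n|^{1+\alpha'-2s}$, hence $|h(x)|\leq C(1+|x_n|^{1+\alpha'-2s})$. Running the integration scheme of part (2) with $\alpha$ replaced by $1+\alpha'$ then yields $u\in C^{1,\alpha'+2-2s}(\bar\Ol\cap E_1)$, valid precisely when $\alpha'+2-2s<1$, that is, $\alpha'<2s-1$. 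The principal obstacle in all three parts is converting pointwise $L^\infty$ decay of $u$ and its derivatives near $\Gamma$ into genuine H\"{o}lder control; this is where the tangential smoothness in $(H)$ enters decisively, since it allows tangential H\"{o}lder seminorms to be recovered from rescaled interior estimates while the more delicate normal direction is pinned down by the explicit integration identities.
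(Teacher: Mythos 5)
Your overall strategy is sound and, for parts (2) and (3), essentially coincides with the paper's: the key kernel estimate $|I[u](x)|\leq C(1+|x_n|^{\alpha-2s})$ (improved to $C|x_n|^{1+\alpha'-2s}$ in part (3) using the vanishing of $u$ and $\partial_{e_n}u$ on $\Gamma$, which is exactly why $\alpha'<2s-1$ is needed for convergence), followed by integrating $-u_{e_ne_n}=\triangle_{x'}u+h$ in the normal variable and converting the resulting pointwise decay into H\"{o}lder control of $\nabla u$ by rescaled interior Schauder estimates on balls of radius comparable to $|x_n|$ (your Campanato phrasing and the paper's dyadic chaining are the same mechanism). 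Your extra step controlling the mixed derivatives $\partial_{x_i}\partial_n u$ is not needed for this conclusion, and the point that $\partial_n u$ first extends continuously to $\Gamma$ (from integrability of $\triangle_{x'}u+h$ in $x_n$) before the distributional hypothesis $\partial_{e_n}u(x',0^-)=0$ can be invoked should be made explicit, but these are presentational, not structural, issues.

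Part (1) is where you genuinely diverge from the paper, and as written your barrier comparison has a gap at the lateral boundary of the slab. On $\{|x'|=1,\,-\delta<x_n<0\}$ you only know $|u|\leq [u]_{C^{0,\alpha}}|x_n|^{\alpha}$, while your barrier is $\Phi=C|x_n|^{\beta}$ with $\beta=\min(\alpha+2-2s,1-\eta)>\alpha$ in the relevant range; since $\Phi$ decays strictly faster than $|u|$ may as $x_n\to 0^-$ along the lateral side, no choice of $C$ makes $|u|\leq\Phi$ there, so "absorbing the lateral values by taking $C$ larger" fails. The standard repair is to localize at a boundary point $x_0'\in\Gamma$ and add a lateral term, e.g. compare $\pm u$ with $C_1|x_n|^{\beta}+C_2|x'-x_0'|^{2}$ on a small cylinder around $(x_0',0)$; the new term contributes the bounded quantity $-2(n-1)C_2$ to $-\triangle$, which the singular term $C_1\beta(1-\beta)|x_n|^{\beta-2}$ absorbs for $|x_n|$ small, and on the lateral side $C_2|x'-x_0'|^2$ dominates $\sup|u|$. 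Alternatively, note that the paper avoids the maximum principle altogether in part (1): since $(H)$ bounds the tangential second derivatives, one simply integrates $-u_{e_ne_n}=g$ in $x_n$ starting from the interior level $x_n=-1$, where $|u_{e_n}(x',-1)|$ is controlled by interior estimates, obtaining $|u_{e_n}|\leq C(1+|x_n|^{\alpha+1-2s})$ and then the H\"{o}lder bound by a second integration; this also handles the Lipschitz endpoint without any logarithmic modification. With one of these fixes, your argument goes through.
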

\begin{proof}
Recall that $u$ solves
\[
-\triangle u(x)=f(x)-\nu\int_{\Or}\frac{u(x)-u(y)}{|x-y|^{n+2s}}dy:=g_{1}(x)
\]
for $x\in\Ol\cap E_{2}$. Our method will be to scale the following
basic estimate for solutions of Laplace equation: if $v$ solves
\[
\begin{cases}
-\triangle v=h & x\in B_{1}\\
v=v_{0} & x\in\partial B_{1}
\end{cases}
\]
with $h,v_{0}$ bounded, then for each $\gamma<1$ we have
\[
\|v\|_{C^{1,\gamma}(B_{1/2})}\leq C_{\gamma}[\|v_{0}\|_{L^{\infty}(\partial B_{1})}+\|h\|_{L^{\infty}(B_{1})}].
\]

Now, from $(H)$ we have that $\|u_{ee}\|_{L^{\infty}(E_{2})}\leq C$
for each $e\perp e_{n}.$ Thus 
\[
-u_{e_{n}e_{n}}(x)=\triangle_{\RR^{n-1}}u(x)+f(x)-\nu\int_{\Or}\frac{u(x)-u(y)}{|x-y|^{n+2s}}dy:=g(x).
\]
For $(1)$ we have that $|g(x',x_{n})|\leq C(1+|x_{n}|^{\alpha-2s})$
and $u(x',0)=0,$ giving
\[
|u_{e_{n}}(x',x_{n})|\leq|u_{e_{n}}(x',-1)|+C\int_{-1}^{x_{n}}t^{\alpha-2s}dt\leq C(1+|x_{n}|{}^{\alpha+1-2s}).
\]
Another application of the fundamental theorem of calculus gives
\[
|u(x',t)-u(x',s)|\leq C(|t-s|+|t-s|^{\alpha+2-2s}).
\]
 From this and tangential regularity we may easily deduce the desired
estimate.

For $(2)$, we have from $(1)$ that $\nabla u$ is bounded. Moreover,
$|g(x',x_{n})|\leq C(1+|x_{n}|^{\alpha-2s})$, so
\[
|u_{e_{n}}(x',t)-u_{e_{n}}(x',t')|\leq C\int_{t'}^{t}s^{\alpha-2s}ds\leq C|t-t'|^{\alpha+1-2s}.
\]
In particular, $\partial_{e_{n}}u$ extends continuously to $\Gamma.$
But since $\partial_{e_{n}}u(x',0^{-})$ is assumed to be $0$ (in the distributional
sense), we may further obtain that
\[
|u(x',t)|\leq Ct^{\alpha+2-2s}.
\]

Together with the fact that $|\triangle u(x',x_{n})|\leq C(1+|x_{n}|^{\alpha-2s})$,
we apply the estimate above to the function $u_{x}(y)=r^{-\beta}u(r(y-x))$,
where $r=\frac{|x_{n}|}{4}$ and $\beta=\alpha+2-2s$. This function
solves Laplace equation on $B_{1}$ with right-hand side $f_{x}$
satisfying $|f_{x}|\leq Cr^{-\beta+2+\alpha-2s}\leq C$, and also
$|u_{x}|\leq Cr^{-\beta}r^{\alpha+2-2s}\leq C$. Then applying the
estimate we see that $[u_{x}]_{C^{\beta}(B_{1/2})}\leq C$, and so
scaling back gives
\[
|\nabla u(x)-\nabla u(y)|\leq C|x-y|^{\beta-1}
\]
for every $|x-y|\leq\frac{|x_{n}|}{8}$. We claim this implies the
conclusion, for
\begin{align*}
|\nabla u(x',t)|&=|\nabla u(x',t)-\nabla u(x',0)|\\
&\leq\sum_{j=0}^{\infty}\left|\nabla u\left(x',\left(\frac{7}{8}\right)^{j}t\right)-\nabla u\left(x',\left(\frac{7}{8}\right)^{j+1}t\right)\right|\\
&\leq Ct^{\beta-1}\sum_{j=0}^{\infty}\left(\frac{1}{8}\right)^{j(\beta-1)}\\
&\leq Ct^{\beta-1}.
\end{align*}
Then when $|x-y|\geq\max\{\frac{|x_{n}|}{8},\frac{|y_{n}|}{8}\}$
use this to obtain
\[
|\nabla u(x',x_{n})-\nabla u(y',y_{n})|\leq C\left(|x_{n}|^{\beta-1}+|y_{n}|^{\beta-1}\right)\leq C|x-y|^{\beta-1}.
\]

For $(3)$, proceed the same way, using the following improved estimate
on $g$:
\begin{align*}
\left|\int_{\Or}\frac{u(x)-u(y)}{|x-y|^{n+2s}}dy\right| & \leq C|x_{n}|^{1+\alpha}|x_{n}|^{-2s}+\int_{\Or}\frac{|y_{n}|^{1+\alpha}}{|x-y|^{n+2s}}dy\\
 & \leq C|x_{n}|^{1+\alpha-2s}.
\end{align*}
\end{proof}
\begin{lem}
\label{lem:Nonlocboot}Let $u$ be a solution satisfying $(H)$ and
$r>1$. Then we have:
\begin{enumerate}
\item If $u\in C^{0,\alpha}(E_{r}\cap\bar{\Omega}_{1})$, $\alpha'<\min\{\alpha,\alpha_{0}\}$,
then $u\in C^{0,\alpha'}(E_{1}\cap\bar{\Omega}_{2})$.
\item If $u\in C^{1,\alpha}(E_{r}\cap\bar{\Omega}_{1})$, $\alpha_{0}>1,$
and $\partial_{e_{n}}u(x',0^{-})=0$ (from $\Ol$), then $u\in C^{1,\alpha'}(E_{1}\cap\bar{\Omega}_{2})$
for each $\alpha'<\min\{\alpha,\alpha_{0}-1\}$, and $\partial_{e_{n}}u(x',0)=0$.
\end{enumerate}
\end{lem}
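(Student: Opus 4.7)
The plan is to compare $u$ with explicit homogeneous barriers $(x_n)_+^\beta$ on $\Omega_2$, using the operator computation already performed in the construction of $\Phi$. For $\beta\in(0,2s)$ the same calculation done for $v_0$ shows that on $\Omega_2$,
\[
\frac{2}{c_s}(-\Delta)^s(x_n)_+^\beta - (2-\nu)(x_n)_+^\beta\int_{\Omega_1}\frac{dy}{|x-y|^{n+2s}} = 2A(n,s)\bigl[q(s,\beta)-q(s,\alpha_0)\bigr]|x_n|^{\beta-2s},
\]
and the bracketed factor is strictly positive whenever $\beta\in(0,\alpha_0)$, by the concavity of $q(s,\cdot)$ and the identity $q(s,\alpha_0)=(1-\nu/2)/(2s)$.

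For (1), the hypothesis $u\in C^{0,\alpha}(E_r\cap\bar\Omega_1)$ together with $u|_\Gamma=0$ yields $|u(y)|\le C|y_n|^\alpha$ on $\Omega_1$ near $\Gamma$; inserted into the right-hand side of \eqref{eq:OR}, this makes the equation for $u$ on $\Omega_2$ a fractional Laplace equation with source of size $O(|x_n|^{\alpha-2s})$. Fix $\alpha'<\min\{\alpha,\alpha_0\}$. Choosing $M$ large enough (in terms of $\|u\|_{L^\infty}$ and the implicit constants) and padding with a bounded cutoff if necessary so that $M(x_n)_+^{\alpha'}$ majorizes $u$ on $\Omega_1\cup\Gamma\cup\partial E_1$, the gap is a supersolution of a fractional Laplace inequality on $\Omega_2\cap E_1$ with nonnegative nonlocal data. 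A nonlocal maximum principle then gives $|u(x)|\le M|x_n|^{\alpha'}$ on $\Omega_2\cap E_{1/2}$ (the argument applied to $-u$ gives the lower bound), and combined with the tangential regularity in $(H)$ this is exactly $u\in C^{0,\alpha'}(E_1\cap\bar\Omega_2)$.

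For (2), the stronger hypothesis $u\in C^{1,\alpha}(\bar\Omega_1)$ with vanishing $\partial_{e_n}u(x',0^-)$ sharpens the decay on $\Omega_1$ to $|u(y)|\le C|y_n|^{1+\alpha}$, so the source in \eqref{eq:OR} is now $O(|x_n|^{1+\alpha-2s})$. Since $\alpha_0>1$, for $\alpha'<\alpha_0-1$ we have $1+\alpha'\in(1,\alpha_0)$, and the barrier $(x_n)_+^{1+\alpha'}$ still produces the positive coefficient above; the same comparison yields $|u(x)|\le C|x_n|^{1+\alpha'}$ on $\Omega_2\cap E_{1/2}$, which in particular forces $\partial_{e_n}u(x',0)=0$ as a one-sided limit. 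To upgrade this pointwise decay to a full $C^{1,\alpha'}$ estimate on the closed half-cylinder, I carry out the dyadic rescaling argument from the proof of Lemma~\ref{lem:Localboot}(2) in the fractional setting: for $x$ near $\Gamma$ set $r=x_n/8$ and consider $u_x(y)=r^{-(1+\alpha')}u(x+ry)$, which by the decay has uniform $L^\infty$ control on $B_1$ and satisfies on $B_{1/2}$ a fractional Laplace equation whose tail contributions from $\Omega_1$ are controlled by the $C^{1,\alpha}$ hypothesis together with the decay just proved. Interior Schauder for $(-\Delta)^s$ gives $[u_x]_{C^{1,\alpha'}(B_{1/4})}\le C$, and a telescoping sum over dyadic normal annuli approaching $\Gamma$ transfers this into the desired $C^{1,\alpha'}$ bound. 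The principal difficulty is executing the maximum-principle step, since the natural localized operator on $\Omega_2$ has a zeroth-order term of the wrong sign; this is resolved by absorbing that zeroth-order contribution into both sides of the comparison (precisely as in the displayed identity), so that the argument reduces to the classical maximum principle for $(-\Delta)^s$ with nonlocal Dirichlet data given by $u|_{\Omega_1}$.
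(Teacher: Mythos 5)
Your overall strategy coincides with the paper's: compare $u$ on $\Or$ with the homogeneous barriers $\rho_{\beta}(x_{n})$, exploit the sign of $q(s,\beta)-q(s,\alpha_{0})$ for $\beta<\alpha_{0}$ (your displayed identity is correct), use the decay of $u$ on $\Ol$ coming from $u|_{\Gamma}=0$ (and, in case (2), from $\partial_{e_{n}}u(x',0^{-})=0$) to control the transmission term, conclude $|u|\leq C|x_{n}|^{\alpha'}$ (resp.\ $C|x_{n}|^{1+\alpha'}$), and upgrade by dyadic rescaling as in Lemma \ref{lem:Localboot}. That is exactly the paper's route for this lemma.

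There is, however, a genuine gap in your comparison step: the treatment of the values of $u$ on $\Ol$, where you mix two incompatible framings and neither is executed correctly. If you invoke an exterior-data maximum principle, it must be for the full operator $\mathcal{M}w(x)=2\int_{\Or}\frac{w(x)-w(y)}{|x-y|^{n+2s}}dy+\nu\int_{\Or^{c}}\frac{w(x)-w(y)}{|x-y|^{n+2s}}dy$, whose kernels are nonnegative for every $\nu>0$; then the barrier must dominate $u$ on all of $\Ol$, and ``padding with a bounded cutoff'' cannot achieve this: a padding bounded away from zero near $\Gamma$ on the $\Ol$ side feeds back through the cross term a contribution of size $-cK|x_{n}|^{-2s}$, which overwhelms the gain $cM\left[q(s,\alpha')-q(s,\alpha_{0})\right]|x_{n}|^{\alpha'-2s}$, while a cutoff vanishing near $\Gamma$ (like $\xi(|x'|)$) fails to majorize $u$ on $\Ol\cap E_{1}$. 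If instead you treat $u|_{\Ol}$ as known data, the operator acting on $\Or$ is not $(-\triangle)^{s}$ but $(-\triangle)^{s}$ plus the zeroth-order term $-(2-\nu)\kappa(x)$ with $\kappa(x)=\int_{\Ol}|x-y|^{-n-2s}dy$; for $\nu<2$ this has the wrong sign, and ``absorbing it into both sides'' does not remove it from the equation satisfied by the difference: at a negative interior minimum of the difference this term is positive and can offset the negativity of $(-\triangle)^{s}$, so the classical maximum principle for $(-\triangle)^{s}$ does not apply. The correct fix, which is what the paper does, is to include in the barrier the $\Ol$-side term $C_{2}\rho_{\alpha}(-x_{n})$ (equivalently, to take the comparison function equal to $u$ on $\Ol$ and keep the full nonnegative-kernel operator $\mathcal{M}$); then the only bad contribution from $\Ol$ is $O(|x_{n}|^{\alpha-2s})$ (resp.\ $O(|x_{n}|^{1+\alpha-2s})$ in case (2)) by the hypothesis on $u|_{\Ol}$, and it is absorbed precisely because $\alpha'<\alpha$, choosing your $M$ large relative to $C_{2}$ and the lateral cutoff constant. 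Note finally that for small $\alpha'$ the barrier need not lie in the energy space, so either run the comparison pointwise at an interior minimum (legitimate here since $u\in C^{\infty}(\Or\cap E_{2})$) or, as the paper does, test variationally with $(w-\frac{m}{2})_{-}$, which is supported away from $\Gamma$.
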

\begin{proof}
The general idea will be to use the functions $\rho_{\alpha}$ as
barriers. There are, however, some technical issues, as $\alpha$
may be small enough that $\rho_{\alpha}$ does not lie in the energy
space. 

Fix $\xi:\RR\rightarrow[0,1]$ a smooth cutoff such that $\xi=0$
on $[-1,1]$, $\xi=1$ on $[-r,r]^{c},$ and $|(-\triangle)^{s}\xi|\leq C$.
Set $w(x)=u(x)+C_{1}\rho_{\alpha'}(x_{n})+C_{2}\rho_{\alpha}(-x_{n})+C_{3}\xi(|x'|)$.
$w$ is a solution of
\begin{equation}
\begin{cases}
\begin{aligned}\frac{1}{c_{s}}(-\triangle)^{s}w(x) & =(1-\frac{\nu}{2})\int_{\Ol}\frac{u(x)-u(y)}{|x-y|^{n+2s}}dy+\frac{C_{1}}{c_{s}}(-\triangle)^{s}\rho_{\alpha'}(x_{n})\\
 & \qquad+C_{2}\int_{\Ol}\frac{-|y_{n}|^{\alpha}}{|x-y|^{n+2s}}dy+C_{3}(-\triangle)^{s}\xi
\end{aligned}
 & x\in E_{r}\cap\Or\\
w(x)=C_{1}\rho_{\alpha'}(x_{n})+C_{2}\rho_{\alpha}(-x_{n})+C_{3}\xi(|x'|)+u(x) & x\in(E_{r}\cap\Or)^{c}
\end{cases}.\label{eq:w}
\end{equation}
By choosing $C_{2},C_{3}$ large and using that $u|_{\Gamma}=0$ and
$u\in C^{0,\alpha}(E_{r}\cap\Ol),$ we can arrange so that $w\geq0$
on $(E_{r}\cap\Or)^{c}.$ Rewriting the right-hand side,
\begin{align*}
\frac{1}{c_{s}}(-\triangle)^{s}&w(x)=(1-\frac{\nu}{2})\int_{\Ol}\frac{w(x)-w(y)}{|x-y|^{n+2s}}dy  -C_{1}(1-\frac{\nu}{2})\int_{\Ol}\frac{\rho_{\alpha'}(x_{n})-\rho_{\alpha'}(y_{n})}{|x-y|^{n+2s}}dy\\
&\qquad +C_{1}q(s,\alpha')A(n,s)|x_{n}|^{\alpha'-2s}-CC_{2}|x_{n}|^{\alpha-2s}-CC_{3}\\
 & \geq(1-\frac{\nu}{2})\int_{\Ol}\frac{w(x)-w(y)}{|x-y|^{n+2s}}dy-CC_{2}|x_{n}|^{\alpha-2s}-CC_{3}\\
&\qquad +C_{1}A(n,s)\left[q(s,\alpha')-q(s,\alpha_{0})\right]|x_{n}|^{\alpha'-2s}\\
 & \geq(1-\frac{\nu}{2})\int_{\Ol}\frac{w(x)-w(y)}{|x-y|^{n+2s}}dy+|x_{n}|^{\alpha'-2s}
\end{align*}
where the final step used that $\alpha'<\alpha$ and that $\alpha'<\alpha_{0}$
(the latter implying that $q(s,\alpha')>q(s,\alpha_{0})$), and $C_{1}$
was chosen to be sufficiently large relative to $C_{2}$ and $C_{3}$.
Let $m=\min_{E_{r}\cap\Or}w;$ we claim $m=0$. Indeed, assume for
contradiction that $m<0$. Then the function $(w-\frac{m}{2})_{-}$
is compactly supported on $E_{r}\cap\Or$ (here we use that $w$ is
continuous and nonnegative on $\partial(E_{r}\cap\Or)$). Then by
multiplying \eqref{eq:w} by $(w-\frac{m}{2})_{-}$ and undoing the
computation used to derive \eqref{eq:OR}, we see that
\[
B_{N}[w,(w-\frac{m}{2})_{-}]\leq\int|x_{n}|^{\alpha'-2s}(w-\frac{m}{2})_{-}
\]
since $(w-\frac{m}{2})_{-}$ is negative. Note carefully that the
the left-hand side is finite despite $w$ possibly having infinite
$H^{s}$ seminorm, since $(w-\frac{m}{2})_{-}$ is supported away from
 $\Gamma$. Also, it is positive:
\begin{align*}
B_{N}[w,(w-\frac{m}{2})_{-}] & =B_{N}[(w-\frac{m}{2})_{-},(w-\frac{m}{2})_{-}]+B_{N}[(w-\frac{m}{2})_{+},(w-\frac{m}{2})_{-}].
\end{align*}
The first term is clearly positive, while the second is nonnegative
from the same computation as in, say, \eqref{eq:flux}. On the other
hand, by definition of $m$, the right-hand side is strictly negative,
giving a contradiction.

Applying this to $\pm u$ gives that $|u(x)|\leq C_{2}|x_{n}|^{\alpha'}$
on $E_{1}\cap\Or$. Combining this with the already known regularity
of $u$ away from $\Gamma$ gives $(1)$. The same argument works
for $(2)$, giving $|u(x)|\leq C_{2}|x_{n}|^{1+\alpha'}$on $E_{\frac{1+r}{2}}\cap\Or$.
Use this to estimate the right-hand side in \eqref{eq:OR} by $|x_{n}|^{1+\alpha'-2s}.$
Then argue as in the proof of Lemma \ref{lem:Localboot}, giving
that $u\in C^{1,\alpha'}(E_{1}\cap\bar{\Or})$; we omit the details.\end{proof}
\begin{lem}
\label{lem:normaldervanish}Let $u$ be a solution satisfying $(H).$
Then if $u\in C^{0,\alpha}(E_{r}\cap\bar{\Or})$ for some $\alpha>(2s-1)_{+}$
and $r>1,$ $\partial_{e_{n}}u=0$ from $\Ol$ for $|x'|<1$ (in the
sense of distributions). \end{lem}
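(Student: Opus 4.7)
The plan is to identify the distributional trace of $\partial_{e_n} u$ from $\Ol$ by testing the weak form of $(P)$ against a family of smooth cutoffs concentrating on $\Gamma$, and to show that every contribution besides the trace vanishes in the limit. I fix $\phi \in C_c^\infty(\{|x'|<1\})$ and an even cutoff $H \in C_c^\infty([-2,2])$ with $H \equiv 1$ on $[-1,1]$, and for small $\delta > 0$ I set $\tilde\phi_\delta(x) = \phi(x') H(x_n/\delta) \in C_c^\infty(E_1)$, which satisfies $\tilde\phi_\delta|_\Gamma = \phi$. Substituting into the weak equation $B_L[u,\tilde\phi_\delta] + B_N[u,\tilde\phi_\delta] = \int f\tilde\phi_\delta$, I send $\delta \to 0^+$ and analyze each term.

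For the right-hand side, $\int f\tilde\phi_\delta = O(\delta)$ by boundedness of $f$ on $E_2$ (from $(H)$) and $|\supp \tilde\phi_\delta| = O(\delta)$. For $B_L$, I split $\nabla\tilde\phi_\delta$ into tangential and normal components. The tangential contribution $\int_\Ol (\nabla_{x'}u\cdot\nabla_{x'}\phi) H(x_n/\delta)\,dx$ is $O(\delta)$ using the tangential smoothness from $(H)$ and the shrinking support. The normal contribution $\int_\Ol \phi(x')\partial_{e_n}u\cdot \delta^{-1} H'(x_n/\delta)\,dx$ becomes, after substituting $s = x_n/\delta$, $\int_{|x'|<1}\phi(x')\int_{-2}^{-1}\partial_{e_n}u(x',\delta s)H'(s)\,ds\,dx'$, which converges to the distributional pairing $\langle\partial_{e_n}u|_{\Omega_1^-},\phi\rangle$.

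The main work is showing $B_N[u,\tilde\phi_\delta] \to 0$. I split $B_N$ into its $\Or\times\Or$ and mixed $\Or\times\Ol$ parts, and decompose $\tilde\phi_\delta(x)-\tilde\phi_\delta(y) = [\phi(x')-\phi(y')]H(x_n/\delta) + \phi(y')[H(x_n/\delta)-H(y_n/\delta)]$. On $\Or \times \Or$, the hypothesis $|u(x)-u(y)| \leq C|x-y|^\alpha$ and the explicit cutoff scaling give a bound of order $\delta^{\alpha+1-2s}$. On the mixed region, I use $|u(x)| \leq Cx_n^\alpha$ for $x\in\Or$ near $\Gamma$ (from $u|_\Gamma = 0$) and $|u(y)| \leq C|y_n|$ for $y\in\Ol$ near $\Gamma$; the latter I obtain from a preliminary Lipschitz bootstrap on $\bar\Omega_1$ by alternating Lemmas \ref{lem:Localboot}(1) and \ref{lem:Nonlocboot}(1) until the $\bar\Omega_1$-exponent exceeds $2s-1$. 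The kernel estimate $\int_\Ol |x-y|^{-n-2s}\,dy \lesssim x_n^{-2s}$ for $x\in\Or$, integrated against $x_n^\alpha \mathbf{1}_{\{0<x_n<2\delta\}}$, yields another $O(\delta^{\alpha+1-2s})$. The assumption $\alpha > (2s-1)_+$ is precisely the threshold that makes $\alpha+1-2s > 0$, forcing the contributions to vanish.

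The hard part will be the mixed-region estimate, where the singular kernel interacts with the low regularity on both sides of $\Gamma$; the preliminary Lipschitz bootstrap on $\bar\Omega_1$ together with the exponent condition $\alpha > 2s-1$ supply exactly the integrability required. Collecting the three limits gives $\langle\partial_{e_n}u|_{\Omega_1^-},\phi\rangle = 0$ for every admissible $\phi$, which is the distributional statement of the lemma.
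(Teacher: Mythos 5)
Your proposal is correct and takes essentially the same route as the paper: both test the weak form of $(P)$ with a smooth cutoff concentrated in a $\delta$-neighborhood of $\Gamma$ (the paper uses $w(x)\tau_{\delta}(x_{n})$ with $\tau_{\delta}(t)=(1-|t|/\delta)_{+}$, you use $\phi(x')H(x_{n}/\delta)$), isolate the slab-averaged normal derivative from $B_{L}$, and show $B_{N}$ and the forcing term vanish at rate $\delta^{\alpha+1-2s}$, which is exactly where $\alpha>(2s-1)_{+}$ enters. Your preliminary Lipschitz upgrade of $u$ on $\bar\Ol$ via Lemma \ref{lem:Localboot} plays the same role as the paper's appeal to that lemma to get $u\in C^{0,\alpha}$ on the full cylinder before estimating the mixed nonlocal term, so the differences are only in bookkeeping, not in substance.
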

\begin{proof}
Note that an application of Lemma \ref{lem:Localboot} guarantees
that $u\in C^{0,\alpha}(E_{\frac{1+r}{2}})$. We will show that
\[
\int_{\Ol}\langle\nabla u,T\rangle=-\int_{\Ol}u\div T
\]
for any vector field $T\in C_{c}^{\infty}(E_{\frac{3+r}{4}})$. It suffices to consider $T(x)=w(x)e_{n}$; for any $T\perp e_{n}$
the above is immediate from integrating by parts in tangential directions.

Set $\tau_{\delta}(t)=(1-\frac{|t|}{\delta})_{+}$; then
\begin{align*}
\int_{\Ol}w\partial_{e_{n}}u & =\lim_{\delta\rightarrow0}\int_{\Ol}\left(1-\tau_{\delta}(x_{n})\right)w(x)\partial_{e_{n}}u(x)\\
 & =\lim_{\delta\rightarrow0}-\int_{\Ol}u(x)\left(1-\tau_{\delta}(x_{n})\right)\partial_{e_{n}}w(x)+\lim_{\delta\rightarrow0}\frac{1}{\delta}\int_{\RR^{n-1}}\int_{-\delta}^{0}u(x)w(x)\\
 & =-\int_{\Ol}u\partial_{e_{n}}w+\lim_{\delta\rightarrow0}\frac{1}{\delta}\int_{\RR^{n-1}}\int_{-\delta}^{0}u(x)w(x)
\end{align*}
provided the limit exists. We show that it does, and in fact equals
$0$, which immediately implies the conclusion. To this end, use $w_{\delta}(x)=w(x)\tau_{\delta}(x_{n})$
as a test function in $(P).$ Then
\[
B_{L}[u,w_{\delta}]=\int_{-\delta}^{0}\int_{\RR^{n-1}}w(x)\partial_{e_{n}}u(x',x_{n})\frac{1}{\delta}dx'dx_{n}+o_{\delta}(t)
\]
so it would suffice to show $B_{L}[u,w_{\delta}]\rightarrow0.$ From
the equation,
\[
B_{L}[u,w_{\delta}]+B_{N}[u,w_{\delta}]=\int fw_{\delta}\rightarrow0
\]
as $\delta\rightarrow0,$ so it will be enough to show $B_{N}[u,w_{\delta}]\rightarrow0$
instead. For this,
\begin{align*}
|B_{N}&[u,w_{\delta}]|  =\left|\int_{\Or\times\Rn}\frac{[u(x)-u(y)]a(x,y)[w_{\delta}(x)-w_{\delta}(y)]}{|x-y|^{n+2s}}dxdy\right|\\
 & \leq C(r)\int_{\Or\times\Rn}\frac{|x-y|^{\alpha}|w_{\delta}(x)-w_{\delta}(y)|}{|x-y|^{n+2s}}dxdy\\
 & \leq C\int_{\Rn\times\Rn}\frac{|w(x)-w(y)|\tau_{\delta}(x_{n})+|w(y)||\tau_{\delta}(x_{n})-\tau_{\delta}(y_{n})|}{|x-y|^{n+2s-\alpha}}dxdy\\
 & \leq C\int_{\Rn}\frac{\tau_{\delta}(x_{n})}{1+|y|^{n+2s-\alpha}}\\
&\qquad +C\int_{\RR\times\RR}\frac{|\tau_{\delta}(t)-\tau_{\delta}(s)|}{|t-s|^{1+2s-\alpha}}dtds\int_{\RR^{n-1}\times\RR^{n-1}}\frac{\sup_{t}|w(y',t)|}{(1+|x'-y'|^{2})^{\frac{n+2s-\alpha}{2}}}dx'dy'\\
 & \leq o_{\delta}(1)+C\delta^{1-2s+\alpha}\int_{\RR}\frac{1}{1+t^{1+2s-\alpha}}dt=o_{\delta}(1).
\end{align*}
This proves the lemma.
\end{proof}
Combining these statements easily implies the following:
\begin{thm}
\label{thm:lalnearopt}Let $u$ satisfy $(H)$. Then $u\in C^{0,\alpha}(E_{1}\cap\bar{\Or})$
for every $\alpha<\alpha_{0},$ $u\in C^{1,\alpha}(E_{1}\cap\bar{\Ol})$
for every $\alpha<\alpha_{0}+1-2s$, and $\partial_{e_{n}}u(x',0^{-})=0$
for $|x'|<1$. Moreover, if $\alpha_{0}>1,$ then $u\in C^{1,\alpha}(E_{1}\cap\bar{\Or})$
for each $\alpha<\alpha_{0}-1$ and $\partial_{e_{n}}u(x',0)=0$ (from
both sides).\end{thm}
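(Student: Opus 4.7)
The plan is a bootstrap built from Lemmas \ref{lem:Localboot}, \ref{lem:Nonlocboot}, and \ref{lem:normaldervanish}, carried out on a shrinking sequence of cylinders $E_{r_k}$ with $r_k\downarrow 1$; each stage needs only finitely many rounds so the cumulative shrinkage is harmless. The base is $u\in C^{0,\alpha_{*}}(E_2)$ for some $\alpha_{*}>0$ from hypothesis $(H)$, together with the tangential smoothness also granted by $(H)$.

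First, I iterate Lemma \ref{lem:Localboot}(1) (which raises the exponent on $\bar{\Ol}$ by $2-2s$, capped at $1$) alternating with Lemma \ref{lem:Nonlocboot}(1) (which transfers the exponent to $\bar{\Or}$, capped at $\alpha_0$). Writing $\alpha_k$ for the exponent valid on all of $E_{r_k}$, the recursion is $\alpha_{k+1}=\min\{(\alpha_k+2-2s)\wedge 1,\alpha_0\}-\epsilon_k$, which reaches any value below $\min\{1,\alpha_0\}$ in finitely many rounds. Since $\alpha_0>(2s-1)_+$ and $s<1$, the interval $((2s-1)_+,\min\{1,\alpha_0\})$ is nonempty, so Lemma \ref{lem:normaldervanish} applies to give $\partial_{e_{n}}u(x',0^-)=0$ from $\Ol$ on $|x'|<1$. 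With this vanishing and $\alpha$ close to $\min\{1,\alpha_0\}$ (so $2s-1<\alpha<2s$, using $\alpha_0<2s$), Lemma \ref{lem:Localboot}(2) yields $u\in C^{1,\alpha+1-2s-\epsilon}(E_1\cap\bar{\Ol})$. This already completes the case $\alpha_0\leq 1$.

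If $\alpha_0>1$ (which forces $s>1/2$), I continue by iterating Lemmas \ref{lem:Nonlocboot}(2) and \ref{lem:Localboot}(3). Denote by $\beta$ the current $\bar{\Ol}$ exponent, starting at $\beta=2-2s-\epsilon$ from the previous step. Lemma \ref{lem:Nonlocboot}(2) yields $u\in C^{1,\min\{\beta,\alpha_0-1\}-\epsilon}(\bar{\Or})$ together with $\partial_{e_{n}}u(x',0)=0$ from $\Or$, so the normal derivative now vanishes from both sides. The bound $\alpha_0-1<2s-1$ (from $\alpha_0<2s$) guarantees that the cap $\alpha'<2s-1$ in Lemma \ref{lem:Localboot}(3) is always respected, so the latter lemma upgrades the $\bar{\Ol}$ exponent to $\beta'=\min\{\beta,\alpha_0-1\}-\epsilon+2-2s$. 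Each round gains $2-2s$ until $\beta$ saturates at $\alpha_0+1-2s$ and $\bar{\Or}$ saturates at $\alpha_0-1$, delivering the remaining claims in finitely many steps.

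The main technical concern is bookkeeping: at every iteration one must verify the hypotheses of the chosen lemma, in particular the strict inequalities $\alpha'<2s-1$ for Lemma \ref{lem:Localboot}(3) and $1<\alpha+2-2s<2$ for Lemma \ref{lem:Localboot}(2), and the compatibility of the nested cylinders. These are automatic from the caps described above (all our target exponents lie strictly below $1$ and strictly below $2s$), and the $\epsilon$-losses per iteration are harmless since the conclusions of the theorem are all open-ended.
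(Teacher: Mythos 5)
Your proposal is correct and follows essentially the same route as the paper: alternate Lemmas \ref{lem:Localboot} and \ref{lem:Nonlocboot} on a shrinking family of cylinders, invoke Lemma \ref{lem:normaldervanish} once the exponent exceeds $(2s-1)_{+}$, and continue until the caps $\alpha_{0}$, $\alpha_{0}+1-2s$, $\alpha_{0}-1$ are (nearly) reached, the whole process terminating in finitely many steps. Your bookkeeping of the exponent recursions and of the hypotheses $2>\alpha+2-2s>1$ and $\alpha'<2s-1$ is simply a more explicit version of what the paper's proof leaves implicit.
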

\begin{proof}
From $(H)$ we have $u\in C^{0,\alpha_{*}}(\Rn)$ for some $\alpha_{*}.$
If $\alpha_{*}<2s-1,$ apply Lemma \ref{lem:Localboot} to obtain
$u\in C^{0,\alpha_{1}}(E_{3/2}\cap\bar{\Ol})$ for $\alpha_{1}=2-2s+\alpha_{*}$,
and then Lemma \ref{lem:Nonlocboot} to get $u\in C^{0,\alpha_{2}}(E_{1+2^{-2}}).$
Continue iterating this until $u\in C^{0,\alpha_{k}}(E_{1+2^{-k}})$
for some $\alpha_{k}>2s-1$. At that point apply \ref{lem:normaldervanish},
and continue iterating until the conclusion is reached. Since it can
be arranged that the exponent improves by at least, say, $1-s$ every
cycle, this will conclude in finitely many steps.
\end{proof}
This theorem shows almost-optimal regularity for solutions of $(P),$
at least in the simple case treated here. The next section will discuss
to what extent this can be generalized to equations with translation
invariance, while in the subsequent section we will study the case
of variable coefficients.

\subsection{Refined Barriers, Optimal Regularity, and Transmission Condition}

In the previous subsection, we showed solutions come close to having
the optimal H\"{o}lder exponent (relative to the special solution $\Phi$).
However, as we saw in the construction of that solution, there is
reason to expect that solutions actually have the asymptotic behavior
\[
u(x)=u(x',0)+m\rho_{\alpha_{0}}(x_{n})+mM_{0}\rho_{\alpha_{0}+2-2s}(-x_{n})+o((x_{n})_{+}^{\alpha_{0}})+o\left(-(x_{n})_{-}^{\alpha_{0}+2-2s}\right).
\]
It will be the goal of this section to show this for general solutions
$u$. The method will first involve a slight improvement of the barrier
construction above to show $u\in C^{\alpha_{0}}(E_{1})$, and then
an argument in the spirit of N. Krylov's proof of the boundary Harnack
principle \cite{K} to show that the asymptotic form above actually
holds. We begin with:
\begin{lem}
\label{lem:nonlocopt}Let $u$ be as in Theorem \ref{thm:lalnearopt}.
Then if $\alpha_{0}\leq1$, $u\in C^{0,\alpha_{0}}(E_{1}),$ while
if $\alpha_{0}>1,$ $u\in C^{1,\alpha_{0}-1}(E_{1}).$ In either case,
$u\in C^{1,\alpha_{0}+1-2s}(E_{1}\cap\Ol)$.\end{lem}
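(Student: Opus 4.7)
The strategy is to upgrade the near-optimal estimates from Theorem \ref{thm:lalnearopt} to the sharp exponent $\alpha_0$ by refining the barrier used in Lemma \ref{lem:Nonlocboot}. The obstruction is that the coefficient $q(s,\alpha')-q(s,\alpha_0)$ degenerates as $\alpha'\uparrow\alpha_0$, erasing the slack that drove the previous comparison; the fix is to replace the pure power $\rho_{\alpha_0}$ by the explicit two-sided solution $\Phi$ built in Section 7.1, which by construction has bounded right-hand side. As a preliminary I would subtract the smooth extension $u(x',0)\chi(x_n)$ of $u|_{\Gamma}$ and absorb the error into $f$ exactly as in the discussion preceding the definition of $(H)$, reducing to $u\equiv 0$ on $\Gamma\cap E_{3/2}$.

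The sharp decay $|u(x)|\leq C|x_n|^{\alpha_0}$ on $\Or\cap E_{1/2}$ then comes from the following modified barrier. Fix $\alpha'\in((2s-1)_+,\alpha_0)$ for which Theorem \ref{thm:lalnearopt} supplies $|u|\leq C_*|x_n|^{\alpha'}$, and set
\[
w(x)=u(x)+C_1\Phi(x)+C_2\rho_{\alpha'}(-x_n)+C_3\xi(|x'|),
\]
with $\xi$ the lateral cutoff from Lemma \ref{lem:Nonlocboot} and $C_2,C_3$ large enough that $w\geq 0$ outside $E_r\cap\Or$. Since $\Phi$ is an honest weak solution of $(P)$ with bounded right-hand side, its contribution to the nonlocal equation for $w$ is bounded, while the lower-order corrector $\rho_{\alpha'}$ still produces the strictly positive term $C_2A(n,s)[q(s,\alpha')-q(s,\alpha_0)]|x_n|^{\alpha'-2s}$ as in Lemma \ref{lem:Nonlocboot}. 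Taking $C_2$ large enough to dominate $f$, the bounded $\Phi$-contribution, and the $\xi$-term, the same contradiction argument as in Lemma \ref{lem:Nonlocboot} (testing against $(w-m/2)_-$ and using that $B_N$ applied to positive and negative parts is signed) forces $w\geq 0$ in $\Or$; a symmetric treatment of $-u$ yields $|u|\leq C|x_n|^{\alpha_0}$.

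With the sharp growth on $\Or$ established, Lemma \ref{lem:Localboot}(2) (when $\alpha_0\leq 1$) or (3) (when $\alpha_0>1$), applied with input exponent $\alpha_0$ and the known condition $\partial_{e_n}u(x',0^-)=0$ from Theorem \ref{thm:lalnearopt}, gives $u\in C^{1,\alpha_0+1-2s}(E_1\cap\bar\Omega_1)$. The stated regularity on all of $E_1$ then follows by combining the normal decay with the tangential smoothness from $(H)$ through a standard scaling argument at each point of $\Gamma$: at scale $r=|x_n|$ rescale $u$ by $r^{-\alpha_0}$, apply interior regularity on $\Or$ and $\Ol$ separately, and patch to obtain $C^{0,\alpha_0}(E_1)$ if $\alpha_0\leq 1$ and $C^{1,\alpha_0-1}(E_1)$ otherwise.

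The main obstacle is precisely the endpoint degeneracy of $q(s,\cdot)$ at $\alpha_0$: power barriers alone cannot reach it, and the repair requires the two-sided structure of $\Phi$, whose $\Omega_1$ corrector $M_0\rho_{\alpha_0+2-2s}$ is engineered to cancel the leading singular contribution from the nonlocal coupling. A secondary technical point is checking that the bounded right-hand side generated by $\Phi$ is genuinely dominated by the lower-order $\rho_{\alpha'}$ corrector; once that is arranged, the rest of the proof recycles the comparison machinery of Lemma \ref{lem:Nonlocboot} and the local bootstrap of Lemma \ref{lem:Localboot}.
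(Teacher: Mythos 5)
Your overall architecture (a comparison argument in the style of Lemma \ref{lem:Nonlocboot} to get the sharp decay $|u|\leq C|x_n|^{\alpha_0}$ on $\Or$, followed by Lemma \ref{lem:Localboot} and interior estimates to patch up $C^{0,\alpha_0}$, resp. $C^{1,\alpha_0-1}$, and $C^{1,\alpha_0+1-2s}$ on $\Ol$) is the same as the paper's, but the heart of the barrier construction has a sign problem that breaks the argument. The corrector you add is $C_2\rho_{\alpha'}(-x_n)$, which is supported on $\Ol$; at a point $x\in\Or$ its contribution to the operator in \eqref{eq:OR} is $-\frac{\nu}{2}\int_{\Ol}\frac{\rho_{\alpha'}(-y_n)}{|x-y|^{n+2s}}dy\approx -c|x_n|^{\alpha'-2s}<0$, not the positive term $C_2A(n,s)[q(s,\alpha')-q(s,\alpha_0)]|x_n|^{\alpha'-2s}$ you claim. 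That positive term is produced only by $\rho_{\alpha'}(x_n)$ on the $\Or$ side with $\alpha'<\alpha_0$ — but if you use that corrector at unit scale, the barrier grows like $x_n^{\alpha'}$ on $\Or$ and the comparison only returns the already-known near-optimal bound $|u|\leq C|x_n|^{\alpha'}$, not $|x_n|^{\alpha_0}$. Meanwhile $\Phi$ cannot supply the missing slack: its right-hand side is merely bounded, with no sign, so $C_1\Phi$ contributes an error of size $C_1$ of uncontrolled sign, and as written your barrier has no positive defect at all with which to absorb $f$, the $\Phi$-error, and the $\xi$-term; the "secondary technical point" you flag is in fact the obstruction, and it cannot be arranged with the ingredients you chose.

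The paper resolves exactly this tension differently: the leading term is $(1+R^{\alpha'}+C_1)\rho_{\alpha_0}(x_n)$ (an exact solution, so zero defect), and the sign slack comes from \emph{subtracting} a truncated \emph{supercritical} power, $-C_1\bigl(\rho_{\alpha'}(x_n)\wedge R^{\alpha'}\bigr)$ with $\alpha_0<\alpha'<\gamma$. Because it is subtracted and $q(s,\alpha')<q(s,\alpha_0)$, its defect is $+C_1A(n,s)\bigl[q(s,\alpha_0)-q(s,\alpha')\bigr]|x_n|^{\alpha'-2s}$, positive and bounded below on the cylinder, so it dominates $f$, the $\xi$-contribution, and the small negative defect of the term $C_1C_2\rho_{\gamma}(-x_n)$ (which is only there, with $\gamma>\alpha'$ and $C_2$ small, to dominate $u$ on $\Ol$); and because the subtracted term is nonpositive on $\Or$, the barrier is still $\leq(1+R^{\alpha'}+C_1)|x_n|^{\alpha_0}$ there, so the sharp exponent survives. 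The truncation at height $R^{\alpha'}$, compensated by the $R^{\alpha'}$ in the coefficient of $\rho_{\alpha_0}$, keeps the ordering away from $\Gamma$. If you want to keep $\Phi$ as the main profile you may, since its leading behavior on $\Or$ is $\rho_{\alpha_0}$, but you would still need this subtracted supercritical corrector (or an equivalent signed, singular term) to make the comparison close; your present write-up is missing that idea.
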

\begin{proof}
First, from the conclusion of Theorem \ref{thm:lalnearopt}, we have
that $u\in C^{\gamma}(E_{3/2}\cap\Ol)$ for some $\gamma$ with $\min\{\alpha_{0}+2-2s,2s\}>\gamma>\alpha'>\alpha_{0}$.
Assume (without loss of generality from linearity) that $|f|\leq1$
and $|u(x)|\leq|x_{n}|^{\gamma}$ on $\Ol$. We consider the following
barrier function, claiming that it is a supersolution to \eqref{eq:OR}:
\[
\psi(x)=\left(1+R^{\alpha'}+C_{1}\right)\rho_{\alpha_{0}}(x_{n})+C_{1}\left[-(\rho_{\alpha'}(x_{n})\wedge R^{\alpha'})+C_{2}\rho_{\gamma}(-x_{n})\right]+\xi(|x'|),
\]
where $\xi$ is as in Lemma \ref{lem:Nonlocboot} (with $r=5/4$,
say) and $C_{1},$ $C_{2}$ nonnegative with $C_{1}\cdot C_{2}\geq1$.
Then arguing as in that Lemma, it's clear that $u\leq\psi$ on $(E_{5/4}\cap\Or)^{c}$,
and so the comparison argument given there applies to show $u\leq\psi\leq(1+R^{\alpha'}+C_{1})|x_{n}|^{\alpha_{0}}$
in $E_{1}\cap\Or$. Now do the same for $-u$, combine with interior
estimates, and apply Lemma \ref{lem:Localboot} to conclude.

To see that $\psi$ is indeed a supersolution, we first notice that
by definition of $\alpha_{0}$, the first term is a solution. Indeed,
\begin{align*}
\int_{\Rn}&\frac{\rho_{\alpha_{0}}(x_{n})-\rho_{\alpha_{0}}(y_{n})}{|x-y|^{n+2s}}dy-(1-\frac{\nu}{2})\int_{\Ol}\frac{\rho_{\alpha_{0}}(x_{n})-\rho_{\alpha_{0}}(y_{n})}{|x-y|^{n+2s}}dy\\
 & =A(n,s)\left[q(s,\alpha_{0})-(1-\frac{\nu}{2})\frac{1}{2s}\right]|x_{n}|^{\alpha_{0}-2s}=0.
\end{align*}
The contribution from the other terms is then given by
\begin{align*}
\int_{\Rn}&\frac{\psi(x)-\psi(y)}{|x-y|^{n+2s}}dy-(1-\frac{\nu}{2})\int_{\Ol}\frac{\psi(x)-\psi(y)}{|x-y|^{n+2s}}dy\\
&  \geq-C-C_{1}o_{R}(1)\\
&\qquad+C_{1}\left[-C_{2}A(n,s)|x_{n}|^{\gamma-2s} -A(n,s)|x_{n}|^{\alpha'-2s}\left(q(s,\alpha')-q(s,\alpha_{0})\right)\right]\\
&\geq0,
\end{align*}
with the last step from noting that as $\alpha_{0}<\alpha'<\gamma$,
$q(s,\alpha')>q(s,\alpha_{0})$, and then choosing $C_{2}$ small,
$R$ large, and $C_{1}$ large in that order.\end{proof}
\begin{lem}
\label{lem:barrier}For each $\sigma>0$ there is a function $B:\Rn\rightarrow\RR$,
continuous and smooth away from $\Gamma,$ with the following properties
(here $\gamma$ is as in Lemma \ref{lem:nonlocopt}):
\begin{itemize}
\item $B(x)\leq-\rho_{\gamma}(-x_{n})$ for $x_{n}<0$, $x_{n}>1+\sigma$,
or $|x'|\geq\sigma$.
\item $B(x)\leq C_{*}|x_{n}|^{\alpha_{0}}$ for $x_{n}>0$.
\item $B(x)\geq|x_{n}|^{\alpha_{0}}$ for $1>x_{n}>0$ and $|x'|\leq\frac{\sigma}{2}$.
\item $\int_{\Rn}\frac{B(x)-B(y)}{|x-y|^{n+2s}}dy-(1-\frac{\nu}{2})\int_{\Ol}\frac{B(x)-B(y)}{|x-y|^{n+2s}}dy\leq-1$
for $x\in\{|x'|\leq\sigma,0<x_{n}<1\}$.
\end{itemize}
The constant depends on $\sigma$.\end{lem}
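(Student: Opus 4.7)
Plan:

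I construct $B$ by combining a truncated model solution, a subcritical corrector, and two penalty terms. Take smooth cutoffs: $\chi(x')\equiv 1$ on $|x'|\leq\sigma$, compactly supported in $|x'|<2\sigma$; $\mu(x_n)\equiv 1$ on $[0,1]$ and supported in $(-\sigma/2,1+\sigma)$; and $\psi_1(x')\equiv 0$ on $|x'|\leq\sigma$, $\equiv 1$ on $|x'|\geq 2\sigma$. Fix an exponent $\beta\in(\alpha_0,\gamma)$ and set
\[
B(x) = M\Bigl\{[\rho_{\alpha_0}(x_n)+\lambda\rho_\beta(x_n)]\chi(x')\mu(x_n) - K_1\psi_1(x') - K_2\rho_\gamma(-x_n)\Bigr\},
\]
with parameters $\lambda,K_1,K_2,M>0$ to be chosen.

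Properties (1)--(3) are verified by direct inspection: choose $K_2\geq 1$ to handle $x_n<0$; take $K_1\geq(1+\lambda)(1+\sigma)^{\alpha_0}$ to force $B\leq 0$ on $|x'|\geq\sigma$ and on $x_n>1+\sigma$; and $M\geq 1$ gives the remaining signs. Property (3) follows automatically since $\rho_\beta\geq 0$ and the cutoffs equal $1$ in the central subcylinder, and (2) holds with $C_*=M(1+\lambda)$.

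For property (4), let $\mathcal{N}[v](x)$ denote the left-hand side as an operator on $v$. By linearity, $\mathcal{N}[B]$ splits into a sum of four contributions, and the main calculations are $\mathcal{N}[\rho_{\alpha_0}(x_n)]\equiv 0$ on $\Or$ (the defining identity $q(s,\alpha_0)=(1-\nu/2)/(2s)$ from the construction of $\Phi$) and $\mathcal{N}[\rho_\beta(x_n)](x)=A(n,s)[q(s,\beta)-q(s,\alpha_0)]\,x_n^{\beta-2s}$. By the preceding Claim, $q(s,\cdot)$ is strictly decreasing on $(\alpha_0,2s)$, so $c_\beta:=A(n,s)[q(s,\alpha_0)-q(s,\beta)]>0$, and the dominant contribution is $-M\lambda c_\beta x_n^{\beta-2s}$. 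The truncation errors from the $\chi,\mu$ cutoffs are nonnegative and bounded on $\{|x'|\leq\sigma,0<x_n<1\}$ (the relevant integrands are supported away from the point of evaluation), and the penalty terms produce positive contributions bounded respectively by $MK_1 C(\sigma)$ and $MK_2 C\,x_n^{\gamma-2s}$, the latter by direct computation of the integral over $\Ol$ using $\gamma<2s$.

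The main obstacle is securing the inequality uniformly across the cylinder. Near $\Gamma$, the constraint $\beta<\gamma$ ensures $x_n^{\beta-2s}\gg x_n^{\gamma-2s}$ as $x_n\to 0^+$, so the negative term dominates the penalty from $K_2$. Away from $\Gamma$, where $x_n$ is bounded below, all contributions are bounded and one must balance $M\lambda c_\beta$ against $\sigma$-dependent positive errors that scale linearly in $\lambda$ through $K_1$. I resolve this by first fixing $\lambda$ so large that $\lambda c_\beta$ exceeds these errors, and then multiplying by $M$ large enough to upgrade the bound from $\mathcal{N}[B]\leq -c(\sigma)$ to $\mathcal{N}[B]\leq -1$. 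All constants are allowed to depend on $\sigma$.
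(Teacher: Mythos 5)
Your reductions are fine as far as they go: $\mathcal{N}[\rho_{\alpha_0}]\equiv 0$ on $\{x_n>0\}$ and $\mathcal{N}[\rho_\beta](x)=A(n,s)\left[q(s,\beta)-q(s,\alpha_0)\right]x_n^{\beta-2s}<0$ for $\beta\in(\alpha_0,2s)$ (writing $\mathcal{N}$ for the operator in the fourth bullet). The gap is in property (4) at points of the strip with $x_n$ bounded away from $0$, and the proposed parameter hierarchy cannot close it. Once you multiply $\rho_{\alpha_0}+\lambda\rho_\beta$ by the narrow cutoff $\chi(x')\mu(x_n)$, the result is a nonnegative bump concentrated in a column of width $\sim\sigma$; at a point such as $x=(0,\tfrac12)$ the truncation error $\int_{\Or}\frac{(\rho_{\alpha_0}+\lambda\rho_\beta)(y)\,(1-\chi\mu)(y)}{|x-y|^{n+2s}}dy$ is positive of size comparable to $(1+\lambda)\sigma^{-2s}$, while your only negative term there is $-\lambda c_\beta x_n^{\beta-2s}\approx-\lambda c_\beta$ with $c_\beta=A(n,s)[q(s,\alpha_0)-q(s,\beta)]$ bounded independently of $\sigma$ (since $\beta<\gamma<2s$ is fixed). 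Moreover $-K_1\psi_1$ contributes $+K_1c(\sigma)$ with $K_1\gtrsim\lambda$, and $-K_2\rho_\gamma(-x_n)$ contributes $+cK_2x_n^{\gamma-2s}$. Thus every positive error scales at least linearly in $\lambda$, exactly like the unique negative term, and $M$ is an overall factor; "fix $\lambda$ large, then $M$ large" is therefore circular, and for small $\sigma$ no choice of $\lambda,K_1,K_2,M$ in your family yields $\mathcal{N}[B]\le-1$ on $\{|x'|\le\sigma,\ \tfrac12<x_n<1\}$. (A secondary, fixable slip: with $\chi$ supported in $\{|x'|<2\sigma\}$ but $\psi_1\equiv1$ only for $|x'|\ge2\sigma$, the sign requirement of the first bullet can fail on the annulus $\sigma\le|x'|\le2\sigma$.)

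What is missing is a source of uniform negativity whose amplitude is decoupled from the terms that create the $\sigma$-dependent errors. The paper's construction supplies it with a separate bump $C_1\varphi$, $\varphi\ge0$, $\int\varphi=1$, supported in $\{|x'|\le\sigma,\ 1+\sigma/2\le x_n\le1+\sigma\}\subset\Or$: at every $x$ in the strip $\varphi(x)=0$, so this term contributes $-C_1\int_{\Or}\frac{\varphi(y)}{|x-y|^{n+2s}}dy\le-c(\sigma)C_1$ uniformly, and $C_1$ can then be chosen much larger than all other error constants. In addition, the paper does not try to beat the $\rho_\gamma(-x_n)$ penalty with a subcritical corrector: it pairs it as $-\left[\rho_\gamma(-x_n)-C_2(\rho_{\alpha'}(x_n)\wedge R)\right]$, which by the supersolution computation in Lemma \ref{lem:nonlocopt} contributes nonpositively; and it truncates $\rho_{\alpha_0}$ in height ($\rho_{\alpha_0}\wedge2$, a bounded error) rather than multiplicatively in $x'$, handling the lateral sign condition by additive cutoffs whose contribution is only $O(\sigma^{-2})$. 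Your scheme could be repaired by importing such a bump (and either the paired supersolution or a separate treatment of the near-$\Gamma$ layer versus the bulk), but as written property (4) does not hold.
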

\begin{proof}
Set $\xi:[0,\infty)\rightarrow[-1,0]$ to be a smooth cutoff which
vanishes on $[0,\frac{\sigma}{2})$ and equals $-1$ on $[\sigma,\infty)$.
Set $\varphi$ to be a smooth nonnegative function with mean $1$
and supported on $\{|x'|\leq\sigma,1+\sigma/2\leq x_{n}\leq1+\sigma\}$.
Note that it is possible to have $\|\xi\|_{C^{2}}\leq C\sigma^{-2}$
and $\|\varphi\|_{L^{\infty}}\leq C\sigma^{-n}$. Let 
\begin{align*}
B(x)=\rho_{\alpha_{0}}(x_{n})\wedge2&+(2+RC_{2})\left[\xi(|x'|)+\xi((x_{n}-1)_{+})\right]+C_{1}\varphi\\
&-\underset{(*)}{\underbrace{\left[\rho_{\gamma}(-x_{n})-C_{2}(\rho_{\alpha'}(x_{n})\wedge R)\right]}}
\end{align*}
where the positive parameters $C_{1},C_{2},R$ will be chosen below.
Then the first, second, and third properties are clear and we just
need to check $B$ is a subsolution. The first term is, up to a bounded
error, a solution
\begin{align*}
\int_{\Or}&\frac{\rho_{\alpha_{0}}(x_{n})\wedge2-\rho_{\alpha_{0}}(y_{n})\wedge2}{|x-y|^{n+2s}}dy+\frac{\nu}{2}\int_{\Ol}\frac{\rho_{\alpha_{0}}(x_{n})\wedge2}{|x-y|^{n+2s}}dy]\\
 & \leq A(n,s)|x_{n}|^{\alpha_{0}-2s}\left[q(s,\alpha_{0})-\frac{1}{2s}(1-\frac{\nu}{2})\right]+C\\
&\leq C
\end{align*}
provided $x_{n}\leq1$. The function $\xi(|x'|)$ is smooth and constant
in the $e_{n}$ direction, so as previously seen it contributes a
term controlled by $\sigma^{-2}$, as does the other term $\xi((x_{n}-1)_{+})$.
The entire term $(*)$ was shown above to be a supersolution for $R,C_{2}$
large enough. Then we are reduced to 
\begin{align*}
\int_{\Or}\frac{B(x)-B(y)}{|x-y|^{n+2s}}dy+\frac{\nu}{2}\int_{\Ol}\frac{B(x)-B(y)}{|x-y|^{n+2s}}dy & \leq C\sigma^{-2}+C_{1}\int_{\Or}\frac{-\varphi(y)}{|x-y|^{n+2s}}dy\\
 & \leq C[\sigma^{-2}-C_{1}]\leq-1.
\end{align*}
provided $C_{1}\gg\sigma^{-2}$. Set $C_{*}\approx\sigma^{-2-n}$
to conclude.
\end{proof}
Note that the dependence on $\sigma$ in the above lemma (the constant
is $\approx\sigma^{-2-n}$) is clearly not optimal, but this will
not be relevant to the application below.
\begin{lem}
\label{lem:nlbdryharnack}Let $u$ satisfy $(H)$. The quantity $\frac{u(x)}{|x_{n}|^{\alpha_{0}}}$
extends continuously from $E_{1}\cap\Or$ to $\Gamma\cap E_{1}$;
i.e. for every $x\in\Gamma\cap E_{1}$,
\[
L(x)=\lim_{y_{k}\rightarrow x,y_{k}\in\Or}\frac{u(y)}{(y_{k})_{n}^{\alpha_{0}}}
\]
exists and is continuous. Indeed, the following stronger statement
is true: for each $x\in\Gamma\cap E_{1}$ and $C_{0}>0$ there are
constants $\beta,C_{1}$ (independent of $x$ and $u$) such that
\[
\osc_{B_{r}(x)\cap\Or}\frac{u(y)}{y_{n}^{\alpha_{0}}}\leq C_{2}r^{\beta}.
\]
\end{lem}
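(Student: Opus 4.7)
The statement is a boundary-Harnack-type Hölder estimate for the ratio $u/v_0$, where $v_0(x)=\rho_{\alpha_0}(x_n)$ is the reference solution of \eqref{eq:OR} with zero right-hand side constructed at the start of Section 7. By tangential translation invariance, it suffices to prove the oscillation decay at $x=0\in\Gamma$. For $\rho\in(0,1]$ set
\begin{equation*}
M_\rho=\sup_{B_\rho\cap\Or}\frac{u}{v_0},\qquad m_\rho=\inf_{B_\rho\cap\Or}\frac{u}{v_0},\qquad \omega_\rho=M_\rho-m_\rho,
\end{equation*}
which are finite by Lemma \ref{lem:nonlocopt}. The plan is to establish a universal oscillation recurrence $\omega_{r\rho}\leq(1-\theta)\omega_\rho+C\rho^{\beta_0}$ with $r,\theta\in(0,1)$, $\beta_0>0$, and $C$ depending on $\|u\|_{L^\infty}$ and $\|f\|_{L^\infty}$; the standard dyadic iteration lemma then produces $\omega_\rho\leq C_2\rho^\beta$, which is the quantitative form of the claim.

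To obtain the recurrence, fix $\rho\leq 1$ and consider the dichotomy at $x_*=\tfrac{\rho}{2}e_n$: either $u(x_*)-m_\rho v_0(x_*)\geq\tfrac{1}{2}\omega_\rho v_0(x_*)$, or the symmetric inequality with $M_\rho$ holds. Assume the first (the second follows by applying the same argument to $-u$). The function $w:=u-m_\rho v_0$ again solves \eqref{eq:OR} with the same right-hand side $f$, because the computation in Section 7.1 shows $v_0$ is itself a solution of \eqref{eq:OR} with zero source; moreover $w\geq 0$ on $B_\rho\cap\Or$ and $w(x_*)\gtrsim\omega_\rho\rho^{\alpha_0}$. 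Rescale the barrier of Lemma \ref{lem:barrier} by setting $b(x):=c_1\omega_\rho\rho^{\alpha_0}B(x/\rho)$ for a small universal $c_1$. By scale-covariance of \eqref{eq:OR}, $b$ is a strict subsolution on $D:=\{|x'|<\sigma\rho,\ 0<x_n<\rho\}$ with source bounded above by $-c_1\omega_\rho\rho^{\alpha_0-2s}$, while $b\geq c_1\omega_\rho v_0$ on the inner core $\{|x'|<\sigma\rho/2,\ 0<x_n<\rho\}$. The nonlocal comparison principle for $w-b$ on $D$ — once the boundary ordering $w\geq b$ is verified on $\Rn\setminus D$ — then gives $w\geq b$ on $D$, hence $u\geq(m_\rho+c_1\omega_\rho)v_0$ on the core, i.e.\ $\omega_{r\rho}\leq(1-c_1)\omega_\rho$ with $r=\sigma/2$.

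The main difficulty, and the step that is responsible for the error term $C\rho^{\beta_0}$, is verifying $w\geq b$ on $\Rn\setminus D$. On $\Or\setminus D$ with $x_n>0$ the outer cutoff terms $(2+RC_2)[\xi(|x'|)+\xi((x_n-1)_+)]$ built into $B$ force $b$ to be strongly negative outside the cylinder $|x'|<\sigma\rho$, while $w\geq 0$ on $B_\rho\cap\Or$; on $\Or\setminus B_\rho$ the same cutoff terms combined with the crude bound $|u|\leq\|u\|_\infty$ provide the required tail estimate. The genuinely delicate range is $\Ol$ with $|x_n|$ small, where $b(x)\leq -c_1\omega_\rho\rho^{\alpha_0-\gamma}\rho_\gamma(-x_n)$ has only modest magnitude while $w=u$ is only a priori bounded below by $-\|u\|_\infty$. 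To handle this I would use the improved decay $|u(x)|\leq C|x_n|^{\alpha_0+2-2s}$ valid on $E_1\cap\Ol$, which follows from the $C^{1,\alpha_0+1-2s}$ regularity established in Lemma \ref{lem:nonlocopt} together with $u(\cdot,0)=0$ and the vanishing of $\partial_{e_n}u$ on $\Gamma$ from $\Ol$ (Theorem \ref{thm:lalnearopt}); matching this bound against $|b|$ near $\Gamma$ and against $\|u\|_\infty$ for larger $|x_n|$ produces an inequality that holds whenever $\omega_\rho\geq C\rho^{2-2s}$, which gives $\beta_0=2-2s$.

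The contribution of the right-hand side $f$ rescales as $\rho^{2s}$ in \eqref{eq:OR} and is absorbed into the same $C\rho^{\beta_0}$ term. Iterating $\omega_{r^k}\leq(1-\theta)\omega_{r^{k-1}}+C(r^{k-1})^{\beta_0}$ via the standard geometric-series iteration lemma yields $\omega_\rho\leq C_2\rho^\beta$ with $\beta=\min(\log(1-\theta)^{-1}/\log r^{-1},\ \beta_0)>0$. The Cauchy property of $u(y)/y_n^{\alpha_0}$ as $y\to 0$ along $\Or$ gives existence of the limit $L(0)$; the argument at any other $x\in\Gamma\cap E_1$ is identical after tangential translation, and continuity of $L$ together with the uniform Hölder modulus follow from the uniformity of $r,\theta,\beta$ in $x$.
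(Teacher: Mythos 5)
Your overall strategy---oscillation decay for $u/\rho_{\alpha_{0}}(x_{n})$, proved by comparing $w=u-m_{\rho}\rho_{\alpha_{0}}$ against a rescaled copy of the barrier of Lemma \ref{lem:barrier}, with the right-hand side $f$ and the $\Ol$-tails absorbed into an additive error $C\rho^{\beta_{0}}$---is essentially the paper's, but the decisive step of the comparison is not closed. The barrier $B$ of Lemma \ref{lem:barrier} is \emph{not} nonpositive outside its subsolution region $D=\{|x'|\le\sigma,\,0<x_{n}<1\}$: it contains the term $C_{1}\varphi$, a large positive bump supported in $\{|x'|\le\sigma,\ 1+\sigma/2\le x_{n}\le1+\sigma\}$, and this bump is exactly what makes $B$ a strict subsolution inside $D$ (the last display of the proof of Lemma \ref{lem:barrier}). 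After your rescaling $b=c_{1}\omega_{\rho}\rho^{\alpha_{0}}B(\cdot/\rho)$, the exterior ordering $w\ge b$ on $\Rn\setminus D$ therefore requires a quantitative positive lower bound $w\gtrsim\omega_{\rho}\rho^{\alpha_{0}}$ on the scaled bump support, which sits at heights $x_{n}\approx\rho$ and lies entirely outside $B_{\rho}$; neither $w\ge0$ on $B_{\rho}\cap\Or$ nor the crude bound $|u|\le\|u\|_{\infty}$ gives this, and your claim that the cutoff terms force $b$ to be negative throughout $\Or\setminus D$ fails precisely there. The gap is not cosmetic: if $u/\rho_{\alpha_{0}}$ were equal to $m_{\rho}$ on a neighborhood of the bump, the conclusion $u\ge(m_{\rho}+c_{1}\omega_{\rho})\rho_{\alpha_{0}}$ on the core would be false, so positivity information must be injected where $\varphi$ is supported, and your pointwise dichotomy at the single point $x_{*}=\tfrac{\rho}{2}e_{n}$ (which moreover sits at height $\rho/2$, not near the bump) is never transported there.

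The missing ingredient is to upgrade the dichotomy to a lower bound on a ball of fixed relative size and to build the barrier so that the bump lies inside that ball. This is how the paper closes the argument: after normalizing so that (without loss of generality) the rescaled function $v$ satisfies $v(0,1)\ge1$, the up-to-the-interface H\"older estimate of Lemma \ref{lem:nonlocopt} gives $v\ge\tfrac12$ on a universal ball $B_{r_{0}}(0,1)$, and Lemma \ref{lem:barrier} is invoked with $\sigma=r_{0}$ so that the support of $\varphi$ sits where $v$ is known to be bounded below; only then is the exterior ordering checkable and the comparison applicable. To repair your write-up you would need this propagation step (and to relocate either $x_{*}$ or the bump so they match). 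The rest of your bookkeeping is workable in spirit, with one quantitative caveat: the ordering on $\Ol$ must be verified for all $x_{n}<0$, and for $|x_{n}|\sim1$ the binding requirement is $\omega_{\rho}\gtrsim\|u\|_{\infty}\rho^{\gamma-\alpha_{0}}$ rather than $\rho^{2-2s}$, so the error exponent is $\beta_{0}=\min\{2-2s,\gamma-\alpha_{0}\}$; the paper sidesteps this by an initial rescaling that enforces $|u|\le\rho_{\alpha_{0}}(x_{n})+\eta\rho_{\gamma}(-x_{n})$ and $|f|\le\eta$ and by carrying that bound through the induction with the cutoff $\zeta$, rather than through an additive error term.
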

\begin{proof}
First, after a localization, an application of Lemma \ref{lem:nonlocopt},
and an initial rescaling $u'(x)=R^{-\alpha_{0}}u(Rx)$ (which decreases
the right-hand side $f$ and otherwise leaves the equation \eqref{eq:OR}
unchanged) it suffices to consider the case of $|u(x)|\leq\rho_{\alpha_{0}}(x_{n})+\eta\rho_{\gamma}(-x_{n})$
and $|f|\leq\eta$. 

We will show the following by induction on $k$:
\[
\osc_{E_{r^{-k}}}\frac{u(y)}{y_{n}^{\alpha_{0}}}\leq2(1-\Theta_{1})^{k}
\]
for some $r,\Theta_{1}>0$. Note this holds for $k=0$ by the reduction
above. Suppose this holds for each $l\leq k$. Then define
\[
v(x)=(1-\Theta_{1})^{-k}r^{-(k+1/2)\alpha_{0}}\left[u(r^{-k-1/2}x)-m_{k}\rho_{\alpha_{0}}(x_{n})\right],
\]
where
\[
m_{k}=\min_{E_{r^{-k}}}\frac{u}{y_{n}^{\alpha_{0}}}
\]
From inductive assumption we have that $0\leq v/y_{n}^{\alpha_{0}}\leq2$
on $E_{r^{-1/2}}\cap\Or$ and $|v-y_{n}^{\alpha_{0}}|\leq y_{n}^{\alpha_{0}}+C(|x|-\frac{1}{2}r^{-1/2})_{+}^{\beta_{0}}$
for some $\beta_{0}$ depending on $r,\Theta_{1}$. Assume without
loss of generality that $v(0,1)\geq1$ (otherwise consider $2|x_{n}|^{\alpha_{0}}-v$,
which satisfies the same assumptions). Then applying Lemma \ref{lem:nonlocopt},
say, we have that $v\geq\frac{1}{2}$ on $B_{r_{0}}(0,1)$, where
$r_{0}$ is universal. By choosing $r,\Theta_{1}$ small enough and
fixing $\zeta$ a smooth cutoff supported on $E_{1/2r}$ and identically
$1$ on $E_{1/4r}$, it is easy to see that $\zeta v$ satisfies
\[
\zeta v(x)\geq-\eta\rho_{\gamma}(-x_{n})
\]
for all $x$, and moreover
\[
\left|\int_{\Or}\frac{\zeta v(x)-\zeta v(y)}{|x-y|^{n+2s}}dy+\frac{\nu}{2}\int_{\Ol}\frac{\zeta v(x)-\zeta v(y)}{|x-y|^{n+2s}}dy\right|\leq\eta
\]
for $x\in E_{1}$. Let $B$ be the function from Lemma \ref{lem:barrier}
(with $\sigma=r_{0}$) and $C_{*}\geq1$ the associated constant.
Then for $\eta$ small enough $\tilde{B}=B/2C_{*}\leq\zeta v$ outside
of $\{|x'|\leq r_{0},x_{n}\in[0,1]\}$, and is a strict subsolution
with 
\[
\int_{\Or}\frac{[\tilde{B}(x)-\zeta v(x)]-[\tilde{B}(y)-\zeta v(y)]}{|x-y|^{n+2s}}+\frac{\nu}{2}\int_{\Ol}\frac{[\tilde{B}(x)-\zeta v(x)]-[\tilde{B}(y)-\zeta v(y)]}{|x-y|^{n+2s}}\leq0.
\]
By the comparison argument in Lemma \ref{lem:Nonlocboot}, it follows
that $\zeta v\geq\tilde{B}$ on all of $\Rn$, and so in particular
$v\geq\frac{1}{2C_{*}}x_{n}^{\alpha}$ on $\{|x'|\leq\frac{r_{0}}{2},x_{n}\in[0,1]\}$.
Making sure $r,$ $\Theta_{1}$ are chosen so that $r^{1/2}<\frac{r_{0}}{2}$,
$\Theta_{1}<\frac{1}{4C_{*}},$ it follows that 
\[
\osc_{E_{r^{1/2}}}\frac{v(y)}{y_{n}^{\alpha_{0}}}\leq2(1-\Theta_{1}),
\]
which scales to
\[
\osc_{E_{r^{k+1}}}\frac{u(y)}{y_{n}^{\alpha_{0}}}\leq2(1-\Theta_{1})^{k+1}.
\]

The conclusion now follows from applying this to tangential translates
of $u$.\end{proof}
\begin{thm}\label{thm:7final}
Let $u$ satisfy $(H).$ and $|f|,|u|\leq1$. Then there is a number
$l\in\RR$ such that
\begin{equation}
u(x)=l\left[\rho_{\alpha_{0}}(x_{n})+M_{0}\rho_{\alpha_{0}+2-2s}(-x_{n})\right]\left[1+q(x)\right],\label{eq:transmit}
\end{equation}
where $|q(x)|\leq C|x|^{\beta}$ for some $C,\beta$ independent of
$u$.
\end{thm}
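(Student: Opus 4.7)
The plan is to identify the leading coefficient $l$ via the boundary Harnack principle of Lemma \ref{lem:nlbdryharnack} on $\Or$, subtract the explicit model solution $l\Phi$ constructed in Section 7.1, and then show that the remainder decays at a strictly better rate on both sides of $\Gamma$.

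First, I would apply Lemma \ref{lem:nlbdryharnack} at the point $x = 0 \in \Gamma$ to define $l := L(0) = \lim_{t \to 0^+} u(te_n)/t^{\alpha_0}$. The lemma directly gives
\[
\bigl| u(y) - l\, y_n^{\alpha_0} \bigr| \leq C y_n^{\alpha_0} |y|^\beta \quad \text{for } y \in \Or \cap E_1,
\]
which is already the $\Or$-half of \eqref{eq:transmit}. Setting $w = u - l\Phi$, the proposition earlier in this section ensures that $l\Phi$ solves $(P)$ on $E_1$ with bounded right-hand side, so $w$ satisfies $(H)$ with uniformly bounded data (note that $|l|$ is universally controlled by $\|u\|_{L^\infty} \leq 1$). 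Since the explicit construction gives $\Phi(y) = y_n^{\alpha_0} + O(y_n^{\alpha_0 + 2 - 2s})$ on $\Or$, one has $L_w(0) = 0$, and reapplying Lemma \ref{lem:nlbdryharnack} to $w$ yields
\[
|w(y)| \leq C y_n^{\alpha_0} |y|^\beta \quad \text{for } y \in \Or \cap E_1.
\]

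The principal new step is to prove the matching estimate on the local side:
\[
|w(y)| \leq C |y_n|^{\alpha_0 + 2 - 2s} |y|^\beta \quad \text{for } y \in \Ol \cap E_1.
\]
For this I would use the local form \eqref{eq:OL} directly. Subtracting the equation for $l\Phi$ from that for $u$, one obtains
\[
-\Delta w(x) + \nu\, w(x)\, K(x) = g(x), \quad x \in \Ol \cap E_1,
\]
with $K(x) = \int_{\Or} |x-y|^{-n-2s}\,dy \sim |x_n|^{-2s}$ and $g(x) = (f - l f_\Phi)(x) + \nu \int_{\Or} w(y)/|x-y|^{n+2s}\,dy$. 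Using the $\Or$-bound on $w$, a direct computation produces $|g(x)| \leq C\bigl(1 + |x_n|^{\alpha_0+\beta-2s} + |x'|^\beta |x_n|^{\alpha_0-2s}\bigr)$; the $\nu w K$ contribution is a lower-order perturbation, since $|x_n|^{-2s}$ is strictly less singular than the $|x_n|^{-2}$ scaling of $-\Delta$. Combined with tangential smoothness from $(H)$ and the boundary conditions $w|_{\Gamma} = 0$ and $\partial_{e_n} w(x',0^-) = 0$ --- the latter supplied by Lemma \ref{lem:normaldervanish}, which applies since $w \in C^{0,\alpha_0}(\overline{\Or} \cap E_1)$ by Lemma \ref{lem:nonlocopt} --- integrating $\partial_{e_n e_n} w = -g - \Delta_{x'} w + \nu w K$ twice in $x_n$ from $0$ produces the claimed bound, provided $\beta$ is chosen small enough that $\alpha_0 + \beta < 2s$. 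Equivalently, one may run a Krylov-style iteration analogous to Lemma \ref{lem:nlbdryharnack} on $\Ol$, using barriers built from $\rho_{\alpha_0+2-2s}(-x_n)$ together with the local equation.

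Finally, writing $u = l\Phi + w$ and invoking the explicit form $\Phi(x) = \rho_{\alpha_0}(x_n) + M_0 \rho_{\alpha_0 + 2 - 2s}(-x_n) + R(x)$, where each correction term $v_j,\tilde v_j$ in the construction of Section 7.1 gains a factor of $|x_n|^{2-2s}$ so that $|R(x)| \leq C|x|^{2-2s}\bigl[\rho_{\alpha_0}(x_n) + \rho_{\alpha_0 + 2 - 2s}(-x_n)\bigr]$, yields \eqref{eq:transmit} with $q$ absorbing both $lR$ and $w$ and $|q(x)| \leq C|x|^{\min(\beta, 2-2s)}$. The main obstacle is the $\Ol$ estimate: both the nonlocal coupling integral in $g$ and the singular zero-order coefficient $\nu K$ must be tracked sharply in the normal and tangential directions simultaneously, so that the final bound carries the correct factor $|y_n|^{\alpha_0+2-2s}|y|^\beta$ rather than the weaker $|y_n|^{\alpha_0+2-2s+\beta}$.
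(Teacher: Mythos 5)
Your proposal is correct and follows essentially the same route as the paper: identify $l$ from Lemma \ref{lem:nlbdryharnack}, subtract the model profile whose coefficient $M_0$ was designed precisely so that the leading singular term cancels in the equation on $\Ol$, bound the resulting right-hand side using the $\Or$ estimate, and integrate in the normal direction using $w|_{\Gamma}=0$ and $\partial_{e_n}w(x',0^-)=0$. The two points of difference are minor. First, you subtract the full corrected solution $l\Phi$ rather than, as the paper does, only $l[\rho_{\alpha_0}(x_n)+M_0\rho_{\alpha_0+2-2s}(-x_n)]$; this buys you a bounded residual $f-lf_\Phi$ (the paper instead carries the leftover homogeneous term $O(|x_n|^{\alpha_0+2-4s})$, which is harmless) and lets you reuse the boundary Harnack lemma, at the cost of tracking the correction $R$ at the end — note the reapplication to $w$ is not even needed, since your first display plus $\Phi(y)=y_n^{\alpha_0}+O(y_n^{\alpha_0+2-2s})$ already gives the $\Or$ bound. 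Second, on $\Ol$ the paper runs the rescaled interior estimate of Lemma \ref{lem:Localboot} at scale $|x_n|/4$ to get $|\nabla v|\leq C|x|^{\gamma}$ and integrates once, whereas you integrate $\partial_{e_ne_n}w$ twice from $\Gamma$; both are fine given the boundary conditions and the smoothness of $w$ in $\Ol$. Two small points to tighten: dismissing the zero-order term $\nu wK$ as lower order requires the a priori bound $|w|\leq C|x_n|^{\alpha_0+2-2s}$ on $\Ol$ (which follows from Lemma \ref{lem:nonlocopt} together with $w|_\Gamma=0$, $\partial_{e_n}w(x',0^-)=0$, exactly as the paper uses when it splits $v(x)-v(y)$ through $v(0)$), so state it; and your closing remark has the comparison backwards — since $|y_n|\leq|y|$, the bound $|y_n|^{\alpha_0+2-2s+\beta}$ is the \emph{stronger} one, and the factorized bound $|y_n|^{\alpha_0+2-2s}|y|^{\beta}$ that your computation actually yields is all that is needed.
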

Recall $M_{0}$ was a negative constant depending only on $s,\nu$
defined earlier:
\[
M_{0}=-\frac{\nu}{(2+\alpha_{0}-2s)(1+\alpha_{0}-2s)}A(n,s)\int_{0}^{\infty}\frac{t^{\alpha_{0}}}{(1+t)^{1+2s}}dt
\]

\begin{proof}
For $x\in\Or$, this is an immediate consequence of Lemma \ref{lem:nlbdryharnack},
provided
\[
l=\lim_{y\rightarrow0,y\in\Or}\frac{u(y)}{y_{n}^{\alpha_{0}}}.
\]
We show that this implies improved regularity $\Ol$. Indeed, let
\[
v(x)=u(x)-l\left[\rho_{\alpha_{0}}(x_{n})+M_{0}\rho_{\alpha_{0}+2-2s}(-x_{n})\right].
\]
Then from Lemma \ref{lem:nlbdryharnack} and tangential regularity
we have that for $x\in\Or$,
\[
|v(x)|\leq C|x|^{\alpha_{0}+\beta}.
\]
Now we check the equation on $\Ol$ satisfied by $v$:
\begin{align*}
-\triangle v&-f(x)+\nu\int_{\Or}\frac{v(x)-v(y)}{|x-y|^{n+2s}}dy \\ 
&=-l|x_{n}|^{\alpha_{0}-2s}\left[-M_{0}(\alpha_{0}+2-2s)(\alpha_{0}+1-2s)-\nu A(n,s)\int_{0}^{\infty}\frac{t^{\alpha_{0}}}{(1+t)^{1+2s}}dt\right]\\
&\qquad -\frac{l\nu M_{0}A(n,s)}{2s}|x_{n}|^{\alpha_{0}+2-4s}.
\end{align*}
 The first term on the right vanishes from the definition of $M_{0}$,
so we are left with
\begin{align*}
-\triangle v(x) & =f(x)+\nu\int_{\Or}\frac{v(x)-v(y)}{|x-y|^{n+2s}}+O(|x_{n}|^{\alpha_{0}+2-4s})\\
 &= O(1+|x_{n}|^{\alpha_{0}+2-4s})+\nu\left[\int_{\Or}\frac{v(x)-v(0)}{|x-y|^{n+2s}}+\int_{\Or}\frac{v(0)-v(y)}{|x-y|^{n+2s}}\right]\\
 &= O\left(1+|x|^{\alpha_{0}+\beta}|x_{n}|^{-2s}+|x_{n}|^{\alpha_{0}+2-4s}\right).
\end{align*}
Now the scaling argument in Lemma \ref{lem:Localboot}, performed
only near $0$, gives that $|\nabla u(x)|\leq C|x|^{\gamma},$ where
$\gamma=\min\{\alpha_{0}+\beta+1-2s,\alpha_{0}+3-4s\}>\alpha_{0}+1-2s.$
Integrating this easily gives
\[
|v(x)|\leq C|x|^{1+\gamma}
\]
for $x\in\Ol,$ from which the conclusion follows immediately.\end{proof}
\begin{rem}
The argument here can be continued to give a full asymptotic expansion
for $u$ up to terms with homogeneity greater than $2s$, where the
profile is a multiple of $\Phi$, the solution constructed above.
The relation \eqref{eq:transmit} has a natural interpretation as a
\emph{transmission condition} for $(P)$: indeed, it implies that
at every point on $\Gamma\cap E_{1}$ we have
\[
\lim_{t\rightarrow0^{+}}\frac{u(x',t)-u(x',0)}{t^{\alpha_{0}}}=\frac{1}{M_{0}}\lim_{t\rightarrow0^{+}}\frac{u(x',-t)-u(x',0)}{t^{\alpha_{0}+2-2s}}.
\]
This is analogous to the classical transmission problem, in which
the ratio of the (co)normal derivatives remains constant along the
interface.
\end{rem}
\begin{rem}
A straightforward modification of the techniques used here can be
applied to the more general equation 
\begin{align*}
\int f\psi=\int_{\Ol}\langle\nabla\psi&,\nabla u\rangle+\int_{\Or}\int_{\Or}\frac{[u(x)-u(y)][\psi(x)-\psi(y)]dxdy}{|x-y|^{n+2s}}\\
&+\int_{\Or}\int_{\Ol}\frac{[u(x)-u(y)][\nu'\psi(x)-\nu\psi(y)]}{|x-y|^{n+2s}}dxdy
\end{align*}
(this reduces to the previous case when $\nu=\nu'$, and otherwise
changes the equation over $\Ol$), provided solutions exist and admit a H\"older estimate (which we do not show). Then the value $\alpha_{0}$ remains
exactly the same, as does the transmission condition, except the value
of $M_{0}$, which is now given by
\[
M_{0}=-\frac{\nu'}{(2+\alpha_{0}-2s)(1+\alpha_{0}-2s)}A(n,s)\int_{0}^{\infty}\frac{t^{\alpha_{0}}}{(1+t)^{1+2s}}dt.
\]
The rest of the asymptotic expansion changes analogously; observe
in particular that the exponents do not depend on the value of $\nu'$.
If $\nu'=0$, the behavior is slightly different, in that now the
equation over $\Ol$ is just $-\triangle u=f$. Using the same argument
as when $\nu'>0$, we deduce that the normal derivative of $u$ vanishes
from $\Ol.$ This means that $u$ solves the Neumann problem for Laplace
equation over $\Ol$, completely independently of what happens on
$\Or$. Thus it is simple to deduce the asymptotic form
\[
u(x)=l\rho_{\alpha_{0}}(x_{n})+O\left(|x_{n}|^{2}1_{\Ol}+|x_{n}|^{2s}1_{\Or}\right)
\]
near $0\in\Gamma.$ This calculation suggests that the $\nu'$ term is of lower-order, and should not seriously effect the regularity theory. Unfortunately, it does break the variational structure of the equation, so some additional arguments would be required to prove the analogue of Theorem \ref{thm:BryReg} in this case.
\end{rem}

\section{Near-Optimal Estimates for Flat Interface and Constant Coefficients}

If the interface $\Gamma$ is not flat, the argument given above will
not suffice to prove higher regularity even in the simple case of
$A=I,$ $a=1$. The barriers used, and the way they were used, depended
heavily on the special geometry of $\Gamma.$ On the other hand, the
localization and flattening of Lemma \ref{lem:Flattening} can be
applied to reduce to a situation where similar barriers are applicable.
Over the course of the remaining sections, we show how to prove near-optimal
regularity in the setting of general (smooth) coefficients and flat
boundary.

Even in the simpler setting of a classical local transmission problem,
the optimal (Lipschitz) regularity is to be expected only in the case
that the coefficients and interface satisfy some minimal smoothness
condition (say $A\in C^{0,\alpha}$, $\Gamma\in C^{1,\alpha}$). Indeed,
the behavior of a transmission problem near the interface describes
how solutions to a uniformly elliptic equation look like near jump-type
discontinuities in the coefficient matrix. However, this is a meaningful
situation to consider only if the coefficients are regular \emph{elsewhere}
\emph{in the domain}. We will not strive to find optimal conditions
on the coefficients for these regularity statements to hold, but the
theory below will be sufficiently robust to deal with the example
of a quasigeostrophic-type drift, as well as any problem with smooth
parameters $A,a,f,\Gamma$ with $a$ symmetric. An effort will be
made to discuss which of the assumptions made are \emph{invariant
under diffeomorphism,} and so preserved under the flattening procedure.
When moving to the nonlocal case, there exists a second and less obvious
obstruction. The matrix $A$ can have, roughly speaking, only one
type of regularity property: smoothness in the spacial dependence
parameter $x$. Other structural constraints may be imposed, but in
the extreme case of a translation-invariant matrix $A$, there ends
up being no substantive difference with the situation of the Laplace
operator treated above. For the nonlocal energy weight $a(x,y)$,
the corresponding property of regularity under translation becomes
smoothness of the mapping $x\rightarrow a(z+x,z'+x)$, with the extremal
version being invariance under this operation. However, notice that
the class of coefficients $a$ that are invariant under translation
is much richer in the nonlocal setting; indeed, there is another action
$x\rightarrow a(z+x,z'-x)$ which is separate from translation dependence.
This captures the variation of $a$ with respect to nonlocal ``jump
length,'' when the equation is interpreted as a nonlocal diffusion.

There is no obvious reason regularity under this mapping needs to
be assumed to ensure the regularity of solutions. However, without
such assumptions it often becomes unavoidable to require control over
the regularity of the boundary data in every estimate.
Recently, some progress has been made on interior regularity for nonlocal
equations without assumptions on the kernels' dependence on this nonlocal
parameter. See \cite{K}, or \cite{Se} for a simpler argument. In
our case, however, some regularity in the nonlocal parameter seems
required for the barrier construction above to function. Analogous
considerations for boundary regularity for the nonlocal Dirichlet
problem have been suggested by \cite{RS}, and indeed it is likely
that expected asymptotics near the boundary will hold only under some
assumption of this type. A second reason to make this kind of restriction
on $a$ is that without it, structural properties are not preserved
under diffeomorphism; this will be explained in detail in the following
section.

With this discussion in mind, we outline the strategy we will pursue
below. The argument is perturbative, along the lines of classical
variational Schauder theory. The remainder of this section will be
devoted to proving near-optimal estimates on solutions of the constant-coefficient
equation. The next section will combine these estimates with a crude
$L^{\infty}$ stability statement and an iteration procedure. It will
once again become important that the equation in question is not scale-invariant,
and so all of the estimates need to be done independent of the scaling
parameter $\epsilon$. In this section this will be rather trivial,
as the effect of $\epsilon$ is mainly on the transmission relation,
not the optimal smoothness.

A final note before commencing: the case $s=\frac{1}{2}$, $\boldsymbol{b}\neq0$
is perhaps the only one where the asymptotic behavior differs substantially
from what was discussed above. Here $\langle\boldsymbol{b},e_{n}\rangle$
has the power to affect $\alpha_{0}$ either favorably or unfavorably,
depending on its sign.

We begin with the following situation: $u\in C^{0,\alpha}(\Rn)\cap H^{s}(\Rn)\cap H^{1}(\Ol)$
is an admissible solution to $(P_{\epsilon})$ on $E_{2}$, with $\Gamma=\{x_{n}=0\}$,
$A(x)\equiv A$ constant, and $a(x,y)=a(x,y)1_{\Or\times\Or}+a(x,y)1_{\Or\times\Ol}+a(x,y)1_{\Ol\times\Or}:=a^{1}(x,y)+a^{2}(x,y)+a^{2}(x,y)$
satisfying $a^{1}(x+z,y+z)=a^{1}(x,y)$ for every $x,y\in\Or$ with
$x+z,y+z\in\Or,$ and similarly $a^{2}(x+z,y+z)=a^{2}(x,y)$ for all
$x,x+z\in\Or$ and $y,y+z\in\Ol$. For
simplicity we'll use the notation $a^{i}(x,y)=a_{s,i}(x-y)$. We assume
that $a_{s,i}$ are uniformly elliptic; this implies in particular
that $a_{s,1},a_{s,2}(z)\geq\lambda$.
Moreover, notice that $a_{s,1}(z)$ is symmetric, meaning
$a_{s,1}(z)=a_{s,1}(-z)$. $\boldsymbol{b}(x)\equiv\boldsymbol{b}$ will remain
fixed (and we will always consider only the cases of $s\geq\frac{1}{2}$
or $\boldsymbol{b}=0$), and $f(x)$ is bounded and smooth in the
tangential directions.

It can be checked using the same method as for the fractional Laplace
case that $u$ satisfies the following equations:
\[
\begin{cases}
\begin{aligned}-\text{Tr}& AD^{2}u(x)+\epsilon^{2(1-s)}\int_{\Or}\frac{[u(x)-u(y)]a_{s,2}(x-y)}{|x-y|^{n+2s}}dy\\
 &+\epsilon\langle\boldsymbol{b},\nabla u(x)\rangle=\epsilon^{2}f(x)
\end{aligned}
 & x\in E_{2}\cap\Ol\\ &\\
\begin{aligned}2\int_{\Rn}&\frac{[u(x)-u(y)]a_{s,1}(x-y)}{|x-y|^{n+2s}}dy\\
&-\int_{\Ol}\frac{[u(x)-u(y)](2a_{s,1}(x-y)-a_{s,2}(x-y))}{|x-y|^{n+2s}}dy\\
&+\epsilon^{2s-1}\langle\boldsymbol{b},\nabla u(x)\rangle=\epsilon^{2s}f(x) 
\end{aligned}
& x\in E_{2}\cap\Or
\end{cases}.
\]
We justify that solutions have the expected tangential regularity
up to the boundary. For this, we note that the following assumption
on regularity of $a_{s,i}$ is preserved under diffeomorphism, at
least when the coefficients of the image are frozen at a point (this
will be justified rigorously in the next section):
\begin{defn}\label{def:kernels}
We say that $a_{s,i}$ is in the class $\mathcal{L}_{k}$ if there
is a constant $C$ such that $a_{s,i}$ is $k$ times continuously
differentiable on its domain and
\[
|D^{\beta}a_{s,i}(y)|\leq\frac{C}{|y|^{|\beta|}}
\]
for each multi-index $\beta$ with $|\beta|\leq k$. The class $\mathcal{L}_{1}^{*}$
contains kernels satisfying the following stronger regularity criterion
in the radial directions: there exists a modulus $\omega(r)$ and
a function $a_{s,i}^{(0)}:S^{n-1}\rightarrow[0,\Lambda]$ such that
for each $r>0,$ $\hat{y}\in S^{n-1}$,
\[
|a_{s,i}(r\hat{y})-a_{s,i}^{(0)}(\hat{y})|\leq\omega(r).
\]
In addition, $\omega$ satisfies the following condition (depending
on $s$):
\[
\begin{cases}
\lim_{r\rightarrow0^{+}}\omega(r)=0 & s<\frac{1}{2}\\
\int_{0}^{1}\frac{\omega(r)}{r^{2s}}dr<\infty & s\geq\frac{1}{2}
\end{cases}
\]

\end{defn}
A standard abuse of notation will be to extend $a_{s,i}^{(0)}$ homogeneously
(of degree $0$) to $\Rn$. We have
that $a_{s,i}^{(0)}\geq\lambda$ for $i=1,2.$
\begin{lem}
\label{lem:Constcoefftang}If $u$ is as above, $e_{j}\perp e_{n}$
are tangential directions, and $a_{s,i}$ are in $\mathcal{L}_{k}$,
then
\[
\sup_{S>0}s^{k}\|u\|_{C^{k}(E_{3/2}\backslash\{|x_{n}|\leq s\})}+\|\partial_{e_{1}}\partial_{e_{2}}\cdots\partial_{e_{k}}u\|_{C^{0,\alpha}(E_{3/2})}\leq C(n,k,a)\|u\|_{L^{\infty}(\Rn)}.
\]
\end{lem}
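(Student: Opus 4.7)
The estimate decomposes into two pieces: an interior $C^k$ bound away from $\Gamma$ that scales like $s^{-k}$, and a uniform Hölder bound on tangential derivatives of $u$ up to the interface. The plan is to obtain the second by a Nirenberg-type difference-quotient iteration exploiting translation invariance, and the first by standard scaling of interior regularity.

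Interior bounds. On $E_{3/2} \cap \{|x_n| \geq s\}$ the equation $(P_\epsilon)$ restricts to a constant-coefficient local elliptic equation on $\Ol$ (resp. a translation-invariant nonlocal equation of order $2s$ on $\Or$), with a ``coupling'' right-hand side given by an integral over the opposite side whose kernel is pointwise bounded by $C|x-y|^{-n-2s} \leq Cs^{-n-2s}$. Since $a_{s,i}$ is bounded and $u \in L^\infty(\Rn)$, the resulting right-hand side is bounded by $C(n,a)\|u\|_{L^\infty}$. Applying interior Schauder estimates on $\Ol$ (and the corresponding interior $C^k$ theory for nonlocal equations with kernels in $\mathcal{L}_k$ on $\Or$) at scale $s/4$ around each point and rescaling to unit size yields $s^k \|u\|_{C^k(E_{3/2} \setminus \{|x_n| \leq s\})} \leq C \|u\|_{L^\infty}$.

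Tangential bounds. The flat interface $\Gamma = \{x_n = 0\}$ and all data of $(P_\epsilon)$ (the constant matrix $A$, the constant drift $\boldsymbol{b}$, and the difference-kernels $a_{s,i}(x-y)$) are invariant under tangential translations $T_h(x) = x + he$ with $e \perp e_n$. Hence $u \circ T_h$ is an admissible solution on $E_{2-|h|}$ with right-hand side $f \circ T_h$, and by linearity $w_h := h^{-1}(u \circ T_h - u)$ is an admissible solution on $E_{2-|h|}$ with right-hand side $\delta_{e,h} f$, the latter uniformly bounded in $L^q$ as $h \to 0$ by the tangential smoothness of $f$. A Nirenberg-style application of the energy inequality (Lemma \ref{lem:enloc} and Lemma \ref{lem:en2}) to $w_h$ with a tangential cutoff gives uniform $H^1(\Ol \cap E_{11/8}) \cap H^s(E_{11/8})$ bounds on $w_h$; combined with the localization device of Lemma \ref{lem:truncate} and Theorem \ref{thm:Holder} (whose constants are uniform in $\epsilon$), one obtains a uniform $C^{0,\alpha}(E_{3/2})$ bound on $w_h$ depending only on $\|u\|_{L^\infty(\Rn)}$ and $\|f\|_{L^q}$. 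Passing to the limit $h \to 0$ identifies the limit with $\partial_e u \in C^{0,\alpha}(E_{3/2})$ and shows that it is itself an admissible solution of $(P_\epsilon)$ with right-hand side $\partial_e f$, which is again smooth in the tangential directions. The argument therefore iterates, and after $k$ tangential differentiations we obtain $\partial_{e_1}\cdots\partial_{e_k} u \in C^{0,\alpha}(E_{3/2})$ with bound $C(n,k,a)\|u\|_{L^\infty(\Rn)}$.

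Main obstacle. The delicate step is obtaining uniform bounds on $w_h$ when $s < 1$: the crude pointwise estimate $|w_h| \leq 2\|u\|_{L^\infty}/|h|$ blows up as $h \to 0$, and the classical Nirenberg inequality $\|\delta_{e,h}v\|_{L^2} \lesssim \|v\|_{H^1}$ is not available on the $\Or$-side where $u$ is only in $H^s$. The workaround is that $w_h$ satisfies the \emph{same} equation as $u$ (thanks to translation invariance), so that the $H^s$-seminorm of $w_h$ can be estimated directly from the energy inequalities rather than by a crude interpolation; the $L^2$ control needed to close the iteration on $\Ol$ comes from the standard Nirenberg estimate applied to $u \in H^1(\Ol)$, and the corresponding $L^2$ control on the $\Or$-side is supplied by the Hölder regularity of $u$ already provided by Theorem \ref{thm:Holder}. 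Uniformity in $\epsilon$ throughout is what allows the iteration to close.
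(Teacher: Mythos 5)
Your overall strategy (tangential translation invariance plus difference quotients plus the $\epsilon$-uniform De Giorgi estimate, then iterate in the order of differentiation, with the weighted interior bound obtained by scaling) is the same as the paper's. But there is a genuine gap at the step you yourself flag as the main obstacle: the uniform-in-$h$ control of the first-order quotient $w_h=h^{-1}(u\circ T_h-u)$. Your proposed workaround does not close it. On $\Or$ (and on the tails of the nonlocal term, which see $w_h$ on all of $\Rn$), the only a priori information is $u\in L^\infty\cap C^{0,\alpha}$ with the small De Giorgi exponent $\alpha<1$, which gives $|w_h|\leq C|h|^{\alpha-1}$ -- blowing up as $h\to0$. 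The energy inequalities (Lemma \ref{lem:enloc}, Lemma \ref{lem:en2}, Theorem \ref{thm:admis}) cannot substitute for this: to apply them to $w_h$ with a localizing $\phi$ you need $w_h\leq\phi$ outside the cylinder with $M(\phi)<\infty$ and you need the right-hand side terms $\int(w_h-\phi)_+^2$ and $|\{w_h>\phi\}|$ under control, i.e.\ you already need an $h$-uniform bound on $w_h$ itself; saying ``$w_h$ satisfies the same equation, so estimate its $H^s$ seminorm from the energy inequality'' is circular. The Nirenberg inequality only helps on $\Ol$, where $u\in H^1$, exactly as you note.

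The paper's device, which is the missing idea, is to use \emph{fractional} incremental quotients $\delta_{e,h}w=\frac{w(x+he)-w(x)}{h^{\beta}}$ and to bootstrap the exponent: start with $\beta=\alpha$, so that $\delta_{e,h}\eta u$ is bounded in $L^\infty(\Rn)$ uniformly in $h$ by the known H\"older modulus; show (using the kernel regularity $\mathcal{L}_k$ and the separation of supports to turn the cutoff-tail term $B_N[\delta_{e,h}(1-\eta)u,\phi]$ into a bounded right-hand side $f_h$) that $\delta_{e,h}\eta u$ is an admissible solution, so Theorem \ref{thm:BryReg} and the energy estimates give an $h$-uniform $C^{0,\alpha}$ bound on it; by the standard incremental-quotient lemma this upgrades the tangential modulus of $u$ from $\beta$ to $\beta+\alpha$, and one repeats with $\beta=2\alpha,3\alpha,\dots$ until $\beta=1$, at which point $\partial_e u$ exists, is $C^{0,\alpha}$, and solves $(P_\epsilon)$ with a tangentially smooth right-hand side, so the iteration in $k$ proceeds. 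In short, you must gain the Lipschitz control in the tangential direction incrementally rather than assuming first-order quotients can be handled directly; as written, your argument presupposes the very bound it needs to produce.
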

\begin{proof}
(sketch) Let $\eta\in C_{c}^{\infty}(E_{t})$ a cutoff identically
$1$ on $E_{r}$, with $3/2<r<t<2$. The tangential regularity statement
in the lemma will follow by induction from the following claim:
\begin{claim*}
For each direction $e\perp e_{n}$, and provided $a_{s,i}$ are in
$\mathcal{L}_{k}$, we have that $\partial_{e}\eta u\in H^{s}(\Rn)\cap H^{1}(\Ol).$
Furthermore, $\partial_{e}u$ solves $(P_{\epsilon})$ on $E_{r_{1}}$
with a right-hand side $f$ which is $C^{k-1}$ in the tangential
directions, and $\partial_{e}u$ is in $C^{0,\alpha}(E_{r_{1}})$
for each $r_{1}<r$.\end{claim*}
\begin{proof}
Let $\delta_{e,h}w(x)=\frac{w(x+he)-w(x)}{h^{\beta}}.$ For any $\phi\in C_{c}^{\infty}(E_{r_{2}}),$
use $\delta_{e,-h}\phi$ as a test function for $(P_{\epsilon})$
for $|h|\leq\frac{r-r_{2}}{4}$and $r_{1}<r_{2}<r$. This gives
\begin{align*}
0 & =B_{L}[u,\delta_{e,-h}\phi]+\epsilon^{2(1-s)}B_{N}[u,\delta_{e,-h}\phi]\\
&\qquad-\epsilon\int u\langle\boldsymbol{b},\nabla\delta_{e,-h}\phi\rangle-\epsilon^{2}\int f\delta_{e,-h}\phi\\
 & =B_{L}[\delta_{e,h}\eta u,\phi]+\epsilon^{2(1-s)}B_{N}[\delta_{e,h}\eta u,\phi]\\
&\qquad-\epsilon\int\delta_{e,h}\eta u\langle\boldsymbol{b},\nabla\phi\rangle-\epsilon^{2}\int\delta_{e,h}f\phi+\epsilon^{2(1-s)}B_{N}[\delta_{e,h}(1-\eta)u,\phi]
\end{align*}
The rightmost term can be re-expressed as
\begin{align*}
= & \int_{\Or\times\Or}\frac{a_{s,1}(x-y)\left[\delta_{e,h}(1-\eta)u(x)-\delta_{e,h}(1-\eta)u(y)\right][\phi(x)-\phi(y)]}{|x-y|^{n+2s}}dxdy\\
&\qquad+\int_{\Or\times\Ol}a_{s,i}(x-y)\cdots dxdy\\
= & \int_{\Or\times\Or}\frac{-2a_{s,1}(x-y)\delta_{e,h}[(1-\eta)u](x)dx\phi(y)dy}{|x-y|^{n+2s}}+\int_{\Or\times\Ol}a_{s,i}(x-y)\cdots dxdy\\
= & -2\int_{\Or\times\Or}\delta_{e,h}\left(\frac{a_{s,1}}{|x-y|^{n+2s}}\right)(1-\eta)u(x)dx\phi(y)dy-\cdots\\
= & \int f_{h}(x)\phi(x)dx,
\end{align*}
with $f_{h}$ bounded uniformly in $h$ and $C^{k-1}$ in the tangential
directions (the ellipsis represents the other two completely analogous
terms). We used the fact that the supports of $\phi$ and $(1-\eta)$
are bounded away from each other, and that the kernels $\frac{a_{s,i}(x-y)}{|x-y|^{n+2s}}$
have integrable derivatives for $|x-y|$ bounded away from $0$. The
third step was an integration by parts, enabled by the fact that $e\perp e_{n}$.
A similar computation for the approximated problems $(P_{\epsilon}^{\delta})$
gives that $\delta_{e,h}u$ is an admissible solution to $(P_{\epsilon})$
with this right-hand side.

Also we have that $\delta_{e,h}\eta u\in L^{\infty}(\Rn)$ provided
$\beta=\alpha$ (the H\"{o}lder modulus for $u$ from Theorem \ref{thm:BryReg}),
so it follows from the energy estimates and Theorem \ref{thm:BryReg}
that
\[
\|\delta_{h}\eta u\|_{C^{0,\alpha}(E_{r_{1}})\cap H^{1}(\Ol)\cap H^{s}(\Rn)}\leq C(\boldsymbol{b},f,\|u\|_{C^{0,\alpha}},\lambda,\Lambda,r_{1},r_{2},r)=C.
\]
From standard facts about difference quotients (see \cite[Lemma 5.6]{CC}),
this implies that $|u(x+eh)-u(x)|\leq C|h|^{2\alpha},$ and so the
same procedure can be performed for $\beta=2\alpha,$ etc. until $\beta=1,$
at which point the claim follows. Note that the $\alpha$ gained in
each iteration is the same, so the procedure terminates after finitely
many steps.
\end{proof}
Apply the claim inductively, choosing a sequence of nested cylinders,
say $E_{3/2+2^{-k}}$. For the weighted interior estimate, simply
observe that the argument given here applies for \emph{any }direction
provided that we restrict to $x\in\{x_{n}<-1/2\}$ or $x\in\{x_{n}>1/2\}$;
the $s$ dependence follows from scaling.
\end{proof}
Next we would like make the same reduction to situation $(H)$ as
in the fractional Laplace case, and study the regularity of $u$ in
the $e_{n}$ direction. There is, however, a geometric obstruction.
In the case of the local transmission problem, this takes the following
form: the transmission condition is best expressed as a jump in the
\emph{conormal derivative }of $u.$ However, the coefficients on the
left and right may have different conormal directions, so while it
is always possible to re-express this as a jump in the normal derivative,
this expression will depend on the local tangential behavior of $u$.
Nevertheless, it is easy to see (if the coefficients are constant)
that after applying a specific piecewise-linear transformation to
the domain, $u$ solves Laplace equation on each side, and so characterizing
regularity becomes trivial.

In our setting, it will not always be possible to recover the fractional
Laplace case treated previously by a coordinate change. However, it
is possible to reduce the problem to a more isotropic setting on each
side separately, which is enough for the barrier argument to apply.
On $\Ol$ bootstrap regularity is generally simpler, and the reduction
to $(H)$ is not truly needed.

The following spherical averages of the homogeneous parts of $a_{s,i}$
will appear frequently; as we will see they characterize the optimal
regularity of the problem.
\begin{align*}
A_{s,1}= & \int_{\RR^{n-1}}\frac{a_{s,1}^{(0)}(y',1)}{(1+|y'|^{2})^{\frac{n+2s}{2}}}dy'\geq A(n,s)\lambda>0\\
A_{s,2}= & \int_{\RR^{n-1}}\frac{a_{s,2}^{(0)}(y',1)}{(1+|y'|^{2})^{\frac{n+2s}{2}}}dy'\geq A(n,s)\lambda>0
\end{align*}
Also, define the following spherical moments (these are vectors in
$\RR^{n-1}$):
\begin{align*}
M_{s,1}= & \int_{\RR^{n-1}}\frac{a_{s,1}^{(0)}(y',1)y'}{(1+|y'|^{2})^{\frac{n+2s}{2}}}dy'\\
M_{s,2}= & \int_{\RR^{n-1}}\frac{a_{s,2}^{(0)}(y',1)y'}{(1+|y'|^{2})^{\frac{n+2s}{2}}}dy'
\end{align*}

\begin{defn}
The vector $\nu_{1}=\frac{(Ae_{n})'}{\langle e_{n},Ae_{n}\rangle}\in\RR^{n-1}$
is called the \emph{conormal ratio }of $A$ (where $(Ae_{n})'$ is
the projection of $Ae_{n}$ onto the orthogonal complement of $e_{n}$).\emph{
}If $a_{s,i}$ are in $\mathcal{L}_{1}^{*},$ the vector $\nu_{2}\in\RR^{n-1}$
is defined as
\begin{equation}
\begin{cases}
\nu_{2}=\frac{\nu_{1}A_{s,1}\frac{1}{2s}+M_{s,2}-2M_{s,1}}{2A_{s,1}-\frac{2s-1}{2s}A_{s,2}} & s>\frac{1}{2}\\
\nu_{2}=0 & s\leq\frac{1}{2}
\end{cases}\label{eq:nonlocconormal}
\end{equation}
unless the denominator in the first formula vanishes, in which case
set $\nu_{2}=0$ as well. This $\nu_{2}$ is called the \emph{effective
nonlocal conormal ratio }of $a.$
\end{defn}
This definition may seem arbitrary, but as the rest of this section
will show $\nu_{2}$ plays a role analogous to $\nu_{1}$ for the
nonlocal form $B_{N}$. The fact that $\nu_{2}$ depends on $\nu_{1}$
is a relic of the fact that the transmission term sees the local part
of the equation; in general $\nu_{2}$ measures to what extent and
in what direction $a$ fails to be isotropic near $\Gamma.$ The next
lemma will demonstrate that in the situations when $\nu_{2}$ is relevant
(namely, when the expected H\"{o}lder exponent $\alpha_{0}>1$, the denominator
in the above expression will be strictly negative.

The next step is to perform a more subtle barrier construction which
is appropriate in this anisotropic setting.
\begin{lem}
Assume $u$ solves $(P_{\epsilon}),$ $\boldsymbol{b}$, $f,A,a$
are as above, and $a_{s,i}$ are in $\mathcal{L}_{2}\cap\mathcal{L}_{1}^{*}$.
Define $\alpha_{0}$ to be the unique solution in $((2s-1)_{+},2s)$
to the equation
\[
\begin{cases}
2\left[q(s,\alpha_{0})-\frac{1}{2s}\right]A_{s,1}+\frac{1}{2s}A_{s,2}=0 & s\neq\frac{1}{2}\\
2\left[q(s,\alpha_{0})-\frac{1}{2s}\right]A_{s,1}+\frac{1}{2s}A_{s,2}=-\alpha_{0}\langle\boldsymbol{b},e_{n}\rangle & s=\frac{1}{2}
\end{cases}.
\]
Then the following hold:\end{lem}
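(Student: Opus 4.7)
The plan is to mirror the argument from Section 7 (the isotropic/simple case), extending the barrier method to handle the anisotropy introduced by general $A$ and by $a_{s,i}\in\mathcal{L}_2\cap\mathcal{L}_1^*$. The first step is a reduction to the analogue of condition $(H)$: replace $u$ by $u - v$ where $v(x',x_n) = u(x',0)$. By Lemma \ref{lem:Constcoefftang}, $v$ is smooth in $x'$ and bounded; substituting into the equations just after Definition \ref{def:kernels}, the nonlocal terms applied to $v$ are controlled since $v$ is constant in $x_n$, and the result is a bounded right-hand side (piecewise smooth in each of $\Ol,\Or$). This reduces us to $u\equiv 0$ on $\Gamma$ with $f$ bounded and tangentially smooth.

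The second step is the anisotropic barrier computation. One computes directly, using $a_{s,i}(z) = a_{s,i}^{(0)}(\hat{z}) + O(\omega(|z|))$ and integrating in $y'$ first, that
\[
\int_{\Rn}\frac{\rho_{\alpha_0}(x_n)-\rho_{\alpha_0}(y_n)}{|x-y|^{n+2s}}\,a_{s,1}^{(0)}(x-y)\,dy = A_{s,1}\,q(s,\alpha_0)\,|x_n|^{\alpha_0 - 2s},
\]
and analogously for the $a_{s,2}$ cross-term. The defining equation for $\alpha_0$ is exactly the one that makes the leading-order contribution to the equation on $\Or$ vanish (the $\boldsymbol{b}$ correction appears only when $s=\tfrac12$ because $\langle\boldsymbol{b},e_n\rangle$ enters at the same scale as $B_N$). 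The $\mathcal{L}_1^*$ hypothesis on $\omega$ is precisely what makes the error $a_{s,i}-a_{s,i}^{(0)}$ contribute only a bounded (or at worst lower-order) remainder. This allows us to run a comparison argument exactly as in Lemmas \ref{lem:Nonlocboot} and \ref{lem:nonlocopt}: barriers $C_1\rho_{\alpha_0}(x_n) + C_2\rho_{\gamma}(-x_n) + \xi$ become strict supersolutions modulo lower-order terms that can be absorbed by choosing the constants in the right order. This yields $u\in C^{0,\alpha}(E_1\cap\bar{\Or})$ for each $\alpha<\alpha_0$, and once combined with interior estimates, the decay $|u(x)|\leq C|x_n|^{\alpha}$ on $\Or$.

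The third step transfers regularity to $\Ol$. The operator $\operatorname{Tr} AD^2$ is elliptic, so by a Schauder/scaling argument analogous to Lemma \ref{lem:Localboot}, the estimate $|u(x)|\leq C|x_n|^\alpha$ on $\Or$ plus boundedness of the nonlocal integral gives $u\in C^{1,\alpha+1-2s}(E_1\cap\bar{\Ol})$, provided one interprets the vanishing normal derivative in the conormal sense. This is where the direction $Ae_n$ enters: integration by parts against test vector fields along $Ae_n$ and the computation from Lemma \ref{lem:normaldervanish} (modified so that the local boundary term reads $\langle A\nabla u, n\rangle$ rather than $\partial_{e_n}u$) yields $\partial_{Ae_n}u(x',0^-) = 0$. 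The extra $1-2s$ gain comes as in part (2) of Lemma \ref{lem:Localboot}, from integrating $|u_{e_n e_n}|\lesssim |x_n|^{\alpha-2s}$ twice.

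The main obstacle, and the point where the definition \eqref{eq:nonlocconormal} of $\nu_2$ is forced on us, is the regime $\alpha_0>1$. Once $u$ has $C^1$ regularity on $\Or$, the barriers must cancel not only the zeroth-order leading term but also the first-order anisotropy produced by the moments $M_{s,i}$ and by the tangential conormal component $\nu_1$ from the $\Ol$ side. One computes the action of the $\Or$ operator on a linear-in-$x'$ corrector $x_n^{\alpha_0}\langle \nu, x'\rangle$: it produces terms involving $\nu_1 A_{s,1}/(2s) + M_{s,2} - 2M_{s,1}$ (from cross-integrals against $\nabla u$ extended into $\Ol$ by its conormal tangent plane) and $(2A_{s,1} - \tfrac{2s-1}{2s}A_{s,2})\nu$ from $\rho_{\alpha_0}\cdot x'\mapsto x'$ itself, using the identity $q(s,\alpha_0-1)+q(s,\alpha_0+1) = \tfrac{2s-1}{2s\cdot A_{s,1}}A_{s,2}$ implicit in the definition of $\alpha_0$. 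The choice $\nu=\nu_2$ is the unique one that makes these cancel, so the barrier $\rho_{\alpha_0}(x_n)(1+\langle\nu_2,x'\rangle)$ (plus a $\Ol$-side corrector analogous to $M_0\rho_{\alpha_0+2-2s}$) plays the same role here as $\rho_{\alpha_0}$ did in the simple case. A Krylov-type boundary-Harnack iteration as in Lemma \ref{lem:nlbdryharnack} then gives $u\in C^{1,\alpha}(E_1\cap\bar{\Or})$ for $\alpha<\alpha_0-1$ with $\partial_{\nu^*}u(x',0)=0$, where $\nu^*=e_n+\nu_2$; the compatibility condition arises only if one wants to push past $\alpha=2-2s$ on $\Ol$, which is outside the scope of the present lemma.
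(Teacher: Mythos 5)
Your toolkit (homogeneous barriers $\rho_\alpha$, the choice of $\alpha_0$ as the exponent annihilating the leading coefficient, comparison as in Lemma \ref{lem:Nonlocboot}) is the same family of techniques the paper uses, and your Step 2 computation of the action of the frozen kernels on $\rho_{\alpha_0}$ is correct. However, there are genuine gaps exactly in the part of the argument that is new in this lemma. First, the reduction to situation $(H)$ in your Step 1 fails when $s>\frac12$: after subtracting $v(x',x_n)=u(x',0)$, the half-space integrals $\int_{\Ol}\frac{[v(x)-v(y)]a_{s,2}(x-y)}{|x-y|^{n+2s}}dy$ (and the $\Ol$-portion of the $a_{s,1}$ term) no longer enjoy the tangential symmetry that killed the first-order term in the isotropic case, since $a_{s,i}^{(0)}(z',\pm1)$ need not be even in $z'$. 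The surviving first-order Taylor term produces a right-hand side of size $\langle\nabla'u(x',0),\cdot\rangle\,|x_n|^{1-2s}$, not a bounded one. This is of lower order when $\alpha_0\le1$, but it is exactly critical when $\alpha_0>1$, which is the only regime in which $\nu_2$ matters; this is the ``geometric obstruction'' the paper points out, and the paper avoids the reduction altogether by building the trace into the barrier as a skewed extension, $w=C_1\rho_\alpha(-x_n)+C_2\rho_{\alpha_1}(x_n)+u(x'+\nu_1x_n,0)1_{\{x_n\le0\}}+u(x'-\nu_2x_n,0)1_{\{x_n\ge0\}}$.

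Second, your mechanism for the $\alpha_0>1$ case does not work as stated. The identity $q(s,\alpha_0-1)+q(s,\alpha_0+1)=\frac{2s-1}{2s\,A_{s,1}}A_{s,2}$ is not implied by the definition of $\alpha_0$; in fact $\alpha_0+1>2s$ always, so $q(s,\alpha_0+1)$ is not even defined. Moreover a corrector of the form $\rho_{\alpha_0}(x_n)\langle\nu_2,x'\rangle$ modulates the coefficient of $x_n^{\alpha_0}$ tangentially and cannot yield the conclusion $\partial_{(\nu_2,1)}u(x',0)=0$; what is needed is a correction linear in $x_n$ proportional to $\langle\nu_2,\nabla'u(x',0)\rangle$, which is precisely what the skewed extension supplies. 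In the paper the cancellation comes from Taylor-expanding the skewed extension inside the operator $\mathscr{L}$, which produces truncated averages with limits $I_1(\infty)=-2M_{s,1}+M_{s,2}-2A_{s,1}\nu_2-A_{s,2}\nu_1$ and $I_2(\infty)=A_{s,2}(\nu_1+\nu_2)$; the coefficient of the critical term $x_n^{1-2s}$ is $\frac{1}{2s-1}I_1(\infty)+\frac{1}{2s}I_2(\infty)$, which vanishes exactly by the definition \eqref{eq:nonlocconormal} of $\nu_2$. The conclusion $\partial_{(\nu_2,1)}u(x',0)=0$ and the $C^{1,\alpha_1-1}$ bound then follow from $|u(x',x_n)-u(x'-\nu_2x_n,0)|\le C|x_n|^{\alpha_1}$ with $\alpha_1>1$, combined with interior estimates and a dyadic summation; no boundary-Harnack iteration is required here (Lemma \ref{lem:nlbdryharnack} is used later, for the transmission condition in the model case). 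Finally, note that this lemma concerns only the $\Or$ side: $\partial_{Ae_n}u(x',0^-)=0$ enters as a hypothesis of the second part, and your Step 3 about regularity on $\Ol$ belongs to the subsequent lemmas rather than to this one.
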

\begin{enumerate}
\item If $u\in C^{0,\alpha}(E_{r}\cap\bar{\Omega}_{1})$, $\alpha_{1}<\min\{\alpha,\alpha_{0}\}$,
then $u\in C^{0,\alpha_{1}}(E_{1}\cap\bar{\Omega}_{2})$.
\item If $u\in C^{1,\alpha-1}(E_{r}\cap\bar{\Omega}_{1})$, $\alpha_{0}>1,$
and $\partial_{Ae_{n}}u(x',0^{-})=0$ (from $\Ol$), then $u\in C^{1,\alpha_{1}-1}(E_{1}\cap\bar{\Omega}_{2})$
for each $1<\alpha_{1}<\min\{\alpha,\alpha_{0}\}$, and $\partial_{(\nu_{2},1)}u(x',0)=0$.\end{enumerate}
\begin{proof}
Localize as in Lemma \ref{lem:truncate} so the hypotheses are true
globally. We construct a barrier of the form
\[
w(x',x_{n})=C_{1}\rho_{\alpha}(-x_{n})+C_{2}\rho_{\alpha_{1}}(x_{n})+\begin{cases}
u(x'+\nu_{1}x_{n},0) & x_{n}\leq0\\
u(x'-\nu_{2}x_{n},0) & x_{n}\geq0
\end{cases}.
\]
Checking that this works is rather laborious; for convenience we rewrite
the equation satisfied on $\Or$ as
\[
\mathscr{L}u(x)+\epsilon^{2s-1}\langle\boldsymbol{b},\nabla u(x)\rangle=\epsilon^{2s}f(x).
\]
We will show that
\[
\mathscr{L}w(x)+\epsilon^{2s-1}\langle\boldsymbol{b},\nabla w(x)\rangle\geq C|x_{n}|^{\alpha_{1}-2s}
\]
for $|x_{n}|$ small enough; then by making sure $C_{1},$ $C_{2}$
are large enough so that $w\geq u$ on $(E_{1}\cap\Or\cap\{x_{n}<\delta\})^{c}$,
we can argue as in Lemma \ref{lem:Nonlocboot} to conclude that $u\leq w$
on $\Rn$, and so in particular (after applying to $\pm u$) $|u(x',x_{n})-u(x-\nu_{2}x_{n},0)|\leq C_{2}|x_{n}|^{\alpha_{1}}$
for $x_{n}>0.$ 

Now for the computation. First the $\rho_{\alpha}$ term ($x$ will
always be in $\Or\cap E_{1}$):
\[
\mathscr{L}\rho_{\alpha}(x)=-\int_{\Ol}\frac{a_{s,2}(x-y)\rho_{\alpha}(-y_{n})dy}{|x-y|^{n+2s}}\geq-C(n,s,\Lambda)|x_{n}|^{\alpha-2s}.
\]
Now for $\rho_{\alpha_{1}}:$
\begin{align*}
&\mathscr{L}\rho_{\alpha_{1}}(x) =2\int_{\Rn}\frac{a_{s,1}(x-y)\left[\rho_{\alpha_{1}}(x)-\rho_{\alpha_{1}}(y)\right]dy}{|x-y|^{n+2s}}\\
&\qquad-\int_{\Ol}\frac{\rho_{\alpha_{1}}(x)\left(2a_{s,1}(x-y)-a_{s,2}(x-y)\right)}{|x-y|^{n+2s}}dy\\
 & =|x_{n}|^{\alpha_{1}-2s}\left[\int_{\Or}\frac{a_{s,2}\left(x_{n}y',x_{n}(1+y_{n})\right)}{\left((1+y_{n})^{2}+|y'|^{2}\right)^{\frac{n+2s}{2}}}dy+2\int_{\Or}\frac{(1-y_{n}^{\alpha_{1}})a_{s,1}\left(x_{n}y',x_{n}(1-y_{n})\right)}{\left((1-y_{n})^{2}+|y'|^{2}\right)^{\frac{n+2s}{2}}}dy\right]\\
 & =|x_{n}|^{\alpha_{1}-2s}\bigg[\int_{0}^{\infty}\frac{1}{(1+y_{n})^{1+2s}}\int_{\RR^{n-1}}\frac{a_{s,2}\left(x_{n}y'(1+y_{n}),x_{n}(1+y_{n})\right)}{(1+|y'|^{2})^{\frac{n+2s}{2}}}dy'dy_{n}+\\
 & \qquad+2\int_{0}^{\infty}\frac{1-y_{n}^{\alpha_{1}}}{(1-y_{n})^{1+2s}}\int_{\RR^{n-1}}\frac{a_{s,1}\left(x_{n}y'(1-y_{n}),x_{n}(1-y_{n})\right)}{(1+|y'|^{2})^{\frac{n+2s}{2}}}dy'dy_{n}\bigg].
\end{align*}
At this point make use of the expansion for $a_{s,1}$ as $a_{s,1}(r\hat{y})=a_{s,1}^{(0)}(\hat{y})+\omega(r\wedge1)$,
and similarly for $a_{s,2}:$
\begin{align*}
&\mathscr{L}\rho_{\alpha_{1}}(x)=  |x_{n}|^{\alpha_{1}-2s}\bigg[\int_{0}^{\infty}\frac{1}{(1+y_{n})^{1+2s}}\int_{\RR^{n-1}}\frac{a_{s,2}^{(0)}(y',1)}{(1+|y'|^{2})^{\frac{n+2s}{2}}}dy'dy_{n}\\
 & \qquad+2\int_{0}^{\infty}\frac{1-y_{n}^{\alpha_{1}}}{(1-y_{n})^{1+2s}}\int_{\RR^{n-1}}\frac{a_{s,1}^{(0)}(y',1)}{(1+|y'|^{2})^{\frac{n+2s}{2}}}dy'dy_{n}\bigg]+O\left(|x_{n}|^{\alpha_{1}-2s}\omega(|x_{n}|)\right)\\
 & =\left\{2\left[q(s,\alpha_{1})-\frac{1}{2s}\right]A_{s,1}+\frac{1}{2s}A_{s,2}\right\}|x_{n}|^{\alpha_{1}-2s}+O\left(|x_{n}|^{\alpha_{1}-2s}\omega(|x_{n}|)\right).
\end{align*}
The symmetry of $a_{s,1}$ was used in this step to say that $a_{s,1}^{(0)}(y',1)=a_{s,1}^{(0)}(y',-1)$.
The drift term has a top-order contribution only when $s=\frac{1}{2}$
(recall the standing assumption that if $s<\frac{1}{2}$ then $\boldsymbol{b}=0$):
\begin{align*}
\langle\boldsymbol{b},\nabla\rho_{\alpha_{1}}(x)\rangle & =\alpha_{1}\langle\boldsymbol{b},e_{n}\rangle|x_{n}|^{\alpha_{1}-1}.
\end{align*}
Finally we estimate the third term, denoted by $v$. This turns out
to be somewhat more delicate if $s>\frac{1}{2}$; if $s<\frac{1}{2}$
then $\mathscr{L}v$ is easily seen to be bounded from the fact that
$v$ is Lipschitz, while if $s=\frac{1}{2},$ the same gives that
$|\mathscr{L}v|\leq C|\log x_{n}|,$ which is still much smaller than
$|x_{n}|^{\alpha_{1}-2s}.$ Note that in these cases $\nu_{2}$ could
have been arbitrary. For $s>\frac{1}{2}$, we expand $v$ as a Taylor
series near $x\in\Or$ in the following way:
\begin{align*}
v(x)-v(y) & =\langle\nabla v(x',0^{+}),x-y\rangle+O(|x-y|^{2}+x_{n}^{2})\\
 & =\langle\nabla'u(x',0),x'-y'+\nu_{2}(x_{n}-y_{n})\rangle+O(|x-y|^{2}+x_{n}^{2})
\end{align*}
provided $y\in\Or$ (here $\nabla'$ stands for the $n-1$ dimensional
gradient along $\Gamma$). For $y\in\Ol$, we use the following instead:
\begin{align*}
v(x)-v(y) & =v(x)-v(x',0)+v(x',0)-v(y)\\
 & =\langle\nabla v(x',0^{+}),x-(x',0)\rangle+\langle\nabla v(x',0^{-}),(x',0)-y\rangle+O(|x-y|^{2}+x_{n}^{2})\\
 & =\langle\nabla'u(x',0),\nu_{2}x_{n}+x'-y'+\nu_{1}y_{n}\rangle+O(|x-y|^{2}+x_{n}^{2}).
\end{align*}
This enables us to reduce the expression for $\mathscr{L}v$ to a
simpler form: 
\begin{align*}
\mathscr{L} & v(x)=2\int_{\Or}\frac{[v(x)-v(y)]a_{s,1}(x-y)}{|x-y|^{n+2s}}dy+\int_{\Ol}\frac{[v(x)-v(y)]a_{s,2}(x-y)}{|x-y|^{n+2s}}dy\\
 & =2\int_{E_{3/2}\cap\{y_{n}>2x_{n}\}}\frac{[v(x)-v(y)]a_{s,1}(x-y)}{|x-y|^{n+2s}}dy\\
 &\qquad+\int_{\Ol\cap E_{3/2}}\frac{[v(x)-v(y)]a_{s,2}(x-y)}{|x-y|^{n+2s}}dy+O(1)\\
 & =2\int_{\{y_{n}>2x_{n}\}\cap E_{3/2}}\frac{a_{s,1}(x-y)\langle\nabla'u(x',0),x'-y'+\nu_{2}(x_{n}-y_{n})\rangle}{|x-y|^{n+2s}}dy\\
 & \qquad+\int_{\Ol\cap E_{3/2}}\frac{a_{s,2}(x-y)\langle\nabla'u(x',0),x'-y'+\nu_{2}x_{n}+\nu_{1}y_{n}\rangle}{|x-y|^{n+2s}}dy+O(1).
\end{align*}
The first step used the fact that the contributions from outside of
$E_{3/2}$ are $O(1)$, while the symmetry of $a_{s,1}$ ensures that
the integral over $\{0<y_{n}<2x_{n}\}$ vanishes, as the contributions
from the strips $\{0<y_{n}<x_{n}\}$ and $\{x_{n}<y_{n}<2x_{n}\}$
cancel. The second step observes that as in both integrals, $|x_{n}-y|\geq x_{n}$,
the integral of something of order $O(|x-y|^{2}+x_{n}^{2})$ against
the kernel gives a contribution of $O(1)$. We now proceed by changing
variables in both terms:

\begin{align*}
\mathscr{L}v(x) & =2\int_{\{y_{n}>x_{n}\}\cap E_{1/2}}\frac{a_{s,1}(y)\langle\nabla'u(x',0),-y'-\nu_{2}y_{n}\rangle}{|y|^{n+2s}}dy\\
 & \qquad+\int_{\{y_{n}>x_{n}\}\cap E_{1/2}}\frac{a_{s,2}(y)\langle\nabla'u(x',0),y'+\nu_{2}x_{n}+\nu_{1}(x_{n}-y_{n})\rangle}{|y|^{n+2s}}dy+O(1)\\
 & =2\int_{\{y_{n}>x_{n}\}\cap E_{1/2}}\frac{a_{s,1}^{(0)}(y)\langle\nabla'u(x',0),-y'-\nu_{2}y_{n}\rangle}{|y|^{n+2s}}dy\\
 &\qquad +\int_{\{y_{n}>x_{n}\}\cap E_{1/2}}\frac{a_{s,2}^{(0)}(y)\langle\nabla'u(x',0),y'+\nu_{2}x_{n}+\nu_{1}(x_{n}-y_{n})\rangle}{|y|^{n+2s}}dy\\
 & \qquad+O(1+\int_{\{y_{n}>x_{n}\}\cap E_{1/2}}\frac{\omega(|y|)(y'+y_{n}+x_{n})}{|y|^{n+2s}}dy).
\end{align*}
This last integral is easily seen to be of order $O(1)$ from the
assumptions on the modulus $\omega$. Now we may factor out the $y_{n}$
dependence and change variables, to obtain

\begin{align}
\mathscr{L}v(x) & =\int_{x_{n}}^{1/2}\bigg[y_{n}{}^{-2s}\int_{B_{1/(2y_{n})}}\frac{2a_{s,1}^{(0)}(y',1)\langle\nabla'u(x',0),-y'-\nu_{2}\rangle}{(1+|y'|^{2})^{\frac{n+2s}{2}}}dy'\nonumber \\
 & \qquad +y_{n}{}^{-2s}\int_{B_{1/(2y_{n})}}\frac{a_{s,2}^{(0)}(y',1)\langle\nabla'u(x',0),y'-\nu_{1}\rangle}{(1+|y'|^{2})^{\frac{n+2s}{2}}}dy'\nonumber \\
 & \qquad + y_{n}^{-2s-1}x_{n}\int_{B_{1/(2y_{n})}}\frac{a_{s,2}^{(0)}(y',1)\langle\nabla'u(x',0),\nu_{2}+\nu_{1}\rangle}{(1+|y'|^{2})^{\frac{n+2s}{2}}}dy'\bigg]dy_{n}+O(1)\nonumber \\
 & =\int_{x_{n}}^{1/2}y_{n}{}^{-2s}\langle\nabla'u,I_{1}(1/2y_{n})\rangle+y_{n}^{-2s-1}x_{n}I_{2}(1/2y_{n})dy_{n}+O(1),\label{eq:IRform}
\end{align}
where
\[
\begin{cases}
I_{1}(R)= & \int_{B_{1/R}}\frac{2a_{s,1}^{(0)}(y',1)(-y'-\nu_{2})+a_{s,2}^{(0)}(y',1)(y'-\nu_{1})}{(1+|y'|^{2})^{\frac{n+2s}{2}}}dy'\\
I_{2}(R)= & \int_{B_{1/R}}\frac{a_{s,2}^{(0)}(y',1)(\nu_{2}+\nu_{1})}{(1+|y'|^{2})^{\frac{n+2s}{2}}}dy'
\end{cases}
\]
are vector-valued. Notice that the integrals $I_{1}(R),$ $I_{2}(R)$
are convergent, bounded uniformly in $R$, and that $I_{1}(R)\rightarrow-2M_{s,1}+M_{s,2}-2A_{s,1}\nu_{2}-A_{s,2}\nu_{1}=I_{1}(\infty)$
as $R\rightarrow\infty$, while $I_{2}(R)\rightarrow A_{s,2}(\nu_{1}+\nu_{2})=I_{2}(\infty)$.
In the case of $\alpha_{0}\leq1$, it is sufficient to observe that
(only using the boundedness of $I_{j}(R)$) we obtain $\mathscr{L}v(x)=O(|x_{n}|^{1-2s})$,
and that $1-2s\geq\alpha_{0}-2s>\alpha'-2s$ and hence lower order;
a much cruder argument would have been enough in this regime. If $\alpha_{0}>1$,
we first claim that $I_{j}(R)$ converge very rapidly to their limits.
Indeed, using the obvious estimates 
\[
\int_{|y'|>R}\frac{1}{(1+|y'|^{2})^{\frac{n+2s}{2}}}dy'\leq CR^{-1-2s}
\]
and
\[
\int_{|y'|>R}\frac{|y'|}{(1+|y'|^{2})^{\frac{n+2s}{2}}}dy'\leq CR^{-2s}
\]
gives
\[
|I_{1}(R)-I_{1}(\infty)|\leq CR^{-2s},
\]
while
\[
|I_{2}(R)-I_{2}(\infty)|\leq CR^{-2s-1}.
\]
Substituting this into equation \ref{eq:IRform},
\begin{align*}
\mathscr{L}v & =O(1)+\int_{x_{n}}^{1/2}y_{n}{}^{-2s}I_{1}(\infty)+x_{n}y_{n}^{-2s-1}I_{2}(\infty)dy_{n}\\
 & =O(1)+x_{n}^{1-2s}[\frac{1}{2s-1}I_{1}(\infty)+\frac{1}{2s}I_{2}(\infty)].
\end{align*}
Now we may compute
\begin{align*}
\frac{1}{2s-1}&I_{1}(\infty)+\frac{1}{2s}I_{2}(\infty)  =\frac{-2M_{s,1}+M_{s,2}-2A_{s,1}\nu_{2}-A_{s,2}\nu_{1}}{2s-1}+\frac{A_{s,2}(\nu_{1}+\nu_{2})}{2s}\\
 & =\nu_{2}[\frac{-2}{2s-1}A_{s,1}+\frac{1}{2s}A_{s,2}]+\nu_{1}A_{s,2}\frac{1}{2s(2s-1)}+\frac{M_{s,2}-2M_{s,1}}{2s-1}=0
\end{align*}
by the definition of $\nu_{2}$. Notice that as $\alpha_{0}>1$, $q(s,\alpha_{0})>q(s,1)=\frac{1}{1-2s}+\frac{1}{2s}$
gives that the denominator in the expression for $\nu_{2}$ is strictly
less than $0$, and so, in particular, $\nu_{2}$ is well-defined.
Thus $\mathscr{L}v=O(1)$, as desired. Also, $v$ has bounded derivatives,
so the drift term $\langle\boldsymbol{b},\nabla v\rangle$ is bounded. 

Putting everything together, we have that (with the term in braces
only if $s=\frac{1}{2}$),
\begin{align*}
\mathscr{L}w&+\epsilon^{2s-1}\langle\boldsymbol{b},\nabla w(x)\rangle\geq \\
&\geq C_{2}|x_{n}|^{\alpha_{1}-2s}\left[2\left(q(s,\alpha_{1})-\frac{1}{2s}\right)A_{s,1}+\frac{1}{2s}A_{s,2}+\{\alpha_{1}\langle\boldsymbol{b},e_{n}\rangle\}\right]\\
&\qquad-O\left(|x_{n}|^{\alpha_{1}-2s}\omega(x_{n})+|x_{n}|^{\alpha-2s}+1\right).
\end{align*}
Since $\alpha_{1}<\alpha_{0},$ the term in brackets is positive,
and so by choosing $C_{2}$ large the entire right-hand side is positive
for $|x_{n}|$ small.

So far we have shown that $|u(x,x_{n})-v(x,x_{n})|\leq C|x_{n}|^{\alpha_{1}}$
globally. We also have that $v_{1}=u-v$ admits the following trivial
estimate:
\[
\left|\int_{\Rn}\frac{a_{s,1}(x-y)[v_{1}(x)-v_{1}(y)]}{|x-y|^{n+2s}}dy\right|\leq C|x_{n}|^{-2s}.
\]
This is true for $v$ because it is smooth on $\Or,$ while for $u$
it follows from using the equation. Therefore from an easy scaling
argument the following interior estimates are available for $x,y\in E_{1}\cap\Or$
and $|x-y|<\frac{1}{2}x_{n}$:
\[
|v_{1}(x)-v_{1}(y)|\leq C|x-y|x_{n}^{\alpha_{1}-1}
\]
and
\[
|\nabla v_{1}(x)-\nabla v_{1}(y)|\leq C|x-y|^{2s-1}x_{n}^{\alpha_{1}-2s}.
\]
If $\alpha_{1}\leq1,$ use the first estimate when$|x-y|\leq\frac{1}{2}\max\{x_{n},y_{n}\}$
to get
\[
|v_{1}(x)-v_{1}(y)|\leq C|x-y|^{\alpha_{1}},
\]
or else use the estimate from the barrier directly to give
\[
|v_{1}(x)-v_{1}(y)|\leq|v_{1}(x)|+|v_{1}(y)|\leq C[|x_{n}|^{\alpha_{1}}+|y_{n}|^{\alpha_{1}}]\leq C|x-y|^{\alpha_{1}}.
\]
If $\alpha_{1}>1$, use the second estimate when it applies to get
\[
|\nabla v_{1}(x)-\nabla v_{1}(y)|\leq C|x-y|^{\alpha_{1}-1}.
\]
Then when $|x-y|>\frac{1}{2}\max\{x_{n},y_{n}\}$ we have
\[
|\nabla v_{1}(x)-\nabla v_{1}(y)|\leq|\nabla v_{1}(x)|+|\nabla v_{1}(y)|,
\]
and these can be estimated as
\begin{align*}
|\nabla v_{1}(x)|&=|\nabla v_{1}(x)-\nabla v_{1}(x',0)|\\
&\leq\sum_{k=0}^{\infty}|\nabla v_{1}(x,2^{-k}x_{n})-\nabla v_{1}(x,2^{-k-1}x_{n})|\\
&\leq C\sum_{k=0}^{\infty}2^{-k(\alpha_{1}-1)}|x_{n}|^{\alpha_{1}-1}\\
&\leq C|x-y|^{\alpha_{1}-1}.
\end{align*}
This shows that $v_{1}$, and hence $u$, is in $C^{0,\alpha_{1}}(E_{1}\cap\Or)$
(or $C^{1,\alpha_{1}-1}(E_{1}\cap\Or)$). 
\end{proof}
Next is a corresponding argument for the local side. Here, as we will
see, using the first-order vanishing of $\partial_{(\nu_{2},1)}u$
on $\Or$ does not always work, since it requires a compatibility
of the conormal vectors. This is made precise in the following definition:
\begin{defn}
The collection $A,a_{s,1}$, $a_{s,2}$ is called \emph{compatible }if
the following relation holds:
\[
\frac{1}{2s}\nu_{2}A_{s,2}-\frac{2s-1}{2s}\nu_{1}A_{s,2}+M_{s,2}=0.
\]
\end{defn}\label{def:compat}
Compatibility can be thought of as the condition needed such that
there is a global reduction to situation $(H)$. In other words, that
after composition with a piecewise linear transformation of $\Rn$,
$u$ solves an equation with both $\nu_{1},$ $\nu_{2}=0$ and in addition
$a_{s,2}$ is isotropic in an averaged sense. 
\begin{lem}
Let $u$ be a solution to $(P_{\epsilon})$ and $r>1$. Then we have:
\begin{enumerate}
\item If $u\in C^{0,\alpha}(E_{r})$ then $u\in C^{0,(\alpha+2-2s)\wedge1}(E_{1}\cap\bar{\Ol}).$
\item If $u\in C^{0,\alpha}(E_{r})$, $2>\alpha+2-2s>1,$ and in addition
$\partial_{Ae_{n}}u(x',0^{-})=0$ (from $\Ol,$in the sense of distributions),
then \textup{$u\in C^{1,\alpha+1-2s}(E_{1}\cap\bar{\Ol})$.}
\item If $u\in C^{1,\alpha}(E_{r}\cap\bar{\Or})\cap C^{1,\alpha}(E_{r}\cap\bar{\Ol})$
and $\partial_{Ae_{n}}u(x',0^{-})=0$, $\partial_{(\nu_{2},1)}u(x',0^{+})=0$,
$a_{s,2}\in\mathcal{L}_{1}^{*},$ and in addition $a_{s,i},A$ are
compatible, then \textup{$u\in C^{1,\alpha'+2-2s}(E_{1}\cap\bar{\Ol})$
for all $\alpha'\leq\alpha$ with $\alpha'<2s-1$.}
\end{enumerate}
\end{lem}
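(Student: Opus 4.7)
The plan is to adapt the proof of Lemma \ref{lem:Localboot} to the present setting, accounting for the non-scalar matrix $A$ and the anisotropy of $a_{s,2}$. The equation satisfied classically on $\Ol$ is
\[
-\text{Tr}(A D^{2}u) \;=\; \epsilon^{2}f \;-\; \epsilon^{2(1-s)}I(x) \;-\; \epsilon\langle\boldsymbol{b}, \nabla u\rangle, \qquad I(x) := \int_{\Or}\frac{[u(x)-u(y)]\,a_{s,2}(x-y)}{|x-y|^{n+2s}}dy.
\]
Introducing $W := A_{nn}\partial_n u + 2\langle (Ae_n)',\nabla' u\rangle$, I rewrite $-\text{Tr}(AD^{2}u) = -\partial_n W - \text{Tr}(A''(D')^{2}u)$, where $A''$ is the tangential-tangential block of $A$ and $(D')^{2}u$ is the tangential Hessian. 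Lemma \ref{lem:Constcoefftang} controls the purely tangential second derivatives of $u$ and supplies the weighted interior bound $|\nabla u|\leq C|x_n|^{\alpha-1}$ needed to absorb the drift, which is lower-order whenever $\boldsymbol{b}\neq 0$ (since then $s\geq \tfrac12$). The PDE then reads as a first-order ODE
\[
\partial_{n}W(x', x_n) = g(x',x_n), \qquad |g(x',x_n)|\leq C(1+|x_n|^{\alpha-2s}),
\]
with the bound on $g$ coming from the standard split of $I(x)$ into near and far pieces combined with $u\in C^{0,\alpha}$.

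Parts (1) and (2) then follow the logic of Lemma \ref{lem:Localboot}. Integrating the ODE once in $x_n$ gives $|W(x',x_n)|\leq C(1+|x_n|^{\alpha+1-2s})$; since $W-A_{nn}\partial_n u = 2\langle (Ae_n)', \nabla' u\rangle$ is a tangentially-smooth derivative of $u$ and hence in $C^{0,\alpha}$, this transfers to $\partial_n u$, and a second integration combined with tangential regularity delivers (1). For (2), the hypothesis $\partial_{Ae_n}u(x',0^-)=0$ fixes the boundary trace $W(x',0^-) = \langle (Ae_n)',\nabla' u(x',0)\rangle$, a tangentially smooth function of $x'$; integrating the ODE from $x_n = 0$ then gives $|W(x',x_n) - W(x',0^-)|\leq C|x_n|^{\alpha+1-2s}$, producing $u\in C^{1,\alpha+1-2s}(E_1\cap\bar\Ol)$.

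Part (3) is the hard case and requires improving the estimate on $I(x)$ from $O(1+|x_n|^{\alpha-2s})$ to $O(|x_n|^{1+\alpha'-2s})$. The idea is to decompose $u = \bar u + \tilde u$ near $\Gamma$, where $\bar u$ extends the tangentially-smooth trace $u|_\Gamma$ linearly in $x_n$ on each side using the vanishing conormal hypotheses,
\[
\bar u(x) = u(x',0) - \langle \nu_1,\nabla' u(x',0)\rangle x_n \text{ on }\Ol, \quad \bar u(y) = u(y',0) - \langle \nu_2,\nabla' u(y',0)\rangle y_n \text{ on }\Or.
\]
Then $\tilde u = O(|x_n|^{1+\alpha})$ on each side and contributes $O(|x_n|^{1+\alpha-2s})$ to $I(x)$ by a routine estimate. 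For the $\bar u$ contribution, I expand $\bar u(x) - \bar u(y)$ in $z = x-y$, scale $z = |x_n| w$, and substitute $a_{s,2}(|x_n|w) = a_{s,2}^{(0)}(w) + O(\omega(|x_n||w|))$. The leading term takes the form $|x_n|^{1-2s}\langle \nabla' u(x',0),\,L\rangle$, with $L\in\RR^{n-1}$ an explicit integral over $\{w_n<-1\}$ against $a_{s,2}^{(0)}$, evaluable in closed form via the spherical averages $A_{s,2}$ and $M_{s,2}$. The compatibility hypothesis (Definition \ref{def:compat}) is precisely the algebraic identity making $L = 0$. The sub-leading remainders --- coming from the modulus $\omega$ (controlled by $a_{s,2}\in\mathcal{L}_1^*$ per Definition \ref{def:kernels}), from the tangential variation $\nabla' u(y',0) - \nabla' u(x',0)$, and from higher-order Taylor terms of the smooth trace $u|_\Gamma$ --- are of order $|x_n|^{1+\alpha'-2s}$. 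Feeding the improved estimate on $I(x)$ back into the ODE framework of (1)-(2) and integrating twice in $x_n$ produces the claimed $C^{1,\alpha'+2-2s}$ regularity on $\bar\Ol$; note that the restriction $\alpha'<2s-1$ keeps the target exponent $\alpha'+2-2s$ strictly below $1$, so it is a bona fide H\"older exponent on $\partial_n u$. The main obstacle is the careful sign bookkeeping in (3), verifying that the compatibility relation is exactly what annihilates the leading coefficient $L$ and that the $\mathcal{L}_1^*$ modulus is strong enough to integrate the remainder terms down to the desired order.
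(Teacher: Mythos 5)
This is correct and essentially the paper's own argument: the paper likewise reduces everything to estimating the nonlocal term, uses the crude bound $C|x_{n}|^{\alpha-2s}$ for (1)--(2) (with the local bootstrap done as in Lemma \ref{lem:Localboot}, where the joint H\"older continuity of $\nabla u$ in parts (2)--(3) comes from subtracting tangential planes and applying rescaled interior estimates plus a dyadic summation, a completion your pure integration in $x_{n}$ still needs), and for (3) it expands $u$ about its trace along the two conormal directions, scales out $|x_{n}|$, replaces $a_{s,2}$ by $a_{s,2}^{(0)}$ via the $\mathcal{L}_{1}^{*}$ modulus, and finds a leading coefficient built from $M_{s,2}$, $A_{s,2}\nu_{1}$, $A_{s,2}\nu_{2}$ with weights $\tfrac{1}{2s-1}$, $\tfrac{1}{2s}$ that is annihilated exactly by compatibility. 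The sign bookkeeping you defer is precisely the paper's displayed computation, with one caution: your hypothesis-consistent expansion $u(y)\approx u(y'-\nu_{2}y_{n},0)$ on $\Or$ leads to the vanishing condition $M_{s,2}-\tfrac{1}{2s}\nu_{2}A_{s,2}-\tfrac{2s-1}{2s}\nu_{1}A_{s,2}=0$, whereas the paper's proof expands $u(y'+\nu_{2}y_{n},0)$ and so lands on its stated compatibility identity verbatim; the discrepancy traces to a sign convention in the paper's own computation of $\nu_{2}$ in the preceding ($\Or$-side) lemma, not to a structural problem, so fix one convention throughout and the cancellation goes through exactly as you describe.
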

\begin{proof}
The proof is analogous to that of Lemma \ref{lem:Localboot} (it is
now necessary to subtract planes in the tangential direction before
scaling, which doesn't effect the equation); we only show how to obtain
estimates on the nonlocal term. For $(1)$ and $(2),$ the following
basic estimate is sufficient:
\[
\left|\int_{\Or}\frac{[u(x)-u(y)]a_{s,2}(x-y)}{|x-y|^{n+2s}}dy\right|\leq C\int_{\Or}|x-y|^{\alpha-n-2s}dy\leq C|x_{n}|^{\alpha-2s}.
\]
If in situation $(3)$, the computation is more subtle; we expand
$u(y)=u(y'+\nu_{2}y_{n},0)+O(|y_{n}|^{1+\alpha}\wedge1),$ $u(x)=u(x'-\nu_{1}x_{n},0)+O(|x_{n}|^{1+\alpha}\wedge1)$:
\begin{align*}
\int_{\Or}&\frac{[u(x)-u(y)]a_{s,2}(x-y)}{|x-y|^{n+2s}}dy \\
& =\int_{\Or\cap E_{3/2}}\frac{\left[u(x'-\nu_{1}x_{n},0)-u(y'+\nu_{2}y_{n},0)\right]a_{s,2}(x-y)}{|x-y|^{n+2s}}dy+O(|x_{n}|^{\alpha+1-2s})\\
 & =\int_{\Or\cap E_{3/2}}\frac{\left[\langle\nabla'u(x'-\nu_{1}x_{n},0),x'-y'-\nu_{2}y_{n}-\nu_{1}x_{n}\rangle\right]a_{s,2}(x-y)}{|x-y|^{n+2s}}dy\\ 
 &\qquad +\int_{\Or\cap E_{3/2}}\frac{O\left(|y'-x'+\nu_{2}y_{n}+\nu_{1}x_{n}|^{2}\right)}{|x-y|^{n+2s}}dy+O(|x_{n}|^{\alpha+1-2s})
\end{align*}
We claim that the second term is lower-order (using the fact that
$s>\frac{1}{2}$):
\begin{align*}
\int_{\Or\cap E_{3/2}}&\frac{|y'-x'+\nu_{2}y_{n}+\nu_{1}x_{n}|^{2}a_{s,2}(x-y)}{|x-y|^{n+2s}}dy \\
& \leq C\int_{0}^{3/2}\frac{(x_{n}+y_{n})^{2}}{(x_{n}+y_{n})^{1+2s}}dy_{n}\int_{\RR^{n-1}}\frac{(1+|y'|)^{2}}{(1+|y'|^{2})^{\frac{n+2s}{2}}}dy'
  \leq C.
\end{align*}
Back to the original computation,
\begin{align*}
= & \int_{\Or\cap E_{3/2}}\frac{\langle\nabla'u(x'-\nu_{1}x_{n},0),x'-y'-\nu_{2}y_{n}-\nu_{1}x_{n}\rangle a_{s,2}(x-y)}{|x-y|^{n+2s}}dy+O(|x_{n}|^{\alpha+1-2s})\\
 & =\int_{\{y_{n}>x_{n}\}\cap B_{1/2}}\frac{\langle\nabla'u,y'-\nu_{2}(x_{n}-y_{n})-\nu_{1}x_{n}\rangle a_{s,2}(y)}{|y|^{n+2s}}dy+O(|x_{n}|^{\alpha+1-2s})\\
 & =\int_{x_{n}}^{1/2}|y_{n}|^{-2s}\int_{B_{1/2y_{n}}}\frac{\langle\nabla'u,y'-\nu_{2}(x_{n}/y_{n}-1)-\nu_{1}x_{n}/y_{n}\rangle a_{s,2}^{(0)}(y',-1)}{(1+|y'|^{2})^{\frac{n+2s}{2}}}dy'dy_{n}\\
&\qquad+O(|x_{n}|^{\alpha+1-2s}).
\end{align*}
At this point, considerations as in the previous lemma give that this
integral is $O(1)$ provided
\[
\nu_{2}\frac{1}{2s}A_{s,2}-\frac{2s-1}{2s}\nu_{1}A_{s,2}+M_{s,2}=0,
\]
which is the compatibility condition.\end{proof}
\begin{lem}
Let $u$ be a solution of $(P_{\epsilon})$ Then if $u\in C^{0,\alpha}(E_{r})$
for some $\alpha>(2s-1)_{+}$ and $r>1,$ $\partial_{Ae_{n}}u=0$
from $\Ol$ for $|x'|<1$ (in the sense of distributions).\end{lem}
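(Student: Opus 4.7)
The plan is a direct adaptation of Lemma \ref{lem:normaldervanish}: test the equation $(P_\epsilon)$ against a thin spike localized at $\Gamma$, identify the limit as the desired boundary integral of the conormal derivative, and show the other contributions vanish. The hypothesis $\alpha>(2s-1)_+$ enters in two essential places: via Lemma \ref{lem:Localboot}(1) it gives $u\in C^{0,1}(E_1\cap\bar\Omega_1)$ (so $|\partial_i u|$ is bounded in $\Omega_1$ up to $\Gamma$), and it guarantees the nonlocal spike integral tends to $0$.

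Concretely, fix $w\in C_c^\infty(\{|x'|<1\})$ (extended to $\Rn$ by a smooth $x_n$-cutoff supported in $\{|x_n|<(r-1)/2\}$) and set $\tau_\delta(t)=(1-|t|/\delta)_+$. Using $w\tau_\delta$ as test function and expanding $\nabla(w\tau_\delta)=\tau_\delta\nabla w+w\tau_\delta'(x_n)e_n$, the bulk piece of $B_L[u,w\tau_\delta]$ is $o_\delta(1)$ by dominated convergence, and the boundary-layer piece is
\[
\frac{1}{\delta}\int_{-\delta}^{0}\!\!\int_{\RR^{n-1}} w(x',x_n)\langle A\nabla u(x',x_n),e_n\rangle\,dx'\,dx_n\;\longrightarrow\;\mathcal{B}(w):=\int_{\RR^{n-1}} w(x',0)\,\partial_{Ae_n}u(x',0^-)\,dx',
\]
since $\tau_\delta'=+1/\delta$ on $(-\delta,0)$. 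Our goal is thus $\mathcal{B}(w)=0$.

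From the equation $B_L[u,w\tau_\delta]=-\epsilon^{2(1-s)}B_N[u,w\tau_\delta]+\epsilon\int u\langle\boldsymbol{b},\nabla(w\tau_\delta)\rangle+\epsilon^2\int fw\tau_\delta$: the forcing term is $O(\|w\tau_\delta\|_{L^1})=O(\delta)$. The drift splits as $\int u\tau_\delta\langle\boldsymbol{b},\nabla w\rangle+\int uw\tau_\delta'\langle\boldsymbol{b},e_n\rangle$; the first piece is $o_\delta(1)$, and the second decomposes as $\delta^{-1}\int_{-\delta}^{0}-\delta^{-1}\int_{0}^{\delta}$ of $u(x',x_n)w(x',x_n)\langle\boldsymbol{b},e_n\rangle$. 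By continuity of $u$ across $\Gamma$ (indeed Hölder continuity from $u\in C^{0,\alpha}(E_r)$), both halves converge to the same limit $\int w(x',0)u(x',0)\langle\boldsymbol{b},e_n\rangle dx'$, and so cancel.

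The main obstacle is $B_N[u,w\tau_\delta]\to 0$. Using $|u(x)-u(y)|\leq C|x-y|^\alpha$ on the bounded region where both arguments lie in $E_r$, controlling the tails via $u\in L^\infty$ and the kernel decay, and the uniform bound $a\leq\Lambda$, one reduces (up to $o_\delta(1)$) to estimating
\[
\int_{\Rn\times\Rn}\frac{|w\tau_\delta(x)-w\tau_\delta(y)|}{|x-y|^{n+2s-\alpha}}\,dx\,dy.
\]
Splitting the numerator as $(w(x)-w(y))\tau_\delta(x_n)+w(y)(\tau_\delta(x_n)-\tau_\delta(y_n))$, the first piece (via $|w(x)-w(y)|\leq C|x-y|$) yields $O(\|\tau_\delta\|_{L^1})=O(\delta)$, while the second, after integrating the smooth $x',y'$-dependence of $w$, reduces to a multiple of
\[
\int_{\RR}\int_{\RR}\frac{|\tau_\delta(t)-\tau_\delta(s)|}{|t-s|^{1+2s-\alpha}}\,dt\,ds \;=\; C\delta^{1-2s+\alpha}\int_{\RR}\frac{dt}{1+|t|^{1+2s-\alpha}},
\]
which is finite because $\alpha<2s$ and tends to $0$ precisely because $\alpha>(2s-1)_+\geq 2s-1$. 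The variable matrix $A$ and general kernel $a_{s,i}$ introduce no new difficulty beyond the uniform ellipticity/$L^\infty$ bound already used. Assembling the four limits gives $\mathcal{B}(w)=0$ for every test $w$, which is the claim.
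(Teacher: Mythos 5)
Your proposal is correct and takes essentially the same route as the paper, whose proof simply defers to Lemma \ref{lem:normaldervanish}: test with $w\tau_{\delta}$, identify the flux layer in $B_{L}[u,w\tau_{\delta}]$, and show $B_{N}[u,w\tau_{\delta}]\rightarrow0$ through the $\delta^{1-2s+\alpha}$ scaling, which is exactly where $\alpha>(2s-1)_{+}$ enters. Your explicit cancellation of the two half-layer drift terms (absent in Section 7 where $\boldsymbol{b}=0$) is a detail the paper leaves implicit under ``identical,'' and the boundary-layer limit is best read distributionally as in Lemma \ref{lem:normaldervanish} (its existence follows from the convergence of the other terms) rather than via a pointwise trace of $\langle A\nabla u,e_{n}\rangle$, which Lipschitz regularity alone does not provide.
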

\begin{proof}
Identical to the proof of Lemma \ref{lem:normaldervanish}.\end{proof}
\begin{thm}
\label{thm:ConstCoeffEst}Let $u$ solve $(P_{\epsilon})$ with $a_{s,i}\in\mathcal{L}_{2}\cap\mathcal{L}_{1}^{*}$.
\begin{enumerate}
 \item Then $u\in C^{0,\alpha}(E_{1}\cap\bar{\Or})$ for every $\alpha<\alpha_{0},$
$u\in C^{1,\alpha}(E_{1}\cap\bar{\Ol})$ for every $\alpha<\min\{\alpha_{0}+1-2s,2-2s\}$,
and $\partial_{Ae_{n}}u(x',0^{-})=0$ for $|x'|<1$.
 \item Moreover, if
$\alpha_{0}>1,$ then $u\in C^{1,\alpha}(E_{1}\cap\bar{\Or})$ for
each $\alpha<\min\{\alpha_{0}-1,2-2s\}$ and $\partial_{(\nu_{2},1)}u(x',0)=0.$
 \item If in addition $a_{s,i},A$ are compatible, $u\in C^{1,\alpha}(E_{1}\cap\bar{\Ol})$
for every $\alpha<\alpha_{0}+1-2s$ and $u\in C^{1,\alpha}(E_{1}\cap\bar{\Or})$
for each $\alpha<\alpha_{0}-1$.
\end{enumerate}
The exponent $\alpha_{0}$ depends
only on $a_{s,i}$ and, if in the case $s=\frac{1}{2}$, also on $\langle\boldsymbol{b},e_{n}\rangle$.
\end{thm}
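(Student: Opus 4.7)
The plan is to bootstrap using the three preceding lemmas in direct parallel to the proof of Theorem \ref{thm:lalnearopt}, which handled the fractional Laplace case. The strategy is straightforward in structure, but several exponents must be tracked carefully and the three different conclusions (1), (2), (3) correspond to three successive stages of the iteration.

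First I would set the starting point: Theorem \ref{thm:BryReg} gives $u\in C^{0,\alpha_{*}}(\Rn)$ for some $\alpha_{*}>0$, and Lemma \ref{lem:Constcoefftang} upgrades this to full smoothness in the tangential directions together with the weighted interior estimate. Thus it suffices to improve regularity in the $e_{n}$-direction, and one may work on nested shrinking cylinders $E_{r_{k}}$ with $r_{k}\searrow 1$ to absorb the slight loss of domain at each step. Starting from the exponent $\alpha_{*}$, I would alternate the local-side and nonlocal-side boot lemmas: the local-side lemma (part (1)) gains $2-2s$ on $\bar{\Ol}$, and the nonlocal-side lemma (part (1)) transfers this gain to $\bar{\Or}$ up to the barrier threshold $\alpha_{0}$. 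Since each full cycle increases the exponent by at least $\min\{2-2s, 1-s\}$, after finitely many cycles one obtains $u\in C^{0,\alpha}(E_{1}\cap\bar{\Or})$ for any $\alpha<\alpha_{0}$, and along the way $C^{1,\alpha+1-2s}(E_1\cap\bar{\Ol})$ once $\alpha+2-2s>1$.

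Once the exponent on $\bar{\Or}$ exceeds $(2s-1)_{+}$, I would invoke the normal-derivative-vanishing lemma to obtain $\partial_{Ae_{n}}u(x',0^{-})=0$ in the distributional sense; feeding this back into part (2) of the local-side lemma gives the improved $C^{1,\alpha+1-2s}$ regularity on $\bar{\Ol}$ claimed in (1). This completes (1). For (2), under the assumption $\alpha_{0}>1$, I would continue iterating until the exponent on $\bar{\Or}$ crosses $1$, at which point part (2) of the nonlocal-side lemma applies and produces $u\in C^{1,\alpha-1}(E_{1}\cap\bar{\Or})$ for $\alpha<\alpha_{0}$, with $\partial_{(\nu_{2},1)}u(x',0^{+})=0$; here the bound $\alpha<2-2s$ on the $\bar{\Ol}$ side simply reflects the fact that in part (2) of the local-side lemma the exponent $\alpha+1-2s$ cannot exceed $2-2s$ without a further vanishing-derivative hypothesis.

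Finally, for (3), assuming compatibility of $(A,a_{s,1},a_{s,2})$, both $\partial_{Ae_{n}}u(x',0^{-})=0$ and $\partial_{(\nu_{2},1)}u(x',0^{+})=0$ are in hand from (1) and (2), so part (3) of the local-side lemma applies and upgrades regularity on $\bar{\Ol}$ past the barrier $2-2s$, giving $C^{1,\alpha}(E_{1}\cap\bar{\Ol})$ for every $\alpha<\alpha_{0}+1-2s$. A final round of the nonlocal-side lemma then yields the matching $C^{1,\alpha}$ estimate on $\bar{\Or}$ for $\alpha<\alpha_{0}-1$. The main obstacle is bookkeeping: each iteration is performed on a slightly smaller cylinder, and one has to verify that the gain per cycle is bounded below by a universal positive constant (which follows since, away from the thresholds $(2s-1)_{+}$ and $1$, one of parts (1) of the two boot lemmas always delivers an improvement of at least $\min\{1-s,\alpha_{0}-\alpha\}/2$), so that the procedure terminates after finitely many cycles at any prescribed target exponent strictly below $\alpha_{0}$.
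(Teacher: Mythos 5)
Your proposal is correct and follows essentially the same route as the paper, which proves Theorem \ref{thm:ConstCoeffEst} by running the bootstrap of Section 7 (the proof of Theorem \ref{thm:lalnearopt}) with the three constant-coefficient lemmas of Section 8: tangential regularity, alternating local/nonlocal exponent improvements on shrinking cylinders with a uniform gain per cycle, the distributional vanishing of $\partial_{Ae_{n}}u$ once the exponent exceeds $(2s-1)_{+}$, and then parts (2) and (3) of the lemmas (the latter under compatibility) for conclusions (2) and (3).
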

The proof of this theorem follows the bootstrapping argument of the
previous section. We conclude with the following fact, which will
be useful in subsequent sections:
\begin{prop}
\label{prop:Unique}Admissible solutions of $(P_{\epsilon})$ in the
situation described above, provided $a_{s,i}\in\mathcal{L}_{2},$
are unique. \end{prop}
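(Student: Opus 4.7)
The plan is to subtract the weak formulations of two admissible solutions $u_1, u_2$ with the same exterior data and show that $w = u_1 - u_2$ must vanish via an energy argument. Since both $u_i$ are admissible, $w$ lies in $H^1(\Ol) \cap H^s(\Rn)$, is supported in $\overline{\Omega}$, and satisfies
\[
B_L^\epsilon[w,\phi] + \epsilon^{2(1-s)} B_N^\epsilon[w,\phi] = \epsilon \int w \langle \boldsymbol{b}, \nabla \phi \rangle
\]
for every $\phi \in C_c^\infty(\Omega)$. Because $a_{s,i} \in \mathcal{L}_2$, Theorem \ref{thm:ConstCoeffEst} applies to each $u_i$ and thus to $w$, giving $w \in C^{0,\alpha}(\overline{\Or}) \cap C^{1,\alpha}(\overline{\Ol})$ with bounded tangential derivatives of all orders via Lemma \ref{lem:Constcoefftang}. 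In particular, the two traces of $w$ on $\Gamma$ agree.

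First I would approximate $w$ by a sequence $\phi_k \in C_c^\infty(\Omega)$ with $\phi_k \to w$ strongly in $H^1(\Ol) \cap H^s(\Rn)$. Plugging each $\phi_k$ into the displayed equation and using continuity of $B_L^\epsilon$ and $B_N^\epsilon$, I pass to the limit to recover
\[
B_L^\epsilon[w,w] + \epsilon^{2(1-s)} B_N^\epsilon[w,w] = \epsilon \lim_{k \to \infty} \int w \langle \boldsymbol{b}, \nabla \phi_k \rangle.
\]
Uniform ellipticity bounds the left-hand side below by a positive multiple of $\|w\|_{H^1(\Ol)}^2 + \|w\|_{H^s(\Rn)}^2$, so it suffices to show the right-hand side is zero.

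For this I decompose $\boldsymbol{b} = b_n e_n + \boldsymbol{b}'$ with $\boldsymbol{b}' \perp e_n$. The tangential piece $\int w \langle \boldsymbol{b}', \nabla_{x'} \phi_k \rangle$ integrates by parts tangentially to $-\int \phi_k \langle \boldsymbol{b}', \nabla_{x'} w \rangle$, which by the tangential smoothness of $w$ converges to $-\int w \langle \boldsymbol{b}', \nabla_{x'} w \rangle = -\tfrac{1}{2} \int \nabla_{x'} \cdot (\boldsymbol{b}' w^2) = 0$. For the normal piece $b_n \int w \partial_n \phi_k$, split as $\int_{\Ol} + \int_{\Or}$. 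On $\Ol$, direct integration by parts using $w \in H^1(\Ol)$ yields a bulk term plus a trace term $b_n \int_\Gamma w \phi_k$. On $\Or$, the interior estimate $|\partial_n w(x)| \leq C x_n^{\alpha-1}$ coming from Theorem \ref{thm:ConstCoeffEst} guarantees $\partial_n w \in L^1_{\mathrm{loc}}$ near $\Gamma$, so the analogous integration by parts produces a bulk term plus a trace $-b_n \int_\Gamma w \phi_k$ with opposite sign (since $\Gamma$ has opposite orientation as $\partial \Or$). The two trace terms cancel by continuity of $w$ across $\Gamma$, leaving $-\int \phi_k \langle \boldsymbol{b}, \nabla w \rangle$, which converges to $-\int w \langle \boldsymbol{b}, \nabla w \rangle$. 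Repeating the identical splitting on this last quantity, using $\nabla(w^2) = 2 w \nabla w$ on each side and the canceling trace contributions, shows it equals zero. Hence $w \equiv 0$.

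The main obstacle is precisely this last boundary integration: without $w \in H^1(\Or)$ one cannot simply integrate by parts globally, and making sense of the normal component of the drift on $\Or$ depends essentially on the boundary continuity $w \in C^{0,\alpha}(\overline{\Or})$ together with the one-sided interior derivative estimate. The assumption $a_{s,i} \in \mathcal{L}_2$ is what permits invoking Theorem \ref{thm:ConstCoeffEst}, which supplies exactly the regularity needed for the trace terms to exist and to cancel.
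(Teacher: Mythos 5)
Your overall strategy coincides with the paper's: upgrade the regularity of $w=u_1-u_2$ just enough to test the homogeneous equation with $w$ itself, and use the constant, divergence-free drift so that $\int w\langle\boldsymbol{b},\nabla w\rangle=0$, leaving coercivity of $B_L+B_N$ to force $w\equiv0$. Two steps, however, are not justified as written. First, you derive the regularity of $w$ from Theorem \ref{thm:ConstCoeffEst}, but that theorem requires $a_{s,i}\in\mathcal{L}_{2}\cap\mathcal{L}_{1}^{*}$, while the proposition grants only $\mathcal{L}_{2}$; the radial-limit structure $\mathcal{L}_{1}^{*}$ is precisely what you are not given. The paper avoids this entirely: it uses only the H\"older estimate, the tangential regularity of Lemma \ref{lem:Constcoefftang} (which needs only $\mathcal{L}_{k}$), and interior estimates combined with a rescaling at each point to produce the weighted bounds $|\partial_{e}w|\leq C(2-|x'|)^{\alpha-1}(1-|x_n|)^{\alpha-1}$ and $|\partial_{e_n}w|\leq C(2-|x'|)^{\alpha-1}(1-|x_n|)^{\alpha-1}|x_n|^{\alpha-1}$, hence $w\in W^{1,p}(E_2)$ for some $p>1$. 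Your key bound $|\partial_n w|\leq Cx_n^{\alpha-1}$ is indeed what is needed, but it follows from interior estimates plus the H\"older modulus and scaling, not from the boundary regularity theorem, and citing the latter makes your proof apply only under a strictly stronger hypothesis.

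Second, you treat the tangential derivatives of $w$ as globally bounded and pass to the limit $\int\phi_k\langle\boldsymbol{b},\nabla w\rangle\rightarrow\int w\langle\boldsymbol{b},\nabla w\rangle$ without saying in which topology $\phi_k\to w$ makes this legitimate. Lemma \ref{lem:Constcoefftang} gives boundedness only on interior cylinders such as $E_{3/2}$: since the two solutions agree only outside $E_2$ and $w$ is merely H\"older up to $\partial E_2$, all derivatives degenerate like $d(\cdot,\partial E_2)^{\alpha-1}$ there. Consequently your tangential integration by parts and the vanishing of $\int\nabla_{x'}\cdot(\boldsymbol{b}'w^2)$ require the weighted (integrable) bounds near $\partial E_2$, and the passage to the limit requires $\phi_k\to w$ strongly in $W^{1,p}(E_2)$ (or uniformly); convergence in $H^{1}(\Ol)\cap H^{s}(\Rn)$ alone does not control $\nabla\phi_k$ on $\Or$, which is exactly why the paper builds the approximating sequence in $H^{1}(\Ol)\cap H^{s}(\Rn)\cap W^{1,p}(E_2)$. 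Your two-sided integration by parts with cancelling traces on $\Gamma$ is a reasonable substitute for the paper's observation that continuity across $\Gamma$ plus one-sided $W^{1,p}$ bounds give $w\in W^{1,p}(E_2)$, but without the $\partial E_2$ weights and a specified approximation the argument does not close.
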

\begin{proof}
We omit the $\epsilon$ dependence for brevity. Let $u$ and $v$
be solutions; then $w=u-v$ solves the equation with $0$ right-hand
side and $0$ boundary data. In other words, for each $\phi\in C_{c}^{\infty}(E_{2})$
we have that
\[
B_{L}[w,\phi]+B_{N}[w,\phi]=\int w\langle\boldsymbol{b},\nabla\phi\rangle.
\]
We claim that $w\in W^{1,p}(E_{2})$ for some $p>1$. Indeed, we know
that $w$ is smooth away from $\partial E_{2}\cap\Gamma$, and uniformly
H\"{o}lder-$\alpha$ on $\bar{E}_{2}$. Applying the tangential regularity
result, we see that $|\partial_{e}w|\leq C$ on $E_{1}$ for each
$e\perp e_{n},$ and from interior estimates, $|\partial_{e_{n}}w(x)|\leq C$
for $x\in E_{1}\cap\{|x_{n}|\geq\frac{1}{2}\}$. Now for each $x\in\Gamma\cap E_{2},$
rescale to get
\[
w_{0}(y)=w(\frac{y-x}{R})-w(x).
\]
Provided $R>\frac{1}{2-|x'|},$ $w_{0}$ solves $(P_{1/R})$ on $E_{1}$
and is bounded by $R^{-\alpha}$ on $E_{2}.$ Then we obtain that
$|\partial_{e}w_{0}|\leq C$ on $E_{1}$ and that $|\partial_{e_{n}}w_{0}(y)|\leq C$
on $E_{1}\cap\{|y_{n}|\geq\frac{1}{2}\}$. The same can be done at
$x\in E_{2}\backslash\Gamma,$ this time giving $\partial_{e_{n}}w_{0}$
bounded on $E_{1}$ provided $R\geq\frac{1}{|y_{n}|},\frac{1}{1-|y_{n}|}.$
Scaling back gives that $|\partial_{e}w(x)|\leq C(2-|x'|)^{\alpha-1}(1-|x_{n}|)^{\alpha-1},$
while $|\partial_{e_{n}}w(x)|\leq C(2-|x'|)^{\alpha-1}(1-|x_{n}|)^{\alpha-1}|x_{n}|^{\alpha-1}.$
This guarantees that $w\in W^{1,p}(E_{2}\cap\Or)\cap W^{1,p}(E_{2}\cap\Ol)$
for any $p<\frac{1}{1-\alpha},$ and from the continuity of $w$ and
the trace theorem $w\in W^{1,p}(E_{2}).$ We can then find a sequence
$\phi_{l}\rightarrow w$ strongly in $H^{1}(\Ol)\cap H^{s}(\Rn)\cap W^{1,p}(E_{2}).$
Using them as test functions and passing to the limit gives
\[
B_{L}[w,w]+B_{N}[w,w]=\int w\langle\boldsymbol{b},\nabla w\rangle=0.
\]
This implies $w=0,$ so $u=v.$\end{proof}
\begin{rem}
In the cases of $\alpha_{0}<1$ or $\alpha_{0}>1$ and $A,a_{s,i}$
compatible, it is possible to recover a transmission relation as in
the previous section. The procedure is similar, using the next piece
of the homogeneous solution as an improved barrier, but the construction
is more challenging. Either the equation needs to be reduced to situation
$(H)$ with $\nu_{1},\nu_{2}=0$, which is possible in this case,
or the barriers need to account for tangential variation in $u.$
The case of $\alpha_{0}=1$ appears to require a secondary compatibility
condition to admit a transmission relation: namely, that the numerator
in the formula for $\nu_{2}$ is zero.
\end{rem}

\section{Perturbative Theory}

Now equipped with sufficiently powerful constant-coefficient estimates,
we turn to the more general variable coefficient problem. The first
section will discuss how the various quantities and conditions above
behave under diffeomorphism and scaling. The rest will outline a perturbative
framework for proving general regularity results near a boundary point.

\subsection{Diffeomorphism Invariance}

First consider a constant-coefficient equation of the type treated
above, but with $\Gamma$ not flat (we'll always assume $\Gamma$
is at least locally $C^{1,1})$. Up to a rotation that obviously preserves
all of the quantities above, we may assume the plane $\{x_{n}=0\}$
is tangent to $\Gamma$ at $0$. Then the flattening map $Q$ can
be taken so that $\nabla Q(0)=I$; as we will see this condition will
preserve local ``conformal'' properties of $a$ at $0$. Moreover,
because of the localization property, $Q$ may be taken to be a global
diffeomorphism of $\Rn$ with global bounds on its derivatives (this
is easily seen by replacing $a$, $\Gamma$ outside a large ball with
a flat extension and constant coefficients and then interpolating
smoothly).

The transformed matrix
$\bar{A}=(\nabla Q{}^{T}A\nabla Q)\circ Q^{-1}$ will be $C^{k}$
in $x$ provided $\Gamma$ was $C^{k+1}.$ Moreover, $\bar{A}(0)=A$;
in particular, the conormal vector at $0$ is the same. For the transformed
drift, $\boldsymbol{\bar{b}}$ will be $C^{k}$ if $\Gamma$ is $C^{k+1}$
as well, but will lose the divergence-free property. It will still
be true, however, that $\div\bar{b}=\langle F,\boldsymbol{b}\rangle$
for some vector field $F$ which is obtained from second derivatives
of $Q$ (so for instance if $Q\in C^{1,1},$ this is a bounded function).
We will also make use of the fact that $\bar{\boldsymbol{b}}(0)=\boldsymbol{b}(0)$
when discussing optimal regularity in the case $s=\frac{1}{2}.$

Next, take the transformed form $\bar{a}(x,y)=a(Q^{-1}x,Q^{-1}y)\left(\frac{|x-y|}{|Q^{-1}x-Q^{-1}y|}\right)^{n+2s}$.
The translation regularity of $\bar{a}$ is similar to that of $A$,
in the following sense: if $a$ is in $\mathcal{L}_{k}$ and $\Gamma\in C^{k+1},$
then $\bar{a}$ will be $C^{k-1,1}$ under the symmetric action $z\mapsto\bar{a}(x+z,y+z)$
and at each point will be in $\mathcal{L}_{k}$.

More concretely, for any $a$ we say that $a$ is \emph{decomposable
}if $a=a^{1}1_{\{\Or\times\Or\}}+a^{2}1_{\Or\times\Ol}+a^{2}1_{\Ol\times\Or}$,
with $a^{1},a^{2}$continuous on $\Rn\times\Rn\backslash\{x=y\}$
and satisfy $0\leq a^{i}\leq\Lambda$, $a^{i}\geq\lambda$.
(For the purposes of the equation there are many equivalent decompositions,
but we ask that one is fixed; this will give automatic extensions
of some of the nonlocal operators in question). Then associate the
following to $a$:
\[
\begin{cases}
a_{s,1}(x,z)=\frac{a^{1}(x,x+z)+a^{1}(x,x-z)}{2}\\
a_{a,1}(x,z)=\frac{a^{1}(x,x+z)-a^{1}(x,x-z)}{2}\\
a_{s,2}(x,z)=\frac{a^{2}(x,x+z)+a^{2}(x,x-z)}{2}\\
a_{a,2}(x,z)=\frac{a^{2}(x,x+z)-a^{2}(x,x-z)}{2}
\end{cases}.
\]
Thus $a_{s,i}(x,z)$ is symmetric in $z$, and will play the role
of the coefficients frozen at $x$, while $a_{a,i}(x,z)$ is an antisymmetric
remainder, which will exhibit cancellation properties making it low-order.
It can be checked that $a_{a,i}\equiv0$ is equivalent to translation
invariance of $a^{1}$.

We say $a\in\mathcal{L}_{k}$ if $a_{a,1}(x,\cdot),a_{s,i}(x,\cdot)\in\mathcal{L}_{k}$
for each $x$. On the other hand, we say $a\in C_{t}^{k,\alpha}$
if 
\[
\sup_{z}|D_{x}^{\beta}a^{i}(x,x+z)-D_{y}^{\beta}a^{i}(y,y+z)|\leq C|x-y|^{\alpha}
\]
for each multi-index $|\beta|\leq k$. The appropriate regularity
notion for $a_{a,1}$ is the following vanishing condition: $a_{a,1}\in\mathcal{A}_{\alpha}$
if
\[
\sup_{x}|a_{a,1}(x,z)|\leq C|z|^{\alpha}.
\]

\begin{prop}
Let $a$, $\bar{a}$, and $Q$ be as above, and assume $a$ is decomposable.
Then:
\begin{enumerate}
\item $\bar{a}$ is also decomposable.
\item If $a\in\mathcal{L}_{k}$ and $Q$ is uniformly $C^{k},$ then $\bar{a}\in\mathcal{L}_{k}.$
\item If $a\in C_{t}^{k,\alpha},$ then so is $\bar{a}$ provided $a\in\mathcal{L}_{k+1}$
and $Q$ is $C^{k+1,\alpha}$.
\item If $a_{s,i}(x,\cdot)\in\mathcal{L}_{1}^{*},$ then so is $\bar{a}_{s,i}(Qx,\cdot)$
provided $a\in\mbox{\ensuremath{\mathcal{L}}}_{1}$ and $Q\in C^{1,1}$. 
\item Moreover, if $a$ is as in $(4)$, $\bar{a}_{s,i}^{(0)}(0,\cdot)=a_{s,i}^{(0)}(0,\cdot)$.
If $a_{s,i}^{(0)}(x,z)$ is $C^{0,\alpha}$ in both parameters on
$B_{1}\times S^{n-1},$ then for $\delta<\delta_{0},$ 
\[
\sup_{|x|<\delta,z\in S^{n-1}}|\bar{a}_{s,i}^{(0)}(x,z)-a_{s,i}^{(0)}(0,z)|\leq C\delta^{\alpha}
\]

\item If $a_{a,i}\in\mathcal{A}_{\alpha},$ then so is $\bar{a}_{a,i}$
provided $Q\in C^{1,\alpha}$ and $a\in\mathcal{L}_{1}.$
\end{enumerate}
\end{prop}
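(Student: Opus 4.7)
The plan is to prove all six items by direct computation, treating the transformation as the composition of a change of variables in the arguments of $a^i$ with multiplication by the Jacobian-type factor
\[
J(x,z) := \left(\frac{|z|}{|Q^{-1}(x+z)-Q^{-1}(x)|}\right)^{n+2s}.
\]
The central technical input is the Taylor expansion $Q^{-1}(x\pm z) - Q^{-1}(x) = \pm\nabla Q^{-1}(x)z + O(|z|^{1+\gamma})$, where $\gamma\in(0,1]$ depends on the regularity of $Q$. From this, $J(x,z)\to|\nabla Q^{-1}(x)\hat z|^{-n-2s}$ as $z\to 0$, and $J$ is bounded above and below, $C^k$ in $(x,z)$ away from $z=0$, and satisfies the symbol-type bounds $|D_z^\beta J|\leq C$ (rather than $C|z|^{-|\beta|}$), so $J$ behaves like a bounded function under derivatives. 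Item (1) is immediate from the fact that $Q$ is a bijection respecting $\Or,\Ol$.

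For (2) and (3), write $\bar a^i(x,x+z)=a^i(Q^{-1}x,Q^{-1}(x+z))\,J(x,z)$ and differentiate in $z$ or $x$ via the chain rule and Leibniz. Each $z$-derivative of $a^i(Q^{-1}x,Q^{-1}(x+z))$ falls on the second argument (producing $\nabla Q^{-1}(x+z)$ and lower-order derivatives of $Q^{-1}$, all bounded) and by the $\mathcal L_k$ hypothesis costs a factor $C/|z|$; combined with the symbol bounds on $J$ this yields the $\mathcal L_k$ scaling for $\bar a$, proving (2). For (3), each $x$-derivative of $a^i(Q^{-1}x,Q^{-1}(x+z))$ produces one extra derivative of $Q^{-1}$ at both $x$ and $x+z$ (hence the requirement $Q\in C^{k+1,\alpha}$), and produces one $y$-derivative of $a^i$ which costs $C/|z|$; but one needs to check that no $|z|^{-|\beta|-1}$ scaling enters, which works because $x$-derivatives of $a^i(x,x+z)$ and pure $y$-derivatives at $(x,y)=(x,x+z)$ both get mixed, and the $C_t^{k,\alpha}$ hypothesis is invariant under this combination provided one has the extra derivative of $Q$.

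For (4), write
\[
\bar a_{s,i}(\bar x, r\hat z) = a_{s,i}\bigl(Q^{-1}\bar x,\, Q^{-1}(\bar x+r\hat z)-Q^{-1}\bar x\bigr)\,J(\bar x,r\hat z) + \tfrac{1}{2}\bigl[\cdots\bigr]_{a,i},
\]
where the bracketed antisymmetric remainder is controlled by item (6). Substituting $Q^{-1}(\bar x+r\hat z)-Q^{-1}\bar x = r\nabla Q^{-1}(\bar x)\hat z + O(r^2)$ and using that $a_{s,i}(y,\cdot)$ has radial limit $a_{s,i}^{(0)}(\cdot)$ with modulus $\omega$ shows $\bar a_{s,i}(\bar x,r\hat z)$ has a radial limit $\bar a_{s,i}^{(0)}(\bar x,\hat z)=a_{s,i}^{(0)}(Q^{-1}\bar x,\widehat{\nabla Q^{-1}(\bar x)\hat z})\cdot|\nabla Q^{-1}(\bar x)\hat z|^{-n-2s}$, with modulus $\bar\omega(r)\leq\omega(Cr)+Cr$, which satisfies the integrability needed since the correction $Cr$ is $C^{0,1}$ in $r$. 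At $\bar x=0$, $\nabla Q^{-1}(0)=I$ gives $\bar a_{s,i}^{(0)}(0,\cdot)=a_{s,i}^{(0)}(0,\cdot)$, and the pointwise continuity estimate in (5) then follows from combining $C^{0,\alpha}$ continuity of $a_{s,i}^{(0)}$ in its spatial variable with the $C^{1,\alpha}$ dependence of $\nabla Q^{-1}(x)\hat z$ on $x$.

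The main obstacle is (6), where cancellation must be extracted simultaneously from $a^i$ and from $J$. Splitting
\[
\bar a_{a,i}(x,z) = \tfrac{1}{2}\bigl[a^i(Q^{-1}x,Q^{-1}(x+z))-a^i(Q^{-1}x,Q^{-1}(x-z))\bigr]J(x,z) + \tfrac{1}{2}a^i(Q^{-1}x,Q^{-1}(x-z))\bigl[J(x,z)-J(x,-z)\bigr],
\]
the first summand is, to leading order, $a_{a,i}(Q^{-1}x,\nabla Q^{-1}(x)z)\cdot J(x,z)=O(|z|^\alpha)$ by the $\mathcal A_\alpha$ hypothesis; the error from replacing $Q^{-1}(x\pm z)-Q^{-1}x$ by $\pm\nabla Q^{-1}(x)z$ costs $|z|^{1+\alpha}$ but is controlled by the $\mathcal L_1$ bound $|\partial_y a^i|\leq C/|z|$, giving again $O(|z|^\alpha)$. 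The second summand is $O(|z|^\alpha)$ because Taylor expanding $|Q^{-1}(x\pm z)-Q^{-1}x|$ to order $|z|^{1+\alpha}$ and using $Q\in C^{1,\alpha}$ yields $J(x,z)-J(x,-z)=O(|z|^\alpha)$. The delicacy lies in the fact that $\mathcal L_1$ regularity of $a$ and $C^{1,\alpha}$ regularity of $Q$ are each individually not enough, and one must combine them precisely to close the estimate at the right order.
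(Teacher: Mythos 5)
Your overall route is the same as the paper's: direct computation, writing $\bar a^i(x,x+z)=a^i(Q^{-1}x,Q^{-1}(x+z))\,J(x,z)$ and Taylor-expanding $Q^{-1}$; your treatments of (1), (4), (5), (6) are essentially the paper's arguments (your splitting in (6) into a difference of $a$-values times $J$ plus $a$ times $J(x,z)-J(x,-z)$ is just a reorganization of the same cancellations, and your formula for $\bar a_{s,i}^{(0)}$ with the exponent $n+2s$ on $|\nabla Q^{-1}(x)\hat z|$ is correct). But your ``central technical input'' contains a genuine error: the claim $|D_z^{\beta}J|\leq C$ is false in general. To leading order $J(x,z)\to |\nabla Q^{-1}(x)\hat z|^{-(n+2s)}$, which is a nonconstant degree-zero homogeneous function of $z$ whenever $\nabla Q^{-1}(x)$ is not conformal, so its $z$-derivatives necessarily blow up like $|z|^{-|\beta|}$ near $z=0$; the correct bound is exactly the one you reject, $|D_z^{\beta}J|\leq C|z|^{-|\beta|}$. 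Fortunately this does not sink item (2): with the weaker (true) bound, the Leibniz computation still yields $|D_z^{\beta}\bar a^i|\leq C|z|^{-|\beta|}$, which is all $\mathcal{L}_k$ requires, and this is precisely how the paper argues. You should replace the false claim by that estimate (it follows from writing $|Q^{-1}(x+z)-Q^{-1}(x)|\geq c|z|$ and differentiating the quotient, as in the paper).

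The second issue is item (3), which in your proposal is asserted rather than proved. The statement ``one needs to check that no $|z|^{-|\beta|-1}$ scaling enters, which works because \dots the $C_t^{k,\alpha}$ hypothesis is invariant under this combination'' is exactly the point that requires work. Already for $k=0$ one must bound
\[
\bigl|a^i\bigl(Q^{-1}x,Q^{-1}(x+z)\bigr)-a^i\bigl(Q^{-1}y,Q^{-1}(y+z)\bigr)\bigr|\leq C|x-y|^{\alpha}
\]
uniformly in $z$, and the danger is that the $\mathcal{L}_1$ bound $|D_ya^i|\leq C/|z|$ only pays a factor $|z|^{-1}$, which must be matched by a gain of order $\min\{|z|,|x-y|\}$ coming from the cancellation in $Q^{-1}y-Q^{-1}x+Q^{-1}(x+z)-Q^{-1}(y+z)$. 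The paper extracts this by splitting into the regimes $|z|\leq|x-y|$ (where one compares $\nabla Q^{-1}(x)z$ with $\nabla Q^{-1}(y)z$, using $Q^{-1}\in C^{1,\alpha}$ to gain $|z|\,|x-y|^{\alpha}$ plus $|z|^{1+\alpha}\leq |z|\,|x-y|^\alpha$) and $|z|>|x-y|$ (where one compares $\nabla Q^{-1}(x)(y-x)$ with $\nabla Q^{-1}(x+z)(y-x)$, gaining $|x-y|\,|z|^{\alpha}$), and only then does the $|z|^{-1}$ cost cancel. One also needs the Hölder-in-$x$ continuity of $J$ uniformly in $z$, which is a separate (easy) check. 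Without some version of this case analysis your item (3) does not close; the rest of the proposal is sound once the bound on $J$ is corrected.
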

\begin{proof}
For $(1)$, this is obvious from the expression for $\bar{a}$ and
the fact that $Q$ maps $\Gamma\rightarrow\Gamma$.

For $(2)$ it suffices to check that $|D_{z}^{\beta}\bar{a}^{i}(x,x+z)|\leq C|z|^{-k}$
for $|\beta|=k$. Computing first $\left|D_{z}^{\beta}[a^{i}(Q^{-1}x,Q^{-1}(x+z))]\right|$
gives
\begin{align*}
\left|D_{z}^{\beta}[a^{i}(Q^{-1}x,Q^{-1}(x+z))]\right| & \leq C\sum_{l=1}^{k}\left|D^{l}a^{i}(Q^{-1}x,Q^{-1}(x+z))\right|\left|D^{k-l+1}Q^{-1}(x+z)\right|^{l}\\
 & \leq C\sum_{l=1}^{k}\frac{1}{|Q^{-1}x-Q^{-1}(x+z)|^{l}}\leq C|z|^{-k}.
\end{align*}
The other factor depends only on $Q:$
\begin{align*}
\left|D_{z}^{\beta}\left(\frac{|z|}{|Q^{-1}x-Q^{-1}(x+z)|}\right)^{n+2s}\right| & \leq C\sum_{l=0}^{k}\left|D^{l}|z|^{n+2s}\right|\left|D^{k-l}|Q^{-1}x-Q^{-1}(x+z)|^{-n-2s}\right|\\
 & \leq C\sum_{l=0}^{k}|z|^{n+2s-l}\frac{C(|D^{k-l}Q^{-1}|)}{|z|^{n+2s+k-l}}\\
 & \leq C|z|^{-k}.
\end{align*}
Then the conclusion follows from Leibniz rule.

For $(3)$, we only check the case $k=0$; for higher $k$ the computation
is similar but somewhat more tedious, and will not be needed below.
First,
\begin{align*}
&\left|a(Q^{-1}x,Q^{-1}(x+z))-a(Q^{-1}y,Q^{-1}(y+z))\right|  \leq C|Q^{-1}x-Q^{-1}y|^{\alpha}\\
&\qquad+\left|a(Q^{-1}y,Q^{-1}y-Q^{-1}x+Q^{-1}(x+z))-a(Q^{-1}y,Q^{-1}(y+z))\right|\\
 & \leq C|x-y|^{\alpha}+C|z|^{-1}\left|Q^{-1}y-Q^{-1}x+Q^{-1}(x+z)-Q^{-1}(y+z)\right|=S.
\end{align*}
If $|z|\leq|x-y|$, then
\begin{align*}
S\leq & C|x-y|^{\alpha}+C|z|^{-1}\left(|\nabla Q^{-1}(x)z-\nabla Q^{-1}(y)z|+C|z|^{1+\alpha}\right)\\
 & \leq C|x-y|^{\alpha}+C\left(|x-y|^{\alpha}+|z|^{\alpha}\right)\\
 & \leq C|x-y|^{\alpha},
\end{align*}
while if not,
\begin{align*}
S\leq & C|x-y|^{\alpha}+C|z|^{-1}\left(\left|\nabla Q^{-1}(x)(y-x)-\nabla Q^{-1}(x+z)(y-x)\right|+C|x-y|^{1+\alpha}\right)\\
 & \leq C|x-y|^{\alpha}+C|z|^{-1}\left(|x-y||z|^{\alpha}+C|x-y|^{1+\alpha}\right)\\
 & \leq C|x-y|^{\alpha}.
\end{align*}
The other factor can easily be seen to be H\"{o}lder-$\alpha$, and the
conclusion then follows from the algebra property of H\"{o}lder spaces.

For $(4,5)$, we use the notation $y=Q^{-1}x$:
\begin{align*}
\bar{a}_{s,1}(x,z) & =\frac{\bar{a}^{1}(x,x+z)+\bar{a}^{1}(x,x-z)}{2}\\
 & =\frac{1}{2}\left[\frac{a^{1}(Q^{-1}x,Q^{-1}(x+z))|z|^{n+2s}}{|Q^{-1}(x+z)|^{n+2s}}+\frac{a^{1}(Q^{-1}x,Q^{-1}(x-z))|z|^{n+2s}}{|Q^{-1}(x-z)|^{n+2s}}\right]\\
 & =\frac{1}{2|\nabla Q^{-1}(y)\hat{z}|}\Big[a^{1}\left(y,y+\nabla Q^{-1}(y)z+O(|z|^{2})\right)\\
&\qquad+a^{1}\left(y,y-\nabla Q^{-1}(y)z+O(|z|^{2})\right)\Big]+O(|z|)\\
 & =\frac{1}{|\nabla Q^{-1}(y)\hat{z}|}a_{s,1}(y,\nabla Q^{-1}(y)z)+O(|z|^{2})/|z|+O(|z|)\\
 & =\frac{1}{|\nabla Q^{-1}(y)\hat{z}|}a_{s,1}^{(0)}(y,\nabla Q^{-1}(y)z)+O(|z|+\omega(|z|)),
\end{align*}
which immediately shows that $\bar{a}_{s,1}\in\mathcal{L}_{1}^{*}$.
$(5)$ now follows from the fact that $\nabla Q^{-1}(0)=I$ and the
formula above. For $i=2$ the computation is the same.

Finally, $(6)$ follows from a similar argument. 
\begin{align*}
\bar{a}_{a,1}(x,z) & =\frac{\bar{a}^{1}(x,x+z)-\bar{a}^{1}(x,x-z)}{2}\\
 & =\frac{1}{2}\left[\frac{a^{1}(y,Q^{-1}(x+z))|z|^{n+2s}}{|y-Q^{-1}(x+z)|^{n+2s}}-\frac{a^{1}(y,Q^{-1}(x-z))|z|^{n+2s}}{|y-Q^{-1}(x-z)|^{n+2s}}\right]\\
 & =\frac{1}{2}\left[\frac{a^{1}\left(y,y+\nabla Q^{-1}(y)z+O(|z|^{1+\alpha})\right)}{|\nabla Q^{-1}(y)|^{n+2s}}-\frac{a^{1}\left(y,y-\nabla Q^{-1}(y)z+O(|z|^{1+\alpha})\right)}{|\nabla Q^{-1}(y)|^{n+2s}}\right]\\
&\qquad+O(|z|^{\alpha})\\
 & =\frac{1}{2|\nabla Q^{-1}(y)|^{n+2s}}\left[a^{1}(y,y+\nabla Q^{-1}(y)z)-a^{1}(y,y-\nabla Q^{-1}(y)z)\right]+O(|z|^{\alpha})\\
 & =\frac{1}{|\nabla Q^{-1}(y)|^{n+2s}}a_{a,1}(y,\nabla Q^{-1}(y)z)+O(|z|^{\alpha})\\
 & =O(|z|^{\alpha}).
\end{align*}

\end{proof}
A consequence of this proposition is that if we start with a translation
invariant kernel in $\mathcal{L}_{2}\cap\mathcal{L}_{1}^{*}$ and
$\Gamma\in C^{1,1},$ we obtain a kernel on a flat interface which
is no longer translation invariant, but is in $C_{t}^{0,1}\cap\mathcal{L}_{2}\cap\mathcal{A}_{1}$,
has symmetric part in $\mathcal{L}_{1}^{*}$ at the origin, and preserves
the limiting homogeneous structure there (and as a consequence properties
such as the values of $A_{s,i},M_{s,i}$, $\alpha_{0}$, and compatibility.)
If, furthermore, $a_{s,i}^{(0)}(z)$ is smooth enough, then $\bar{a}_{s,i}(x,z)$
will be close to $a_{s,i}^{(0)}$ for $x$ small.

\subsection{Scaling Properties}

Now we discuss how the flattened problem behaves under dilations centered
at the origin. Consider the problem $(P_{\epsilon})$ satisfied by
$u_{\epsilon}(x)=u(\epsilon x)$. The transformed matrix $A^{\epsilon}(x)=A(\epsilon x)$
satisfies $A^{\epsilon}(0)=A(0)$, and also if $A$ had the modulus
of continuity $\omega_{2}$, then
\[
\osc_{E_{1}}A^{\epsilon}\leq\omega_{2}(\epsilon).
\]
The same statements hold for the vector field $\boldsymbol{b}^{\epsilon}$
and right-hand side $f^{\epsilon}$, and $\div\boldsymbol{b}^{\epsilon}=\epsilon\div\boldsymbol{b}$.
For the nonlocal kernel $a^{\epsilon}$, we have that if $a\in\mathcal{L}_{k},$
then so is $a^{\epsilon}$ with the same constant, for
\[
|D_{z}^{\beta}a^{\epsilon,i}(x,x+z)|\leq\epsilon^{|\beta|}|D_{z}^{\beta}a^{i}(\epsilon x,\epsilon(x+z))|\leq C|z|^{-|\beta|}.
\]
If $a\in C_{t}^{k,\alpha},$ then we have that 
\[
\sup_{z}|D_{x}^{\beta}a^{\epsilon,i}(x,x+z)-D_{y}^{\beta}a^{\epsilon,i}(y,y+z)|\leq C\epsilon^{|\beta|+\alpha}|x-y|^{\alpha},
\]
while if $a_{a,i}\in\mathcal{A}_{\alpha}$, then
\[
|a_{a,i}^{\epsilon}(x,z)|=|a_{a,i}(\epsilon x,\epsilon z)|\leq C\epsilon^{\alpha}|z|^{\alpha}.
\]
If $a_{s,i}(0,\cdot)\in\mathcal{L}_{1}^{*}$, then $a_{s,i}^{\epsilon}(0,z)$
satisfies
\[
a_{s,i}^{\epsilon}(0,z)=a_{s,i}(0,\epsilon z)=a_{s,i}^{(0)}(0,z)+\omega(\epsilon|z|),
\]
so $a_{s,i}^{\epsilon}(0,\cdot)\in\mathcal{L}_{1}^{*}$ and has the
same homogeneous part as $a_{s,i}.$

\subsection{An Approximation Lemma}

The main ingredient in the perturbative theorem is the following lemma
about approximation by translation-invariant equations. We will give
two frameworks for the perturbative theory. The first will use the
method of Campanato, and is relatively straightforward. However, it
seems to lack the flexibility to improve regularity to the near-optimal
level in certain $(\alpha_{0},s)$ ranges. The second is a classical
improvement of flatness argument, incorporating a substantially more
sophisticated approximation lemma. While the setup is more complicated,
the conclusions are stronger.

Below, $x_{0}$ may lie outside of $\Gamma$. It ts helpful to introduce
the following: let $\alpha_{0}(x)$ be the optimal regularity for
the equation with coefficients frozen at $x$ (i.e. the exponent in
Theorem \ref{thm:ConstCoeffEst}). Then let
\[
\alpha_{0}(\Omega)=\inf_{x\in\Omega}\alpha_{0}(x).
\]

Also, in this section we will find it useful to introduce the following
\emph{generalized problem. }Let $f_{1}$ be an $L^{2}$ vector field
on $\Ol$ and $h(x,y)$ a (not necessarily symmetric) function satisfying
\[
\int_{\Rn\times\Rn}\frac{|h(x,y)|^{2}dxdy}{|x-y|^{n+2s}}<\infty.
\]
Then $u$ solves the generalized problem $(P)$ on $\Omega$ if for
each $\phi\in C_{c}^{\infty}(\Rn)$: 
\begin{align*}
B_{L}[u,\phi]&+B_{N}[u,\phi]=\int u\langle\boldsymbol{b},\nabla\phi\rangle+f\phi\\
&+\int_{\Ol}\langle f_{1},\nabla\phi\rangle+\int_{\Rn\times\Rn}\frac{h(x,y)[\phi(x)-\phi(y)]}{|x-y|^{n+2s}}dxdy.
\end{align*}
We remark that we will never solve the generalized problem; it is simply
a way of keeping track of some extra terms on the right-hand side
that come out of the bootstrap argument. The following proposition
is a basic fact about fractional Sobolev spaces; we include the elementary
proof for completeness.
\begin{prop}
\label{prop:1D<ND}Let $u\in H^{s}(\Rn)$. Then there is a constant
$C=C(n,s)$ such that
\[
\int\|u(x',\cdot)\|_{H^{s}(\RR)}^{2}dx'\leq C\|u\|_{H^{s}(\Rn)}^{2}.
\]
\end{prop}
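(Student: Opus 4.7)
The plan is to prove the proposition via Fourier analysis, exploiting the characterization of $H^s$ in terms of Fourier multipliers. Writing $\|u\|_{H^s(\Rn)}^2$ as being equivalent (up to constants depending on $n,s$) to $\int (1+|\xi|^2)^s |\hat{u}(\xi)|^2 d\xi$, and likewise for the one-dimensional $H^s(\RR)$ norm, reduces the question to a pointwise inequality between Fourier weights.

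More concretely, I would introduce the partial Fourier transform $\tilde{u}(x',\xi_n)=\int u(x',x_n)e^{-i\xi_n x_n}\,dx_n$, which makes sense for almost every $x'$ by Fubini since $u\in L^2(\Rn)$. For such $x'$, the one-dimensional Plancherel theorem gives
\[
\|u(x',\cdot)\|_{H^s(\RR)}^2 = c_1 \int_{\RR} (1+|\xi_n|^2)^s |\tilde{u}(x',\xi_n)|^2 \, d\xi_n.
\]
Integrating in $x'$, swapping the order of integration, and applying Plancherel in the $x'$ variable, one obtains
\[
\int \|u(x',\cdot)\|_{H^s(\RR)}^2 \, dx' = c_2 \int_{\Rn} (1+|\xi_n|^2)^s |\hat{u}(\xi',\xi_n)|^2 \, d\xi' \, d\xi_n,
\]
since the $(n-1)$-dimensional Fourier transform of $\tilde{u}(\cdot,\xi_n)$ in $x'$ coincides (up to normalization) with $\hat{u}(\xi',\xi_n)$. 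From the trivial estimate $(1+|\xi_n|^2)^s \leq (1+|\xi|^2)^s$ the conclusion follows at once with $C=C(n,s)$.

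The proof is essentially routine; the only ``obstacle'' worth mentioning is bookkeeping the normalization constants so that the Gagliardo form of $H^s$ used elsewhere in the paper matches the Fourier characterization. Since the equivalence constants depend only on $n$ and $s$, they can be absorbed into $C$. An alternative, more self-contained approach avoiding Fourier would be to use the identity
\[
\int_{\RR^{n-1}} \frac{dy'}{(|x'-y'|^2+r^2)^{(n+2s)/2}} = \frac{c_{n,s}}{r^{1+2s}},
\]
rewrite the one-dimensional Gagliardo seminorm via this integral representation, and then apply the triangle inequality $|u(x',t)-u(x',s)|^2 \leq 2|u(x',t)-u(y',s)|^2+2|u(y',s)-u(x',s)|^2$ together with Fubini to bound everything by the $n$-dimensional Gagliardo seminorm. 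I would fall back on this if one wished to avoid Fourier analysis entirely, but the Fourier argument is both shorter and more transparent.
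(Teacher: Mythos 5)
Your proof is correct and follows essentially the same route as the paper: both use the Fourier characterization of $H^{s}$ together with Plancherel (in $x_{n}$ and then in $x'$) and the pointwise inequality $(1+|\xi_{n}|^{2})^{s}\leq(1+|\xi|^{2})^{s}$. Your write-up is in fact slightly more careful about the partial Fourier transform and normalization constants than the paper's own two-line argument.
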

\begin{proof}
Observe it suffices by density to prove this for smooth compactly
supported functions. We use the Fourier transform formulation of fractional
Sobolev spaces and the Plancharel theorem:
\begin{align*}
\int\|u(x',\cdot)\|_{H^{s}(\RR)}^{2}dx' & =C\int_{\Rn}(1+|\xi_{n}|^{2})^{s/2}\hat{u}(\xi)d\xi\\
 & \leq C\int_{\Rn}(1+|\xi|^{2})^{s/2}\hat{u}(\xi)d\xi\\
 & =C\|u\|_{H^{s}(\Rn)}^{2}.
\end{align*}
Here $\hat{u}$ denoted the Fourier transform of $u$, and the constant
comes only from the normalization used in the definition of $H^{s}$.
\end{proof}
The following lemma is the first half of the approximation devised
in this section. We will use the notation
\begin{equation}\label{eq:dist}
 \mathcal{D}(x,y,U,\alpha)=\left(d(x,U^{c})^{\alpha-1}+d(y,U^{c})^{\alpha-1}\right),
\end{equation}
where $U$ is an open set and $\alpha$ a real number, in some weighted estimates below.
\begin{lem}
\label{lem:AppLemma1}Let $u$ be an admissible solution of the generalized
problem for $(P_{\epsilon})$ (with data $A,a,\boldsymbol{b},f,f_{1},h$)
on $B_{3}(x_{0}),$ with $\|u\|_{C^{0,\alpha'}(\Rn)}\leq1$. Assume
$a\in\mathcal{L}_{2}\cap\mathcal{L}_{1}^{*}$ with $\alpha_{0}(\Omega)>(2s-1)_{+}$.
Let $u_{0}$ be the unique solution to the following Dirichlet problem:
\begin{equation}
\begin{cases}
\begin{aligned}&\forall\phi\in C_{c}^{\infty}(B_{2}(x_{0})),\qquad 0  =\int_{\Ol}\langle A^{\epsilon}(x_{0})\nabla u_{0},\nabla\phi\rangle+\\
 & +\epsilon^{2(1-s)}\int_{\Or}\int_{\Ol}\frac{[u_{0}(x)-u_{0}(y)]a_{s,2}^{\epsilon}(x_{0},x-y)\left[\phi(x)-\phi(y)\right]}{|x-y|^{n+2s}}dydx\\
 & +\epsilon^{2(1-s)}\int_{\Or}\int_{\Or}\frac{a_{s,1}^{\epsilon}(x_{0},x-y)[u_{0}(x)-u_{0}(y)][\phi(x)-\phi(y)]}{|x-y|^{n+2s}}dydx\\
 & -\int\epsilon u_{0}\langle\boldsymbol{b}(x_{0}),\nabla\phi\rangle+\epsilon^{2}\left[1_{\Ol}\fint_{\Ol\cap B_{2}(x_{0})}f+1_{\Or}\fint_{\Or\cap B_{2}(x_{0})}f\right]\phi
\end{aligned}\\
 & \\
\forall x\in B_{2}^{c}(x_{0}),\qquad u_{0}(x)=u(x) & 
\end{cases}\label{eq:ConstCoefProb}
\end{equation}
Assume the following hold for some parameters $\beta\geq0$,$\eta>0$,
$0<\alpha<\alpha_{0}(x_{0})-(2s-1)_{+}$small enough, and $x\in B_{2}(x_{0})$:
\begin{enumerate}
\item $|A^{\epsilon}(x)-A^{\epsilon}(x_{0})|\leq\eta|x-x_{0}|^{\beta}$
\item $\epsilon^{2s-1}|\boldsymbol{b}^{\epsilon}(x)-\boldsymbol{b}^{\epsilon}(x_{0})|\leq\eta|x-x_{0}|^{(\beta+1-2s)_{+}}$
\item $\epsilon^{2s-1}\sup_{B_{2}(x_{0})}|\div\boldsymbol{b}^{\epsilon}|\leq\eta$
\item $\epsilon^{2s}[\osc_{B_{r}(x_{0})\cap\Ol}f^{\epsilon}+\osc_{B_{r}(x_{0})\cap\Or}f^{\epsilon}]\leq\eta r^{(\beta-2s)_{+}}$
for $r\leq2$
\item $\sup_{z\in\Rn}|a^{\epsilon,i}(x,x+z)-a^{\epsilon,i}(y,y+z)|\leq\eta|x-y|^{\beta}$
for $x,y\in B_{3}(x_{0})$
\item $\sup_{x,z\in\Rn}|a_{a,i}^{\epsilon}(x,z)|\leq\eta|z|^{\beta}$
\end{enumerate}
Then $v=u-u_{0}$ is an (admissible) $C^{0,\alpha'}$ solution to
the generalized problem for $(P_{\epsilon})$ on $B_{2}(x_{0})$ with
data $A,a,\boldsymbol{b},\tilde{f},\tilde{f_{1}},\tilde{h}$ which
satisfy the following:
\begin{enumerate}
\item $\epsilon^{2s}|\tilde{f}^{\epsilon}(x)|\leq C\eta|x-x_{0}|^{(\beta-2s)_{+}}d(x,B_{2}^{c}(x_{0}))^{\alpha-1}$
\item $\epsilon|\tilde{f_{1}}^{\epsilon}(x)|\leq\epsilon|f_{1}^{\epsilon}(x)|+C\eta|x-x_{0}|^{\beta}d(x,B_{2}^{c}(x_{0}))^{\alpha-1}$
\item  If $|x-y|<\frac{1}{2}|x_{n}|$,
\begin{align*}  |&\tilde{h}^{\epsilon}(x,y)|+|\tilde{h}^{\epsilon}(y,x)|\leq\left[|h^{\epsilon}(x,y)|+|h^{\epsilon}(y,x)|\right]\\
 & +C\eta(|x-y|\wedge1)|x_{n}|^{\alpha+(2s-1)_{+}-1}\left(|x-x_{0}|^{\beta}+|y-x_{0}|^{\beta}\right)\mathcal{D}(x,y,B_{2}(x_{0}),\alpha)
\end{align*}
\item $|\tilde{h}^{\epsilon}(x,y)|\leq|h^{\epsilon}(x,y)|+C\eta(|x-y|\wedge1)^{\alpha+(2s-1)_{+}}(|x-x_{0}|^{\beta}+|x-y_{0}|^{\beta})\mathcal{D}(x,y,B_{2}(x_{0}),\alpha)$
\item $\epsilon^{2s}\|\tilde{f}\|_{L^{2}(B_{2}(x_{0}))}+\epsilon\|\tilde{f_{1}}-f_{1}\|_{L^{2}(B_{2}(x_{0}))}+\left(\int_{\Rn\times\Rn}\frac{|\tilde{h}(x,y)-h(x,y)|^{2}}{|x-y|^{n+2s}}\right)^{1/2}\leq C\eta.$
\end{enumerate}
\end{lem}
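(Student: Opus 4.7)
The plan is to subtract the equations satisfied by $u$ and $u_0$, tested against an arbitrary $\phi \in C_c^\infty(B_2(x_0))$, and recognize the difference as the generalized equation for $v = u - u_0$ with source terms produced by the mismatch between the true coefficients and their frozen versions. For the local form, one writes
\[
B_L^\epsilon[u,\phi] - \int_{\Ol}\langle A^\epsilon(x_0)\nabla u_0,\nabla\phi\rangle = B_L^\epsilon[v,\phi] + \int_{\Ol}\langle(A^\epsilon - A^\epsilon(x_0))\nabla u_0,\nabla\phi\rangle,
\]
so the second term feeds into $\tilde f_1$. An analogous split handles the drift term (with hypothesis (3) and one integration by parts moving a divergence remainder into $\tilde f$), while the frozen right-hand side contributes $f - \fint f$ (on each side of $\Gamma$) directly to $\tilde f$.

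The pointwise estimates (1) and (2) for $\tilde f$ and $\tilde f_1$ will then follow by combining coefficient hypotheses (1)--(4) with regularity of $u_0$. Since $u_0$ solves the translation-invariant frozen problem on $B_2(x_0)$ with $C^{0,\alpha'}$ Dirichlet data, Theorem \ref{thm:ConstCoeffEst} applied on interior balls of radius $d(x,\partial B_2(x_0))/2$ and rescaled back yields $|\nabla u_0(x)| \le C\,d(x, B_2^c(x_0))^{\alpha-1}$ on $\Ol$ (taking $\alpha$ just below $\alpha_0(x_0) + 1 - 2s$), together with the corresponding weighted H\"older bounds on $\Or$; these feed directly into the claimed estimates.

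The nonlocal term is the main obstacle. Using the decomposition
\[
a(x,y) - a_{s,i}(x_0, x-y) = \bigl[a_{s,i}(x, x-y) - a_{s,i}(x_0, x-y)\bigr] + a_{a,i}(x, y-x)
\]
(with $i=1$ on $\Or\times\Or$ and $i=2$ on the mixed region), hypotheses (5) and (6) control this pointwise by $\eta(|x-x_0|^\beta + |x-y|^\beta)$. The effective new term is therefore
\[
\tilde h(x,y) - h(x,y) = \bigl[a(x,y) - a_{s,i}(x_0, x-y)\bigr]\bigl[u_0(x) - u_0(y)\bigr],
\]
and the two estimates (3) and (4) correspond to two regimes of $|u_0(x)-u_0(y)|$: when $|x-y| < |x_n|/2$ both points lie on the same side of $\Gamma$ at interior scale at least $|x_n|/2$, so the $C^{1,\alpha+1-2s}$ gradient estimate on $\Ol$ (or tangential smoothness plus $C^{0,\alpha}$ regularity on $\Or$) produces a factor of $|x-y|$ multiplied by $|x_n|^{\alpha + (2s-1)_+ - 1}$; in the complementary regime only the crude bound $|u_0(x) - u_0(y)| \le C(|x-y|\wedge 1)^{\alpha + (2s-1)_+}$ is needed. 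In both cases the weight $\mathcal D(x,y,B_2(x_0),\alpha)$ arises from scaling the interior $u_0$-estimates by the distance to $\partial B_2(x_0)$.

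Finally, the global $L^2$-type estimates in (5) are obtained by integrating the pointwise bounds: for $\tilde f$ and $\tilde f_1 - f_1$ the weights are square-integrable on $B_2(x_0)$ once $\alpha$ is chosen close enough to its supremum (so $2(1-\alpha) < 1$), while for the nonlocal norm of $\tilde h - h$, Proposition \ref{prop:1D<ND} reduces the estimate to one-dimensional fractional Sobolev seminorms of $u_0$ along lines parallel to $e_n$, which are controlled by the energy estimate of Theorem \ref{thm:admis} applied to $u_0$. The principal technical difficulty throughout is juggling two independent weights---distance to $\Gamma$ (governing the local-to-nonlocal jump in regularity of $u_0$) and distance to $\partial B_2(x_0)$ (reflecting the merely H\"older boundary data)---so that the resulting exponents remain compatible with the Schauder-type iteration to follow; this is precisely why the threshold $\alpha_0(x_0) > (2s-1)_+$ is imposed in the hypotheses.
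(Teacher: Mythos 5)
Your treatment of the local term, the frozen right-hand side $f-\fint f$, and the nonlocal coefficient-freezing error $[a-a_{s,i}(x_0,\cdot\,)][u_0(x)-u_0(y)]$ matches the paper's proof, as does the use of scaled constant-coefficient estimates to produce the weights $d(\cdot,B_2^c(x_0))^{\alpha-1}$. The genuine gap is the drift. After your integration by parts, besides the divergence remainder (which indeed goes into $\tilde f$ via hypothesis (3)), you are left with the mismatch term $\epsilon\int\langle\boldsymbol{b}^{\epsilon}-\boldsymbol{b}^{\epsilon}(x_{0}),\nabla u_{0}\rangle\phi$, and your proposal gives it no admissible home. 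It cannot be absorbed into $\tilde f$: on $\Or$ the frozen solution has $|\nabla u_0|\sim d(\cdot,\Gamma)^{\alpha_0(x_0)-1}$, which is unbounded at the interface whenever $\alpha_0(x_0)<1$ and need not even be square-integrable when $\alpha_0(x_0)\leq\frac12$ (which happens for $s$ close to $\frac12$, exactly the regime where $\boldsymbol{b}\neq0$ is allowed); so neither the pointwise bound in conclusion (1), which permits a singular weight only at $\partial B_2(x_0)$ and not along $\Gamma$, nor the $L^2$ bound in conclusion (5) would hold. It cannot go into $\tilde f_1$ either, since $f_1$ is a vector field on $\Ol$ while the problematic part of $\nabla u_0$ sits on $\Or$.

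The paper's proof spends most of its effort on precisely this term: it solves the auxiliary problem $(-\triangle)^{1/2}w=1_{B_2(x_0)}\langle\boldsymbol{b}^{\epsilon}-\boldsymbol{b}^{\epsilon}(x_{0}),\nabla u_{0}\rangle$ with zero exterior data, shows well-posedness by checking that this right-hand side, though not in $L^2$, pairs boundedly against $H^{1/2}$ functions (this is where Proposition \ref{prop:1D<ND} actually enters, rather than in the final $L^2$ estimate as you suggest), proves Campanato-type H\"older estimates for $w$ carrying the weights $|x_n|^{\alpha+(2s-1)_+-1}$ and $\mathcal{D}(x,y,B_2(x_0),\alpha)$, and then rewrites the mismatch term as a constant times $\int\frac{[w(x)-w(y)][\phi(x)-\phi(y)]}{|x-y|^{n+1}}\,dxdy$, which is absorbed into $\tilde h$. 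This construction is the actual source of the exponent $\alpha+(2s-1)_+$ in conclusions (3)--(4), which your outline attributes solely to the coefficient-freezing error. In the case $\boldsymbol{b}=0$ your argument is essentially the paper's, but the drift case (the one needed for the quasigeostrophic application) requires this additional idea.
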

For the rest of this section, the case $\beta=0$ would be sufficient,
but when $\beta>0$ the above gives improved scaling for the generalized
right-hand sides, which will be helpful later. A special but useful
case is if the original problem has $f_{1},h=0$, in which case the
difference between $u$ and $u_{0}$ still only solves a generalized
problem. The conclusion should be interpreted as saying that whenever
the coefficients of the equation have small oscillation, $v$ solves
an equation with small right-hand side.
\begin{proof}
We ignore the topic of admissibility; this is easily justified using
the uniqueness of $u_{0}$. We have the energy estimates
\[
\|u_{0}\|_{H^{1}(B_{4}(x_{0})\cap\Ol)\cap H^{s}(B_{4}(x_{0}))}\leq C\|u\|_{H^{1}(B_{4}(x_{0})\cap\Ol)\cap H^{s}(B_{4}(x_{0}))}\leq C,
\]
with the constant independent of $\epsilon.$ Set $U=B_{2}(x_{0})$,
let $\phi\in C_{c}^{\infty}(U)$, and use the notation $B_{L}^{0},$
$B_{N}^{0}$ for the quantities in \eqref{eq:ConstCoefProb}. Then
\begin{align*}
B_{L}^{\epsilon}[v,\phi]&=B_{L}^{\epsilon}[u,\phi]-\int_{\Ol}\langle A^{\epsilon}\nabla u_{0},\nabla\phi\rangle\\
&=B_{L}^{\epsilon}[u,\phi]-B_{L}^{0}[u_{0},\phi]+\int_{\Ol}\langle(A^{\epsilon}(x_{0})-A^{\epsilon})\nabla u_{0},\nabla\phi\rangle\\&
=B_{L}^{\epsilon}[u,\phi]-B_{L}^{0}[u_{0},\phi]+\int_{\Ol}\langle K_{0},\nabla\phi\rangle.
\end{align*}
Notice that $\|K_{0}\|_{L^{2}(U)}\leq C\eta$ from the energy estimate
above, while from the constant coefficient estimate (appropriately
scaled) of Section 8, we have that $|\nabla u_{0}(x)|\leq Cd(x,\partial U)^{\alpha-1}$
for $x\in\Or$, which implies $|K_{0}|\leq C\eta|x-x_{0}|^{\beta}d(x,\partial U)^{\alpha-1}$.

For the drift,
\begin{align*}
\int\langle\boldsymbol{b}^{\epsilon},\nabla\phi\rangle v & =\int\langle\boldsymbol{b}^{\epsilon},\nabla\phi\rangle u-\langle\boldsymbol{b}^{\epsilon}(x_{0}),\nabla\phi\rangle u_{0}-\langle\boldsymbol{b}^{\epsilon}-\boldsymbol{b}^{\epsilon}(x_{0}),\nabla\phi\rangle u_{0}\\
 & =\int\langle\boldsymbol{b}^{\epsilon},\nabla\phi\rangle u-\langle\boldsymbol{b}^{\epsilon}(x_{0}),\nabla\phi\rangle u_{0}+\langle\boldsymbol{b}^{\epsilon}-\boldsymbol{b}^{\epsilon}(x_{0}),\nabla u_{0}\rangle\phi+K_{1}\phi
\end{align*}
where $|K_{1}|\leq C\eta/\epsilon^{2s-1}$ from the assumption on
the divergence of $\boldsymbol{b}^{\epsilon}.$ We used that $u_{0}\in W^{1,p}(U)$
for some $p>1,$ which follows from Lemma \ref{lem:Constcoefftang};
indeed, $|\nabla u_{0}|\leq Cd(x,\Gamma\cup\partial U)^{\alpha-1}$
(see also the proof of Proposition \ref{prop:Unique}). It is now
convenient to solve the following Dirichlet problem for the fractional
Laplacian:
\begin{equation}
\begin{cases}
(-\triangle)^{1/2}w(x)=1_{U}(x)\langle\boldsymbol{b}^{\epsilon}-\boldsymbol{b^{\epsilon}}(x_{0}),\nabla u_{0}\rangle & x\in\Omega\\
w(x)=0 & x\notin\Omega
\end{cases}\label{eq:aux}
\end{equation}
where $\Omega$ is some smooth domain with $U\subset\subset\Omega.$
The relevant fact about the right-hand side is that it is controlled
by $C\eta|x-x_{0}|^{\beta}/\epsilon^{2s-1}[d(x,\Gamma)^{\alpha+(2s-1)_{+}-1}+d(x,U^{c})^{\alpha-1}].$
The following claim can be easily derived from scaling:
\begin{claim*}
There exists a unique finite-energy solution $w$ to \eqref{eq:aux},
with $\|w\|_{H^{1/2}}\leq C\eta/\epsilon^{2s-1}$ . Moreover, $w$
is H\"{o}lder continuous on $U$ with 
\[
|w(x)-w(y)|\leq C\epsilon^{1-2s}\eta|x-y|^{\alpha+(2s-1)_{+}}\left(|x-x_{0}|^{\beta}+|y-x_{0}|^{\beta}\right)\mathcal{D}(x,y,U,\alpha)
\]
and
\[
|w(x)-w(y)|\leq C\epsilon^{1-2s}\eta|x-y||x_{n}|^{\alpha+(2s-1)_{+}-1}\left(|x-x_{0}|^{\beta}+|y-x_{0}|^{\beta}\right)\mathcal{D}(x,y,U,\alpha)
\]
provided $|x-y|<\frac{1}{2}|x_{n}|$.
\end{claim*}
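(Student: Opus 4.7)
The plan is to treat this as a standard boundary-regularity statement for the Dirichlet problem of the half-Laplacian on $\Omega$, where the source $g := 1_U\langle \boldsymbol{b}^\epsilon - \boldsymbol{b}^\epsilon(x_0), \nabla u_0\rangle$ is pointwise dominated by $C(\eta/\epsilon^{2s-1})|x-x_0|^\beta\bigl(|x_n|^{\alpha+(2s-1)_+-1} + d(x,U^c)^{\alpha-1}\bigr)$ and carries singularities along both $\Gamma$ and $\partial U$. I will normalize to absorb the prefactor $\eta/\epsilon^{2s-1}$ and proceed in three steps.

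For existence, uniqueness, and the energy bound I would appeal to Lax--Milgram applied to the Gagliardo bilinear form associated with $(-\triangle)^{1/2}$ on the Hilbert space $\mathcal{H} = \{v\in H^{1/2}(\Rn):v\equiv 0 \text{ on } \Omega^c\}$, on which it is bounded and coercive. The nontrivial point is to verify that $\phi \mapsto \int g\phi$ is a bounded functional on $\mathcal{H}$ of norm $\lesssim \eta/\epsilon^{2s-1}$; I would split $g$ according to its two singularities and apply the fractional Hardy inequality on the half-space $\{x_n>0\}$ to absorb the $|x_n|^{\alpha+(2s-1)_+ - 1}$ factor and on $U$ to absorb the $d(x,U^c)^{\alpha-1}$ factor.

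To get the pointwise bound $|w(x)| \leq C(\eta/\epsilon^{2s-1})|x-x_0|^\beta\bigl(|x_n|^{\alpha+(2s-1)_+} + d(x,U^c)^{\alpha}\bigr)$, I would construct a barrier of the form $B(x) = C|x-x_0|^\beta\bigl(\rho_{\alpha+(2s-1)_+}(|x_n|)\wedge R + d(x,U^c)^{\alpha}\bigr)$ plus smooth cutoff corrections to handle behavior at infinity. The explicit computation $(-\triangle)^{1/2}\rho_\gamma(|x_n|) \asymp |x_n|^{\gamma-1}$ from Section 7 makes the first piece of $B$ a supersolution against the first piece of $|g|$, and the standard asymptotics for the half-Laplacian applied to a boundary-distance power handle the second piece. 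The comparison then runs along the same lines as in Lemma \ref{lem:Nonlocboot}: test the equation for $(B-w)_-$ against itself, note that $(B-w)_- \in \mathcal{H}$ is compactly supported in $\Omega$, observe that the cross term has a good sign, and conclude $B\geq w$; applying the same argument to $-w$ gives the matching lower bound.

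Finally, I would upgrade this pointwise bound to the two Hölder estimates by scaling and interior regularity for the half-Laplacian. Given $x\in U$ with $r = |x_n|/4$, the rescaled function $\tilde w(z) = r^{-\alpha-(2s-1)_+}|x-x_0|^{-\beta}w(x+rz)$ is uniformly bounded on $\Rn$ by the previous step and solves an equation with uniformly bounded right-hand side on $B_1(0)$, so standard interior Lipschitz-type estimates for $(-\triangle)^{1/2}$ yield $[\tilde w]_{C^{0,1}(B_{1/2}(0))} \lesssim 1$; unscaling produces the second estimate in the regime $|x-y|<|x_n|/2$. When instead $|x-y|\geq |x_n|/2$ the first estimate follows directly from the pointwise bound via $|w(x)-w(y)|\leq|w(x)|+|w(y)|$ together with $|x_n|,|y_n|\lesssim|x-y|$, and the $\mathcal{D}(x,y,U,\alpha)$ factor enters through the $d(\cdot,U^c)^{\alpha}$ piece of the barrier contribution. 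The main technical obstacle is constructing a single global barrier that simultaneously dominates both singularities while lying in the correct energy space, especially near $\Gamma \cap \partial U$ where the two singularities interact; this is a bookkeeping matter but requires some care to guarantee $B - w \in \mathcal{H}$ in the comparison step.
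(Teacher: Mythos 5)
Your Lax--Milgram step is fine in spirit (though be careful with the phrase ``fractional Hardy inequality'': for $(-\triangle)^{1/2}$ the critical Hardy inequality with weight $d^{-1}$ fails on a half-space or on $U$, so you must exploit that the weights are subcritical, e.g.\ split $|x_n|^{\gamma-1}=|x_n|^{\gamma-1+\sigma}\cdot|x_n|^{-\sigma}$ with $\tfrac12-\gamma<\sigma<\tfrac12$ and use the Hardy--Sobolev inequality at exponent $\sigma$, or use the paper's slicing Proposition \ref{prop:1D<ND}). The genuine gap is in your barrier step. Writing $\gamma=\alpha+(2s-1)_+$, the even power $|x_n|^{\gamma}$ is a strict \emph{subsolution} of the half-Laplacian near $\Gamma$: a direct computation (the one-sided $q(1/2,\gamma)$ of Section 7 plus the contribution of the reflected half) gives $(-\triangle)^{1/2}|x_n|^{\gamma}=\bigl[q(1/2,\gamma)-\Gamma(1+\gamma)\Gamma(1-\gamma)\bigr]|x_n|^{\gamma-1}$, and since $q(1/2,\gamma)<1\le\Gamma(1+\gamma)\Gamma(1-\gamma)$ this constant is negative. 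So the main piece of your barrier produces $-c|x_n|^{\gamma-1}$, and it cannot dominate a source which may be as large as $+C|x_n|^{\gamma-1}$; the truncation $\wedge R$ only adds a bounded correction. This is not a repairable bookkeeping issue: the model particular solution of $(-\triangle)^{1/2}v=|x_n|^{\gamma-1}$ behaves like $-c|x_n|^{\gamma}$ plus a bounded, \emph{nonvanishing} term, so no pointwise bound of the form $|w(x)|\le C|x-x_0|^{\beta}\bigl(|x_n|^{\gamma}+d(x,U^c)^{\alpha}\bigr)$ can hold --- indeed at $x=x_0$ it would force $w(x_0)=0$, which fails for any nonnegative nontrivial source, and $w$ has no reason to be small on $\Gamma$ away from $\partial U$ either. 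The Claim only asserts an oscillation (H\"older) bound, which is strictly weaker than your pointwise bound, and your subsequent reductions rely on the false statement: in the regime $|x-y|\ge\tfrac12|x_n|$ the term $d(\cdot,U^c)^{\alpha}$ in your bound is of order one in the interior of $U$, so $|w(x)|+|w(y)|$ does not produce the factor $|x-y|^{\gamma+(2s-1)_+\cdots}$; and in the Lipschitz regime your rescaled $\tilde w$ is not uniformly bounded, since $w$ is only $O(\eta\epsilon^{1-2s})$ globally, not $O(r^{\gamma})$.

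The paper avoids barriers entirely and obtains the first estimate by a Campanato-type argument: for $z\in U$,
\begin{align*}
\fint_{B_r(z)}\Bigl|w-\fint_{B_r(z)}w\Bigr|
&\le C\,r^{1-n}\int_{B_{2r}(z)}\bigl|(-\triangle)^{1/2}w\bigr|
= C\,r^{1-n}\int_{B_{2r}(z)\cap U}\bigl|\langle\boldsymbol{b}^{\epsilon}-\boldsymbol{b}^{\epsilon}(x_0),\nabla u_0\rangle\bigr|,
\end{align*}
and the weighted bound $|\nabla u_0|\le C d(\cdot,\Gamma)^{\alpha+(2s-1)_+-1}d(\cdot,U^c)^{\alpha-1}$ makes the right-hand side $\lesssim \eta\epsilon^{1-2s}|z-x_0|^{\beta}d(z,U^c)^{-1}r^{\alpha+(2s-1)_+}$; Campanato's criterion then yields exactly the first H\"older estimate (an \emph{oscillation} statement, with the singular source integrated rather than matched pointwise), and the second estimate follows from interior estimates and scaling. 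If you want to keep a comparison-principle flavor, you would have to run it at every scale on $w$ minus suitable constants, which essentially reproduces the Campanato argument; as proposed, the barrier route fails at the sign of $(-\triangle)^{1/2}|x_n|^{\gamma}$ and at the passage from a pointwise bound to the H\"older modulus.
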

\begin{proof}
We claim that the right-hand side in \eqref{eq:aux} lies in the dual
space of $H_{0}^{1/2}(\Omega)$ (by which we mean the closure of $C_{c}^{\infty}(\Omega)$
in $H^{1/2}(\Rn)$), in the sense that
\[
\left|\int_{\Rn}1_{U}(x)\langle\boldsymbol{b}^{\epsilon}-\boldsymbol{b^{\epsilon}}(x_{0}),\nabla u_{0}\rangle w\right|\leq C\|w\|_{H^{1/2}}
\]
for every $w\in C_{0}^{\infty}(\Omega)$ with some uniform constant
$C$. Indeed, this is a consequence of Proposition \ref{prop:1D<ND}
and Sobolev embedding. For $w$ supported away from $\partial U$,
we have
\begin{align*}
\left|\int_{\Rn}1_{U}(x)\langle\boldsymbol{b}^{\epsilon}-\boldsymbol{b^{\epsilon}}(x_{0}),\nabla u_{0}\rangle w\right| & \leq C\eta\epsilon^{1-2s}\int|x_{n}|^{\alpha_{0}-1}1_{U}(x',x_{n})|w(x',x_{n})|dx_{n}dx'\\
 & \leq C\eta\epsilon^{1-2s}\int\sup_{x_{n}}|w(x',x_{n})|dx'\\
 & \leq C\eta\epsilon^{1-2s}(\int\|w(x',\cdot)\|_{H^{1/2}(\RR)}^{2})^{1/2}\\
 & \leq C\eta\epsilon^{1-2s}\|w\|_{H^{1/2}(\Rn)}.
\end{align*}
A similar computation works for $w$ supported near $\partial U$.
It then follows immediately from Lax-Milgram theorem that there is
a unique finite-energy solution to \eqref{eq:aux}, and that it satisfies
the energy estimate as promised. For the point estimates, we will
first apply Campanato criterion. We have that:
\begin{align*}
\fint_{B_{r}(z)}|w(x)-\fint_{B_{r}(z)}w|dx&\leq Cr^{1-n}\int_{B_{2r}(z)}|(-\triangle)^{1/2}w|\\
&=Cr^{1-n}\int_{B_{2r}(z)\cap U}|\langle\boldsymbol{b}^{\epsilon}-\boldsymbol{b^{\epsilon}}(x_{0}),\nabla u_{0}\rangle|\\
&\leq C\eta\epsilon^{1-2s}|z-x_{0}|^{\beta}d(z,U^{c})^{-1}r^{\alpha+(2s-1)_{+}},
\end{align*}
which implies the first estimate. The second follows from this, interior
estimates, and scaling. 
\end{proof}
Now the error term from the drift can be re-expressed as follows:
\begin{align*}
\int\langle\boldsymbol{b}^{\epsilon}-\boldsymbol{b^{\epsilon}}(x_{0}),\nabla u_{0}\rangle\phi&=\int\phi(-\triangle)^{1/2}w\\
&=c\int_{\Rn\times\Rn}\frac{[w(x)-w(y)][\phi(x)-\phi(y)]}{|x-y|^{n+1}}\\
&=\epsilon^{2s-1}I_{1}[\phi].
\end{align*}

The nonlocal terms can be simplified (first doing the $\Ol\times\Or$
ones):
\begin{align*}
\int_{\Or} & \int_{\Ol}\frac{[v(x)-v(y)]a^{\epsilon}(x,y)\left[\phi(x)-\phi(y)\right]}{|x-y|^{n+2s}}dydx\\
 & =\int_{\Or}\int_{\Ol}\frac{[u(x)-u(y)]a^{\epsilon}(x,y)\left[\phi(x)-\phi(y)\right]}{|x-y|^{n+2s}}dydx\\
 & -\int_{\Or}\int_{\Ol}\frac{[u_{0}(x)-u_{0}(y)]a_{s,2}^{\epsilon}(x_{0},x-y)\left[\phi(x)-\phi(y)\right]}{|x-y|^{n+2s}}dydx\\
 & +\int_{\Or}\int_{\Ol}\frac{[u_{0}(x)-u_{0}(y)]\left(a_{s,2}^{\epsilon}(x_{0},x-y)-a_{s,2}^{\epsilon}(x,x-y)\right)[\phi(x)-\phi(y)]}{|x-y|^{n+2s}}dydx\\
&+\int_{\Or}\int_{\Ol}\frac{[u_{0}(x)-u_{0}(y)]a_{a,2}^{\epsilon}(x,x-y)[\phi(x)-\phi(y)]}{|x-y|^{n+2s}}dydx
\end{align*}
where we have used that $a^{2}(x,y)=a_{s,2}(x,x-y)-a_{a,2}(x,x-y).$
The first two terms are parts of $B_{N}^{\epsilon}$ and $B_{N}^{0}$
respectively; the others, which we denote by $I_{2}[\phi]$, are of the
form
\[
I_{2}[\phi]=\int_{\Rn\times\Rn}\frac{K_{2}(x,y)[u_{0}(x)-u_{0}(y)][\phi(x)-\phi(y)]}{|x-y|^{n+2s}}dxdy,
\]
where 
\[|K_{2}(x,y)(u_{0}(x)-u_{0}(y))|\leq\eta C|x-y|^{\alpha+(2s-1)_{+}}\mathcal{D}(x,y,U,\alpha)[|x-x_{0}|^{\beta}+|y-x_{0}|^{\beta}].\]
Similarly the nonlocal terms over $\Or\times\Or$ can be written as
\begin{align*}
\int_{\Or} & \int_{\Or}\frac{a^{\epsilon}(x,y)[w(x)-w(y)][\phi(x)-\phi(y)]}{|x-y|^{n+2s}}dydx\\
 & =\int_{\Or}\int_{\Or}\frac{a^{\epsilon}(x,y)[u(x)-u(y)][\phi(x)-\phi(y)]}{|x-y|^{n+2s}}dydx\\
 & -\int_{\Or}\int_{\Or}\frac{a_{s,1}^{\epsilon}(x_{0},x-y)[u_{0}(x)-u_{0}(y)][\phi(x)-\phi(y)]}{|x-y|^{n+2s}}dydx\\
 & +I_{3}[\phi]
\end{align*}
with $I_{3}$ satisfying the same properties as $I_{2}$.

Now by subtracting the equations for $u_{0}$ and $u$, we have the
following equation for $v$:
\begin{align}
0= & -B_{L}^{\epsilon}[v,\phi]-\epsilon^{2(1-s)}B_{N}^{\epsilon}[v,\phi]+\epsilon\int v\langle\boldsymbol{b}^{\epsilon},\nabla\phi\rangle\label{eq:eqdiff}\\
 & +\int_{\Ol}\langle K_{0},\nabla\phi\rangle+\epsilon\int K_{1}\phi+\epsilon I_{1}[\phi]\nonumber\\
 & +\epsilon^{2(1-s)}\left(I_{2}[\phi]+I_{3}[\phi]\right)-\epsilon^{2}\int\left[f^{\epsilon}-\left(1_{\Ol}\fint_{\Ol\cap B_{2}(x_{0})}f+1_{\Or}\fint_{\Or\cap B_{2}(x_{0})}f\right)\right]\phi\nonumber \\
 & +\epsilon^{2(1-s)}\int_{\Rn\times\Rn}\frac{h^{\epsilon}(x,y)[\phi(x)-\phi(y)]dxdy}{|x-y|^{n+2s}}+\epsilon\int_{\Ol}\langle f_{1}^{\epsilon},\nabla\phi\rangle.\nonumber 
\end{align}
It is now easy to see from the estimates we have shown that this generalized
problem satisfies the conclusions of the lemma.
\end{proof}
This lemma is meant to be partnered with the following one, which
states that solutions to a generalized problem with small data are
small, both in energy and (under some stronger structural assumptions)
in $L^{\infty}.$ The fact that the estimates above improve as $x$
gets closer to $x_{0}$will not be relevant to the approximation below;
rather, that will only be used for some scaling arguments in later
sections.
\begin{lem}
\label{lem:ApproximationLemma}Let $u$ be an admissible solution
of the generalized problem for $(P_{\epsilon})$ on $B_{2}(x_{0})$,
with $\|u\|_{C^{0,\alpha'}(\Rn)}\leq1$ and $u$ supported on $B_{2}(x_{0})$.
Then there is a constant $C_{1},$ independent of $\epsilon$, such
that if
\[
\epsilon^{2s}\|f\|_{L^{2}(B_{2}(x_{0}))}+\epsilon\|f_{1}\|_{L^{2}(B_{2}(x_{0}))}+\left(\int_{\Rn\times\Rn}\frac{|h(x,y)|^{2}}{|x-y|^{n+2s}}dxdy\right)^{1/2}+\epsilon^{2s-1}\sup_{B_{2}(x_{0})}|\div\boldsymbol{b}^{\epsilon}|\leq\eta,
\]
then

\begin{equation}
\|u\|_{H^{1}(\Ol)\cap H^{s}(\Rn)}\leq C_{1}\eta.\label{eq:approxen}
\end{equation}
Under the additional assumptions
\begin{enumerate}
\item $\epsilon^{2s}|f^{\epsilon}(x)|\leq C\eta d(x,B_{2}^{c}(x_{0}))^{\alpha-1}$
\item $\epsilon|f_{1}^{\epsilon}(x)|\leq C\eta d(x,B_{2}^{c}(x_{0}))^{\alpha-1}$
\item if $|x-y|<\frac{1}{2}|x_{n}|$, then 

$|h^{\epsilon}  (x,y)|+|h^{\epsilon}(y,x)|
  \leq C\eta(|x-y|\wedge1)|x_{n}|^{\alpha+(2s-1)_{+}-1}\mathcal{D}(x,y,B_{2}(x_{0}),\alpha)$
\item $|h^{\epsilon}(x,y)|\leq C\eta(|x-y|\wedge1)^{\alpha}\mathcal{D}(x,y,B_{2}(x_{0}),\alpha)$
\end{enumerate}
we have this $L^{\infty}$ estimate (for some universal $\gamma>0)$.
\begin{equation}
\|u\|_{L^{\infty}(B_{1}(x_{0}))}\leq C_{1}\eta^{\gamma}.\label{eq:approxinfty}
\end{equation}
\end{lem}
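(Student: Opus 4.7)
Both parts are obtained by testing the generalized weak formulation against functions built from $u$ itself: part~(1) with $\phi = u$, and part~(2) with a sequence of truncations $(u - \psi_k)_+$ in the spirit of De Giorgi.

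For part~(1), take $\phi = u$, which is legitimate since $u$ is supported in $B_2(x_0)$. The left-hand side
\[ B_L^\epsilon[u,u] + \epsilon^{2(1-s)} B_N^\epsilon[u,u] \]
is bounded below by $\lambda\|\nabla u\|_{L^2(\Ol)}^2 + \lambda\epsilon^{2(1-s)} B_N^\epsilon[u,u]$, and the Sobolev-embedding step from the end of the proof of Lemma~\ref{lem:enloc} recovers the full $H^s(\Rn)$ seminorm from this. On the right, each of the forcing terms coming from $f$, $f_1$, and $h$ is controlled by Cauchy--Schwarz---producing factors $\eta\|u\|_{L^2}$, $\eta\|\nabla u\|_{L^2(\Ol)}$, and $\eta[u]_{H^s}$ respectively---and absorbed into the left-hand side using the fractional Poincar\'{e} inequality (valid since $u$ has compact support in $B_2(x_0)$). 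The drift, which is only approximately divergence-free, is handled by
\[ \epsilon\int u\langle\boldsymbol{b}^\epsilon, \nabla u\rangle = -\frac{\epsilon}{2}\int u^2\,\div\boldsymbol{b}^\epsilon, \]
bounded by $(\eta/2)\epsilon^{2(1-s)}\|u\|_{L^2}^2$ from the divergence hypothesis, and again absorbed after Poincar\'{e}.

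For part~(2), the estimate from (1) delivers $\|u\|_{L^2} \leq C\eta$. I would then mimic Lemma~\ref{lem:DG1}: fix $M = \eta^\gamma$ for a small $\gamma > 0$, select cutoffs $\psi_k \nearrow M$ so that $(u-\psi_k)_+$ is supported in a nested family of cylinders strictly inside $B_2(x_0)$, and apply the energy inequality to $(u-\psi_k)_+$. The sharpened pointwise assumptions on $f$, $f_1$, $h$---together with their distance weights $\mathcal{D}(x,y,B_2(x_0),\alpha)$---yield a right-hand-side bound of the form $C\eta\cdot A_{k-1}^{\beta_0}$ for some $\beta_0 > 0$, where $A_k = |\{u > \psi_k\}|$. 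Fractional Sobolev embedding then produces the standard nonlinear recursion
\[ A_{k+1} \leq C^k A_k^{1+\tau} \]
for some $\tau > 0$. The starting measure obeys $A_0 \leq M^{-2}\|u\|_{L^2}^2 \leq C\eta^{2(1-\gamma)}$ by Chebyshev, which is small for $\gamma < 1$ and $\eta$ small. Hence $A_k \to 0$, giving $u \leq M = \eta^\gamma$ on $B_1(x_0)$; applying the same argument to $-u$ completes the $L^\infty$ estimate.

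The principal obstacle is controlling the distance weights $\mathcal{D}(x,y,B_2(x_0),\alpha)$, which blow up near $\partial B_2(x_0)$. I would circumvent this by running the iteration on cylinders retreating away from $\partial B_2(x_0)$, so the singular factor contributes only bounded constants at each step. A second subtlety is the two-piece pointwise bound on $h$: the near-diagonal portion (used when $|x-y| < |x_n|/2$) controls contributions of pairs close to $\Gamma$ via a nonlocal isoperimetric argument analogous to Lemma~\ref{lem:DG2nonloc}, while the far-diagonal portion is integrated directly against the level-set indicators. Carrying both pieces through the recursion simultaneously is where most of the technical bookkeeping lies.
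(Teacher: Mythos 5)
The decisive gap is in your proof of the energy estimate \eqref{eq:approxen}, which must hold with $C_1$ independent of $\epsilon$. Testing with $\phi=u$ only yields $B_L^{\epsilon}[u,u]+\epsilon^{2(1-s)}B_N^{\epsilon}[u,u]\leq C\eta^{2}$, in which the nonlocal energy carries the degenerate weight $\epsilon^{2(1-s)}$; the Sobolev-embedding step you invoke from the end of the proof of Lemma \ref{lem:enloc} is exactly the step leading to \eqref{eq:enloc2}, where the paper explicitly allows the constant to depend on $\epsilon$. Run here, it gives $[u]_{H^{s}(\Rn)}\leq C\epsilon^{-(1-s)}\eta$, which is useless in the applications (there $\epsilon=r^{k}\rightarrow0$ in the flatness iteration). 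What is missing is the analogue of the fine energy estimate of Lemma \ref{lem:en2}: test also with $u-R[u]$ (or its positive and negative parts), which vanishes on $\Ol$ so the local form drops out and the factor $\epsilon^{2(1-s)}$ can be divided through, with $f,f_{1},h$ and $\div\boldsymbol{b}^{\epsilon}$ entering precisely at the scalings assumed in the hypothesis. This gives $B_N[u-R[u],u-R[u]]\leq C\eta^{2}+C\|R[u]\|_{H^{1}}^{2}\leq C\eta^{2}$, since $\|R[u]\|_{H^{1}}\leq C\|u\|_{H^{1}(\Ol)}\leq C\eta$ from the first inequality, and then $[u]_{H^{s}(\Rn)}\leq[u-R[u]]_{H^{s}}+[R[u]]_{H^{s}}\leq C\eta$. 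Without this reflection step the stated $\epsilon$-uniform $H^{s}(\Rn)$ bound does not follow.

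Your sketch of the $L^{\infty}$ part also has two concrete problems. First, the support scheme is circular: to have $(u-\psi_{k})_{+}$ compactly supported in $B_{2}(x_{0})$ with supports retreating a fixed distance from $\partial B_{2}(x_{0})$ (so that the weights $\mathcal{D}$ are bounded), you would need $|u|\leq M=\eta^{\gamma}$ already on the remaining annulus, whereas the hypotheses only give $|u|\leq C(2-r)^{\alpha'}$ there, which at a fixed distance is of order one. The paper instead starts the levels at $l(r)=\sup_{B_{r}^{c}(x_{0})}|u|\leq C(2-r)^{\alpha'}$, raises them by $C_{2}\approx\eta(2-r)^{-1}$, and optimizes in $r$ at the end; that optimization is where the exponent $\gamma=\alpha'/(\alpha'+1)$ comes from. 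Second, the claim that the pointwise bounds on $h$ yield a ``standard'' recursion in $A_{k}=|\{u>\psi_{k}\}|$ hides the real difficulty: the near-diagonal bound carries the weight $|x_{n}|^{\alpha+(2s-1)_{+}-1}$ with negative exponent (recall $\alpha$ is small), so its integral over the level set produces the anisotropic quantity $\int|\{u(x',\cdot)>l_{k}\}|_{1}^{1+\gamma}dx'$ with $1+\gamma<1$; bounding this crudely by $A_{k}^{1+\gamma}$ does not in general close the iteration (one would need $(1+\gamma)\frac{n}{n-2s}>1$, which fails for small $\alpha$). The paper keeps this quantity inside the iterated functional $\mathscr{A}_{k}$ and closes the loop by H\"{o}lder interpolation together with the one-dimensional Sobolev embedding of Proposition \ref{prop:1D<ND} in the $x_{n}$ variable; a measure-propagation argument in the style of Lemma \ref{lem:DG2nonloc} is not the relevant tool. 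Finally, the same $\epsilon$-degeneracy as above reappears in the iteration, so the reflected functions $R[(u-l_{k})_{+}]$ and $[(u-l_{k})_{+}-R[(u-l_{k})_{+}]]_{+}$ must be carried along as in Lemma \ref{lem:DG1}, not just the truncations themselves.
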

\begin{proof}
This equation admits an energy inequality (this is justified by writing
down the equation above for the approximate problem, setting $\phi=u,$
noticing the drift term on the left vanishes up to a lower-order term,
and passing to the limit): we have that
\begin{align*}
B_{L}^{\epsilon}[u,u]+\epsilon^{2(1-s)}B_{N}^{\epsilon}[u,u] & \leq\int_{\Ol}\epsilon\langle f_{1}^{\epsilon},\nabla u\rangle+\int\epsilon^{2}fu+u^{2}\epsilon\div\boldsymbol{b}^{\epsilon}\\
 & +\epsilon^{2(1-s)}\int\frac{h^{\epsilon}(x,y)[u(x)-u(y)]}{|x-y|^{n+2s}}dxdy.
\end{align*}
We bound each term:
\[
\int_{\Ol}\langle\epsilon f_{1}^{\epsilon},\nabla u\rangle\leq\mu\int_{\Ol}|\nabla u|^{2}+C_{\mu}\epsilon^{2}\|f_{1}\|_{L^{2}(B_{2})}^{2}\leq\mu\int_{\Ol}|\nabla u|^{2}+C_{\mu}\eta^{2},
\]
where the first term is reabsorbed. Similarly,
\[
\int(\epsilon^{2}|f^{\epsilon}|+\epsilon\div\boldsymbol{b}^{\epsilon}u)u\leq\epsilon^{2(1-s)}\mu\int u^{2}+C_{\mu}\eta^{2}.
\]
That only leaves the nonlocal integral, which is treated as follows:
\begin{align*}
\epsilon^{2(1-s)}\int\frac{h^{\epsilon}(x,y)[u(x)-u(y)]}{|x-y|^{n+2s}}dxdy & \leq\epsilon^{2(1-s)}\left[\mu \|u\|_{H^{s}(\Rn)}^{2}+C_{\mu}\int\frac{|h^{\epsilon}(x,y)|^{2}}{|x-y|^{n+2s}}dxdy\right]\\
 & \leq\mu\epsilon^{2(1-s)}\|u\|_{H^{s}(\Rn)}^{2}+C_{\mu}\eta^{2}
\end{align*}
This gives that 
\[
B_{L}^{\epsilon}[u,u]+\epsilon^{2(1-s)}B_{N}^{\epsilon}[u,u]\leq C\eta^{2}.
\]
A second energy inequality can be obtained by using $u-R[u]$ as a
test function, where $R$ is the reflection operator across $\Gamma.$
We omit the details, but it is easily verified that
\[
B_{N}^{\epsilon}[u-R[u],u-R[u]]\leq C\eta^{2}+C\|R[u]\|_{H^{1}(\Rn)}^{2}\leq C\eta^{2}.
\]
This implies \eqref{eq:approxen}.

For the $L^{\infty}$ conclusion, we need a level set energy inequality
and a somewhat more subtle approach to controlling the nonlocal terms,
this time fully utilizing the power of the constant-coefficient estimate.
An estimate with sharp dependence on $\eta$ would require some extra
examination of regularity of $u$ up to the boundary $\partial B_{2},$
which we do not wish to pursue. Thus we content ourselves with the
following well-known argument.

Recall that $\|u\|_{C^{0,\alpha'}(\Rn)}\leq1$. As $u$ vanishes
at $\partial B_{2}(x_{0})$, this means that
\[
l(r)=\sup_{B_{r}^{c}(x_{0})}|u(x)|\leq C(2-r)^{\alpha'}.
\]

Set $l_{k}=l(r)+C_{2}(1-2^{-k})$, where $C_{2}$ will be chosen below.
Now use $(u-l_{k})_{+}$ as a test function in \eqref{eq:eqdiff} and
estimate the terms on the right as follows: since by our improved
estimate $f_{1}$ is bounded, we have
\[
\int_{\Ol}\langle\epsilon f_{1}^{\epsilon},\nabla(u-l_{k})_{+}\rangle\leq\mu\|(u-l_{k})_{+}\|_{H^{1}(\Ol)}^{2}+C_{\mu}\eta^{2}(2-r)^{-2}|\{u>l_{k}\}|.
\]
Also, the remaining local terms are straightforward to bound:
\[
\int(\epsilon^{2}|f^{\epsilon}|+\epsilon\div\boldsymbol{b}^{\epsilon}(u-l_{k})_{+})(u-l_{k})_{+}\leq\mu\epsilon^{2(1-s)}\int(u-l_{k})_{+}^{2}+C\eta^{2}|\{u>l_{k}\}|,
\]
where the first term can be reabsorbed. This leaves only the nonlocal
term:
\begin{align*}
\epsilon^{2(1-s)} & \int\frac{h^{\epsilon}(x,y)[(u-l_{k})_{+}(x)-(u-l_{k})_{+}(y)]}{|x-y|^{n+2s}}\\
&\leq\mu\epsilon^{2(1-s)}\|(u-l_{k})_{+}\|_{H^{s}}^{2}+C\epsilon^{2(1-s)}\int_{\{u>l_{k}\}}\int_{\Rn}\frac{h^{\epsilon}(x,y)^{2}+h^{\epsilon}(y,x)^{2}}{|x-y|^{n+2s}}dydx\\
 & \leq C\eta^{2}(2-r)^{-2}\int_{\{u>l_{k}\}}\int_{\Rn}\frac{(|x-y|\wedge1)^{2(s+\alpha/2)}|x_{n}|^{\gamma}d(y,\partial B_{2}(x_{0}))^{\alpha-1}}{|x-y|^{n+2s}}dydx\\
&\qquad +\mu\epsilon^{2(1-s)}\|(u-l_{k})_{+}\|_{H^{s}}^{2},
\end{align*}
where $\gamma=2(\alpha/2+(2s-1)_{+}-s)>-1+(2s-1)_{+}\geq-1$. We have used the hypotheses $(3,4)$ and the fact that $(u-l_{k})_{+}$
is supported on $B_{r}$. The second piece is reabsorbed, while for
the first,
\begin{align*}
 & \leq C\eta^{2}(2-r)^{-2}\int_{\{u>l_{k}\}}|x_{n}|^{\gamma}dx\\
 & \leq C\eta^{2}(2-r)^{-2}\int\int_{0}^{|\{u(x',\cdot)>l_{k}\}|_{1}}|x_{n}|^{\gamma}dx_{n}dx'\\
 & \leq C\eta^{2}(2-r)^{-2}\int|\{u(x',\cdot)>l_{k}\}|_{1}^{1+\gamma}dx'.
\end{align*}
The subscript $|\cdot|_1$ means one-dimentional (Hausdorff) measure. As we will see shortly, this unusual asymmetric bound is compatible
with Sobolev embedding, and this is the key point of the estimate.
Set
\[
\mathscr{A}_{k}=\int(u-l_{k})_{+}^{2}+|\{u>l_{k}\}|+\int|\{u(x',\cdot)>l_{k}\}|_{1}^{1+\gamma}dx';
\]
we have just shown that
\[
\|(u-l_{k})_{+}\|_{H^{1}(\Ol)}^{2}+\epsilon^{2(1-s)}\|(u-l_{k})_{+}\|_{H^{s}(\Rn)}^{2}\leq C(2-r)^{-2}\eta^{2}\mathscr{A}_{k}.
\]
A similar argument using $g=[(u-l_{k})_{+}-R[(u-l_{k})_{+}]_{+}$ as a
test function will give that
\[
\|g\|_{H^{s}(\Rn)}^{2}\leq C(2-r)^{-2}\eta^{2}\mathscr{A_{k}}.
\]
We next show a nonlinear recurrence between the $\mathscr{A}_{k}$.
First, fix $(1-2s)_{+}<j<1+\gamma$; this is possible because if $s\geq \frac{1}{2}$ then $1+\gamma>0$, while if $s<\frac{1}{2}$, we have $1+\gamma=\alpha+1-2s>1-2s$. Then there are exponents $t_{1}$,$t_{2}\in(0,1)$
satisfying
\[
t_{1}+t_{2}=1
\]
such that
\begin{align*}
\int|\{u(x',\cdot)>l_{k}\}|_{1}^{1+\gamma}dx' & \leq\left(\int|\{u(x',\cdot)>l_{k}\}|_{1}^{j}dx'\right)^{t_{1}}|\{u>l_{k}\}|_{n}^{t_{2}}.
\end{align*}
This follows from applying H\"{o}lder's inequality. Beginning as usual,
\begin{align*}
\mathscr{A}_{k} & \leq C_{3}C^{k}\left[\int(u-l_{k-1})_{+}^{p}+\left(\int\left(\int(u-l_{k-1})_{+}^{2/j}dx_{n}\right)^{j}dx'\right)^{t_{1}}\left(\int(u-l_{k-1})_{+}^{p}\right)^{t_{2}}\right]\\
 & \leq C_{3}C^{k}\left[\int v_{k-1}^{p}+\left(\int\left(\int v_{k-1}^{2/j}dx_{n}\right)^{j}dx'\right)^{t_{1}}\left(\int v_{k-1}^{p}\right)^{t_{2}}\right],
\end{align*}
where $C_{3}=C_{2}^{-\max\{p,2t_{1}+pt_{2}\}},$ $v_{k}=R[(w-l_{k})_{+}]+g$,
and $p=\frac{2n}{n-2s}$ is the fractional Sobolev embedding exponent
in dimension $n$. The first term is clearly controlled by $\|v_{k-1}\|_{H^{s}}^{p}$.
For the second term we apply the one dimensional Sobolev embedding
and Proposition \ref{prop:1D<ND}. This works because as $j>(1-2s)_{+},$
we have that $\frac{2}{j}<\frac{2}{1-2s}$, meaning that $H^{s}(\RR)\subset L^{2/j}(\RR)$:
\begin{align*}
\left(\int\left(\int v_{k-1}^{2/j}dx_{n}\right)^{j}dx'\right)^{t_{1}}\left(\int v_{k-1}^{p}\right)^{t_{2}} & \leq C\left(\int\|v_{k-1}(x',\cdot)\|_{H^{s}(\RR)}^{2}dx'\right)^{t_{1}}\|v_{k-1}\|_{H^{s}}^{t_{2}p}\\
 & \leq C\|v_{k-1}\|_{H^{s}(\Rn)}^{2t_{1}+pt_{2}}.
\end{align*}
The crucial observation is that as $t_{1}+t_{2}=1$, $t_{2}>0$, and
$p>2$, the power $2t_{1}+pt_{2}>2$. We can now conclude that
\begin{align*}
\mathscr{A}_{k}&\leq C_{3}C^{k}\|v_{k-1}\|_{H^{s}(\Rn)}^{p\wedge(2t_{1}+pt_{2})}\\
&\leq C_{3}C^{k}\left[\|(u-l_{k-1})_{+}\|_{H^{1}(\Ol)}+\|g\|_{H^{s}(\Rn)}\right]^{p\wedge(2t_{1}+pt_{2})}\\
&\leq C_{3}C^{k}\left[(2-r)^{-2}\eta^{2}\mathscr{A}_{k-1}\right]^{\frac{p}{2}\wedge\frac{(2t_{1}+pt_{2})}{2}}.
\end{align*}
Choosing $C_{2}=L(2-r)^{-1}\eta$ makes the constants universal, and
so we have that for $L$ large enough, $\mathscr{A}_{k}\rightarrow0$,
giving
\[
\sup_{B_{r}(x_{0})}u\leq l(r)+L\eta(2-r)^{-1}.
\]
After optimizing in $r$, this implies that
\[
\sup_{B_{1}(x_{0})}u\leq C\eta^{\frac{\alpha'}{\alpha'+1}}.
\]
Now applying to $-w$ as well yields the estimate.
\end{proof}

\subsection{Regularity via the Method of Campanato}

In the following two sections, we present two approaches to perturbative
regularity for the variable-coefficient problem. The first is a Campanato-type
estimate that is well suited for situations when compatibility for
the constant-coefficient approximation is not required. The argument
goes as follows: in the following lemma, the approximation technique
from above will be used to construct a family of functions that approximate
$u$ well at every scale and also satisfy the constant-coefficient
estimates. This immediately implies some regularity of the solution
near the origin, via applications of Campanato's embedding.
\begin{lem}
\label{lem:boot}Let $u$ solve $(P)$ on $E_{2}$ (with $\Gamma=\{x_{n}=0\}$),
and assume $a\in\mathcal{L}_{2}\cap\mathcal{L}_{1}^{*}.$ Assume $u$
is supported on $E_{3}$, $\|u\|_{C^{0,\alpha}(\Rn)\cap H^{1}(\Ol)\cap H^{s}(\Rn)}\leq1$,
and that:
\begin{enumerate}
\item $[A]_{C^{0,\beta}(E_{2})}\leq1$
\item $[\boldsymbol{b}]_{C^{0,(\beta+1-2s)_{+}}(E_{2})}\leq1$
\item $\sup_{E_{2}}|\div\boldsymbol{b}|\leq1$
\item $[f]_{C^{0,(\beta-2s)_{+}}(\Or\cap E_{2})}+[f]_{C^{0,(\beta-2s)_{+}}(\Ol\cap E_{2})}+\|f\|{}_{L^{\infty}(E_{2})}\leq1$
\item $\sup_{x,y\in E_{2},z\in\Rn}|x-y|^{-\beta}|a^{\epsilon,i}(x,x+z)-a^{\epsilon,i}(y,y+z)|\leq1$
\item $\sup_{x\in E_{2},z\in\Rn}|z|^{-\beta}|a_{a,i}(x,z)|\leq1$
\end{enumerate}
for some $\beta<1$. Let $Y_{r,x_{0}}$ be the following average:
\begin{align*}
Y_{r,x_{0}}&=\frac{1}{r^{n-2}}\int_{B_{r}(x_{0})\cap\Ol}|\nabla(u-u_{r,x_{0}})|^{2}\\
&\qquad+\frac{1}{r^{n-2s}}\int_{B_{r}(x_{0})\times B_{r}(x_{0})}\frac{|(u-u_{r,x_{0}})(x)-(u-u_{r,x_{0}})(y)|^{2}}{|x-y|^{n+2s}}dxdy
\end{align*}
for every $B_{r}(x_{0})\subset E_{1}$, where $u_{r,x_{0}}$ is a
solution to the following Dirichlet problem:

\[
\begin{cases}
\begin{aligned}&\forall\phi\in C_{c}^{\infty}(B_{2r}(x_{0})),\qquad 0 =\int_{\Ol}\langle A(x_{0})\nabla u_{r,x_{0}},\nabla\phi\rangle+\\
 & +\int_{\Or}\int_{\Ol}\frac{[u_{r,x_{0}}(x)-u_{r,x_{0}}(y)]a_{s,2}(x_{0},x-y)[\phi(x)-\phi(y)]}{|x-y|^{n+2s}}dydx\\
 & +\int_{\Or}\int_{\Or}\frac{a_{s,1}(x_{0},x-y)[u_{r,x_{0}}(x)-u_{r,x_{0}}(y)][\phi(x)-\phi(y)]}{|x-y|^{n+2s}}dydx\\
 & -\int u_{r,x_{0}}\langle\boldsymbol{b}(x_{0}),\nabla\phi\rangle+\phi\left[1_{B_{2r}(x_{0})\cap\Ol}\fint_{B_{2r}(x_{0})\cap\Ol}f+1_{B_{2r}(x_{0})\cap\Or}\fint_{B_{2r}(x_{0})\cap\Or}f\right]
\end{aligned}
 & \\
\forall x\in B_{2r}^{c}(x_{0}) ,\qquad u_{r,x_{0}}(x)=u(x) & .
\end{cases}
\]
Then we have that
\begin{equation}
Y_{r,x_{0}}\leq C_{2}r^{2\beta}\left[\frac{1}{r^{n-2}}\|u\|_{H^{1}(\Ol\cap B_{4r}(x_{0}))}^{2}+\frac{1}{r^{n-2s}}\|u\|_{H^{s}(B_{4r}(x_{0}))}^{2}+(\osc_{B_{10r}(x_{0})}u)^{2}+r^{2s}\right].\label{eq:Yrx}
\end{equation}
\end{lem}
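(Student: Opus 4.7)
The approach is a scaling argument combined with the two approximation lemmas just proved. After translation, take $x_0 = 0$, and set $\epsilon = r$. Choose the normalization
\[
M^{2} := r^{-(n-2)}\|u\|_{H^{1}(\Ol\cap B_{4r})}^{2} + r^{-(n-2s)}\|u\|_{H^{s}(B_{4r})}^{2} + (\osc_{B_{10r}} u)^{2} + r^{2s},
\]
which is precisely the bracketed factor on the right of \eqref{eq:Yrx}, and define $\tilde u(y) := (u(\epsilon y) - \bar u)/M$, where $\bar u$ is the average of $u$ over $B_{r}$. By construction, $\tilde u$ solves $(P_{\epsilon})$ on $B_{2}$ with rescaled coefficients, has $L^{\infty}$-norm controlled on $B_{2}$ (from the oscillation term in $M^{2}$), and has scaled energy $\|\tilde u\|_{H^{1}(\tilde\Ol \cap B_{2}) \cap H^{s}(B_{2})}$ bounded independently of $r$ (from the energy terms in $M^{2}$). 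The global $C^{0,\alpha}$ bound on $u$ extends $\tilde u$ beyond $B_{2}$ with controlled tails, and a truncation via Lemma \ref{lem:truncate} replaces $\tilde u$ by a compactly supported function at the cost of an additional bounded right-hand side contribution.

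I then apply Lemma \ref{lem:AppLemma1} at the origin to construct the constant-coefficient approximation $\tilde u_{0}$. Using the diffeomorphism and scaling analysis of Section 9.2, each of hypotheses (1)--(6) is verified with parameter $\eta \leq C r^{\beta}$: the H\"older regularity of $A, \boldsymbol b, a$, and $a_{a,i}$ gives $\eta_{1}, \eta_{2}, \eta_{5}, \eta_{6} \sim r^{\beta}$, while the bounded divergence of $\boldsymbol b$ and oscillation of $f$ produce contributions of order $\epsilon^{2s} = r^{2s}$, which are absorbed because $M^{2}$ already contains an $r^{2s}$ term. The lemma then provides that $\tilde v := \tilde u - \tilde u_{0}$ solves a generalized $(P_{\epsilon})$ on $B_{2}$ whose data $(\tilde f, \tilde f_{1}, \tilde h)$ has $L^{2}$ and weighted integral norms of size $C r^{\beta}$.

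Applying the energy estimate \eqref{eq:approxen} of Lemma \ref{lem:ApproximationLemma} to $\tilde v$ yields $\|\tilde v\|_{H^{1}(\tilde\Ol) \cap H^{s}(\Rn)} \leq C r^{\beta}$. Since $u_{r,x_{0}}$ corresponds to $\tilde u_{0}$ rescaled (the pullback under $x \mapsto (x - x_{0})/r$ intertwines the two constant-coefficient Dirichlet problems up to the normalization factor $M$), scaling back via $v(x) = M \tilde v((x - x_{0})/r)$ gives
\[
r^{-(n-2)}\int_{B_{r}(x_{0}) \cap \Ol}|\nabla v|^{2} + r^{-(n-2s)}\int_{B_{r}(x_{0}) \times B_{r}(x_{0})}\frac{|v(x) - v(y)|^{2}}{|x-y|^{n+2s}}\,dx\,dy \leq C M^{2} r^{2\beta},
\]
which is \eqref{eq:Yrx}.

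The main obstacle is the careful bookkeeping of $M$: each of its four summands plays a distinct role. The $H^{1}$ and $H^{s}$ terms ensure that the rescaled $\tilde u$ has controlled energy, which is needed both to construct $\tilde u_{0}$ and for the implicit energy estimate $\|\tilde u_{0}\|_{H^{1} \cap H^{s}} \leq C$ used inside the proof of Lemma \ref{lem:AppLemma1}; the oscillation term ensures $\tilde u$ is globally bounded, so that Lemma \ref{lem:AppLemma1}'s $C^{0,\alpha}$ hypothesis can be met after localization; and the $r^{2s}$ term accommodates the unscalable divergence and $L^{\infty}(f)$ contributions to $\eta$, which do not shrink under the normalization by $M$. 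A secondary technical point is that $x_{0}$ is not required to lie on $\Gamma$, and the argument is uniform in this distinction: when $x_{0}\in\Gamma$ the transmission structure drives the estimate, and when $x_{0}\notin\Gamma$ the argument degenerates to a classical Campanato estimate for a uniformly elliptic or nonlocal equation, depending on which side of $\Gamma$ contains $x_{0}$.
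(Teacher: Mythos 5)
Your argument is correct and follows essentially the paper's own proof: rescale by $\epsilon=r$, normalize by the square root of the bracketed quantity, apply Lemma \ref{lem:AppLemma1} (with its $\beta$ set to $0$) to conclude that the difference with the frozen-coefficient solution solves a generalized problem with data of size $Cr^{\beta}$, then invoke the energy estimate \eqref{eq:approxen} of Lemma \ref{lem:ApproximationLemma} and scale back, the $r^{2s}$ term in the bracket absorbing the divergence and right-hand-side contributions exactly as you say. The only superfluous step is the truncation via Lemma \ref{lem:truncate}: the support hypothesis of Lemma \ref{lem:ApproximationLemma} is only needed for the difference $\tilde u-\tilde u_{0}$, which already vanishes outside $B_{2}$ by the Dirichlet condition.
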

\begin{proof}
Fix $x_{0}$ and $r$, and let $\bar{v}(x)=u(r(x-x_{0})+x_{0})$.
Let $v_{0}$ be the solution of 
\[
\begin{cases}
\begin{aligned}&\forall\phi\in C_{c}^{\infty}(B_{2}(x_{0})),\qquad 0 =\int_{\Ol}\langle A(x_{0})\nabla v_{0},\nabla\phi\rangle\\
 & +r^{2(1-s)}\int_{\Or}\int_{\Ol}\frac{[v_{0}(x)-v_{0}(y)]a_{s,2}^{r}(x_{0},x-y)[\phi(x)-\phi(y)]}{|x-y|^{n+2s}}dydx\\
 & +r^{2(1-s)}\int_{\Or}\int_{\Or}\frac{a_{s,1}^{r}(x_{0},x-y)[v_{0}(x)-v_{0}(y)][\phi(x)-\phi(y)]}{|x-y|^{n+2s}}dydx\\
 & -\int rv_{0}\langle\boldsymbol{b}(x_{0}),\nabla\phi\rangle+r^{2}\phi\left[1_{B_{2}(x_{0})\cap\Ol}\fint_{B_{2}(x_{0})\cap\Ol}f^{r}+1_{B_{2}(x_{0})\cap\Or}\fint_{B_{2}(x_{0})\cap\Or}f^{r}\right]
\end{aligned}
 & \\
\forall x\in B_{2}^{c}(x_{0}),\qquad v_{0}(x)=v(x) & 
\end{cases}
\]
Set $M=\|v\|_{H^{s}(B_{4}(x_{0}))}+\|v\|_{H^{1}(\Ol\cap B_{4}(x_{0}))}+\osc_{B_{10}}v+r^{s}.$
We claim that Lemma \ref{lem:AppLemma1} applies to $v/M,v_{0}/M$
with $\eta=Cr^{\beta}$ (and the $\beta$ in that lemma set to $0$).
Indeed, $(1)-(6)$ scale to satisfy the hypotheses, giving that $v-v_{0}$
satisfy a generalized problem. Applying Lemma \ref{lem:ApproximationLemma}
then yields
\[
\|v-v_{0}\|_{H^{s}(B_{1}(x_{0}))\cap H^{1}(B_{1}(x_{0})\cap\Ol)}\leq MCr^{\beta}.
\]
Then set $u_{r,x_{0}}(x)=v_{0}(x_{0}-\frac{x-x_{0}}{r})$ to get 
\[
Y_{r,x_{0}}\leq Cr^{2\beta}\left[\frac{1}{r^{n-2}}\int_{B_{4r}(x_{0})\cap\Ol}|\nabla u|^{2}+\frac{1}{r^{n-2s}}\|u\|_{\dot{H}^{s}(B_{4r}(x_{0}))}^{2}+(\osc_{B_{10r}(x_{0})}u)^{2}+r^{2s}\right].
\]
Now it is easy to verify that $u_{r,x_{0}}$ satisfies the equation
promised.
\end{proof}
Notice that by demanding more regularity on $f$, the $r^{2s}$ can
be made smaller in the estimate \ref{eq:Yrx}. However, this doesn't
actually improve the estimate, since the other terms on the right-hand
side will be no smaller than order $r^{2s}$. The following is a standard
lemma from calculus useful for dealing with Campanato-type arguments:
\begin{prop}
\label{prop:Calc}If $\rho,\sigma$ are continuous functions $(0,R_{0})\rightarrow(0,\infty)$
satisfying $\lim_{t\rightarrow0^{+}}\sigma(t)=0,$ and also
\[
\rho(r)\leq C\left[\left(\frac{r}{R}\right)^{\alpha}+\sigma(R)\right]\rho(R)
\]
for every $4r<R<R_{0},$ then for every $\beta<\alpha$there is an
$R_{\beta}$ such that for $r<R<R_{\beta},$
\[
\rho(r)\leq C'\left(\frac{r}{R}\right)^{\beta}\rho(R).
\]

If, on the other hand, $\rho$ is increasing and satisfies (for some
$0<\beta<\alpha)$
\[
\rho(r)\leq C\left[\left(\frac{r}{R}\right)^{\alpha}\rho(R)+R^{\beta}\right]
\]
for each $4r<R<R_{0},$ then
\[
\rho(r)\leq C'\left[\frac{\rho(R)}{R^{\beta}}+1\right]r^{\beta}.
\]

\end{prop}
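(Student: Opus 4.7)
My plan is to prove both parts by reducing to a geometric iteration along a sequence $R_k = \tau^k R$ for a suitably small ratio $\tau \in (0, 1/4)$, and then filling in the values at intermediate radii by monotonicity (or by the estimate at the nearest power).

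For part (1), I would first fix $\beta < \alpha$ and choose $\tau \in (0,1/4)$ small enough that $C \tau^\alpha \le \tfrac{1}{2} \tau^{\beta}$. Since $\sigma(t) \to 0$, I can then pick $R_\beta$ so that $C \sigma(R) \le \tfrac{1}{2} \tau^{\beta}$ whenever $R < R_\beta$. Applied to $R' = R$ and $r' = \tau R$, the hypothesis yields $\rho(\tau R) \le \tau^{\beta} \rho(R)$ for all $R < R_\beta$. Iterating this estimate gives $\rho(\tau^k R) \le \tau^{k \beta} \rho(R)$. For general $r < R < R_\beta$, pick the integer $k$ with $\tau^{k+1} R < r \le \tau^k R$ and absorb the factor $\tau^{-\beta}$ into $C'$, which gives the stated conclusion.

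For part (2), the iteration is identical in spirit but now tracks an inhomogeneous term. Choose again $\tau$ with $C \tau^\alpha \le \tfrac{1}{2}\tau^{\beta}$, so that the hypothesis gives $\rho(\tau R) \le \tfrac{1}{2}\tau^{\beta} \rho(R) + C R^{\beta}$. Iterating,
\begin{equation*}
\rho(\tau^k R) \le 2^{-k} \tau^{k\beta} \rho(R) + C R^{\beta} \sum_{j=0}^{k-1} 2^{-j} \tau^{j\beta} \tau^{(k-1-j)\beta} \cdot (\text{geom. factor}),
\end{equation*}
and straightforward bookkeeping shows $\rho(\tau^k R) \le \tau^{k\beta}\bigl[\rho(R) + C' R^{\beta}\bigr]$ for a constant $C'$ depending on $\tau, C, \beta$. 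For an arbitrary $r \in (0, R/4)$, choose $k$ with $\tau^{k+1} R < r \le \tau^k R$; by monotonicity of $\rho$,
\begin{equation*}
\rho(r) \le \rho(\tau^k R) \le \tau^{k\beta}\bigl[\rho(R) + C' R^{\beta}\bigr] \le C'' \Bigl(\frac{r}{R}\Bigr)^{\beta} \bigl[\rho(R) + C' R^{\beta}\bigr],
\end{equation*}
which rearranges to the claimed inequality $\rho(r) \le C'\bigl[\rho(R)/R^{\beta} + 1\bigr] r^{\beta}$.

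There is no genuine obstacle here: the argument is a standard Campanato-type iteration lemma. The only mild care required is choosing $\tau$ before $R_\beta$ in part (1), so that the smallness condition on $\sigma$ can be imposed at a scale that is compatible with the already-selected contraction factor. Monotonicity of $\rho$ is used only in part (2) to pass from the discrete geometric sequence to all radii; in part (1) the conclusion is formulated as a comparison between $\rho(r)$ and $\rho(R)$ at two radii, so no monotonicity is needed and the passage between nearby scales is handled purely by absorbing a universal constant.
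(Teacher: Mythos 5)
The paper gives no argument of its own here: it simply cites \cite[Lemma 3.4]{HL}, and your dyadic iteration along radii $\tau^k R$ is exactly the standard proof of that lemma, so in spirit you are reproducing the cited argument. Your part (2) is correct as written: the choice $C\tau^{\alpha}\le\tfrac12\tau^{\beta}$, the bookkeeping giving $\rho(\tau^k R)\le \tau^{k\beta}\bigl[\rho(R)+C'R^{\beta}\bigr]$, and the use of monotonicity to fill in intermediate radii are all fine.

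In part (1), however, the final step is not justified as you state it, and your closing remark that ``no monotonicity is needed and the passage between nearby scales is handled purely by absorbing a universal constant'' is where the gap sits. The iteration only controls $\rho$ at the grid radii $\tau^k R$; for an intermediate radius $r\in(\tau^{k+1}R,\tau^k R]$ the hypothesis does not let you compare $\rho(r)$ with $\rho(\tau^k R)$ (they are within a factor $\tau^{-1}<4$... more precisely within a factor $1/\tau$, and the hypothesis only compares radii separated by a factor of at least $4$), so absorbing $\tau^{-\beta}$ into $C'$ does not by itself bound $\rho(r)$. The fix for $k\ge1$ is one more application of the hypothesis to the pair $(r,\tau^{k-1}R)$, which is admissible because $4r\le 4\tau^{k}R<\tau^{k-1}R$, yielding $\rho(r)\le\tau^{\beta}\rho(\tau^{k-1}R)\le\tau^{k\beta}\rho(R)$; similarly for $\tau R<r\le R/4$ apply the hypothesis directly to $(r,R)$. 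But for $r\in(R/4,R)$ no such pair exists: the hypothesis never relates radii within a factor of $4$, and one can build continuous positive $\rho$ (a narrow deep dip at $R$ with all scales below $R/4$ suppressed accordingly) satisfying the hypothesis while violating $\rho(r)\le C'(r/R)^{\beta}\rho(R)$ for $r$ just below $R$. So for those pairs the conclusion genuinely requires $\rho$ to be nondecreasing --- which is exactly what \cite[Lemma 3.4]{HL} assumes, and what holds for the quantities $Z_{r,x}$, $Z'_{r,x}$, $Z^{1}_{r,x}$ to which the proposition is applied in Theorem \ref{thm:Campanatoboot} --- or else the conclusion must be read as restricted to $4r<R$, which is all that is used. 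State one of these (monotonicity, or the restriction $4r<R$) explicitly and your proof is complete.
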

The proof may be found in \cite[Lemma 3.4]{HL}. Now we will combine
the previous lemma with constant-coefficient estimates to prove bootstrap
regularity, 
\begin{thm}
\label{thm:Campanatoboot}Let $u$ be as in Lemma \ref{lem:boot}
with $\beta>0$. Then for every 
\[\gamma<\min\{\alpha_{0}(E_{2}),3-2s,s+\beta\},\]
we have that
\[u\in C^{\gamma}(E_{1}\cap\Ol)\cap C^{\gamma}(E_{1}\cap\Or).\]
Also, if 
\[1\leq\gamma<\min\{\alpha_{0}(E_{2})\wedge1+(2-2s)\wedge\beta,s+\beta\},\]
then 
\[u\in C^{1,\gamma-1}(E_{1}\cap\Ol)\].\end{thm}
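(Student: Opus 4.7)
The plan is a Campanato iteration that couples Lemma~\ref{lem:boot} (controlling the defect $u - u_{r,x_0}$) with Theorem~\ref{thm:ConstCoeffEst} (controlling the frozen-coefficient approximation $u_{r,x_0}$ itself), assembled via Proposition~\ref{prop:Calc}. Under the hypotheses of the theorem the bracketed factor on the right of \eqref{eq:Yrx} is bounded by a universal constant for $r \leq 1$, so Lemma~\ref{lem:boot} reduces to the clean statement $Y_{r,x_0} \leq C r^{2\beta}$. Theorem~\ref{thm:ConstCoeffEst} applied to the rescaling of $u_{r,x_0}$ to unit scale gives, with constants independent of $r$, the bounds $u_{r,x_0} \in C^{0,\alpha}(B_{r/2}(x_0))$ for every $\alpha < \alpha_0(x_0)$, and on $\Ol$ the gradient bound $u_{r,x_0} \in C^{1,\alpha'}(B_{r/2}(x_0) \cap \Ol)$ for every $\alpha' < \min\{\alpha_0(x_0)+1-2s,\,2-2s\}$.

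For the first assertion I would focus on $\gamma < 1$ (the case $\gamma \geq 1$ being identical with the modifications described below) and introduce the Campanato-type functional
\begin{equation*}
\mathcal{E}(x_0,r) = \inf_{c \in \RR}\left\{\frac{1}{r^{n-2}}\int_{B_r(x_0)\cap\Ol} |\nabla u|^2 + \frac{1}{r^{n-2s}}\int_{B_r(x_0)\times B_r(x_0)} \frac{|(u-c)(x)-(u-c)(y)|^2}{|x-y|^{n+2s}}\,dxdy\right\}.
\end{equation*}
Splitting $u = u_{r,x_0} + (u - u_{r,x_0})$ on $B_\rho(x_0)$ with $\rho < r/4$ and applying the triangle inequality separately to each piece, one bounds the $u_{r,x_0}$ contribution by $C(\rho/r)^{2\alpha}$ times its $r$-scale analogue (using the H\"older estimate above, valid for any $\alpha < \min\{\alpha_0(E_2),\,3-2s\}$; the $3-2s$ cap comes from the worse of the two sides of $\Gamma$, namely from $\Ol$ in Theorem~\ref{thm:ConstCoeffEst}(1) when $\alpha_0 > 1$), and the defect contribution by $Cr^{2\beta} + Cr^{2s}$ using $Y_{r,x_0} \leq Cr^{2\beta}$ together with the $r^{2s}$ term in \eqref{eq:Yrx}. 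This yields the recursion
\begin{equation*}
\mathcal{E}(x_0,\rho) \;\leq\; C(\rho/r)^{2\alpha}\mathcal{E}(x_0,r) + Cr^{2\beta} + Cr^{2s},
\end{equation*}
and Proposition~\ref{prop:Calc} converts it into $\mathcal{E}(x_0,r) \leq Cr^{2\gamma}$ for every $\gamma < \min\{\alpha_0(E_2),\,3-2s,\,s+\beta\}$, uniformly in $x_0 \in E_1$. Morrey--Campanato embedding on each side of $\Gamma$ then gives the claimed $C^{0,\gamma}$ bounds on $E_1 \cap \Ol$ and $E_1 \cap \Or$.

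For $\gamma \geq 1$ and the $C^{1,\gamma-1}(E_1 \cap \Ol)$ statement, the same scheme is run with $c$ replaced by the best affine approximation and with the gradient H\"older estimate $u_{r,x_0} \in C^{1,\alpha+1-2s}$ on $\Ol$ replacing the $C^{0,\alpha}$ one; the admissible exponent now saturates at $(\alpha_0 \wedge 1) + ((2-2s)\wedge\beta)$, the first summand being the largest $C^\alpha$ exponent on $\Or$ that still contributes to the affine iteration and the second the constant-coefficient improvement capped both by $2-2s$ and by the coefficient regularity $\beta$ entering through Lemma~\ref{lem:boot}, with $s + \beta$ still limiting the approximation error. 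The main technical obstacle is verifying that Theorem~\ref{thm:ConstCoeffEst} produces genuinely \emph{scale-invariant} $L^2$-Campanato decay for $u_{r,x_0}$ even though the frozen-coefficient equation carries the explicit factor $r^{2(1-s)}$ on its nonlocal term: this is handled by working at the rescaled unit scale throughout and combining the pointwise H\"older bounds of Section~8 with the nonlocal estimate of Lemma~\ref{lem:en2}, which is also what ensures that the nonlocal part of $\mathcal{E}(x_0,\rho)$ is not polluted by cross-scale tails coming from outside $B_r(x_0)$.
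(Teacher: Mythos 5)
There is a genuine gap, and it is quantitative rather than cosmetic. By bounding the bracketed factor in \eqref{eq:Yrx} by a universal constant you reduce Lemma \ref{lem:boot} to $Y_{r,x_0}\leq Cr^{2\beta}$, and your recursion then reads $\mathcal{E}(x_0,\rho)\leq C(\rho/r)^{2\alpha}\mathcal{E}(x_0,r)+Cr^{2\beta}+Cr^{2s}$. Proposition \ref{prop:Calc} applied to such an inequality only yields decay at the rate of the \emph{additive} term, i.e.\ $\mathcal{E}(x_0,r)\leq Cr^{2\min\{\beta,s\}}$ (and in fact only $r^{2\beta}$, since the $r^{2s}$ in \eqref{eq:Yrx} is multiplied by $r^{2\beta}$); it cannot ``convert'' this into $\mathcal{E}\leq Cr^{2\gamma}$ for all $\gamma<\min\{\alpha_0,3-2s,s+\beta\}$. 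When $\beta$ is small this falls far short of the statement. The whole point of the paper's proof is that the bracket in \eqref{eq:Yrx} is not treated as a constant: it contains $\frac{1}{r^{n-2}}\|u\|^2_{H^1(\Ol\cap B_{4r})}$, $\frac{1}{r^{n-2s}}\|u\|^2_{H^s(B_{4r})}$ and $(\osc_{B_{10r}}u)^2$, which are re-estimated at every stage by the H\"older exponent already established (via Caccioppoli-type energy estimates tying them back to the mean oscillation $Z_{R,x}$), so the defect error improves to $r^{2\beta+2\min\{\gamma_{\mathrm{current}},s\}}$ and one gains roughly $\beta/2$ per pass. Your single-pass scheme has no such self-improvement, so the claimed exponent range does not follow. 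Relatedly, to get the factor $\mathcal{E}(x_0,r)$ on the right of your recursion you must control the frozen-coefficient solution's seminorms by scale-$r$ oscillation quantities of $u$ (after subtracting means, with the nonlocal tails handled by the weighted cutoffs of the energy estimates), not merely by $\|u\|_{L^\infty(\Rn)}\leq 1$ as Theorem \ref{thm:ConstCoeffEst} is stated; this step is also missing.

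A second structural problem is the choice of functional. Both terms of your $\mathcal{E}$ are unchanged by subtracting a constant $c$ (differences kill $c$, and $\nabla c=0$), so the infimum is vacuous; more importantly, neither the localized Gagliardo seminorm nor $\frac{1}{r^{n-2}}\int_{B_r\cap\Ol}|\nabla u|^2$ vanishes on affine functions, so this quantity cannot decay faster than $r^{2}$ and cannot detect regularity beyond exponent $1$, while the fractional Poincar\'e step that converts energy decay into oscillation decay gains only $r^{2s}$. This is exactly why the paper runs three separate iterations: mean oscillation of $u$ up to exponents below $s$, then mean oscillation of $(-\triangle)^{s/2}u$ (with piecewise means on $\Ol,\Or$ when $\gamma>1$) to cross $s$ and reach $\min\{\alpha_0,3-2s,s+\beta\}$, and finally oscillation of $\nabla u$ on $\Ol$ for the $C^{1,\gamma-1}(E_1\cap\Ol)$ claim. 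Your proposal acknowledges replacing $c$ by an affine function in the second regime but keeps the same two building blocks, which cannot reproduce the $(-\triangle)^{s/2}u$ step; without it the argument stalls at exponent $\min\{s,1\}$ even after the additive-error issue is repaired. (Minor: the tail control you attribute to Lemma \ref{lem:en2} is actually done in this argument through the energy estimates with the growing cutoff $\phi$, not through the reflection inequality.)
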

\begin{proof}
Assume $\gamma\neq s,1$, since otherwise just prove the theorem for
a slightly larger $\gamma.$ We break the proof up into parts: first,
we show $u\in C^{\gamma}$ in two ways depending on the value of $\gamma$.
Then we show the extra regularity over $\Ol$.

First, if $\gamma<s,$ we will inductively prove that if $u\in C^{0,k\beta/2}$
and $\gamma':=(k/2+1)\beta<\min\{\alpha_{0}(E_{2}),s\}$, then $u\in C^{0,(k+1)\beta/2}$.
By using $\beta$ small enough, this eventually implies $u\in C^{0,\gamma}.$
Set 
\[
Z_{r,x}=\int_{B_{r}(x)}|u-m_{r,x}(u)|^{2}
\]
where $m_{r,x}(u)=\fint_{B_{r}(x_{0})}u$. Then we claim that if $5r<R<1,$
\[
Z_{r,x}\leq C\left[\left(\frac{r}{R}\right)^{n+2\gamma'}Z_{R,x}+R^{n+(2+k)\beta}\right].
\]
Indeed, we have that (using fractional Poincar\'{e} inequality)

\begin{align*}
Z_{r,x} & \leq\int_{B_{r}(x)}|u-u_{R/5,x}-m_{r,x}(u-u_{R/5,x})|^{2}+\int_{B_{r}(x)}|u_{R/5,x}-m_{r,x}(u_{R/5,x})|^{2}\\
 & \leq C\left[\int_{B_{R}(x)}|u-u_{R/5,x}-m_{r,x}(u-u_{R/5,x})|^{2}+r{}^{2\gamma'}r^{n}[u_{R/5.x}]_{C^{0,2\gamma'}(B_{R/5})}\right]\\
 & \leq C\left[R^{n}Y_{R/5,x}+r{}^{n+2\gamma'}[u_{R/5.x}]_{C^{0,2\gamma'}(B_{R/5})}\right].
\end{align*}
The first term, using Lemma \ref{lem:boot}, is controlled by 
\[
CR^{2\beta+n}\left[\frac{1}{R^{n-2}}\int_{\Ol\cap B_{4R/5}(x)}|\nabla u|^{2}+\frac{1}{R^{n-2s}}\|u\|_{\dot{H}^{s}(B_{4R/5}(x))}^{2}+R^{k\beta}+R^{2s}\right].
\]
Applying the energy estimates to $[u-m_{x,R}(u)](x_{0}+\frac{4R}{5}(x-x_{0}))$
(with $\phi$ a cutoff which vanishes on $B_{1}(x)$ and is larger than $2$
outside of $B_{2}(x)$) and scaling back gives that the first two terms
in the brackets are bounded by $R^{-n}Z_{R,x}.$ Similarly, from the
(scaled) estimate in Theorem \ref{thm:ConstCoeffEst} and this energy
argument, the other term is dominated by $(\frac{r}{R})^{2\gamma'+n}Z_{R,x}.$
This gives
\[
Z_{r,x}\leq C\left[\left(\frac{r}{R}\right)^{n+2\gamma'}+R^{2\beta}\right]Z_{R,x}+R^{n+(2+k)\beta}\leq C\left[\left(\frac{r}{R}\right)^{n+2\gamma'}Z_{R,x}+R^{n+(2+k)\beta}\right],
\]
with the last step by the inductive assumption and the Campanato isomorphism.
Applying Proposition \ref{prop:Calc}, we obtain that 
\[
Z_{r,x}\leq C\left(\frac{r}{R}\right)^{(k+1)\beta+n}Z_{R,x}.
\]
Applying Campanato's criterion then proves that $[u]_{C^{0,(k+1)\beta/2}(E_{1})}\leq CZ_{2,0}.$

Now for the case $1>\gamma>s$. First apply the above argument to
deduce $u\in C^{0,\gamma_{1}}$for some $\gamma_{1}<s$ with $\gamma<\gamma_{1}+\beta$.
Let $\gamma'>\gamma$ be such that a constant-coefficient estimate
is available, i.e. $\gamma'<\alpha_{0}(E_{2})\wedge(3-2s).$ Here
we use the quantities

\[
Z'_{r,x}=\int_{B_{r}(x)}\left|(-\triangle)^{s/2}u-m_{r,x}((-\triangle)^{s/2}u)\right|^{2}
\]
which can be estimated by 
\begin{align*}
Z'_{r,x} & \leq\int_{B_{r}(x)}\left|(-\triangle)^{s/2}(u-u_{R/5,x})-m_{r,x}((-\triangle)^{s/2}(u-u_{R/5,x}))\right|^{2}+\\
 & \qquad+\int_{B_{r}(x)}\left|(-\triangle)^{s/2}u_{R/5,x}-m_{r,x}((-\triangle)^{s/2}u_{R/5,x})\right|^{2}\\
 & \leq CR^{n-2s}Y_{R/5,x}+C\left(\frac{r}{R}\right)^{n+2(\gamma'-s)}\int_{B_{R/5}(x)}\left|(-\triangle)^{s/2}u_{R/5,x}-m_{R/5,x}((-\triangle)^{s/2}u_{R/5,x})\right|^{2}\\
 & \leq CR^{n-2s+2\beta}\left[\frac{1}{R^{n-2}}\int_{\Ol\cap B_{4R/5}(x)}|\nabla u|^{2}+\frac{1}{R^{n-2s}}\|u\|_{\dot{H}^{s}(B_{4R/5}(x))}^{2}+R^{2\gamma_{1}}\right]+\\
 & \qquad+C\left(\frac{r}{R}\right)^{n+2(\gamma'-s)}\left[Z'_{R,x}+\int_{B_{R/5}(x)}|(-\triangle)^{s/2}(u-u_{R/5,x})|^{2}\right]\\
 & \leq CR^{n+2(\gamma_{1}-s)+2\beta}+C\left(\frac{r}{R}\right)^{n+2(\gamma'-s)}\left[Z'_{R,x}+R^{n+2(\gamma_{1}-s)+2\beta}\right]\\
 & \leq CR^{n-2s+2(\gamma_{1}+\beta)}+C\left(\frac{r}{R}\right)^{n+2(\gamma'-s)}Z'_{R,x},
\end{align*}
where we used the $C^{\gamma'}$ constant coefficient estimate, the
fact $u\in C^{0,\gamma_{1}}$, the energy estimate as above, and Campanato's
criterion. Applying Proposition \ref{prop:Calc} gives
\[
Z'_{r,x}\leq C\left(\frac{r}{R}\right)^{2(\gamma-s)+n}Z_{R,x},
\]
and so the conclusion follows from another application of Campanato's
criterion, which shows that $(-\triangle)^{s/2}u\in C^{\gamma-s}(E_{1})$,
and then regularity for the fractional Laplace equation. If $\gamma>1$,
the same works with $m_{r,x}$ redefined to be 
\[
1_{\Ol}\fint_{\Ol\cap B_{r}(x)}\cdot+1_{\Or}\fint_{\Or\cap B_{r}(x)}\cdot
\]

To prove that $u\in C^{1,\gamma-1}(E_{1}\cap\Ol)$ we proceed as follows.
Let $\gamma_{1}<\alpha_{0}(E_{2})$ be such that $\gamma_{1}+\beta>\gamma,$
and $\gamma<\gamma'<\alpha_{0}(E_{2})\wedge1+2-2s$ Take the modified
quantities

\[
Z_{r,x}^{1}=\int_{B_{r}(x)\cap\Ol}|\nabla u-m_{r,x}(\nabla u)|^{2},
\]
Proceed by estimating
\begin{align*}
Z_{r,x}^{1} & \leq\int_{B_{r}(x)\cap\Ol}|\nabla u-\nabla u_{R/5,x}-m_{r,x}(\nabla u-\nabla u_{R/5,x})|^{2}\\
&\qquad+\int_{B_{r}(x)\cap\Ol}|\nabla u_{R/5,x}-m_{r,x}(\nabla u_{R/5,x})|^{2}\\
 & \leq C\bigg[\int_{B_{R}(x)\cap\Ol}|\nabla u-\nabla u_{R/5,x}-m_{r,x}(\nabla u-\nabla u_{R/5,x})|^{2}\\
&\qquad+\left(\frac{r}{R}\right){}^{n+2\gamma'-2}\int_{B_{R/5}(x)\cap\Ol}|\nabla u_{R/5,x}-m_{R/5,x}(\nabla u_{R/5,x})|^{2}\bigg]\\
 & \leq C\left[R^{n-2}Y_{R/5,x}+\left(\frac{r}{R}\right)^{n+2\gamma'-2}\left(Z_{R,x}^{1}+\int_{B_{R/5}(x)\cap\Ol}|\nabla(u-u_{R/5,x})|^{2}\right)\right]\\
 & \leq C\left[R^{n-2+2\beta+2\gamma_{1}}+\left(\frac{r}{R}\right)^{n+2\gamma'-2}Z_{R,x}^{1}\right],
\end{align*}
using the energy estimate for $u$ in the last step. Proceeding as
before, we deduce that $u\in C^{1,\gamma-1}(E_{1}\cap\Ol)$.
\end{proof}

\subsection{Regularity via Improvement of Flatness}

In this section we give a more localized perturbative framework which
will give stronger conclusions than the previous method. The most
obvious advantage to this approach is the near-optimal regularity
on $\Or$ in the vicinity of each point of the interface, including
the case of compatible coefficients. The majority of the work is condensed
into the following lemma. 
\begin{lem}
Let $u$ solve $(P)$ on $E_{2}$ with $a\in\mathcal{L}_{2}\cap\mathcal{L}_{1}^{*},$
$\|u\|_{C^{0,\alpha}(\Rn)}\leq1$. Select a $\gamma$ with $(2s-1)_{+}<\gamma\leq1$
and $\gamma<\alpha_{0}(0)$. Also assume the following:
\begin{enumerate}
\item $[A]_{C^{0,\beta}(E_{2})}\leq1$
\item $[\boldsymbol{b}]_{C^{0,(\beta+1-2s)_{+}}(E_{2})}\leq1$
\item $\sup_{E_{2}}|\div\boldsymbol{b}|\leq1$
\item $[f]_{C^{0,(\beta+\gamma-2s)_{+}}(\Or\cap E_{2})}+[f]_{C^{0,(\beta+\gamma-2s)_{+}}(\Ol\cap E_{2})}+\|f\|{}_{L^{\infty}(E_{2})}\leq1$
\item $\sup_{x,y\in E_{2},z\in\Rn}|x-y|^{-\beta}|a^{\epsilon,i}(x,x+z)-a^{\epsilon,i}(y,y+z)|\leq1$
\item $\sup_{x\in E_{2},z\in\Rn}|z|^{-\beta}|a_{a,i}(x,z)|\leq1$.
\end{enumerate}
Then for every $\beta'<\beta$ there exists an $r>0$ such that
\[
\sup_{B_{r^{k}}}|u-v_{k}|\leq C_{0}r^{(\gamma+\beta')k},
\]
where $v_{k}$ is a sum of scaled solutions to constant-coefficient
equations
\[
v_{k}(x)=\sum_{j=1}^{k}r^{(\gamma+\beta')(j-1)}w_{j}(r^{-j}x),
\]
each of which satisfy the estimates
\[
\|w_{j}\|_{C^{\gamma}(B_{1}\cap\Or)\cap C^{1,\gamma+1-2s}(B_{1}\cap\Ol)}+\sum_{i,l\neq n}\|\partial_{e_{i}e_{l}}w_{j}\|_{C^{0,\alpha}(B_{1})}+\left\||x_{n}|^{\gamma-1}\nabla w_{j}\right\|_{L^{\infty}}\leq C_{0}.
\]

\end{lem}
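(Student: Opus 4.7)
The plan is a standard improvement-of-flatness iteration, constructing the $w_j$ and partial sums $v_k$ by induction on $k$, driven by the constant-coefficient approximation of Lemma~\ref{lem:AppLemma1}, the $L^\infty$ closeness estimate of Lemma~\ref{lem:ApproximationLemma}, and the near-optimal constant-coefficient regularity from Theorem~\ref{thm:ConstCoeffEst} together with the tangential estimates of Lemma~\ref{lem:Constcoefftang}. First I would set $v_0 = 0$, so the base case reduces to $\|u\|_{L^\infty(B_1)} \le 1$, which is below $C_0$ for $C_0 \ge 1$. The inductive hypothesis is that $v_k$ and $w_1, \dots, w_k$ have been constructed, $\sup_{B_{r^k}}|u - v_k| \le C_0 r^{(\gamma+\beta')k}$, and each $w_j$ satisfies the stated regularity bounds.

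For the inductive step I would rescale the residual to a unit ball: define $\hat u_k(z) := r^{-(\gamma+\beta')k}[u(r^{k+1}z) - v_k(r^{k+1}z)]$, so that $\|\hat u_k\|_{L^\infty(B_1)} \le C_0$ by induction. The function $\hat u_k$ solves a generalized form of $(P_{r^{k+1}})$ on $B_{r^{-k-1}} \supset B_3$, with generalized right-hand sides $\tilde f, \tilde f_1, \tilde h$ built from the rescaled $f$ of the original equation plus correction terms from having subtracted the earlier constant-coefficient pieces $w_1,\dots, w_k$. The scaling of hypotheses (1)-(6) gives, on $B_2$, that the coefficients differ from their values at $0$ by $\eta \le C r^{(k+1)\beta}$. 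Next I would apply Lemma~\ref{lem:AppLemma1} at $\epsilon = r^{k+1}$, $x_0 = 0$, producing a constant-coefficient solution $w_{k+1}$ on $B_2$ matching $\hat u_k$ outside $B_2$; Theorem~\ref{thm:ConstCoeffEst} and Lemma~\ref{lem:Constcoefftang} then give all the required estimates on $w_{k+1}$, namely the $C^\gamma$ bound on $B_1 \cap \bar\Or$, the $C^{1,\gamma+1-2s}$ bound on $B_1 \cap \bar\Ol$, the Hölder second tangential derivatives, and the weighted $\||x_n|^{\gamma-1}\nabla w_{k+1}\|_{L^\infty}$ bound (using that $\partial_{Ae_n}w_{k+1}(x',0^-)=0$). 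The difference $\hat u_k - w_{k+1}$ then solves a generalized problem whose data, by the conclusions of Lemma~\ref{lem:AppLemma1}, is bounded by $C\eta$ in the correct norms and satisfies the pointwise distance-weighted hypotheses needed for the improved $L^\infty$ estimate.

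Applying Lemma~\ref{lem:ApproximationLemma} would then give $\|\hat u_k - w_{k+1}\|_{L^\infty(B_1)} \le C_1 \eta^{\gamma_\ast}$ for a universal $\gamma_\ast > 0$. Setting $v_{k+1}(x) := v_k(x) + r^{(\gamma+\beta')k}w_{k+1}(r^{-(k+1)}x)$, a direct computation yields $u(r^{k+1}z) - v_{k+1}(r^{k+1}z) = r^{(\gamma+\beta')k}[\hat u_k(z) - w_{k+1}(z)]$, so the induction closes provided $C_1 \eta^{\gamma_\ast} \le C_0 r^{\gamma+\beta'}$, that is, $C_1 r^{(k+1)\beta\gamma_\ast} \le C_0 r^{\gamma+\beta'}$.

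The main obstacle will be exactly this last inequality: when $\beta\gamma_\ast < \gamma + \beta'$ (the typical case), the required control is tightest at the first few values of $k$, before $\eta$ has had a chance to become very small. The slack $\beta' < \beta$ is introduced precisely to absorb this loss: I would choose $r$ small depending on $\beta - \beta'$, $\gamma_\ast$, $\gamma$, and the data bounds, and then take $C_0$ large enough to swallow the finitely many initial steps where $r^{(k+1)\beta\gamma_\ast - \gamma - \beta'}$ is $\ge 1$; once $k \ge (\gamma+\beta')/(\beta\gamma_\ast) - 1$ the iteration self-stabilizes into geometric decay. A secondary technical point is that the generalized data $\tilde f, \tilde f_1, \tilde h$ accumulate with each iteration through the correction terms from $w_1, \dots, w_k$; verifying that they continue to satisfy the pointwise and $\mathcal{D}(x,y,U,\alpha)$-weighted hypotheses~(1)-(4) of Lemma~\ref{lem:ApproximationLemma} uses the detailed distance-decay built into the conclusions of Lemma~\ref{lem:AppLemma1}, which survive summation over $j \le k$ as a geometric series once $r$ is chosen small.
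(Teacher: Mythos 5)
Your skeleton matches the paper's: induct on $k$, rescale the residual, produce $w_{k+1}$ from the constant-coefficient approximation of Lemma \ref{lem:AppLemma1}, control it via Theorem \ref{thm:ConstCoeffEst} and Lemma \ref{lem:Constcoefftang}, and close with the $L^{\infty}$ estimate of Lemma \ref{lem:ApproximationLemma}. The gap is in how you close the induction. Your requirement $C_1\eta_k^{\gamma_*}\le C_0r^{\gamma+\beta'}$ with $\eta_k\sim r^{(k+1)\beta}$ cannot be rescued by taking $C_0$ large: the error data $\tilde f_1,\tilde h$ produced in Lemma \ref{lem:AppLemma1} is proportional to the constant-coefficient approximant $u_0$, hence to the size of the rescaled residual you feed in, so the per-step error scales linearly in $C_0$ and $C_0$ cancels from both sides; the true requirement is $C_1\eta_k^{\gamma_*}\le r^{\gamma+\beta'}$ for every $k$, worst at $k=0$. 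When $\beta\gamma_*<\gamma+\beta'$ this fails for all small $r$ --- and shrinking $r$ makes it worse, since $r^{\beta\gamma_*-\gamma-\beta'}\to\infty$. Nor can you ``skip'' the bad initial steps: the inductive hypothesis at step $k_0$ (a $v_{k_0}$ built from bounded constant-coefficient pieces with $|u-v_{k_0}|\le C_0r^{(\gamma+\beta')k_0}$ on $B_{r^{k_0}}$) is exactly what those steps are supposed to produce, and $|u|\le1$ gives no substitute. The paper's resolution is to decouple the smallness parameter from $r$: before iterating, it performs one initial dilation so that hypotheses (1)--(6) hold with a small constant $\eta$ in place of $1$; $r$ is fixed first (depending on $\beta-\beta'$), and then $\eta$ is chosen so small that the approximation error per step is at most $r^{\gamma+\beta'}$, uniformly in $k$. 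This initial normalization is absent from your argument and cannot be replaced by the scale-improvement $r^{(k+1)\beta}$ alone.

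Second, the accumulation of the generalized right-hand sides is not a secondary point: it is the content of the paper's strengthened inductive hypothesis. One carries along that $u-v_k$ solves a generalized problem with weighted bounds of the form $r^{k(1-\gamma)}|f_1|\le\eta(1-t^k)|x|^{\beta'}$ and $|h(x,y)|\le\eta(1-t^k)|x-y|^{\gamma}\left(|x|^{\beta'}+|y|^{\beta'}\right)$, plus the refined bound when $|x-y|<\tfrac{1}{2}|x_n|$. Under the step-$k$ rescaling these weights gain a factor $r^{k\beta'}$, while the new contributions from Lemma \ref{lem:AppLemma1} carry the weight $|x|^{\beta}$, so each step adds only $O(\eta r^{\beta-\beta'})$ relative to the old bound and is absorbed into the increment from $t^{k-1}$ to $t^k$ once $r$ is small. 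This --- not the early-step issue --- is the actual role of the slack $\beta'<\beta$, and it is what guarantees that the data entering Lemma \ref{lem:ApproximationLemma} stays of size $\eta$ at every scale. Your ``geometric series'' remark points in the right direction, but without the downgraded weight $|x|^{\beta'}$ in the inductive hypothesis and the $(1-t^k)$ bookkeeping, the uniform-in-$k$ smallness you need is not established.
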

A couple of remarks are in order. In this lemma, $\gamma$ should
be thought of as being close to the minimum of $\alpha_{0}(0)$ and
$1$, and represents the regularity gained ``for free'', regardless
of the quantitative assumptions on the coefficients. In classical
variational Schauder theory, if the coefficients are in any H\"{o}lder
space, regardless of the exponent, the solution will be $C^{1}$,
and our $\gamma$ plays a role analogous to this $1$. Perhaps surprisingly,
$\gamma$ may exceed $s$ in some situations. Notice also that all
of the approximating functions solve a constant-coefficient equation
\emph{frozen at the same point}; this is a key improvement over the
previous section.
\begin{proof}
Before commencing, observe that we may assume (by making $r$ small
and dilating) that the conditions $(1)-(6)$ are satisfied
with $\eta$ rather than $1$, for some small $\eta$ to be chosen
below.

The proof is by induction on $k,$ with the extra hypotheses that
$w_{k}$ solves the Dirichlet problem
\[
\begin{cases}
\begin{aligned}
B_{L}^{(0)}[w_{k},\phi]&+r^{2k(1-s)}B_{N}^{r^{k},(0)}[w_{k},\phi]\\
&=r^{k}\int w_{k}\langle\boldsymbol{b}(0),\nabla\phi\rangle\\
&\qquad+r^{2k}\int\phi\left[1_{\Ol}\fint_{B_{3}\cap\Ol}f+1_{\Or}\fint_{B_{3}\cap\Or}f\right]
\end{aligned} & \phi\in C_{c}^{\infty}(B_{3})\\
w_{k}=r^{-(k-1)(\gamma+\beta')}(u-v_{k-1})(r^{k}x) & x\notin B_{3}
\end{cases},
\]
where $A^{(0)}=A(0),$ $a_{s,i}^{r^{k},0}(x,z)=a_{s,i}(0,r^{k}z)$,
and $B_{L}^{(0)},$ $B_{N}^{r^{k},(0)}$ are the corresponding forms,
while $u-v_{k}$ solves a generalized problem
\[
\begin{cases}
\begin{aligned}B_{L}&[u-v_{k},\phi]+B_{N}[u-v_{k},\phi] \\
& =\int\left(f-\left[1_{\Ol}\fint_{B_{3}\cap\Ol}f+1_{\Or}\fint_{B_{3}\cap\Or}f\right]\right)\phi\\
 & \qquad+(u-v_{k})\langle\boldsymbol{b},\nabla\phi\rangle+\int_{\Ol}\langle f_{1},\nabla\phi\rangle\\ &\qquad +\int_{\Rn\times\Rn}\frac{h(x,y)[\phi(x)-\phi(y)]}{|x-y|^{n+2s}}dxdy
\end{aligned}
 & \phi\in C_{c}^{\infty}(B_{3r^{k}})\\
u-v_{k}=0 & x\notin B_{3r^{k}}
\end{cases}
\]
where $r^{k(1-\gamma)}|f_{1}|\leq\eta(1-t^{k})|x|^{\beta'},$ $|h(x,y)|\leq\eta|x-y|^{\gamma}(1-t^{k})(|x|^{\beta'}+|y|^{\beta'}),$
and $|h(x,y)|\leq\eta|x-y|(1-t^{k})(|x|^{\beta'}+|y|^{\beta'})|x_{n}|^{\gamma-1}$
for $|x-y|<\frac{1}{2}|x_{n}|$, where $t>0$ is some small constant
to be determined below and $x,y\in B_{3r^{k+1}}$.

Assume this holds for the first $k-1$ iterations; we show it holds
for the $k$th. Write $\bar{w}_{j}(x)=r^{-(\gamma+\beta')(k-1)}w_{j}(r^{k}x)$,
and similarly for $\bar{v}$ and $\bar{u}$. The idea is that $\bar{u}-\bar{v}_{k-1}$
solves a generalized problem in the sense of Lemma \ref{lem:AppLemma1},
while $\bar{w}_{k}$ is the corresponding constant-coefficient solution.

Indeed, from scaling we have that 

\[
\begin{cases}
\begin{aligned}B_{L}^{r^{k}}&[\bar{u}-\bar{v}_{k-1},\phi]  +r^{2k(1-s)}B_{N}^{r^{k}}[\bar{u}-\bar{v}_{k-1},\phi]=\int r^{2k-(\gamma+\beta')(k-1)}f^{r^{k}}\phi\\
 & +r^{k}(\bar{u}-\bar{v}_{k-1})\langle\boldsymbol{b}^{r^{k}},\nabla\phi\rangle+\int_{\Ol}r^{k-(\gamma+\beta')(k-1)}\langle f_{1}^{r^{k}},\nabla\phi\rangle\\
 & +r^{2k(1-s)-(\gamma+\beta')(k-1)}\int_{\Rn\times\Rn}\frac{h^{r^{k}}(x,y)[\phi(x)-\phi(y)]}{|x-y|^{n+2s}}dxdy
\end{aligned}
 & \phi\in C_{c}^{\infty}(B_{3/r})\\
u-v_{k-1}=0 & x\notin B_{3/r}
\end{cases}.
\]
We check that all the hypotheses of Lemma \ref{lem:AppLemma1} are
met on $B_{3}$. The ones on the coefficients, $f,$ and $\boldsymbol{b}$
are immediate (using here the regularity assumption on $f$ and the
fact that $\gamma<2s$). We also have that for $f_{1}$, 
\[
r^{k-(\gamma+\beta')(k-1)}|f_{1}^{r^{k}}|\leq\eta r^{(\gamma-1)k}r^{k-(\gamma+\beta')(k-1)+k\beta'}|x|^{\beta'}\leq\eta r^{\gamma+\beta'}|x|^{\beta'},
\]
while for $h,$ 
\[
r^{-(\gamma+\beta')(k-1)}|h^{r^{k}}(x,y)|\leq r^{-(\gamma+\beta')(k-1)}\eta r^{\gamma}|x-y|^{\gamma}r^{\beta'}(|x|^{\beta'}+|y|^{\beta'})\leq\eta r^{\gamma+\beta'}|x-y|^{\gamma}(|x|^{\beta'}+|y|^{\beta'}),
\]
and similarly for the other condition. By choosing $\eta$ small enough,
we can then arrange that $w_{k}$ satisfies
\[
\sup_{B_{1}}|w_{k}-(\bar{u}-\bar{v}_{k-1})|\leq r^{\gamma+\beta'},
\]
and hence
\[
\sup_{B_{r^{k}}}|v_{k}-u|\leq C_{0}r^{(\gamma+\beta')k}.
\]
The interior estimates on $w_{k}$ are a consequence of the previous
section. We must check that $u-v_{k}$ solves the corresponding generalized
problem. This comes from applying Lemma \ref{lem:AppLemma1} choosing
$r$ small, and scaling back (using $f_{1}'$ for the new $f_{1}$
in the equation for $u-v_{k}$ and $\tilde{f}_{1}$ for the change
in generalized right-hand side in the scaled problem): 
\begin{align*}
|f_{1}'(x)|&\leq|f_{1}|+|r^{(\gamma+\beta')(k-1)}\tilde{f}_{1}(r^{-k}x)|\\
&\leq\eta(1-t^{k-1})|x|^{\beta'}+\eta Cr^{k(\gamma+\beta'-1)}|r^{-k}x|^{\beta}\\
&\leq\eta(1-t^{k-1})|x|^{\beta'}+\eta Cr^{(\beta-\beta')}|x|^{\beta'}\\
&\leq\eta|x|^{\beta'}(1-t^{k})
\end{align*}
provided $r$ is small enough and $|x|\leq3r^{k+1}$, while
\begin{align*}
|h'(x,y)|&=|h(x,y)|+r^{(\gamma+\beta')(k-1)}|\tilde{h}'(r^{-k}x,r^{-k}y)|\\
&\leq\eta(1-t^{k-1})|x-y|^{\gamma}(|x|^{\beta'}+|y|^{\beta'})+\eta r^{k\beta'}|x-y|^{\gamma}(|r^{-k}x|^{\beta}+|r^{-k}y|^{\beta})\\
&\leq\eta(1-t^{k})|x-y|^{\gamma}(|x|^{\beta'}+|y|^{\beta'}).
\end{align*}
The other estimate works the same. and this completes the argument.\end{proof}
\begin{cor}
Let $u$ be an admissible solution to $(P)$ on $E_{2}$ with $a\in\mathcal{L}_{2}\cap\mathcal{L}_{1}^{*}\cap\mathcal{A}_{1}\cap C_{t}^{0,1}$,
$A\in C^{0,1},$ $\boldsymbol{b}\in C^{0,1}$, and $\Gamma\in C^{1,1}.$
Then for each $\gamma<\alpha_{0}(E_{2})\wedge(3-2s),$ $u\in C^{\gamma}(\Ol\cap E_{1})\cap C^{\gamma}(\Or\cap E_{1}).$
Also, for each $(2s-1)_{+}<\gamma<\alpha_{0}\wedge1$, $u\in C^{1,\gamma+(1-2s)_{-}}(\Ol\cap E_{1})$,
and $\partial_{e}u\in C^{0,\gamma}(E_{1})$ for every $e\perp e_{n}.$
If $a_{s,i}^{(0)}(0,\cdot),A(0)$ are compatible and $\alpha_{0}(0)>1$,
there are $v_{1},v_{2}$ such that for each $\delta$,
\[
u(x)=\begin{cases}
u(0)+\langle x,v_{1}\rangle+O(|x|^{\alpha_{0}(0)+2s-2-\delta}) & x\in\Ol\\
u(0)+\langle x,v_{2}\rangle+O(|x|^{\alpha_{0}(0)-\delta}) & x\in\Or
\end{cases}.
\]
Finally, $u$ is the unique admissible solution to $(P)$ with this
boundary data.\end{cor}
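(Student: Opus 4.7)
The plan is to localize near an arbitrary point $x_0\in E_1$ and reduce to the flat-interface situation studied in Sections 7 and 8 via the apparatus of this section. Step one: apply Lemma \ref{lem:truncate} to localize $u$ on a cylinder inside $E_2$, and then Lemma \ref{lem:Flattening} with a $C^{1,1}$ flattening map $Q$ sending a neighborhood of $x_0\cap\Gamma$ onto a piece of $\{x_n=0\}$. The Proposition on diffeomorphism invariance in Section 9.1 ensures that $\bar A,\bar a,\bar{\boldsymbol{b}}$ satisfy the structural hypotheses $a\in \mathcal{L}_2\cap \mathcal{L}_1^*\cap \mathcal{A}_1\cap C_t^{0,1}$, $\bar A\in C^{0,1}$ with $\bar A(0)=A(x_0)$, etc., and moreover $\bar a_{s,i}^{(0)}(0,\cdot)=a_{s,i}^{(0)}(x_0,\cdot)$, so $\alpha_0$ and compatibility at $x_0$ are preserved. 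In particular the flattened data satisfy the assumptions of Lemma \ref{lem:boot} and Theorem \ref{thm:Campanatoboot} with $\beta=1$.

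Applying Theorem \ref{thm:Campanatoboot} with $\beta=1$ then yields the Campanato ranges $\gamma<\min\{\alpha_0(x_0),3-2s\}$ for $C^\gamma$ regularity of $u$ separately on each side, and, when $\gamma<\alpha_0(x_0)\wedge 1$, the improved bound $u\in C^{1,\gamma+(1-2s)_-}$ on the local side. Taking a finite cover of $\Gamma\cap E_1$ by such neighborhoods combined with interior elliptic/fractional estimates off $\Gamma$ (Theorem \ref{thm:BryReg}) gives conclusions (1) and (2) as stated; tangential $C^{0,\gamma}$ estimates follow from Lemma \ref{lem:Constcoefftang} applied to the flattened problem.

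For the compatible, $\alpha_0(0)>1$ case, I would invoke the improvement-of-flatness lemma immediately preceding this corollary with base point $0$ and $\gamma$ chosen just below $\alpha_0(0)\wedge 1$. Because the frozen model at $0$ is compatible, each $w_k$ is in $C^{1,\alpha}$ on both $\bar\Ol$ and $\bar\Or$ up to the interface by Theorem \ref{thm:ConstCoeffEst}(3), so $\nabla w_k(0^\pm)$ exists. Thus each $v_k$ is linear-at-origin with a uniformly bounded gradient jump across $\Gamma$, and the estimate $\sup_{B_{r^k}}|u-v_k|\le C_0 r^{(\gamma+\beta')k}$ passes in the limit $k\to\infty$ to the desired expansion with $v_1=\lim\nabla v_k(0^-)$ and $v_2=\lim\nabla v_k(0^+)$; the error size $|x|^{\alpha_0(0)-\delta}$ on $\Or$ (respectively $|x|^{\alpha_0(0)+2-2s-\delta}$ on $\Ol$, noting the compatible constant-coefficient estimate yields the extra $2-2s$ on the local side) comes from choosing $\gamma+\beta'$ close enough to the target exponent and interpolating the dyadic bound against the uniform $C^{1,\gamma+1-2s}(\bar\Ol)$, $C^{1,\gamma-1}(\bar\Or)$ estimates on the $w_k$.

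For uniqueness, I would mimic Proposition \ref{prop:Unique}: if $u,v$ are two admissible solutions with the same data, then $w=u-v$ satisfies $(P)$ with zero right-hand side and vanishing exterior data. The regularity already proved, together with the interior/tangential estimates and a scaling argument near $\partial\Omega\cap\Gamma$ exactly as in the proof of that Proposition, places $w\in W^{1,p}$ for some $p>1$; approximating $w$ by test functions and testing the equation against $w$ yields $B_L[w,w]+B_N[w,w]=0$ (the drift term vanishes after integration by parts using $\div\boldsymbol{b}=0$ and the $W^{1,p}$ regularity), forcing $w\equiv 0$. The main technical obstacle is verifying the quantitative assumptions of Lemma \ref{lem:boot} for the flattened coefficients with the right exponent $\beta=1$ and matching the resulting Campanato ranges to the stated exponents $\alpha_0(E_2)\wedge(3-2s)$ and $\alpha_0\wedge 1+(1-2s)_-+1$; the compatibility-driven expansion in (3) is the other delicate point, since one must keep careful track that the linear parts of the $w_k$ form a Cauchy sequence, which rests on the $C^{1,\cdot}(\bar\Ol)\cap C^{1,\cdot}(\bar\Or)$ bounds uniform in $k$.
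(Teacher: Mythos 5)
Your localization/flattening step, your treatment of the compatible case via the improvement-of-flatness lemma, and your uniqueness argument all track the paper's own (sketched) proof, which applies the straightening of Section 9.1 at each interface point and then reads everything off the lemma immediately preceding the corollary. The genuine problem is your route to conclusion (2). First, the tangential statement $\partial_{e}u\in C^{0,\gamma}(E_{1})$ cannot be obtained by ``applying Lemma \ref{lem:Constcoefftang} to the flattened problem'': that lemma is proved for the translation-invariant, constant-coefficient setting ($A$ constant, $a_{s,i}(x-y)$), and its difference-quotient argument uses exactly that invariance. After flattening, $\bar{A}$ is only Lipschitz and $\bar{a}$ only lies in $C_{t}^{0,1}$, so tangential difference quotients generate commutator terms involving $\delta_{e,h}\bar{A}$ and $\delta_{e,h}\bar{a}$ that the lemma does not handle; even an adapted difference-quotient argument would only yield a small H\"older exponent, not $C^{0,\gamma}$ across $\Gamma$ for every $\gamma<\alpha_{0}\wedge 1$. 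The paper gets this from the flatness lemma's approximants $v_{k}$, which are frozen at the \emph{same} point and satisfy the second-order tangential bound $\sup_{B_{r^{k}}}|v_{k}(x',x_{n})-v_{k}(0,x_{n})-\langle\nabla_{x'}v_{k}(0,x_{n}),x'\rangle|\leq Cr^{2}$.

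Second, Theorem \ref{thm:Campanatoboot} with $\beta=1$ does not deliver the full exponent claimed on $\Ol$. Its second conclusion carries the caps $\gamma<\min\{\alpha_{0}\wedge1+(2-2s)\wedge\beta,\;s+\beta\}$, so with $\beta=1$ you only get $u\in C^{1,\delta}(\Ol\cap E_{1})$ for $\delta<\min\{\alpha_{0}\wedge1+(1-2s)_{-},\,s\}$; whenever $3s-1<\alpha_{0}\wedge1$ (e.g.\ $s=0.6$, $\alpha_{0}=0.9$, where the corollary asserts $\delta$ up to $0.7$ but Campanato stops at $0.6$), the cap $s$ strictly bites. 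This is precisely the ``certain $(\alpha_{0},s)$ ranges'' in which the paper says the Campanato method lacks flexibility, and it is why the corollary is stated in Section 9.4 and proved from the improvement-of-flatness lemma: the dyadic approximation $\sup_{B_{r^{k}}}|u-v_{k}|\leq C_{0}r^{(\gamma+\beta')k}$, combined with the uniform $C^{1,\gamma+1-2s}(\bar{\Ol})$ bounds on the $w_{j}$, gives the expansion $\sup_{B_{r^{k}}\cap\Ol}|v_{k}(x)-v_{k}(0)-\langle\nabla^{-}v_{k}(0),x\rangle|\leq Cr^{k(\gamma+2-2s)}$ at every interface point, which yields both the $C^{1,\gamma+(1-2s)_{-}}(\Ol\cap E_{1})$ statement and the tangential one simultaneously. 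Your conclusion (1) does follow from Campanato with $\beta=1$ (the $s+1$ cap never binds there since $\alpha_{0}<2s\leq s+1$), so the fix is simply to derive (2) as the paper does, from the flatness lemma rather than from Theorem \ref{thm:Campanatoboot} and Lemma \ref{lem:Constcoefftang}.
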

\begin{proof}
(sketch) Apply the boundary straightening described in 9.1 to $u$
(centered on $0$), and then apply the lemma above. Set $\beta=1$
and note that
\[
\sup_{B_{r^{k}}}|v_{k}(x)-v_{k}(0)|\leq Cr^{k(\gamma\wedge1)},
\]
\[
\sup_{B_{r^{k}}\cap\Or}|v_{k}(x)-v_{k}(0)-\langle\nabla^{+}v_{k}(0),x\rangle|\leq Cr^{k\gamma}
\]
where the second holds for $\gamma>1$ and $\nabla^{+}$ means computed
from $\Or$. Also
\[
\sup_{B_{r^{k}}\cap\Ol}|v_{k}(x)-v_{k}(0)-\langle\nabla^{-}v_{k}(0),x\rangle|\leq Cr^{k(\gamma+2-2s)},
\]
which implies the first and last conclusions. For the tangential regularity,
use that
\[
\sup_{B_{r^{k}}}|v_{k}(x',x_{n})-v_{k}(0,x_{n})-\langle\nabla_{x'}v_{k}(0,x_{n}),x'\rangle|\leq Cr^{2}.
\]
To pass from point estimates at $0$ to H\"{o}lder estimates in neighborhoods,
flatten around each $x\in\Gamma\cap E_{1}$ and proceed the same way,
finally combining with interior estimates. To prove uniqueness, proceed
as in \ref{prop:Unique}, using that under the assumptions of the
corollary, interior estimates place $u\in W^{1,p}$ for some $p$.
\end{proof}
The corollary above is only a sample of what can be shown with this
method, and can clearly be generalized substantially. Indeed, the
lemma in this subsection asserts that to order $\beta+\alpha_{0}\wedge1$,
a solution to the variable-coefficient problem coincides with a superposition
of solutions to the constant-coefficient problem frozen at the point
in question. Except perhaps in the event that $2-2s$ is very large
(but see the remark below), this fully resolves the regularity issue
for variable-coefficient problems.

\subsection{A Remark On Higher Regularity on $\Ol$}

In the arguments above, there was a limit to how much regularity could
be proved in the normal direction: it was capped by $\beta+\alpha_{0}\wedge1$.
On $\Or$, this is never an issue, as $\alpha_{0}$ is already the
maximum expected regularity for the equation, and is less than $2$.
On $\Ol$, however, this can fall short of the expected estimate $\alpha_{0}+2-2s$
(basically when $\alpha_{0}$ is small and $s<\frac{1}{2})$. This
can be circumvented easily as follows: the estimates above do show
(for $\beta$ large enough) that $A\nabla^{-}u(x',0)=0$. But now
purely local Schauder theory can be applied, treating this as a second-order
linear divergence-form Neumann problem. For instance, the argument
given in Sections 7 and 8 goes through using properties of the fundamental
solution of $\div A\nabla\cdot=0$. Note that as soon as $u\in C^{0,\alpha}(E_{1})$
for some $\alpha>2s-1,$ the energy argument for the distributional
vanishing of the conormal can be applied instead (i.e. the proof of
Lemma \ref{lem:normaldervanish} goes through).

\subsection{Applications to the Case of Nonlinear Drift}

The above theory already covers the case of critical (SQG-type) drift;
we briefly explain how it can be applied to this context.
\begin{cor}
There exists a unique admissible solution $u$ of the following, for
$\Gamma\in C^{1,1}$ globally (uniformly) and satisfying Condition
\ref{StrongLip}:
\[
\begin{cases}
\begin{aligned}\forall& \phi\in C_{c}^{\infty}(\Rn),\qquad \int_{\Ol}\langle\nabla u,\nabla\phi\rangle =\int u\langle\boldsymbol{b},\nabla\phi\rangle+f\phi\\
& -\int\int_{\Rn\times\Rn}\frac{[\nu 1_{\Ol\times\Or}+\nu1_{\Or\times\Ol}+1_{\Or\times\Or}][u(x)-u(y)][\phi(x)-\phi(y)]}{|x-y|^{n+2s}}dxdy\\
 & 
\end{aligned}
 & \\
\boldsymbol{b}=Tu
\end{cases}
\]
where $T=G\vec{R}u$ for $G$ a fixed skew-symmetric matrix and $\vec{R}$
the vector of Riesz transforms. Assume $f\in C_{c}^{\infty}$ and
$\nu>0$. Then $u$ is in $C^{0,\gamma}(\Rn)$ for every $\gamma<\alpha_{0}$,
where $\alpha_{0}$ depends only on $\nu$ and $\|f\|_{L^{\infty}\cap L^{1}}.$
Moreover, $u\in C^{1,\gamma}(\Ol)$.\end{cor}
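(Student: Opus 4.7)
The plan is to combine the existence result from Lemma \ref{lem:nonlinbd} with the H\"older estimate for nonlinear drift in Theorem \ref{thm:Holder-1} and the perturbative machinery of Section 9, then close the loop with a uniqueness argument adapted from Proposition \ref{prop:Unique}. Since $s = 1/2$ here, the desired $C^{1,\gamma}(\Omega_1)$ regularity matches the range $\gamma < \alpha_0 + 1 - 2s = \alpha_0$ provided by the constant-coefficient theory of Theorem \ref{thm:ConstCoeffEst}, so no separate treatment is needed for the local side beyond what the perturbative framework already gives.

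First I would apply Lemma \ref{lem:nonlinbd} to produce an admissible finite-energy solution $u$ with $\|u\|_{L^\infty(\Rn)} \leq C$ depending only on $n, f, \Gamma, T$. This $L^\infty$ bound, together with Calderon-Zygmund theory (Lemma \ref{lem:BMO} applied to $T = G\vec R$, which satisfies the strengthened H\"ormander condition), places $\boldsymbol{b} = Tu$ in $BMO(\Rn)$ with norm controlled by $\|u\|_{L^\infty} + \|f\|_{L^1 \cap L^\infty}$. Next, using the local flattening in Lemma \ref{lem:Flattening} at each point of $\Gamma$ and invoking Theorem \ref{thm:Holder-1} in its $s = 1/2$ nonlinear formulation, I would deduce an initial H\"older estimate $u \in C^{0,\alpha_*}(\Rn)$ with $\alpha_* > 0$ depending only on $\nu$ and $\|f\|_{L^1 \cap L^\infty}$, combining this with interior estimates on $\Omega_1$ and $\Omega_2$ away from $\Gamma$.

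The heart of the argument is bootstrapping from $C^{0,\alpha_*}$ to $C^{0,\gamma}$ for arbitrary $\gamma < \alpha_0$. The key structural observation is that once $u \in C^{0,\alpha}(\Rn)$, the Riesz transforms acting on $u$ yield $\boldsymbol{b} \in C^{0,\alpha}$ modulo additive constants (this is a standard consequence of the H\"ormander-type kernel bound, paired with Lemma \ref{lem:BMO} to control the constants). After flattening $\Gamma$ via Lemma \ref{lem:Flattening}, the transformed kernel lies in $\mathcal{L}_2 \cap \mathcal{L}_1^* \cap \mathcal{A}_1 \cap C_t^{0,1}$, the matrix $A$ is in $C^{0,1}$, and $\Gamma$ is flat, so all hypotheses of the perturbative Corollary at the end of Section 9.5 (and of Theorem \ref{thm:Campanatoboot}) are met. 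Applying it delivers $u \in C^{0,\gamma}(E_1 \cap \overline{\Omega_2})$ and $u \in C^{1,\gamma}(E_1 \cap \overline{\Omega_1})$ for every $\gamma < \alpha_0$; covering $\Gamma$ by a finite collection of such neighborhoods and combining with interior estimates produces the global regularity. Uniqueness is then obtained by observing that for two admissible solutions $u,v$, their difference $w = u-v$ satisfies a linear equation with drift term
\[
u\langle Tu, \nabla\phi\rangle - v\langle Tv, \nabla\phi\rangle = w\langle Tu, \nabla\phi\rangle + v\langle Tw, \nabla\phi\rangle.
\]
The regularity just established places $w \in W^{1,p}_{\text{loc}}$ for some $p > 1$, as in Proposition \ref{prop:Unique}, so $w$ can be used as a test function. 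The first drift piece vanishes using $\div Tu = 0$, while the second is estimated using $L^2$-boundedness of $T$, yielding an energy inequality that forces $w \equiv 0$.

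The main obstacle will be step three: rigorously coupling the self-referential drift $\boldsymbol{b} = Tu$ to the perturbative framework, which was written for a prescribed drift. The delicate point is that each improvement of H\"older exponent for $u$ must be accompanied by the corresponding improvement for $\boldsymbol{b}$, and the dependency chain must remain controlled by $\nu$ and $\|f\|_{L^1 \cap L^\infty}$ alone (with $G$ entering only through overall constants). This is analogous to the re-estimation of $BMO$ norms of $\boldsymbol{b}$ at each De Giorgi stage in the proof of Theorem \ref{thm:Holder-1}; the key is that the perturbative estimates depend on the drift only through its H\"older seminorm modulo constants, which Calderon-Zygmund theory controls by the corresponding seminorm of $u$. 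A careful induction on the H\"older exponent, analogous to the proof of Theorem \ref{thm:Holder-1}, closes the argument.
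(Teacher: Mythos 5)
Your skeleton reproduces the paper's argument for most of the statement: existence and the $L^{\infty}$ bound via Lemma \ref{lem:nonlinbd}, an initial exponent via Theorem \ref{thm:Holder-1} (equivalently Theorem \ref{thm:BryReg} in its nonlinear form), flattening at each point of $\Gamma$, and an iteration of Theorem \ref{thm:Campanatoboot} in which the H\"older seminorm of $\boldsymbol{b}=G\vec{R}u$ is re-estimated after each gain; the uniqueness argument through $W^{1,p}_{\text{loc}}$ regularity and testing with the difference is also the paper's. The genuine gap is your opening claim that ``no separate treatment is needed for the local side.'' The constant-coefficient theory (Theorem \ref{thm:ConstCoeffEst}) does reach $C^{1,\gamma}(\Ol)$ for $\gamma<\alpha_{0}+1-2s=\alpha_{0}$, but the perturbative machinery only transfers this up to the regularity of the coefficients and the drift, and here $\boldsymbol{b}=G\vec{R}u$ can never be better than $C^{0,\beta}$ with $\beta<\alpha_{0}<1$ (it inherits at most the regularity of $u$, which is only $C^{0,\alpha_{0}^{-}}$ across $\Gamma$). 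In Theorem \ref{thm:Campanatoboot} the cap $s+\beta=\tfrac12+\beta$ then restricts the $\Ol$-conclusion to $C^{1,\gamma'}$ with $\gamma'<\beta-\tfrac12<\alpha_{0}-\tfrac12$, and the corollary of Section 9.5 that you invoke assumes $\boldsymbol{b}\in C^{0,1}$, which fails; even its underlying improvement-of-flatness lemma applied with $\beta<1$ only produces approximation of order $\gamma+\beta'<2\alpha_{0}$, i.e.\ at best $C^{1,2\alpha_{0}-1-\delta}$ behavior at $\Gamma$, short of the claimed $C^{1,\gamma}(\Ol)$ for all $\gamma<\alpha_{0}$ when $\alpha_{0}<1$.

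The paper closes exactly this step with the argument of Section 9.6, which your proposal omits: once $u\in C^{0,\gamma}(\Rn)$ for some $\gamma>2s-1=0$, the energy argument of Lemma \ref{lem:normaldervanish} (equivalently, the perturbative estimates themselves) shows that the conormal derivative of $u$ vanishes on $\Gamma$ from $\Ol$, so $u|_{\Ol}$ solves a purely local divergence-form Neumann problem whose right-hand side (the nonlocal tail over $\Or$ plus the drift term) has the $|x_{n}|^{\gamma-2s}$-type decay exploited in Lemma \ref{lem:Localboot}. Local Schauder/bootstrap theory on $\Ol$ alone then upgrades to $u\in C^{1,\gamma}(\Ol)$ for every $\gamma<\alpha_{0}$. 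You need to add this local Neumann step; with it, and with your already-correct bootstrap of the coupled drift for the $C^{0,\gamma}(\Rn)$ estimate, the proof matches the paper's.
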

\begin{proof}
(sketch) First apply Lemma \ref{lem:nonlinbd} to obtain a bounded
admissible solution tot his problem, and then Theorem \ref{thm:BryReg}
see that $u\in C^{0,\alpha}(\Rn)$ for some $\alpha>0$. From the
basic theory of singular integrals, it follows that $\boldsymbol{b}\in C^{0,\alpha}$
as well. Now apply Theorem \ref{thm:Campanatoboot} to $u$ after
flattening the boundary (locally at each point $x\in\Gamma)$. Then
the regularity of $u$ increases to any H\"{o}lder space with exponent
less than $(\alpha+\frac{1}{2})\wedge\alpha_{0}$ (where $\alpha_{0}\in(0,1)$
is determined as in Theorem \ref{thm:ConstCoeffEst}). Re-estimate
the H\"{o}lder regularity of the drift, and reapply the Theorem to give
$u\in C^{0,\gamma}(E_{1})$. Finally apply Section 9.6 to conclude
that $u\in C^{1,\gamma}(\Ol)$.

This leaves only the uniqueness assertion. From interior regularity,
we can obtain the following weighted estimate on $u$:
\[
\sup_{\Or}|x_{n}|^{\alpha-1}|\nabla u|\leq C.
\]
This implies that $u\in W_{\text{loc}}^{1,p}$ for some $p>1$; see
Proposition \ref{prop:Unique} for details. The same holds for any
other admissible solution $v$. Writing the weak-form equation for
$w=u-v$,
\[
B_{L}[w,\phi]+B_{N}[w,\phi]=\int\phi\langle Tu,\nabla u\rangle-v\langle Tv,\nabla\phi\rangle.
\]
Letting $\phi_{k}\rightarrow w$ in $W_{\text{loc}}^{1,p}\cap H^{s}\cap H^{1}(\Ol)$,
observe that the right-hand side tends to $0$, giving that $w$ is
identically $0$.
\end{proof}

\section*{Appendix}

Here we gather several results used above; the proofs are mostly elementary
and are included for completeness.
\begin{lema}
There is a bounded linear operator $T:V\rightarrow H_{0}^{1}(E_{1})$,
where $V$ is the closure of $\{u\in C^{\infty}(\Ol\cap E_{1}):u|_{\partial E_{1}\cap\Ol}=0\}$
in $H^{1}(\Ol\cap E_{1})$, with $\|T\|$ depending only on $L,$
satisfying the following properties:
\begin{enumerate}
\item $Tv|_{\Ol\cap E_{1}}=v$ a.e.
\item If $v\geq0$, $Tv\geq0$.
\item For every
$v\in V$ and $l>0$, 
\[
|\{Tv>l\}|\leq\frac{4}{3}|\{v>l\}\cap\Ol\cap E_{1}|.
\]
\end{enumerate}
\end{lema}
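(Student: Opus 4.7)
The plan is to build $T$ by reflecting $v$ across $\Gamma$ with a mild contraction, chosen so that (i) the reflected copy lands strictly inside $\Or\cap E_1$, (ii) the Jacobian of the reflection equals $\tfrac13$ (which forces exactly the constant $\tfrac43$ in property (3)), and (iii) the extended function lies in $H_0^1(E_1)$. Concretely, writing $\Gamma$ locally as the graph $\{x_n=g(x')\}$ with $|g|\leq L_0<L$, I would use
\[
R_\alpha(x',x_n)=\bigl(x',\,g(x')+\alpha(g(x')-x_n)\bigr),\qquad \alpha=\tfrac13,
\]
and set $Tv=v$ on $\Ol\cap E_1$, $Tv=v\circ R_\alpha^{-1}$ on $R_\alpha(\Ol\cap E_1)$, and $Tv=0$ on the remainder of $E_1$. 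By density, it suffices to work with smooth $v$.

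The verification then breaks into three routine checks. For containment: when $x\in\Ol\cap E_1$, the $n$-th coordinate of $R_\alpha(x)$ lies between $g(x')$ and $(1+\alpha)g(x')+2\alpha L=\tfrac43 g(x')+\tfrac23 L$, which is strictly less than $2L$ thanks to the strict bound $|g|<L$ (this is where the relation $L^2=L_0^2+1$ matters); so the image lies inside $\Or\cap E_1$. For $H_0^1$ membership, the two pieces of $Tv$ match on $\Gamma$ since both equal the trace of $v$ on $\Gamma$, and along the interface between the reflected region and the zero region (the image of $(\partial E_1\cap\Ol)$ under $R_\alpha$) the trace vanishes because $v$ vanishes on $\partial E_1\cap\Ol$ by definition of $V$; the portion of this image that falls on $\partial E_1\cap\Or$ likewise carries vanishing trace, so $Tv=0$ on all of $\partial E_1$. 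Boundedness of $T\colon V\to H_0^1(E_1)$ then follows from the change-of-variables formula, with a constant depending only on the derivatives of $R_\alpha$ and hence only on $L$. For property (3), the same change of variables gives
\[
|\{Tv>l\}|=|\{v>l\}\cap\Ol\cap E_1|+|\det\nabla R_\alpha|\cdot|\{v>l\}\cap\Ol\cap E_1|,
\]
and a direct determinant computation shows $|\det\nabla R_\alpha|=\alpha=\tfrac13$, delivering the factor $\tfrac43$ exactly. Properties (1) and (2) are tautological from the definition.

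The main obstacle, conceptually, is the simultaneous need to keep $\alpha$ small (both to get the sharp constant $\tfrac43$ and to force the reflected image into $E_1$) while keeping the reflection nondegenerate and $H_0^1$-compatible at the corners of $\Ol\cap E_1$. The value $\alpha=\tfrac13$ is pinned down by the level-set requirement, and then containment follows automatically from the strict inequality $L_0<L$ that is built into the setup; without this strict inequality one would be forced to take $\alpha$ even smaller, losing the sharp constant.
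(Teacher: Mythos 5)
Your construction is correct and essentially identical to the paper's: the paper defines the extension on the reflected side by $Tv(x',x_n)=v(x',4g(x')-3x_n)$, which is exactly your $v\circ R_{1/3}^{-1}$, with the same constant Jacobian $\tfrac13$ producing the factor $\tfrac43$, the same use of $|g|\leq L_0<L$ to keep the reflected support inside $E_1$, and the same trace-matching along $\Gamma$ and vanishing on $\partial E_1$ to land in $H_0^1(E_1)$. The only (inessential) quibble is your closing remark: taking $\alpha$ smaller would shrink the reflected image and give the constant $1+\alpha\leq\tfrac43$, so it would improve rather than lose the level-set bound.
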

\begin{proof}
Denote by $E$ the infinite cylinder $\{|x'|<1\}$; for convenience
assume $\Gamma\cap E$ is contained in the graph of $g$. Extend $v$
by $0$ to $E\cap\Ol,$ and define $Tv$ as follows:
\[
Tv(x',x_{n})=\begin{cases}
v(x',x_{n}) & x_{n}\geq g(x')\\
v(x',4g(x')-3x_{n}) & x_{n}<g(x')
\end{cases}.
\]
Clearly $Tv$ is linear and preserves positivity. A standard computation
reveals that
\[
\nabla Tv(x',x_{n})=\begin{cases}
\nabla v(x',x_{n}) & x_{n}\geq g(x')\\
\begin{pmatrix}1 & 0 & \cdots & 4\partial_{1}g(x')\\
0 & 1 & \cdots & 4\partial_{2}g(x')\\
\vdots & \vdots & \ddots & \vdots\\
0 & 0 & \cdots & -3
\end{pmatrix}\nabla v(x',4g(x')-3x_{n}) & x_{n}<g(x')
\end{cases}
\]
is in fact a weak derivative for $Tv$, so from change of variables
formula we easily deduce
\[
\|Tv\|_{H^{1}(E)}\leq C(n,L)\|v\|_{H^{1}(E\cap\Ol)}.
\]
From the definition of $T$, it is simple to check that $Tv(x',x_{n})=0$
for almost every $x_{n}\leq-2L$, meaning $Tv\in H_{0}^{1}(E_{1}).$
Finally, we compute the measure of level sets:

\begin{align*}
|\{Tv>l\}|&\leq|\{v>l\}\cap\Ol|+|\{Tv>l\}\cap\Or|\\
&=\int_{\{v>l\}\cap\Ol}1+\left|\det\begin{pmatrix}1 & 0 & \cdots & 4\partial_{1}g(x')\\
0 & 1 & \cdots & 4\partial_{2}g(x')\\
\vdots & \vdots & \ddots & \vdots\\
0 & 0 & \cdots & -3
\end{pmatrix}^{-1}\right|\\
&=\frac{4}{3}|\{v>l\}\cap\Ol|,
\end{align*}
where we've used the fact that $g$ is Lipschitz and the area formula.\end{proof}
\begin{lemb}
 Assume $u:\Rn\rightarrow\RR$ satisfies $|u|\leq1+|x|^{\alpha}$for
some $\alpha<\gamma$. Then there are constants $\{c_{B}\}$ such
that 
\[
\sup_{B\subset B_{1}}\frac{1}{|B|}\int_{B}|Tu-c_{B}|\leq C,
\]
where $C$ depends only on $T$ and $\gamma-\alpha$.\end{lemb}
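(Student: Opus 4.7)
The plan is to follow the classical BMO-via-Calderón-Zygmund argument, with one adaptation: the tails must be handled using the strengthened Hörmander condition \eqref{eq:Hormander} rather than $L^\infty$ of $u$, since $u$ is allowed to grow.

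Fix a ball $B=B_r(x_0)\subset B_1$, so that $|x_0|+r\leq 1$ and in particular $2B\subset B_2$. Decompose $u=u_1+u_2$ with $u_1=u\,\chi_{2B}$ and $u_2=u\,\chi_{(2B)^c}$. The plan is to show that the constant
\[
c_B := P.V.\int_{\mathbb{R}^n}K(x_0-y)u_2(y)\,dy
\]
(which is well-defined since $u_2$ is locally integrable and vanishes near $x_0$) satisfies the required estimate. Writing $\frac{1}{|B|}\int_B|Tu-c_B|\leq \frac{1}{|B|}\int_B|Tu_1|+\sup_{x\in B}|Tu_2(x)-c_B|$, I will control the two pieces separately.

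For the local piece, since $2B\subset B_2$, we have $|u_1|\leq 1+2^\alpha\leq C$, so $L^2$-boundedness of $T$ and Cauchy--Schwarz give
\[
\frac{1}{|B|}\int_B|Tu_1|\leq |B|^{-1/2}\|Tu_1\|_{L^2}\leq C|B|^{-1/2}\|u_1\|_{L^2(2B)}\leq C.
\]
For the tail piece, for $x\in B$ and $y\in(2B)^c$ one has $|x-x_0|\leq r<2r\leq|x_0-y|$, so the Hörmander condition \eqref{eq:Hormander} applies and yields
\[
|Tu_2(x)-c_B|\leq C\int_{|y-x_0|>2r}\frac{|x-x_0|^\gamma}{|x_0-y|^{n+\gamma}}|u(y)|\,dy\leq Cr^\gamma\int_{|y-x_0|>2r}\frac{1+|y|^\alpha}{|x_0-y|^{n+\gamma}}\,dy.
\]

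The main (and essentially only) technical step is the tail integral. The bounded part contributes $Cr^\gamma\cdot r^{-\gamma}=C$ directly. For the polynomial piece, I will split the domain of integration into the region $2r<|y-x_0|\leq 4$, where $|y|\leq 5$ so $|y|^\alpha\leq C$ and the bound reduces to the previous one, and the far region $|y-x_0|>4$, where $|x_0|\leq 1$ implies $|y|\leq \tfrac{5}{4}|y-x_0|$. The far region gives
\[
r^\gamma\int_{|y-x_0|>4}\frac{|y-x_0|^\alpha}{|x_0-y|^{n+\gamma}}\,dy\leq Cr^\gamma\int_4^\infty\rho^{\alpha-1-\gamma}\,d\rho\leq \frac{C}{\gamma-\alpha},
\]
where convergence of the $\rho$-integral is exactly the place where the hypothesis $\alpha<\gamma$ enters, and where the dependence of the constant on $\gamma-\alpha$ arises. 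Combining the two bounds yields the claim.

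The only subtle point is the expected obstacle: making sense of $c_B$ as a principal value and justifying the pointwise inequality $|Tu_2(x)-c_B|\leq\int|K(x-y)-K(x_0-y)||u(y)|\,dy$ on a set of full measure. This is handled in the standard way by approximating $u$ by truncations, applying the estimate above to each truncation (for which the integrals are absolutely convergent by the Hörmander bound combined with $\alpha<\gamma$), and passing to the limit using dominated convergence; no substantive new ideas are needed beyond those in \cite{Stein}.
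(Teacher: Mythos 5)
Your argument is correct and is essentially the paper's own proof: the same decomposition $u=u\,1_{B^{*}}+u\,1_{(B^{*})^{c}}$ with $B^{*}=2B$, the same $L^{2}$-boundedness plus Cauchy--Schwarz bound for the local piece, and the same choice $c_{B}=\int_{(2B)^{c}}K(x_{0}-y)u(y)\,dy$ estimated through the strengthened Hörmander condition \eqref{eq:Hormander}, with $\alpha<\gamma$ providing convergence of the tail integral. Your splitting of the tail into the regions $2r<|y-x_{0}|\leq 4$ and $|y-x_{0}|>4$, and the closing remark about defining $c_{B}$ via truncations, are only cosmetic refinements of the paper's one-line bound $|u(y)|\leq C\left(1+|y-x_{0}|^{\alpha}\right)$.
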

\begin{proof}
Fix $B$ and let $B^{*}$ be its double. Write $u=u_{1}+u_{2}=1_{B^{*}}u+1_{B^{*^{c}}}u$.
Then 
\[
\frac{1}{|B|}\int_{B}|Tu_{1}|\leq \left(\frac{1}{|B|}\int_{B}|Tu_{1}|^{2}\right)^{1/2}\leq C\left(\frac{1}{|B|}\int u_{1}^{2}\right)^{1/2}=C\left(\frac{1}{|B^{*}|}\int_{B^{*}}u^{2}\right)^{1/2}\leq C,
\]
where the second inequality used the boundedness of $T$ on $L^{2}$,
while the last that $B^{*}\subset B_{2}$ and the bound on $u.$ On
the other hand, if $B=B_{r}(x_{0})$, set
\[
c_{B}=\int_{\Rn\backslash B^{*}}K(x_{0}-y)u(y)dy.
\]
Then we can estimate
\begin{align*}
\int_{B}|Tu_{2}(x)-c_{B}|dx & \leq\int_{B}\int_{\Rn\backslash B^{*}}|K(x-y)-K(x_{0}-y)||u(y)|dydx\\
 & \leq C\int_{B}\int_{\Rn\backslash B^{*}}\frac{|x-x_{0}|^{\gamma}}{|y-x_{0}|^{n+\gamma}}|u(y)|dydx\\
 & Cr^{n+\gamma}\int_{\Rn\backslash B^{*}}\frac{1+|y-x_{0}|^{\alpha}}{|y-x_{0}|^{n+\gamma}}dy\\
 & Cr^{n+\gamma}(1+r^{-\gamma})\leq C|B|,
\end{align*}
where the second line uses (\ref{eq:Hormander}).
\end{proof}

\section*{Acknowledgements}
The author would like to thank his PhD supervisor, Luis Caffarelli, for invaluable assistance and encouragement on this project. He is also obliged to Robin Neumayer for many helpful suggestions. This work was supported by National Science Foundation grant NSF DMS-1065926.

\bibliographystyle{plain}
\bibliography{writeup}

\end{document}